\documentclass[10pt]{amsart}

\setlength{\textheight}{240truemm}
\setlength{\textwidth}{146truemm}
\setlength{\headheight}{14.0truemm}
\setlength{\hoffset}{-10truemm} 
\setlength{\voffset}{-20truemm} 

\usepackage{fancyhdr}
\pagestyle{fancy}
\fancyhf{}  
\fancyhead[RO,LE]{\thepage}  
\fancyhead[CO]{\it The homotopy of the $K(2)$-local Moore spectrum  
at the prime $3$ revisited}  
\fancyhead[CE]{\it Hans-Werner Henn, Nasko Karamanov and Mark Mahowald}

\usepackage{bm}
\usepackage{amssymb}

\addtolength{\parskip}{3mm}
\newtheorem{thm}{Theorem}[section]
\newtheorem{cor}[thm]{Corollary}
\newtheorem{lem}[thm]{Lemma}
\newtheorem{prop}[thm]{Proposition}
\theoremstyle{definition}

\newcommand{\remarkstyle}[1]{\medskip\noindent\underline{#1}\;}
\def\goth{\mathfrak}

\def\longra{\longrightarrow}

\def\a{\alpha}
\def\b{\beta}
\def\d{\delta}
\def\e{\epsilon}
\def\g{\gamma}

\def\l{\lambda}

\def\vp{\varphi}

\def\Ma{\mathcal{M}}

\def\Oa{\mathcal{O}}

\def\DD{\mathbb{D}}
\def\FF{\mathbb{F}}
\def\GG{\mathbb{G}}

\def\QQ{\mathbb{Q}}

\def\W{\mathbb{W}}
\def\Z{\mathbb{Z}}

\def\Aut{\mathrm{Aut}}
\def\Ext{\mathrm{Ext}}
\def\Gal{\mathrm{Gal}}
\def\Hom{\mathrm{Hom}}
\def\Ker{\mathrm{Ker}}

\def\Tor{\mathrm{Tor}}

\def\lim{\mathrm{lim}}

\newcommand{\f}{\mathbb{F}_{p^n}}
\newcommand{\F}{\mathbb{F}}
\newcommand{\G}{\mathbb{G}}
\newcommand{\s}{\mathbb{S}}
\newcommand{\wv}{\mathbb{W}}

\date{\today} 
\begin{document}

\title{The homotopy of the $K(2)$-local Moore spectrum 
at the prime $3$ revisited} 

\author{Hans-Werner Henn, Nasko Karamanov and Mark Mahowald}

\address{Institut de Recherche Math\'ematique Avanc\'ee,
C.N.R.S. - Universit\'e Louis Pasteur, F-67084 Strasbourg,
France}
\address{Ruhr-Universit\"at Bochum, Fakult\"at f\"ur Mathematik, D-44780, 
Germany} 
\address{Department of Mathematics, 
Northwestern University, Evanston, IL 60208, U.S.A.}  
\thanks{The authors would like to thank the Mittag-Leffler Institute,  
Northwestern University, Universit\'e Louis Pasteur at Strasbourg  
and the Ruhr-Universit\"at Bochum
for providing them with the opportunity to work together.}

\begin{abstract}
In this paper we use the approach introduced in \cite{GHMR}  
in order to analyze the homotopy groups of $L_{K(2)}V(0)$, the 
mod-$3$ Moore spectrum $V(0)$ localized with respect to Morava 
$K$-theory $K(2)$. These homotopy groups have already been calculated 
by Shimomura \cite{shi1}. The results are very complicated 
so that an independent verification via an alternative approach 
is of interest. In fact, we end up with a result which is more precise 
and also differs in some of its details from that of \cite{shi1}. 
An additional bonus of our approach is that it   
breaks up the result into smaller and more digestible chunks 
which are related to the $K(2)$-localization of the spectrum $TMF$ 
of topological modular forms and related spectra. Even more, the 
Adams-Novikov differentials for $L_{K(2)}V(0)$ can be 
read off from those for $TMF$.

\end{abstract}

\maketitle
 
\section{Introduction}

Let $K(2)$ be the second Morava $K$-theory for the 
prime $3$. For suitable spectra $F$, e.g. if $F$ is a finite spectrum,   
the homotopy groups of the Bousfield localization 
$L_{K(2)}F$ can be calculated via the Adams-Novikov spectral sequence. 
By \cite{DH} 
this spectral sequence can be identified with the descent spectral sequence 
$$
E_2^{s,t}=H^s(\G_2,(E_2)_tF)\Longrightarrow \pi_{t-s}(L_{K(2)}F) 
$$
for the action of the (extended) 
Morava stabilizer group $\G_2$ on $E_2\wedge F$ where 
the action is via the Goerss-Hopkins-Miller 
action on the Lubin-Tate spectrum $E_2$   
(see \cite{GHMR} for a summary of the necessary background material). 
Here we just recall that the homotopy groups of $E_2$ 
are non-canonically isomorphic to 
$\W_{\FF_9}[[u_1]][u^{\pm 1}]$ where $\W_{\F_9}$ 
denotes the ring of Witt vectors of $\F_9$, where $u_1$ is of degree 
$0$ and $u$ is of degree $-2$. 
We also recall that $\G_2$ is a profinite group 
and its action on the profinite module $(E_2)_*F$ is continuous; 
group cohomology is, throughout this paper, taken   
in the continuous sense.   

The cohomological dimension of $\G_2$ is well-known to be infinite and 
therefore a finite projective resolution of the trivial profinite 
$\G_2$-module $\Z_3$ cannot exist. 
However, in \cite{GHMR} a finite resolution of the trivial module $\Z_3$ 
was constructed in terms of permutation modules. More precisely, 
the group $\G_2$ is isomorphic to the product $\G_2^1\times \Z_3$ 
of a central subgroup (isomorphic to) $\Z_3$ and a group 
$\G_2^1$ which is the kernel of a homomorphism $\G_2^1\to \Z_3$, 
also called the reduced norm. 
One of the main technical achievements of \cite{GHMR} was the construction 
of a permutation resolution of the trivial module $\Z_3$ for the 
group $\G_2^1$. This resolution is self-dual in a suitable sense 
(cf. section \ref{partial3}) and has the form 
\begin{eqnarray}\label{alg}
0\rightarrow C_3\rightarrow C_2\rightarrow C_1\rightarrow C_0\rightarrow 
Z_3\rightarrow 0    
\end{eqnarray} 
with $C_0=C_3=\Z_3[[\G_2^1/G_{24}]]$ and 
$C_1=C_2=\Z_3[[\G_2^1]]\otimes_{\Z_3[SD_{16}]}\chi$.  
Here $G_{24}$ is a certain subgroup of $\G_2^1$ of order $24$, 
isomorphic to the semidirect product $\Z/3\rtimes Q_8$ of the 
cyclic group of order $3$ with a non-trivial action of the 
quaternion group $Q_8$, and $SD_{16}$ is another subgroup, 
isomorphic to the semidihedral group of order $16$ 
(see section \ref{subgroups}). Furthermore, $\chi$ is a suitable 
one-dimensional representation of $SD_{16}$, defined over $\Z_3$, 
and if  $S$ is a profinite $\G_2^1$-set 
we denote the corresponding profinite permutation module by 
$\Z_3[[S]]$.   

For any $\Z_3[[\GG_2^1]]$-module $M$ the resolution (\ref{alg}) 
gives rise to a first quadrant cohomological 
spectral sequence  
\begin{eqnarray}\label{algss}
E_1^{s,t}=\Ext_{\Z_3[[\G_2^1]]}^t(C_s,M)\Longrightarrow 
H^{s+t}(\G_2^1,M) 
\end{eqnarray}
refered to in the sequel as the algebraic spectral sequence.  
By Shapiro's Lemma we have 
\begin{equation}\label{Shapiro}
E_1^{0,t}=E_1^{3,t}\cong H^t(G_{24},M), \ \ 
E_1^{1,t}=E_1^{2,t}\cong 
\begin{cases} 
\Hom_{\Z_3[SD_{16}]}(\chi,M) & t=0 \\
                             0 & t>0  \ . \\
\end{cases}
\end{equation}  
The bulk of our work is the calculation of 
this spectral sequence if   $M=(E_2)_*(V(0))$.
In this case the $E_1$-term is well understood and 
can be interpreted in terms of modular forms in 
characteristic $3$. In fact, it is determined by the following result 
which we include for the convenience of the reader and 
in which $v_1$ denotes the well-known $\GG_2$-invariant class 
$u_1u^{-2}\in M_4$. For the definition of the 
other classes figuring in this result the reader is referred to section 
\ref{e1-term}.     

\begin{thm} \label{shapiro} Let $M=(E_2)_*(V(0))$. 

a) There are elements $\beta\in H^2(G_{24},M_{12})$, 
$\a\in H^1(G_{24},M_4)$ and $\widetilde{\a}\in H^1(G_{24},M_{12})$, 
an invertible $G_{24}$-invariant element $\Delta\in M_{24}$, 
and an isomorphism of graded algebras 
$$
H^*(G_{24},M)\cong 
\F_3[[v_1^6\Delta^{-1}]][\Delta^{\pm 1},v_1,\beta,\alpha,\widetilde{\alpha}]
/(\alpha^2,\widetilde{\alpha}^2,v_1\alpha,v_1\widetilde{\alpha},
\alpha\widetilde{\alpha}+v_1\beta)\ . 
$$

b) The ring of $SD_{16}$-invariants of $M$ is given 
by the subalgebra $M^{SD_{16}}=\FF_3[[u_1^4]][v_1,u^{\pm 8}]$ 
and $\Hom_{\Z_3[SD_{16}]}(\chi,M)$ is a free 
$M^{SD_{16}}$-module of rank $1$ with generator $\omega^2u^4$, i.e.  
$$
\mathrm{Hom}_{\Z_3[SD_{16}]}(\chi,M)\cong 
\omega^2u^4\F_3[[u_1^4]][v_1,u^{\pm 8}]\ . \qed 
$$
\end{thm}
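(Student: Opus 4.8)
The plan is to compute both cohomology rings by exploiting the fact that $G_{24}$ and $SD_{16}$ are finite groups, so their cohomology with coefficients in $M = (E_2)_*(V(0))$ is accessible by elementary means, and then to identify the answers with rings of (mod-$3$) modular forms. Recall that $(E_2)_*(V(0)) \cong \FF_9[[u_1]][u^{\pm 1}]$ since $V(0)$ kills the $3$ in $\W_{\FF_9}$; this is a graded ring with $u_1$ in degree $0$ and $u$ in degree $-2$. The orders $24$ and $16$ are both prime to... no — $3 \mid 24$, so for $G_{24}$ the Sylow-$3$ subgroup $\Z/3$ contributes genuinely, whereas $16$ is prime to $3$ so $H^*(SD_{16},M)$ is concentrated in degree $0$ and is just the invariants (consistent with \eqref{Shapiro}).

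\medskip\noindent\textbf{Part (b).} First I would compute $M^{SD_{16}}$. Using the explicit formulas for the action of a chosen $SD_{16} \subset \G_2^1$ on $\FF_9[[u_1]][u^{\pm 1}]$ (recorded in the section on the subgroups and their actions on $(E_2)_*$), one checks that the generator of the cyclic quotient $\Z/8$ acts on $u$ by a primitive $8$th root of unity $\zeta \in \FF_9$ and fixes $u_1$ up to lower-order terms, while the remaining involution acts by the nontrivial Galois/Frobenius twist. Taking invariants, the powers of $u$ that survive are multiples of $u^{8}$, and among the $u_1$-adic terms only the $\Z/4$-norm $u_1^4$ (together with $v_1 = u_1 u^{-2}$, which is $\G_2$-invariant hence certainly $SD_{16}$-invariant) survives; a filtration/degree count then gives $M^{SD_{16}} = \FF_3[[u_1^4]][v_1, u^{\pm 8}]$. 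For the $\chi$-isotypic part, $\Hom_{\Z_3[SD_{16}]}(\chi, M)$ consists of elements on which $SD_{16}$ acts through the character $\chi$; one exhibits the class $\omega^2 u^4$ (with $\omega$ the relevant square root of a unit, as set up in the $E_1$-term section) as such an element, checks it is a unit multiple of any other, and concludes it generates a free rank-one module over the invariants. The only real content here is bookkeeping the explicit action, which is why I expect Part (b) to be routine given the earlier setup.

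\medskip\noindent\textbf{Part (a).} This is the substantive part. Write $G_{24} \cong \Z/3 \rtimes Q_8$. I would use the Lyndon–Hochschild–Serre spectral sequence for the extension, or equivalently compute $H^*(\Z/3, M)$ first and then take $Q_8$-invariants (since $|Q_8| = 8$ is invertible in $M$, the LHS spectral sequence collapses: $H^*(G_{24}, M) \cong H^*(\Z/3, M)^{Q_8}$). For $H^*(\Z/3, M)$ one uses the standard periodic resolution: $H^0$ is the $\Z/3$-invariants of $M$, and $H^{2i}$, $H^{2i+1}$ for $i > 0$ are cokernel/kernel of the norm and the augmentation maps. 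The generator of $\Z/3$ acts on $\FF_9[[u_1]][u^{\pm 1}]$ by an explicit formula (again from the subgroups section) of the shape $u \mapsto u(1 + \text{higher } u_1\text{-terms})$, $u_1 \mapsto u_1 + \dots$; the key computation is that this action makes $M$, as a $\Z/3$-module, a sum of free modules and trivial modules in a pattern governed by $u_1$-divisibility, producing the classes $v_1$ (invariant), $\alpha \in H^1$, $\widetilde\alpha \in H^1$ and $\beta \in H^2$, with the stated multiplicative relations $\alpha^2 = \widetilde\alpha^2 = v_1\alpha = v_1\widetilde\alpha = 0$ and $\alpha\widetilde\alpha = -v_1\beta$ coming from the ring structure of the cohomology of $\Z/3$ combined with the $u_1$-torsion. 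Then one imposes $Q_8$-invariance: $\Delta \in M_{24}$ is the $Q_8$-invariant generator (the "discriminant"), the power series variable becomes $v_1^6\Delta^{-1}$, and one matches the whole thing with $\FF_3[[v_1^6\Delta^{-1}]][\Delta^{\pm1}, v_1, \beta, \alpha, \widetilde\alpha]$ modulo the relations.

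\medskip\noindent\textbf{The main obstacle} will be the $Q_8$-invariance step in part (a): identifying which $\Z/3$-cohomology classes are actually $Q_8$-invariant, pinning down the correct generators $\Delta$, $\beta$, $\alpha$, $\widetilde\alpha$ in terms of $u_1, u$ and the explicit $Q_8$-action, and verifying that the completion happens exactly at $v_1^6\Delta^{-1}$ (degree $0$, as it must be) rather than at some other power. This is where the connection to modular forms mod $3$ is essential — it is really the statement that $M_*^{G_{24}}$ in degree-$0$-cohomology is the ring of mod-$3$ modular forms, with $\Delta$ the discriminant, $v_1 \equiv$ the Hasse invariant up to units, and $v_1^6/\Delta$ the famous mod-$3$ relation — so I would lean on that identification (as the paper promises) rather than attempt a bare-hands verification of every relation. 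The torsion classes $\alpha, \widetilde\alpha, \beta$ then correspond to the image of the $\alpha$- and $\beta$-families under the relevant boundary maps, and their relations are forced by degree and by the ring structure already determined in the $\Z/3$-cohomology.
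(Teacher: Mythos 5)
Your outline is essentially the computation that the paper relies on, but note that the paper does not reprove Theorem \ref{shapiro}: it records the $SD_{16}$-action (\ref{SD16-action}), (\ref{Frobenius-action}) in section \ref{e1-term}, from which part (b) is immediate, and for part (a) it cites section 1.3 of \cite{GHM} and section 3 of \cite{GHMR}. The route taken there differs slightly from yours: one first computes the \emph{integral} cohomology $H^*(G_{12},(E_2)_*)$ for $G_{12}=\Z/3\rtimes\Z/4$ and then deduces the mod-$3$ statement via the Bockstein sequence and the passage to $G_{24}$, whereas you propose to work directly with $H^*(\Z/3,M)^{Q_8}$ in characteristic $3$. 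Both are valid (your collapse $H^*(G_{24},M)\cong H^*(\Z/3,M)^{Q_8}$ is correct since $|Q_8|$ is prime to $3$); the integral route is what actually pins down $\alpha$, $\widetilde{\alpha}$ and $\beta$ as Greek-letter/Bockstein classes, which the paper needs later for the differentials, while your direct approach gets the additive and multiplicative structure faster but leaves the generators less canonical.

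Two soft spots. First, a slip in part (b): $\omega$ does not fix $u_1$ up to lower-order terms; it acts by $\omega_*u_1=\omega^2u_1$ exactly (the $SD_{16}$-action on $(E_2)_*$ is strictly diagonal), which is precisely why only $\F_3[[u_1^4]]$ survives --- your subsequent invocation of the ``$\Z/4$-norm $u_1^4$'' silently corrects this, but as written the two sentences contradict each other. Second, and more substantively, you cannot legitimately ``lean on'' the identification of $M^{G_{24}}$ with completed mod-$3$ modular forms to avoid the degree-zero computation: the paper's $E_2$ is built from the Honda formal group, and its own remark after Theorem \ref{shapiro} cautions that the dictionary with modular forms only becomes tight for a different model of $E_2$. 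So the invariant $\Delta$ and the statement that $H^0(G_{24},M_0)$ is the completion at $(v_1^6\Delta^{-1})$ must be established by hand from the explicit action of $a$, $t$, $\psi$ --- this is exactly what Proposition \ref{Delta} and the surrounding formulae do, and it is the one place where your sketch replaces a necessary computation with an analogy.
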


\remarkstyle{Remark} 
We note that $v_1^6\Delta^{-1}$ is a $G_{24}$-invariant 
class in the maximal ideal of $M_0$ and hence a formal power series 
in $v_1^6\Delta^{-1}$ converges in $M$ and is also invariant. Similarly 
with $u_1^4$. Of course, the name for $\Delta$ is chosen to emphasize 
the close relation with the theory of modular forms. For example 
we note that $M^{G_{24}}$ is isomorphic to the completion of  
$\Ma_3:=\F_3[\Delta^{\pm 1},v_1]$ with respect to the ideal generated 
by $v_1^6\Delta^{-1}$, and $\Ma_3$ is isomorphic to the ring of 
modular forms in characteristic $3$ (cf. \cite{De} and \cite{behk}). 
Similarly, $M^{SD_{16}}$ is isomorphic to the 
completion of $\F_3[v_1,u^{\pm 8}]$ with respect to the ideal 
generated by $u_1^4=v_1^4u^8$. The larger algebra $\FF_3[v_1,u^{\pm 4}]$   
is isomorphic to the ring $\Ma_3(2)$ of modular forms of level $2$ 
(in characteristic $3$) (cf. \cite{behk}). 
The relation with modular forms could be made tight  
if in \cite{GHMR} we had worked with a version of $E_2$ which uses a 
deformation of the formal group of a supersingular curve rather than 
that of the Honda formal group.   
 
As $(E_2)_*(V(0))$ is a graded module, the spectral sequence is trigraded. 
The differentials in this spectral sequence are $v_1$-linear and continuous. 
Therefore $d_1$ is completely described by continuity and 
the following formulae in which we identify the $E_1$-term via  
Theorem \ref{shapiro}.       

\begin{thm}\label{d1} There are elements 
$$
\begin{array}{llll}
\Delta_k\in E_1^{0,0,24k},&  b_{2k+1}\in E_1^{1,0,16k+8}, 
& \overline{b}_{2k+1}\in E_1^{2,0,16k+8}, 
& \overline{\Delta}_k\in E_1^{3,0,24k} 
\end{array}
$$ 
for each $k\in \Z$ satisfying   
$$
\begin{array}{llll} 
\Delta_k\equiv\Delta^k, & b_{2k+1}\equiv \omega^2u^{-4(2k+1)}, 
& \overline{b}_{2k+1}\equiv \omega^2u^{-4(2k+1)}, 
& \overline{\Delta}_k\equiv\Delta^k
\end{array}
$$  
(where the congruences are modulo the ideal 
$(v_1^6\Delta^{-1})$ resp. $(v_1^4u^8)$ and in the case of $\Delta_0$ we even 
have equality $\Delta_0=\Delta^0=1$) 
such that  
$$
\begin{array}{rcl}
d_1(\Delta_k)&=&  
\begin{cases}
(-1)^{m+1}b_{2.(3m+1)+1}                  &k=2m+1 \\
(-1)^{m+1}mv_1^{4.3^n-2}b_{2.3^n(3m-1)+1} &k=2m.3^n,  m\not\equiv 0\ (3)\\ 
0                                         &k=0  \\
\end{cases}   \\
&&\\
d_1(b_{2k+1})&=&  
\begin{cases}
(-1)^nv_1^{6.3^{n}+2}\overline{b}_{3^{n+1}(6m+1)}
&\ \ \ \ \ \ \ k=3^{n+1}(3m+1) \\
(-1)^nv_1^{10.3^n+2}\overline{b}_{3^n(18m+11)}   
&\ \ \ \ \ \ \ k=3^{n}(9m+8)  \\ 
0                                                
&\ \ \ \ \ \ \ \textrm{else}\\
\end{cases}  \\
&&\\
\end{array}
$$
$$
\begin{array}{rcl}
&&\\
d_1(\overline{b}_{2k+1})&=&
\begin{cases}
(-1)^{m+1}v_1^{2}\overline{\Delta}_{2m}             & 2k+1=6m+1 \\
(-1)^{m+n}v_1^{4.3^n}\overline{\Delta}_{3^{n}(6m+5)}& 2k+1=3^{n}(18m+17) \\
(-1)^{m+n+1}v_1^{4.3^n}\overline{\Delta}_{3^{n}(6m+1)}& 2k+1=3^{n}(18m+5)\\
0                                         & \textrm{else}  \ .
\end{cases} 
\end{array} 
$$ 
\end{thm}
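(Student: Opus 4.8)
The plan is to compute the differential $d_1$ in the algebraic spectral sequence (\ref{algss}) for $M = (E_2)_*(V(0))$ directly from the definition, using the identification of the $E_1$-term supplied by Theorem \ref{shapiro} together with the explicit form of the resolution (\ref{alg}). First I would set up the cochain-level description: the resolution (\ref{alg}) is built from the maps $C_3 \to C_2 \to C_1 \to C_0$, and these maps are given explicitly in \cite{GHMR} in terms of specific elements of the group rings $\Z_3[[\G_2^1]]$. Applying $\Hom_{\Z_3[[\G_2^1]]}(-,M)$ and using Shapiro's Lemma (\ref{Shapiro}), the $d_1$ differentials become maps
\begin{eqnarray*}
H^0(G_{24},M) \longrightarrow \Hom_{\Z_3[SD_{16}]}(\chi,M), \quad
\Hom_{\Z_3[SD_{16}]}(\chi,M) \longrightarrow \Hom_{\Z_3[SD_{16}]}(\chi,M),
\end{eqnarray*}
and a final map $\Hom_{\Z_3[SD_{16}]}(\chi,M) \to H^0(G_{24},M)$. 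Each is induced by (co)multiplication by the corresponding element of the group ring, followed by the relevant transfer/restriction. Because these maps are $M^{\G_2^1}$-linear — in particular $v_1$-linear — and continuous, it suffices to evaluate them on the topological module generators $\Delta^k$ and $\omega^2 u^{-4(2k+1)}$ named in the statement, and then to extend by continuity; this is exactly why the theorem is phrased in terms of those generators.

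The core computational step is to evaluate each of these group-ring-induced maps on the generators. For $d_1 \colon H^0(G_{24},M) \to \Hom_{\Z_3[SD_{16}]}(\chi,M)$ one takes the invariant class $\Delta^k \in M^{G_{24}}$, views it inside the induced module $C_0 = \Z_3[[\G_2^1/G_{24}]]$ dually, applies the boundary map of (\ref{alg}) expressed via the chosen element of $\Z_3[[\G_2^1]]$, and projects to the $SD_{16}$-part. Concretely this amounts to a Morava-stabilizer-group computation: writing the relevant group elements in terms of the $u_1$-adic (or $\pi$-adic) filtration on $(E_2)_*$, one expands the action on $\Delta^k$ and reads off the leading term, which will be a power of $v_1$ times one of the $\overline{b}$- or $b$-type generators. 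The arithmetic of which power of $v_1$ appears, and with which sign and which $3$-adic valuation coefficient, is governed by the $3$-adic expansion of $k$ (resp. $2k+1$) — this is the origin of the case distinctions ``$k = 2m+1$'' versus ``$k = 2m\cdot 3^n$, $m\not\equiv 0\ (3)$'' and so on. The calculation for $d_1$ on the $b_{2k+1}$ and $\overline{b}_{2k+1}$ proceeds similarly, using the two middle maps $C_2 \to C_1$ and $C_1 \to C_0$ of the resolution; here one should exploit the self-duality of (\ref{alg}) mentioned in the introduction to relate the computation of $d_1$ on $\overline{b}_{2k+1}$ (landing in $\overline{\Delta}_k$) to that of $d_1$ on $\Delta_k$ (landing in $b_{2k+1}$), which roughly halves the work.

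I expect the main obstacle to be bookkeeping the $v_1$-adic leading terms across the recursive $3$-adic case structure: the formulae involve nested conditions indexed by $n$ with exponents like $4\cdot 3^n - 2$, $6\cdot 3^n + 2$, $10\cdot 3^n + 2$, and one must verify that the chosen lifts $\Delta_k$, $b_{2k+1}$, $\overline{b}_{2k+1}$, $\overline{\Delta}_k$ (which are only specified modulo $(v_1^6\Delta^{-1})$, resp.\ $(v_1^4 u^8)$) can be adjusted so that these leading terms are \emph{exact} values of $d_1$ and not merely leading approximations. The technical heart is therefore a careful induction on the $3$-adic valuation: having computed $d_1$ on the ``atomic'' generators $\Delta_{3^n}$, $\Delta_{2\cdot 3^n}$, etc., one propagates to general $k$ by multiplicativity in $\Delta$ and $v_1$-linearity, absorbing lower-filtration correction terms into redefinitions of the generators. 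A secondary subtlety is keeping the degrees consistent — each displayed element lives in a specific trigraded piece $E_1^{s,0,t}$, and one must check the internal degree $t$ matches on both sides of every formula (e.g.\ $|d_1(b_{2k+1})| $ must equal $|v_1^{6\cdot 3^n + 2}| + |\overline{b}_{3^{n+1}(6m+1)}|$), which provides a useful consistency check on the combinatorics but must be carried through uniformly. Once the leading-term computation and the inductive normalization of generators are in place, continuity and $v_1$-linearity give the full $d_1$ and complete the proof.
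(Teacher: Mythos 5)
Your overall architecture is right --- evaluate the maps induced by the resolution on the topological $\F_3[v_1]$-module generators $\Delta^k$ and $\omega^2u^{-4(2k+1)}$, use $v_1$-linearity and continuity, recursively renormalize the generators so that leading terms become exact values, and use the self-duality of (\ref{alg}) to reduce the differential $E_1^{2,0}\to E_1^{3,0}$ to the dual of $\partial_1$ (this is exactly Proposition \ref{dualcomplex} and Proposition \ref{d_1^{2,0}} in the paper). But the proposal rests on a false premise that hides the real difficulty: you assert that the maps of the resolution ``are given explicitly in \cite{GHMR} in terms of specific elements of the group rings.'' They are not. Only $\partial_1\colon e_1\mapsto(e-\omega)e_0$ is explicit; the paper states at the start of section \ref{resolution} that the maps $C_3\to C_2$ and $C_2\to C_1$ were \emph{not} described explicitly in \cite{GHMR}, and producing a computable approximation to $\partial_2$ is the central technical achievement here (and of \cite{Kar}). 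Concretely, $\partial_2(e_2)=n_1'$ where $n_1=\theta e_1$ involves elements $x,y\in IK$ solving $e-c^3=x(e-b)+y(e-c)$ in the group ring of the torsion-free subgroup $K$; one only ever obtains approximations $\widetilde{x},\widetilde{y}$ with an error term $z$ (Lemma \ref{approx}), and one must then prove, via the filtration estimates of Lemma \ref{ideals} and Lemma \ref{ideal-action}, that this error and the ambiguity in $x,y$ do not affect the coefficients of $u_1^{8}$ resp.\ $u_1^{12}$ that determine the leading term of $d_1(b_{2k+1})$. Without this input the middle differential formula --- the one with the exponents $6\cdot 3^n+2$ and $10\cdot 3^n+2$ --- cannot be computed at all; it is not ``bookkeeping'' but the hardest calculation in the paper (Proposition \ref{d_1b_k} and section \ref{middle-differential}).

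Two smaller omissions: the duality argument does not merely ``halve the work'' heuristically --- one must actually prove that the complex is isomorphic to its dual and control the resulting automorphisms of $E_1^{i,0}$ well enough (via $\Tor_0^{\Z_3[[S_2^1]]}(\F_3,C_i)$) to read off leading terms, which is the content of Proposition \ref{dualcomplex}(b); and the case $k=0$, i.e.\ $d_1(b_1)=0$, does not follow from the leading-term calculus but requires the separate identification $b_1=v_1\alpha$ (Proposition \ref{b_0}), whose proof uses the double-complex machinery of section \ref{higher+extensions} rather than anything in your outline.
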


It turns out that the $d_2$-differential of this spectral sequence 
is determined by the following principles: it is non-trivial if and only 
if $v_1$-linearity and sparseness of the resulting $E_2$-term permit it, 
and in this case it is determined up to sign by these two properties. 
The remaining $d_3$-differential turns out to be trivial. 
More precisely we have the following result. 
 
\begin{prop} \label{d2+d3} \mbox{ }

a) The differential $d_2:E_2^{0,1,*} \to E_2^{2,0,*}$ is determined by
$$
\begin{array}{lcl}
d_2(\Delta_k \alpha)= 
\begin{cases}
(-1)^{m+n+1}v_1^{6.3^n+1}\overline{b}_{3^{n+1}(6m+1)}        &k=2.3^n(3m+1) \\
(-1)^{m+n}  v_1^{10.3^{n+1}+1}\overline{b}_{3^{n+1}(18m+11)} &k=2.3^n(9m+8) \\
0                                                    &\textrm{else}
\end{cases}
&&\\ 
&&\\
d_2(\Delta_k\widetilde{\alpha})=
\begin{cases}
(-1)^{m}v_1^{11}\overline{b}_{18m+11}         & k=6m+5  \\
0                                               &\textrm{else} \ .
\end{cases} 
\end{array} 
$$

b) The $d_3$-differential is trivial. 
\end{prop}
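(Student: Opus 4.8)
The plan is first to reduce the statement to the analysis of finitely many families of maps, then to compute the relevant part of the $E_2$-term, and finally to pin down $d_2$ by a combination of $v_1$-linearity, sparseness, and a comparison with the abutment. Note that, since the resolution (\ref{alg}) has length $3$, the spectral sequence (\ref{algss}) has only four columns, so $d_r=0$ for $r\geq 4$; moreover $d_2\colon E_2^{s,t}\to E_2^{s+2,t-1}$ together with the vanishing (\ref{Shapiro}) of $E_1^{1,t}=E_1^{2,t}$ for $t>0$ forces $d_2$ to be concentrated on $E_2^{0,1,*}\to E_2^{2,0,*}$, while $d_3\colon E_3^{s,t}\to E_3^{s+3,t-2}$ can only be non-zero as a map $E_3^{0,t,*}\to E_3^{3,t-2,*}$. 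So parts a) and b) concern exactly these two families.

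\emph{The $E_2$-term.} Because $E_1^{1,1}=0$, the classes $\alpha$, $\widetilde\alpha$ are automatic $d_1$-cycles and nothing maps into column $s=0$; hence $E_2^{0,1,*}=E_1^{0,1,*}=H^1(G_{24},M)$, which by Theorem \ref{shapiro}a together with $v_1\alpha=v_1\widetilde\alpha=0$ has $\F_3$-basis $\{\Delta_k\alpha,\Delta_k\widetilde\alpha\mid k\in\Z\}$, with $\Delta_k\alpha$ in internal degree $24k+4$ and $\Delta_k\widetilde\alpha$ in internal degree $24k+12$. On the other hand $E_2^{2,0,*}$ is the homology of $E_1^{1,0,*}\xrightarrow{d_1}E_1^{2,0,*}\xrightarrow{d_1}E_1^{3,0,*}$, which one computes from the explicit formulas of Theorem \ref{d1}; this is the same congruence bookkeeping that enters the proof of Theorem \ref{d1}, and it shows that $E_2^{2,0,*}$ is extremely sparse: in each of the internal degrees $24k+4$ and $24k+12$ that can receive a $d_2$ it is at most one-dimensional over $\F_3$, with an explicit generator of the form $v_1^j\overline{b}_{2\ell+1}$, and it vanishes unless the congruences listed in the proposition are satisfied.

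\emph{Determination up to sign and the ``else'' cases.} Since $d_2$ is $v_1$-linear and $v_1\Delta_k\alpha=v_1\Delta_k\widetilde\alpha=0$, the classes $d_2(\Delta_k\alpha)$ and $d_2(\Delta_k\widetilde\alpha)$ must be $v_1$-torsion elements of $E_2^{2,0,*}$ in the appropriate internal degree. Comparing this with the structure of $E_2^{2,0,*}$ from the previous step, one sees that in the cases labelled ``else'' the target either vanishes or its unique generator is $v_1$-torsion-free, so $d_2$ is forced to be zero; in the remaining cases there is exactly one monomial $v_1^j\overline{b}_{2\ell+1}$, up to an $\F_3^{\times}$-scalar, that $d_2$ can hit, and it is $v_1$-torsion. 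This proves everything in Proposition \ref{d2+d3}a except the non-vanishing assertion.

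\emph{Non-vanishing of $d_2$, and triviality of $d_3$.} The non-vanishing is the main obstacle. The plan is to use the edge homomorphism of (\ref{algss}), which identifies the surjection $H^1(\G_2^1,M)\twoheadrightarrow E_\infty^{0,1,*}\hookrightarrow E_2^{0,1,*}=H^1(G_{24},M)$ with restriction along $G_{24}\hookrightarrow\G_2^1$; since all higher differentials out of $E^{0,1,*}$ have zero target and nothing maps in, $d_2(\Delta_k\alpha)=0$ would force $\Delta_k\alpha$ to lie in the image of $\mathrm{res}^{\G_2^1}_{G_{24}}$, and similarly for $\Delta_k\widetilde\alpha$. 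One then has to show that, for exactly the $k$ listed, $\Delta_k\alpha$ resp.\ $\Delta_k\widetilde\alpha$ is \emph{not} restricted from $\G_2^1$. I would attack this by determining the $v_1$-torsion of $H^1(\G_2^1,M)$ independently---via the $v_1$-Bockstein spectral sequence built on $H^*(\G_2^1,v_1^{-1}M)$, which is small and accessible by chromatic methods---and matching internal degrees; alternatively one can evaluate $d_2(\Delta_k\alpha)$ directly as a Massey product, exploiting that $\Delta_k\alpha$ is a $d_1$-cycle only because $d_1(\Delta_k)\cdot\alpha$ lies in $E_1^{1,1}=0$, so that $d_2(\Delta_k\alpha)$ is expressed through the explicit $d_1(\Delta_k)$ of Theorem \ref{d1} and the chain-level nullhomotopy coming from the vanishing $\Ext^1_{\Z_3[SD_{16}]}(\chi,M)=0$; either route should reduce the claim to the finitely many base cases prescribed by $v_1$- and $\Delta^{\pm1}$-periodicity. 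Finally, for $d_3$: when $t\geq 3$ one has $E_3^{0,t,*}=H^t(G_{24},M)$ and $E_3^{3,t-2,*}=H^{t-2}(G_{24},M)$, and since the generator $\beta$ has internal degree $12$ while $\Delta$ has internal degree $24$, the internal degrees occurring in these two groups are disjoint modulo $24$, so $d_3$ vanishes there for degree reasons; for $t=2$ one checks directly, using the $d_1$-image of Theorem \ref{d1}, that $E_3^{3,0,*}=\mathrm{coker}(d_1\colon E_1^{2,0,*}\to E_1^{3,0,*})$ already vanishes in the internal degrees carrying $H^2(G_{24},M)$, so $d_3=0$ here as well. Together with $d_r=0$ for $r\geq 4$ this gives $E_4=E_\infty$ and finishes the proof.
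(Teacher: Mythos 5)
Your reduction to the two families $d_2\colon E_2^{0,1,*}\to E_2^{2,0,*}$ and $d_3\colon E_3^{0,t,*}\to E_3^{3,t-2,*}$ is correct, and your sparseness-plus-$v_1$-linearity bookkeeping recovers the paper's remark that $d_2$ is determined up to sign \emph{once one knows it is nonzero}. But the two points that actually constitute the proposition are not proved. The non-vanishing of $d_2$ (and the signs) is left as a plan with two unexecuted routes. Your second route is close in spirit to what the paper does: Lemma \ref{higher-differentials} identifies $d_2$ on $E_2^{0,1}$ with the map $\Ext^1(C_0,M)\to\Ext^1(N_0,M)$ induced by $N_0\hookrightarrow C_0$, and Lemma \ref{v1-free} evaluates it from a chain-level nullhomotopy $h$ of $v_1\alpha$ (resp.\ from $v_1\widetilde{c}=\delta_P(v_2)$ for $\widetilde{\alpha}$), the explicit value $\phi_0(e_1)=(e-\omega)\widetilde{e}_0$, and the already-computed $d_1(b_{2k+1})$ of Theorem \ref{d1}; the decisive step is $\phi_0^*(\Delta_kh)\equiv\bigl(1-(-1)^{-3k-1}\bigr)\omega^{2k+2}u^{-12k-4}$ mod $(u_1^4)$, which for $k$ even equals $\pm b_{3k+1}$ plus a $v_1^4$-multiple, whence $d_2(\Delta_k\alpha)=\mp v_1^{-1}d_1(b_{3k+1})$. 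None of this is formal, and your first route (a $v_1$-Bockstein spectral sequence built on $H^*(\G_2^1,v_1^{-1}M)$) is a genuinely different and substantial computation that you do not carry out.

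More seriously, your argument for $d_3=0$ fails exactly where an argument is needed. The internal degrees of $E_3^{0,t}$ and $E_3^{3,t-2}$ are disjoint modulo $24$ only for $t$ odd. For $t=2$ the claim that $E_3^{3,0,*}$ vanishes in the degrees carrying $H^2(G_{24},M)$ is false: $\Delta_{6m+1}\beta\in E_3^{0,2}$ and $v_1^3\overline{\Delta}_{6m+1}\in E_3^{3,0}$ (nonzero, since $\overline{\Delta}_{6m+1}$ has annihilator $(v_1^4)$ by Corollary \ref{E2-30}) lie in the same internal degree $24(6m+1)+12$, and neither degree reasons nor $v_1$-linearity exclude $d_3(\Delta_{6m+1}\beta)=\pm v_1^3\overline{\Delta}_{6m+1}$; the same problem occurs for $\Delta_{6m+5}\beta$ and for all even $t\geq 4$, where the source and target residues modulo $24$ coincide. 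The paper rules this out by the same double-complex mechanism as for $d_2$: the relation $v_1^2\beta=0$ in $H^2(G_{24},M_{20})$ supplies a nullhomotopy $h_1$, and $\phi_1^*(h_1)$ lands in $\Hom_{\Z_3[[\G_2^1]]}(Q_1,M_{20})\cong\omega^2u^4\F_3[[u_1^4]][v_1,u^{\pm 8}]$, which in internal degree $20$ is divisible by $v_1^3$. It is the sparseness of this induced-from-$SD_{16}$ module, not of $E_1^{3,0}$, that forces $d_3(\Delta_k\beta)=0$ (and simultaneously yields $v_1^2\Delta_k\beta=0$ in $H^*(\G_2^1,M)$). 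You would need this, or an equivalent cocycle-level argument, to close the gap.
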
 

\remarkstyle{Remark 1 on notation} Of course, the elements $\Delta_k\a$ and 
$\Delta_k\widetilde{\a}$ are only names for elements in the $E_2$-term 
which are represented in the $E_1$-term as products, but which are 
no longer products in the $E_2$-term. Similar abuse of notation will be used 
in Theorem \ref{E-infinity}, Proposition \ref{extensions}, 
Theorem \ref{H*G21} and in section 6 and 8.

Next we use that the element $\b$ of Theorem \ref{shapiro} 
lifts to an element with the same name in $H^2(\GG_2^1,M_{12})$ 
resp. in $H^2(\GG_2,M_{12})$. In fact this latter element 
detects the image of $\b_1\in \pi_{10}(S^0)$ in $\pi_{10}(L_{K(2)}V(0))$. 
The previous results yield the following $E_{\infty}$-term as a 
module over $\F_3[\beta,v_1]$.             

\begin{thm}\label{E-infinity} As an $\F_3[\beta,v_1]$-module 
the $E_{\infty}$-term of the algebraic spectral sequence (\ref{algss}) 
for $M=(E_2)_*/(3)$ is isomorphic to a direct sum of cyclic modules 
generated by the following elements and with the following 
annihilator ideals: 

a) For $E_{\infty}^{0,*,*}$  we have the following generators with respective 
annihilator ideals  
$$
\begin{array}{lll}
1=\Delta_0                 &               & (\b v_1^2) \\
\Delta_m\beta&  \ m\neq 0                  &(v_1^2) \\
\alpha                      &   &(v_1) \\
\Delta_{2m+1}\alpha &&(v_1) \\
\Delta_{2.3^n(3m-1)}\alpha&\ m\not\equiv 0 \mod(3) & (v_1) \\
\Delta_{2m}\widetilde{\alpha} &  & (v_1)\\
\Delta_{2m+1}\widetilde{\alpha}&\ m\not\equiv 2\mod(3) & (v_1) \\
\Delta_{2.3^n(3m+1)}\alpha\b   & & (v_1) \\
\Delta_{2.3^n(3m-1)}\alpha\b&\ m\equiv 0 \mod (3)    & (v_1) \\
\Delta_{2m+1}\widetilde{\alpha}\b&\ m\equiv 2\mod(3)   & (v_1) \ .\\
\end{array}
$$

b) For $E_{\infty}^{1,*,*}$  we have the following generators 
with respective annihilator ideals 
$$
\begin{array}{lll}
b_1              &                     & (\beta) \\ 
b_{2.3^n(3m-1)+1}&\  m\not\equiv 0\mod (3)& (v_1^{4.3^n-2},\beta) \ .\\ 
\end{array}
$$

c) For $E_{\infty}^{2,*,*}$  we have the following generators with respective 
annihilator ideals 
$$
\begin{array}{lll}
\overline{b}_{3^{n+1}(6m+1)} &&(v_1^{6.3^n+1},\beta) \\ 
\overline{b}_{3^{n}(6m+5)}&\ m\equiv 1\mod (3) &(v_1^{10.3^n+1},\beta) \ .\\ 
\end{array}
$$

d) For $E_{\infty}^{3,*,*}$  we have the following generators with respective 
annihilator ideals 
$$
\begin{array}{lll}
\overline{\Delta}_{2m}               &   &(v_1^2)\\
\overline{\Delta}_{3^n(6m\pm 1)}  &&(v_1^{4.3^n},\b v_1^2) \\
\overline{\Delta}_{m}\alpha        &     &(v_1)\\
\overline{\Delta}_{m}\widetilde{\alpha}& &(v_1)\ . \qed \\
\end{array}
$$
\end{thm}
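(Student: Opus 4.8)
The plan is to run the algebraic spectral sequence (\ref{algss}) to completion for $M=(E_2)_*(V(0))$ by combining the explicit $E_1$-term of Theorem \ref{shapiro} with the differentials of Theorem \ref{d1} and Proposition \ref{d2+d3}. First I would organize the $E_1$-term as a module over $\F_3[\beta,v_1]$, using Theorem \ref{shapiro}(a) for the columns $s=0,3$ (which are copies of $H^*(G_{24},M)$) and Theorem \ref{shapiro}(b) for the columns $s=1,2$ (which are concentrated in cohomological degree $t=0$ and are free of rank one over $M^{SD_{16}}=\F_3[[u_1^4]][v_1,u^{\pm 8}]$ on $\omega^2u^4$). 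Since all differentials are $v_1$-linear and continuous, and $\beta$ is a permanent cycle, every page is a module over $\F_3[\beta,v_1]$, and it suffices to track the images and kernels of the differentials on the $\F_3[\beta]$-module generators recorded by the elements $\Delta_k,b_{2k+1},\overline b_{2k+1},\overline\Delta_k$ (and their products with $\alpha,\widetilde\alpha,\beta$); the relations $\alpha^2=\widetilde\alpha^2=v_1\alpha=v_1\widetilde\alpha=\alpha\widetilde\alpha+v_1\beta=0$ control how these interact.

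Next I would compute $E_2$ column by column. In the $s=0$ column the $d_1$-source group is spanned by the $\Delta_k$ and their $\alpha,\widetilde\alpha,\beta$-multiples; the $d_1$-formula for $d_1(\Delta_k)$ (together with $v_1$-linearity, which forces $d_1$ to vanish on all $v_1$-torsion classes like $\Delta_k\alpha,\Delta_k\widetilde\alpha,\Delta_k\beta$ whenever the target is $v_1$-torsion-free — here one must be slightly careful) determines the kernel and hence $E_2^{0,*,*}$, while the cokernel in the $s=1$ column gives the surviving part of $E_1^{1,*,*}$ modulo $d_1(\Delta_k)$'s; then $d_1$ on the $b_{2k+1}$ carves out $E_2^{1,*,*}$ and feeds $E_2^{2,*,*}$, and finally $d_1$ on the $\overline b_{2k+1}$ together with $d_1$ into column $s=3$ produces $E_2^{2,*,*}$ and $E_2^{3,*,*}$. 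This is a bookkeeping exercise in $3$-adic valuations: for each of the three cases in each of the three $d_1$-formulas one reads off the exact $v_1$-power appearing, and the congruence conditions on $k$ modulo powers of $3$ partition the generators into the families listed in the theorem. I would then apply $d_2$: by Proposition \ref{d2+d3}(a) the only nonzero $d_2$ goes from $E_2^{0,1,*}$ (the $\Delta_k\alpha$ and $\Delta_k\widetilde\alpha$) to $E_2^{2,0,*}$, further truncating the $v_1$-power in the annihilators of the $\overline b$-generators from $v_1^{6\cdot3^n+2}$ to $v_1^{6\cdot3^n+1}$ (resp.\ the analogous shifts), and killing exactly those $\Delta_k\alpha,\Delta_k\widetilde\alpha$ whose $d_2$ is nonzero; the classes $\Delta_k\alpha\beta,\Delta_k\widetilde\alpha\beta$ that survive are precisely those on which $d_2(\text{-}) $ lands in the image of multiplication by $\beta$, which by sparseness/degree reasons (the $\overline b$-column has $\beta$-torsion) forces survival exactly under the stated congruences. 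Proposition \ref{d2+d3}(b) says $d_3=0$, so $E_3=E_\infty$, and assembling the four columns gives the list in the statement.

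The main obstacle I anticipate is not any single hard idea but the sheer combinatorial density of the valuation bookkeeping: one must verify, for every congruence class of $k$ modulo the relevant power of $3$, that the exponent of $v_1$ in the annihilator ideal is exactly the one claimed, i.e. that the $d_1$ (resp.\ $d_2$) hitting a given generator does so with the minimal possible $v_1$-divisibility and that nothing further in a later differential can lower it. Concretely, the delicate points are (i) checking that the three branches of each $d_1$ formula are exhaustive and mutually exclusive on the indexing sets, so that no generator is hit twice or missed; (ii) confirming the interaction $\alpha\widetilde\alpha+v_1\beta=0$ is correctly used when passing from $\Delta_k\alpha$-classes to $\beta$-multiples, since this is what produces the $\Delta_k\alpha\beta$ versus $\Delta_k\widetilde\alpha\beta$ dichotomy in part (a); and (iii) ensuring continuity/completeness issues (the $\F_3[[v_1^6\Delta^{-1}]]$- and $\F_3[[u_1^4]]$-adic topologies) do not introduce extra classes in the limit — this is handled because each differential is continuous and the relevant subquotients are already finitely generated in each internal degree. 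Once these are checked case by case, the four displayed lists follow by direct inspection, and the $\F_3[\beta,v_1]$-module structure (direct sum of cyclics with the stated annihilators) is read off from the fact that after $E_2$ every remaining generator is annihilated either by a power of $v_1$ alone, by $\beta$ together with a power of $v_1$, or by $\beta v_1^2$, with no nontrivial extensions between these cyclic summands because they sit in distinct tridegrees.
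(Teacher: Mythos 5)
Your proposal is correct and is essentially the paper's own (implicit) argument: Theorem \ref{E-infinity} is stated with \qed precisely because it is the bookkeeping consequence of Theorem \ref{shapiro}, Theorem \ref{d1} and Proposition \ref{d2+d3} that you describe, with the $E_2$-pages of the columns recorded in the corollaries to Propositions \ref{d_1^{0,0}}--\ref{d_1^{3,0}}. Two small points of hygiene: the vanishing of $d_1$ on $E_1^{0,t}$ for $t>0$ is immediate from $E_1^{1,t}=0$ for $t>0$ (Shapiro), rather than from $v_1$-torsion considerations; and the classes $\Delta_k\a\b$, $\Delta_k\widetilde{\a}\b$ all survive (their $d_2$ lands in $E_2^{2,t}=0$ for $t>0$) --- the congruence conditions only decide which of them must be listed as \emph{new} cyclic generators, namely exactly those whose $\b$-divisor $\Delta_k\a$ or $\Delta_k\widetilde{\a}$ supports a nonzero $d_2$.
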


To get at $H^*(\G_2^1,(E_2)_*/(3))$ we still need to know the extensions 
between the filtration quotients. 
They are given by the following result.

\begin{prop} \label{extensions} The $\F_3[\b,v_1]$-module generators of 
the $E_{\infty}$-term of Theorem \ref{E-infinity} can be 
lifted to elements (with the same name) in 
$H^*(\mathbb{G}_2^1;(E_2)_*/(3))$  such that the 
relations defining the annihilator ideals of Theorem \ref{E-infinity}
continue to hold with the following exceptions      
$$
\begin{array}{lll} 
v_1\alpha               &=&b_1\\
v_1\Delta_{2.3^n (9m+2)}\alpha   &=&(-1)^{m+1}{b}_{2.3^{n+1}(9m+2)+1}\\  
v_1\Delta_{2.3^n (9m+5)}\alpha   &=&(-1)^{m+1}{b}_{2.3^{n+1}(9m+5)+1}\\
&\\
v_1\Delta_{6m+1}\widetilde{\alpha}  &= &(-1)^{m}{b}_{2(9m+2)+1}\\ 
v_1\Delta_{6m+3}\widetilde{\alpha}  &= &(-1)^{m+1}{b}_{2(9m+5)+1} \\
&\\
\beta \overline{b}_{3^{n+1}(6m+1)}&=&
\pm \overline{\Delta}_{3^n(6m+1)}\widetilde{\alpha}\\
\beta \overline{b}_{3^{n+1}(18m+11)}&=&
\pm \overline{\Delta}_{3^n(18m+11)}\widetilde{\alpha} \\   
\beta \overline{b}_{18m+11}&=&
\pm \overline{\Delta}_{6m+4}\alpha \ .\\
\end{array}
$$ 
\end{prop}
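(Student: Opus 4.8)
The plan is to resolve the hidden $\F_3[\beta,v_1]$-module extensions in the algebraic spectral sequence (\ref{algss}) by combining comparison with the larger group $\G_2$, the self-duality of the resolution (\ref{alg}), and knowledge of the analogous extensions in $H^*(\G_2,(E_2)_*/(3))$, which is accessible through the relation with $TMF$ advertised in the introduction. Concretely, one knows $H^*(\G_2^1,(E_2)_*/(3))$ abstractly as a bigraded $\F_3[\beta,v_1]$-module only up to extension from Theorem \ref{E-infinity}; the assertion is that all filtration-quotient extensions are trivial \emph{except} for the finite list displayed. The strategy is to establish this list in three groups: (i) the $v_1$-extensions landing in filtration $1$ (the $b$-classes), (ii) the $\beta$-extensions from filtration $2$ to filtration $3$ (connecting $\overline{b}$'s to $\overline{\Delta}\cdot\widetilde\alpha$ and $\overline{\Delta}\cdot\alpha$), and (iii) verifying that nothing else can be hidden, which is a sparseness/degree-counting argument once (i) and (ii) are in place.

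For group (i), the key input is the relation $v_1\alpha=b_1$ in $H^1$. This is not visible in $E_\infty$ (where $v_1\alpha=0$ and $b_1$ is a separate generator) but must be detected at the level of the group cohomology, and I would extract it from the structure of $H^*(\G_2^1,(E_2)_*/(3))$ in low degrees together with the known image of $\beta_1\in\pi_*S^0$ and the behavior under the map to $H^*(G_{24},(E_2)_*/(3))$ described in Theorem \ref{shapiro}. Once $v_1\alpha=b_1$ is known, the remaining $v_1$-extensions on the $\Delta_k\alpha$ and $\Delta_k\widetilde\alpha$ classes should follow by $\beta$- and $v_1$-linearity combined with the $d_1$- and $d_2$-differential formulae of Theorem \ref{d1} and Proposition \ref{d2+d3}: the targets $b_{2\cdot 3^{n+1}(9m+2)+1}$ etc. are precisely the $E_1$-classes that survive, and the differentials force the multiplicative relations among their $\Delta_k\alpha$-preimages; the signs $(-1)^{m+1}$ are read off from the corresponding signs in the $d_1$- and $d_2$-formulae. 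Here I would use $v_1$-linearity aggressively: since $v_1^2$ kills most filtration-$0$ classes, a single nonzero $v_1$-multiple pins down the whole pattern.

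For group (ii), the $\beta$-extensions from $E_\infty^{2,*,*}$ to $E_\infty^{3,*,*}$, I would exploit the self-duality of the resolution (\ref{alg}) (mentioned in the excerpt as ``self-dual in a suitable sense,'' cf. section \ref{partial3}): duality interchanges $C_0\leftrightarrow C_3$ and $C_1\leftrightarrow C_2$, hence pairs filtration $s$ with filtration $3-s$, and it should convert the $v_1\alpha$-type extensions in filtrations $0,1$ into $\beta$-type extensions in filtrations $2,3$. Thus $\beta\overline{b}_{3^{n+1}(6m+1)}=\pm\overline{\Delta}_{3^n(6m+1)}\widetilde\alpha$ and its companions are the Poincaré-dual reflections of the filtration-$0$/$1$ relations, and the indices match up exactly as the formulas in Theorem \ref{d1} (whose $d_1$ on $\overline b$ lands on $\overline\Delta$-classes with these very subscripts) predict. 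The signs are only asserted up to $\pm$, so duality plus the known differential signs suffices and I need not chase them further.

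The main obstacle is group (ii) together with the closing sparseness argument: proving that the listed extensions are the \emph{only} ones. For this I would argue degree by degree that any other potential hidden extension connects two $E_\infty$-generators whose internal degrees and filtrations are incompatible with a $\beta$- or $v_1$-multiplication — using that $\beta\in E_\infty^{1,1,12}$ raises homological degree by $1$ and internal degree by $12$, while $v_1$ has internal degree $4$ and fixes homological degree — and that the annihilator ideals in Theorem \ref{E-infinity} already account for all of each cyclic summand except in the listed cases. The delicate point is that a priori an extension could be nonzero into a higher-filtration class of the \emph{same} $(t-s)$; ruling this out requires knowing $H^*(\G_2^1,(E_2)_*/(3))$ tightly enough, and I expect this is where one must invoke the full force of the computation of $H^*(\G_2,(E_2)_*/(3))$ via $TMF$ and then descend along $\G_2^1\hookrightarrow\G_2$ using $\G_2\cong\G_2^1\times\Z_3$, which relates the two cohomologies by a (collapsing, for degree reasons) spectral sequence tensoring with $H^*(\Z_3)=\Lambda(\zeta)$. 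Once that identification is available, comparing the $\beta$- and $v_1$-module structures on both sides forces the exceptions to be exactly those listed.
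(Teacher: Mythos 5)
There is a genuine gap, and it sits exactly where the paper warns one must be careful. You treat the hidden $v_1$-extensions as consequences of the differential formulae plus $v_1$- and $\beta$-linearity and sparseness ("the differentials force the multiplicative relations", "a single nonzero $v_1$-multiple pins down the whole pattern"). They are not so forced: in $E_\infty$ one has $v_1[\Delta_k\alpha]=0$ by definition of the associated graded, and whether the actual product $v_1\cdot\Delta_k\alpha$ in $H^1(\G_2^1,(E_2)_*/(3))$ vanishes or equals a filtration-$1$ class $b_{\dots}$ is extra information that no amount of $d_1$/$d_2$ bookkeeping supplies; the paper says explicitly that the "whenever sparseness permits it" description "is not clear a priori, but needs proof and the proof gives the exact value of the sign." The paper's actual mechanism is the double complex built from the resolutions $P_\bullet$ of $C_0$ and $Q_\bullet$ of $N_0$ with the chain map $\phi$ covering $N_0\hookrightarrow C_0$: Lemma \ref{v1-free} shows that the \emph{same} cochain $h$ with $\delta_P(h)=v_1^kc$ that computes the higher differential (through the $v_1^k$-divisible part of $\phi_{s-1}^*(h)$) also computes the $v_1^k$-extension (through its $v_1^k$-torsion part $d'$), and Propositions \ref{b_0} and \ref{d2+ext} evaluate $\phi_0^*(\Delta_k h)$ and $\phi_0^*(\Delta_k v_2)$ explicitly, yielding Proposition \ref{d2+d3} and the signed extension formulae simultaneously. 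Your proposal contains no substitute for this computation, and your derivation of $v_1\alpha=b_1$ "from the structure in low degrees and the image of $\beta_1$" is likewise not an argument.

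Your two remaining devices also do not do what you want. The self-duality of (\ref{alg}) identifies $C_i^*$ with $C_{3-i}$ as $\Z_3[[\G_2^1]]$-modules (Proposition \ref{dualcomplex}) and is used in the paper only to evaluate the $d_1$-differential $E_1^{2,0}\to E_1^{3,0}$; it is a symmetry of $E_1$-terms and does not interchange multiplication by $v_1\in M_4$ with cup product by $\beta\in H^2(\G_2^1,M_{12})$, so it cannot convert the filtration-$(0,1)$ $v_1$-extensions into the filtration-$(2,3)$ $\beta$-extensions. The paper instead obtains the $\beta$-extensions by naturality of the mod-$u_1$ reduction $H^*(\G_2^1,(E_2)_*/(3))\to H^*(\G_2^1,(E_2)_*/(3,u_1))$, whose target is $\beta$-free and completely known from the $V(1)$ computation (Theorem \ref{E_1V(1)}.c and Proposition \ref{beta-extensions}); one checks that $\widetilde{\alpha}$, $\Delta_{2k+1}$ and $\alpha$ have the stated nonzero images and transports the relations back. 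Finally, the closing appeal to "the full computation of $H^*(\G_2,(E_2)_*/(3))$ via $TMF$, then descend to $\G_2^1$" is circular: in this paper $H^*(\G_2,(E_2)_*/(3))$ is \emph{deduced} from $H^*(\G_2^1,(E_2)_*/(3))$ by the K\"unneth isomorphism of Theorem \ref{H*G2}; there is no independent computation of $H^*(\G_2,(E_2)_*/(3))$ available to descend from.
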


Apart from the last group of $\beta$-extensions 
(which are simple consequences of the calculation 
of $H^*(\GG_2,(E_2)_*/(3,u_1))$, cf. \cite{GHM}) 
one can summarize the result by saying that nontrivial 
$v_1$-extensions exist only between $E_{\infty}^{0,1,*}$ and 
$E_{\infty}^{1,0,*+4}$ and there is such an extension 
whenever sparseness permits it, and then the corresponding relation is 
unique up to sign. Unfortunately this is not clear a priori, but needs  
proof and the proof gives the exact value of the sign.    
In contrast determining the sign for the $\b$-relations would require an 
extra effort.  

The main results can now be stated as follows. 

\begin{thm}\label{H*G21} As an $\F_3[\beta,v_1]$-module 
$H^*(\G_2^1,(E_2)_*/(3))$ is isomorphic to the 
direct sum of the cyclic modules generated by the following elements 
and with the following annihilator ideals   
$$
\begin{array}{lll}
1=\Delta_0                & & (\b v_1^2) \\
\Delta_m\beta & \ m\neq 0  &(v_1^2) \\
\alpha          &                & (\b v_1) \\
\Delta_{2m+1}\alpha &            &(v_1) \\
\Delta_{2.3^n(3m-1)}\alpha& \ m\not\equiv 0\mod (3) & (v_1^{4.3^{n+1}-1},\b v_1) \\
\Delta_{2m}\widetilde{\alpha} &  & (v_1)\\
\Delta_{2m+1}\widetilde{\alpha}& \ m\not\equiv 2\mod (3) & (v_1^{3},\b v_1) \\
\Delta_{2.3^n(3m+1)}\alpha\b & & (v_1) \\
\Delta_{2.3^n(3m-1)}\alpha\b&\ m\equiv 0\mod (3)              & (v_1) \\
\Delta_{2m+1}\widetilde{\alpha}\b& \ m\equiv 2\mod (3)        & (v_1) \\
&\\
\overline{b}_{3^{n+1}(6m+1)}  && (v_1^{6.3^n+1},\b v_1) \\ 
\overline{b}_{3^{n}(6m+5)}&\ m\equiv 1\mod(3)  & (v_1^{10.3^n+1},\b v_1) \\ 
&\\
\overline{\Delta}_{2m}             &     &(v_1^2)\\
\overline{\Delta}_{3^n(6m\pm 1)}   &     &(v_1^{4.3^n},\b v_1^2) \\
&\\
\overline{\Delta}_{2m+1}\alpha && (v_1)    \\
\overline{\Delta}_{2m}\alpha & \ m\not\equiv 2\mod (3) &(v_1)\\
\overline{\Delta}_{2m}\widetilde{\alpha} &               &(v_1)\\
\overline{\Delta}_{3^n(6m+5)}\widetilde{\alpha}&\ m\not\equiv 1\mod (3)&(v_1)\ .\qed \\
\end{array}
$$
\end{thm}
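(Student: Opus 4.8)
The plan is to deduce Theorem \ref{H*G21} from Theorem \ref{E-infinity} and Proposition \ref{extensions}; the argument is a bookkeeping reorganization of the cyclic decomposition and uses no new input. One begins by recording that the algebraic spectral sequence (\ref{algss}) is concentrated in the columns $0\le s\le 3$, so that the induced filtration on $H^*(\G_2^1,(E_2)_*/(3))$ has length at most four and the spectral sequence converges; thus Theorem \ref{E-infinity} describes the associated graded as an $\F_3[\b,v_1]$-module, $\beta$ and $v_1$ preserving the $s$-filtration. Lifting the $E_{\infty}$-generators of Theorem \ref{E-infinity} to $H^*(\G_2^1,(E_2)_*/(3))$ as in Proposition \ref{extensions} then provides, by the usual layer-peeling argument, a set of $\F_3[\b,v_1]$-module generators for $H^*(\G_2^1,(E_2)_*/(3))$.

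Next I would discard the generators that have become redundant. By the $v_1$-relations of Proposition \ref{extensions}, every generator of $E_{\infty}^{1,*,*}$ is $\pm v_1$ times a generator of $E_{\infty}^{0,*,*}$: one has $b_1=v_1\a$; for $n\ge 1$ the class $b_{2\cdot 3^n(3m-1)+1}$ with $m\not\equiv 0\ (3)$ equals $\pm v_1\Delta_{2\cdot 3^{n-1}(3m-1)}\a$; and for $n=0$ it equals $\pm v_1\Delta_{2j+1}\widetilde{\a}$ for the appropriate $j$ with $2j+1\equiv 1$ or $3\ (6)$. That this exhausts $E_{\infty}^{1,*,*}$ follows from the fact that an odd integer prime to $3$ has the form $6m+1$ or $6m+5$, together with the identities $9m+2=3(3m+1)-1$ and $9m+5=3(3m+2)-1$. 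Dually, by the $\b$-relations of Proposition \ref{extensions}, the generators $\overline{\Delta}_{6m+4}\a=\pm\b\,\overline{b}_{18m+11}$ of $E_{\infty}^{3,*,*}$, together with the generators $\overline{\Delta}_k\widetilde{\a}$ for which $k$ has the form $3^n(6m+1)$ or $3^n(18m+11)=3^n(6(3m+1)+5)$, are each $\pm\b$ times a generator of $E_{\infty}^{2,*,*}$; one checks from the residues mod $6$ and mod $3$ that the generators $\overline{\Delta}_k\widetilde{\a}$ which survive are exactly $\overline{\Delta}_{2m}\widetilde{\a}$ and $\overline{\Delta}_{3^n(6m+5)}\widetilde{\a}$ with $m\not\equiv 1\ (3)$, and likewise for $\overline{\Delta}_k\a$. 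Discarding all of these leaves precisely the list of Theorem \ref{H*G21}; since the $v_1$-extensions pair columns $0$ and $1$ while the $\b$-extensions pair columns $2$ and $3$, the two kinds of merging do not interfere.

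I would then recompute the annihilator ideal of each surviving generator. Since multiplication by $v_1$ and by $\b$ preserve the filtration, a monomial $\b^iv_1^jg$ is non-zero in $H^*(\G_2^1,(E_2)_*/(3))$ as soon as its image in $E_{\infty}$ is non-zero; combining this with the relations of Theorem \ref{E-infinity} that persist (Proposition \ref{extensions} lists every exception) one reads off the annihilators of Theorem \ref{H*G21}. For instance $\a$ acquires annihilator $(\b v_1)$ since $v_1^k\a=v_1^{k-1}b_1\neq 0$ for all $k\ge 1$ while $\b v_1\a=\b b_1=0$ and $\b^k\a\neq 0$; the family $\Delta_{2\cdot 3^n(3m-1)}\a$ with $m\not\equiv 0\ (3)$ acquires annihilator $(v_1^{4\cdot 3^{n+1}-1},\b v_1)$ because $v_1$ sends it to $\pm b_{2\cdot 3^{n+1}(3m-1)+1}$, whose annihilator in $E_{\infty}^{1,*,*}$ is $(v_1^{4\cdot 3^{n+1}-2},\b)$; and each surviving $\overline{b}$ sees its annihilator change from $(v_1^a,\b)$ to $(v_1^a,\b v_1)$ because $\b\overline{b}\neq 0$ while $\b v_1\overline{b}=\pm v_1\overline{\Delta}_{\bullet}\widetilde{\a}=0$ (respectively $\pm v_1\overline{\Delta}_{\bullet}\a=0$) and $\b^k\overline{b}\neq 0$. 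To conclude that the result is a direct sum, consider the tautological $\F_3[\b,v_1]$-linear map $\varphi$ from the displayed sum of cyclic modules to $H^*(\G_2^1,(E_2)_*/(3))$: it is well defined because the relations just verified hold, and surjective because its generators generate. Each elementary merging step above — replacing a $v_1$-linked pair of cyclic modules with annihilators $(v_1)$ and $(v_1^a,\b)$ by a single cyclic module with annihilator $(v_1^{a+1},\b v_1)$, and similarly for the $\b$-linked pairs — preserves the $\F_3$-dimension in each bidegree (this is forced, since the extension relations are equalities between elements of equal bidegree), so that the source of $\varphi$ has in every bidegree the same finite $\F_3$-dimension as $E_{\infty}$, hence as $H^*(\G_2^1,(E_2)_*/(3))$; therefore $\varphi$ is an isomorphism.

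The hard part will be the bookkeeping of the second and third paragraphs: one must pin down, once and for all, which $E_{\infty}$-generator is absorbed by which extension — this is where the various index parametrisations ($3$-adic valuations, residues modulo $6$ and modulo $9$) have to be reconciled — and then track the resulting change of annihilator ideal for each surviving family. No idea beyond Theorem \ref{E-infinity} and Proposition \ref{extensions} enters.
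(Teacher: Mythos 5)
Your proposal is correct and follows essentially the same route as the paper: Theorem \ref{H*G21} is stated there with a \qed precisely because it is the direct bookkeeping consequence of Theorem \ref{E-infinity} and Proposition \ref{extensions} that you describe (absorbing the column-$1$ generators via the $v_1$-extensions, the affected column-$3$ generators via the $\b$-extensions, and adjusting annihilators accordingly). Your index reconciliations ($3m-1\in\{9m'+2,9m'+5\}$, $18m+11=6(3m+1)+5$, $6m+4=2(3m+2)$) and the dimension-count argument for the direct sum match what the paper leaves implicit.
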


We emphasize that even though this result is involved the mechanism 
which produces is quite transparent. The passage to the cohomology of 
$\GG_2$ results now from the decomposition $\GG_2\cong \Z_3\times \GG_2^1$ 
and the fact that the central factor $\Z_3$ acts trivially on $(E_2)_*/(3)$.

\begin{thm}\label{H*G2} There is a class $\zeta\in H^1(\G_2,(E_2)_0/(3))$ 
and an isomorphism of graded algebras 
$$
H^*(\G_2^1;(E_2)_*/(3))\otimes_{\Z_3}\Lambda_{\Z_3}(\zeta)
\cong H^*(\G_2,(E_2)_*/(3))\  .\qed
$$
\end{thm}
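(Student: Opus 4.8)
The plan is to exploit the direct-product decomposition $\G_2\cong\G_2^1\times\Z_3$ recorded in the introduction together with the triviality of the action of the central factor $\Z_3$ on $M:=(E_2)_*/(3)$. Let $p_1\colon\G_2\to\G_2^1$ and $p_2\colon\G_2\to\Z_3$ be the two projections, both continuous homomorphisms since the product is direct. Because $\Z_3$ acts trivially on $M$, the $\G_2$-action on $M$ is restricted from $\G_2^1$ along $p_1$, so $p_1$ induces an inflation homomorphism of graded $\Z_3$-algebras $p_1^*\colon H^*(\G_2^1,M)\to H^*(\G_2,M)$; since the composite $\G_2^1\hookrightarrow\G_2\xrightarrow{p_1}\G_2^1$ is the identity, $p_1^*$ is split injective, a section being restriction along $\G_2^1\hookrightarrow\G_2$. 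Next let $\zeta\in H^1(\G_2,(E_2)_0/(3))$ be the mod-$3$ reduction of $p_2^*(\zeta_0)$, where $\zeta_0\in H^1(\Z_3,\Z_3)\cong\Hom_{\mathrm{cont}}(\Z_3,\Z_3)$ is the canonical generator and we use the $\G_2$-equivariant unit $\Z_3\to(E_2)_0$. Since $(E_2)_0/(3)$ has characteristic $3$ and cup products are graded-commutative, $\zeta^2=0$; hence $p_1^*$ and $\zeta$ together determine a homomorphism of graded $\Z_3$-algebras
$$\Phi\colon H^*(\G_2^1,M)\otimes_{\Z_3}\Lambda_{\Z_3}(\zeta)\longrightarrow H^*(\G_2,M),\qquad x\otimes1\mapsto p_1^*(x),\quad x\otimes\zeta\mapsto p_1^*(x)\cup\zeta,$$
and it remains to prove that $\Phi$ is bijective.

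For this I would use the (continuous) Lyndon--Hochschild--Serre spectral sequence of the extension $1\to\G_2^1\to\G_2\xrightarrow{p_2}\Z_3\to1$, with $E_2^{s,t}=H^s(\Z_3,H^t(\G_2^1,M))$. As the product is direct the conjugation action of $\Z_3$ on $\G_2^1$ is trivial, and combined with the triviality of the action on $M$ this makes the action of $\Z_3$ on $H^t(\G_2^1,M)$ trivial. Now the two-term projective resolution $0\to\Z_3[[\Z_3]]\xrightarrow{g-1}\Z_3[[\Z_3]]\to\Z_3\to0$ of the trivial module over $\Z_3[[\Z_3]]$ ($g$ a topological generator) shows that $H^*(\Z_3,N)$ is concentrated in cohomological degrees $0,1$ for every profinite $\Z_3$-module $N$ with trivial action, that $H^*(\Z_3,\Z_3)=\Lambda_{\Z_3}(\zeta_0)$ is $\Z_3$-free, and that the cup-product map $\Lambda_{\Z_3}(\zeta_0)\otimes_{\Z_3}N\to H^*(\Z_3,N)$ is an isomorphism. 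With $N=H^t(\G_2^1,M)$ this makes the $E_2$-page concentrated in the columns $s=0,1$, so all differentials vanish and $E_2=E_\infty$; and since the spectral sequence is multiplicative, $E_\infty$ is isomorphic as a bigraded algebra to $\Lambda_{\Z_3}(\zeta_0)\otimes_{\Z_3}H^*(\G_2^1,M)$. In particular the filtration on $H^n(\G_2,M)$ has length two, with $E_\infty^{0,*}=H^*(\G_2^1,M)$ and $E_\infty^{1,*}=\zeta_0\cdot H^*(\G_2^1,M)$.

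Finally I would identify $\Phi$ with the evident isomorphism on associated graded objects. Filter the source by $F^1:=H^*(\G_2^1,M)\otimes_{\Z_3}\Z_3\zeta\subset F^0:=H^*(\G_2^1,M)\otimes_{\Z_3}\Lambda_{\Z_3}(\zeta)$. Under the edge homomorphism $H^*(\G_2,M)\twoheadrightarrow E_\infty^{0,*}=H^*(\G_2^1,M)$, which is restriction to $\G_2^1$, the class $p_1^*(x)$ maps to $x$, so $p_1^*(x)$ has filtration $0$ with leading term $x$; and $\zeta$, being inflated along $p_2$, lies in $F^1H^1(\G_2,M)$ and represents the generator $\zeta_0$ of $E_\infty^{1,0}$. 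By multiplicativity of the spectral sequence, $p_1^*(x)\cup\zeta$ then lies in $F^1$ and represents $\zeta_0\cdot x$. Hence $\Phi$ preserves filtrations and $\mathrm{gr}(\Phi)$ is, in each internal and cohomological degree, the identity of $H^*(\G_2^1,M)$ onto $E_\infty^{0,*}$ together with the isomorphism $x\otimes\zeta\mapsto\zeta_0x$ onto $E_\infty^{1,*}$. A filtration-preserving homomorphism between modules carrying finite filtrations which induces an isomorphism on associated graded objects is itself an isomorphism, and $\Phi$ is therefore the desired isomorphism of graded algebras.

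The argument is essentially formal, and the hard part will be only the multiplicative bookkeeping: verifying that $\zeta$ detects the generator of $E_\infty^{1,0}$ and that cup product with $\zeta$ realizes the $E_\infty^{0,*}$-module structure of $E_\infty^{1,*}$, together with the underlying continuous-cohomology Künneth statement $H^*(\Z_3,N)\cong\Lambda_{\Z_3}(\zeta_0)\otimes_{\Z_3}N$. The bare module isomorphism $H^n(\G_2,M)\cong H^n(\G_2^1,M)\oplus H^{n-1}(\G_2^1,M)$ is immediate from the collapse of the spectral sequence and the existence of the section $p_1^*$; one may also obtain it, together with the $\zeta$-action, directly by tensoring the two-term resolution above with a projective $\Z_3[[\G_2^1]]$-resolution of $\Z_3$ and using $\Z_3[[\G_2]]\cong\Z_3[[\G_2^1]]\,\widehat{\otimes}_{\Z_3}\,\Z_3[[\Z_3]]$, the resulting cochain complex for $M$ splitting because $g-1$ acts as $0$ on $M$.
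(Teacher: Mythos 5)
Your argument is correct and rests on the same ingredients as the paper's: the decomposition $\G_2\cong\G_2^1\times\Z_3$, the triviality of the central $\Z_3$-action on $(E_2)_*/(3)$, and the two-term resolution $0\to\Z_3[[T]]\xrightarrow{T}\Z_3[[T]]\to\Z_3\to 0$. You package this as a collapsing Lyndon--Hochschild--Serre spectral sequence, whereas the paper states it as a K\"unneth isomorphism verified on the tensor-product double complex of resolutions (the variant you yourself sketch in your closing paragraph), so this is essentially the paper's proof.
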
 

\remarkstyle{Remark} We warn the reader that there is something 
subtle about this K\"unneth type isomorphism.   
In fact, the class $\a$ of Theorem 
\ref{shapiro} is defined via the Greek letter formalism in 
$H^1(G_{24},-)$ as the Bockstein of the class $v_1$ with respect to   
the obvious short exact sequence 
$$
0\to (E_2)_*/(3)\buildrel{3}\over\longra (E_2)_*/(9)\longra (E_2)_*/(3)\to 0 \ .
$$ 
The same formalism allows to define classes $\a(F)\in H^1(F,(E_2)_4/(3))$ 
for any closed subgroup $F$ of $\GG_2$ and these classes are well  
compatible with respect to restrictions among different subgroups. However, with 
respect to the isomorphism of Theorem \ref{H*G2} the class $\a(\GG_2)$ 
corresponds to $\a(\GG_2^1)-v_1\zeta$ (cf. Corollary \ref{Bockstein}). 
We will insist on the notation $\a(\GG_2)$ and $\a(\GG_2^1)$
in order to avoid possible confusion when we deal with $H^*(\GG_2,-)$. 
Fortunately similar notation is unnecessary for the classes 
$\widetilde{\a}$ and $\b$ (cf. Corollary \ref{Bockstein}). 
The difference between $\a(\GG_2)$ and $\a(\GG_2^1)$  
turns out to be important for studying the differentials in the 
Adams-Novikov spectral sequence for $\pi_*(L_{K(2)}V(0))$.

In fact, these differentials can be derived from those 
of the Adams-Novikov spectral sequence for $\pi_*(L_{K(2)}V(1))$ 
which have been determined in \cite{GHM}.   

\remarkstyle{Remark 2 on notation} In the following theorem we 
give the $E_{\infty}$-term of the 
Adams-Novikov spectral sequence for $\pi_*(L_{K(2)}V(0))$ as a 
subquotient of its $E_2$-term which itself 
has been described in Theorem \ref{H*G21} and Theorem \ref{H*G2} 
as a module over $\FF_3[\b,v_1]\otimes \Lambda(\zeta)$ 
with generators represented in the $E_1$-term of the algebraic 
spectral sequence (\ref{algss}) for $M=(E_2)_*V(0)$. 
As before,  generators of $E_{\infty}$ which are represented by 
products in this $E_1$-term are not necessarily products in $E_{\infty}$. 
In order to distinguish between module multiplication and the 
name of a generator we write $\b$ and $v_1$ as right hand  
factors in such a product if they are only part of the name 
of a generator, e.g. in the case of $\Delta_{6m+1}\b v_1$. 
We have also renamed (for reasons which will be explained  below) 
generators involving $\overline{\Delta}_k$ 
by $\Sigma^{48}\overline{\Delta}_{k-2}$.

\begin{thm}\label{E-infty}  As a module over $\FF_3[\b,v_1]\otimes\Lambda(\zeta)$ 
the $E_{\infty}$-term of the Adams-Novikov spectral sequence for $\pi_*(L_{K(2)}V(0))$  
is the quotient of the direct sum of 
cyclic $\FF_3[\b,v_1]\otimes\Lambda(\zeta)$-modules 
with the following generators and annihilator ideals   
$$
\begin{array}{llll}
1=\Delta_0     &         &  & (\b v_1^2,\b^3v_1,\b^6) \\
&\\
\Delta_{3m}\beta,& m\neq 0&&(v_1^2,\b^2v_1,\b^5) \\
\Delta_{6m+1}\b v_1  &    &              &(v_1,\b^2) \\
\Delta_{6m+4}\b v_1 &    &              &(v_1,\b^3) \\
&\\
\alpha(\GG_2^1)     &       &              &(\b v_1,\b^3) \\
\Delta_{2m+1}\alpha(\GG_2^1)  &m\not\equiv 2\mod(3)&     &(v_1,\b^3) \\
\Delta_{2.3^n(3m-1)}\alpha(\GG_2^1) & m\not\equiv 0\mod(3),\ n\geq 1 &
&(v_1^{4.3^{n+1}-1},\b v_1,\b^3) \\
\Delta_{2(3m-1)}\alpha(\GG_2^1) & m\not\equiv 0\mod(3)& 
&(v_1^{11},\b v_1,\b^4) \\
\Delta_{6m}\widetilde{\a} 
&&& (v_1,\b^5)\\
b_{2(9m+2)+1}
&&& (v_1^{2},\b) \\
\Delta_{6m+3}\widetilde{\a}  
& && (v_1^{3},\b v_1,\b^5) \\
&\\
\Delta_{2.3^n(3m+1)}\alpha(\GG_2^1)\b  
& n\geq 1  && (v_1,\b^2) \\ 
\Delta_{2.3^n(3m-1)}\alpha(\GG_2^1)\b &m\equiv 0\mod(3),\ n\geq 1  
&& (v_1,\b^2) \\
\Delta_{2(3m-1)}\alpha(\GG_2^1)\b &m\equiv 0\mod(3)&& (v_1,\b^3) \\
\end{array}
$$ 
$$
\begin{array}{llll}
&\\
\overline{b}_{3^{n+1}(6m+1)}v_1  
& n\geq 0 & &(v_1^{6.3^n},\b) \\ 
\overline{b}_{3^{n}(6m+5)}v_1 &m\equiv 1\mod(3),\ n\neq 1          
&& (v_1^{10.3^n},\b) \\
\overline{b}_{3(6m+5)} &m\equiv 1\mod(3)              
&& (v_1^{31},\b v_1,\b^5) \\ 
\Sigma^{48}\overline{\Delta}_{3^n(6m+1)-3} 
& n\geq 1  &&(v_1^2,\b^2v_1,\b^5)\\
\Sigma^{48}\overline{\Delta}_{3^n(6m+5)-3} &m\not\equiv 1\mod(3),
\ n\geq 1 &&(v_1^2,\b^3v_1,\b^5)\\
\Sigma^{48}\overline{\Delta}_{3^n(6m+5)-3} &m \equiv 1\mod(3), 
\ n\geq 1 &&(v_1^2,\b^2v_1,\b^5)\\
\Sigma^{48}\overline{\Delta}_{(6m+1)-3}v_1 
&&&(v_1,\b^2)\\
\Sigma^{48}\overline{\Delta}_{3^n(6m\pm 1)-2} v_1
&n\geq 1    &&(v_1^{4.3^n-1},\b v_1,\b^3)\\
\Sigma^{48}\overline{\Delta}_{(6m+1)-2}v_1^2 
&   &&(v_1^2,\b)\\
\Sigma^{48}\overline{\Delta}_{(6m+5)-2} 
&    &&(v_1^{4},\b v_1^2,\b^3v_1,\b^5)\\
&\\
\Sigma^{48}\overline{\Delta}_{2m-1}\alpha(\GG_2^1)& m\not\equiv 0\mod(3) &&(v_1,\b^3)\\  
\Sigma^{48}\overline{\Delta}_{3^n(6m+1)-3}\alpha(\GG_2^1)& n\geq 0       &&(v_1,\b^2) \\ 
\Sigma^{48}\overline{\Delta}_{3^n(6m+5)-3}\alpha(\GG_2^1)&m\not\equiv 1\mod(3),\ n\geq 1 
&&(v_1,\b^3) \\ 
\Sigma^{48}\overline{\Delta}_{3^n(6m+5)-3}\alpha(\GG_2^1)&m\equiv 1\mod(3),\ n\geq 1
&&(v_1,\b^2) \\ 
\Sigma^{48}\overline{\Delta}_{6m}\widetilde{\alpha}     
&&&(v_1,\b^4)\\
\Sigma^{48}\overline{\Delta}_{6m+3}\widetilde{\alpha}& m\not\equiv 1\mod (3)
&& (v_1,\b^4)\\ 
&\\
\end{array}
$$
modulo the following relations (in which module generators 
are put into brackets in order to distinguish between module 
multiplications and generators.)  
$$
\begin{array}{lllll}
\beta^3[\Delta_{k}\a(\GG_2^1)]&=
& \b^2\zeta[\Delta_{k}\b v_1]\  & k=2(3m-1)&\ m\not\equiv 0\mod(3) \\
\beta^2[\Delta_{k}\a(\GG_2^1)\b]&=
& \b^2\zeta[\Delta_{k}\b v_1]\    & k=2(3m-1)&\ m\equiv 0\mod(3)  \\
\b^4[{\Delta}_{k}\b]&=& \b^4\zeta[{\Delta}_{k}\widetilde{\a}] \ \ 
& k=6m+3 &\\
\b^2[\Sigma^{48}\overline{\Delta}_{k}\a(\GG_2^1)]&=
& \b^2\zeta[\Sigma^{48}\overline{\Delta}_{k}v_1] & k=6m+1   \\
\b^2[\Sigma^{48}\overline{\Delta}_{k}\a(\GG_2^1)]&=
& \b^2v_1\zeta[\Sigma^{48}\overline{\Delta}_{k}] & k=6m+3  \\
\b^2[\Sigma^{48}\overline{\Delta}_{k}\a(\GG_2^1)]&=
& \b^2v_1\zeta[\Sigma^{48}\overline{\Delta}_{k}] \ \ 
& k=3^n(6m+5)-3 & \ m\not\equiv 1\mod(3) ,\ n\geq 1 \\
\b^4[\Sigma^{48}\overline{\Delta}_{k}]&=
& \b^3\zeta[\Sigma^{48}\overline{\Delta}_{k}\widetilde{\a}]\ \ & k=6m &\ .\\
\end{array}
$$
\end{thm}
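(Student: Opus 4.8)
The plan is to combine three inputs: (i) the $E_2$-term of the Adams-Novikov spectral sequence for $\pi_*(L_{K(2)}V(0))$, which by Theorem \ref{H*G21} and Theorem \ref{H*G2} is $H^*(\GG_2^1,(E_2)_*/(3))\otimes\Lambda(\zeta)$ as an $\FF_3[\b,v_1]\otimes\Lambda(\zeta)$-module; (ii) the Adams-Novikov differentials and $E_\infty$-term for $\pi_*(L_{K(2)}V(1))$ from \cite{GHM}; and (iii) the Bockstein relating $V(0)$ and $V(1)$. First I would set up the long exact sequences of Adams-Novikov $E_2$-terms induced by the cofiber sequences $V(0)\xrightarrow{v_1}\Sigma^{-4}V(0)\to\Sigma^{-4}V(1)$ and $\Sigma^{-1}V(1)\to V(0)\xrightarrow{3}V(0)$; these are compatible with the spectral sequences and let one transport differentials. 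The key point, already flagged in the remark after Theorem \ref{H*G2}, is that the differentials for $V(0)$ are governed by those for $V(1)$ once one keeps track of the distinction between $\a(\GG_2)$ and $\a(\GG_2^1)=\a(\GG_2)+v_1\zeta$; so the first substantive step is to pin down the $d_5$ and $d_9$ differentials on $\Delta_k$-type classes by pulling back the corresponding $V(1)$-differentials and correcting by the $v_1\zeta$ term.

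Next I would organize the computation by $v_1$-Bockstein towers. For each family of $\FF_3[\b,v_1]\otimes\Lambda(\zeta)$-generators in Theorem \ref{H*G21}, the $v_1$-divisibility structure together with the known $V(1)$-answer forces the length of the Adams-Novikov differentials (they must be $d_5$ or $d_9$ by sparseness, exactly as for $TMF$), and $v_1$-linearity and $\zeta$-linearity then propagate a single differential across a whole tower. I would go family by family through the list in Theorem \ref{H*G21}: the $\Delta_m\b$, $\Delta_m\a$, $\Delta_m\widetilde\a$ towers; the $\overline b_{*}$ towers; the $\overline\Delta_m$, $\overline\Delta_m\a$, $\overline\Delta_m\widetilde\a$ towers. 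Each time the source or target of a differential has bounded $v_1$-torsion, the differential truncates a tower and produces the stated annihilator ideals $(v_1^{a},\b^{b}v_1,\b^{c})$; the $\b$-powers in these ideals come from the $\b$-torsion already visible in $H^*(\GG_2,(E_2)_*/(3))$ combined with the new $v_1$-truncation. The renaming $\overline\Delta_k\rightsquigarrow\Sigma^{48}\overline\Delta_{k-2}$ is a bookkeeping device reflecting that these classes are hit from or map to $48$-fold suspended copies of the $\overline\Delta$ part of the resolution (self-duality of (\ref{alg})), so I would fix that indexing convention at the outset and check degree consistency once.

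The last block of relations — the seven $\zeta$-relations at the end of the statement — is where the real work lies, and I expect it to be the main obstacle. These are hidden $\zeta$-extensions in $E_\infty$: a class like $\b^3[\Delta_k\a(\GG_2^1)]$ is zero in the associated graded but equals $\b^2\zeta[\Delta_k\b v_1]$ after the extension is resolved. To establish them I would use the multiplicative structure together with the identity $\a(\GG_2^1)=\a(\GG_2)+v_1\zeta$ from Corollary \ref{Bockstein}: in $H^*(\GG_2,-)$ one has $\b^3\a(\GG_2^1)=\b^3\a(\GG_2)+\b^3 v_1\zeta$, and the first term dies under the differential (it is a $V(1)$-type boundary) while $\b^3 v_1\zeta$ survives, identifying it with $\b^2\zeta$ times the surviving class $\Delta_k\b v_1$ up to the differential; similarly for the other six, using in the $\overline\Delta$ cases the $\b$-extensions of Proposition \ref{extensions}. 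The potential trap is sign and indexing bookkeeping and making sure no further differential or extension interferes; here I would lean on sparseness (the relevant tridegrees are isolated) and on comparison with the fully-known $V(1)$ answer to rule out alternatives, exactly as in the analogous arguments of \cite{GHM} and \cite{GHMR}.
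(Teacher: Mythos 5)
Your plan follows essentially the same route as the paper: sparseness reduces everything to $d_5$ and $d_9$, these are pinned down by transporting the known $L_{K(2)}V(1)$ differentials through the Bocksteins $\delta^0$ and $\delta^1$ via the geometric boundary theorem (with the resolution of $L_{K(2)}S^0$ controlling the leading terms and the $V(1)$ answer ruling out lower-filtration corrections), and the closing $\zeta$-relations arise exactly as you describe from $\alpha(\GG_2)=\alpha(\GG_2^1)-v_1\zeta$ once the $\alpha(\GG_2)$-multiples are killed by $d_5$ and $d_9$. The only slip is your second cofiber sequence --- the mod-$3$ Bockstein comes from $V(0)\to S/9\to V(0)$, not from $3\colon V(0)\to V(0)$, which is null --- but this does not affect the substance of the plan.
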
 

We remark that some but not all of the relations figuring in this 
result could have been avoided by choosing different generators, e.g. 
if we had chosen, for $k=2(3m-1)$ and $m\equiv 0\mod (3)$, 
$\Delta_k\alpha(\GG_2)\b$ as a generator instead of $\Delta_k\alpha(\GG_2^1)\b$.   

Furthermore we remark that this description of the $E_{\infty}$-term as an 
$\F_3[\b,v_1]\otimes\Lambda(\zeta)$-module does not lift 
to $\pi_*(L_{K(2)}V(0))$. In fact, it is not hard to see that
there are exotic relations like $v_1\Delta\a=\b^2\widetilde{\a}$ 
which hold in $\pi_*(E_2^{hG_{24}}\wedge V(0))$, in particular  
$v_1[\Delta\a(\GG_2^1)]\neq 0$ in $\pi_*(L_{K(2)}V(0))$. 
  
As stated above this result is obtained without much trouble 
from the calculation of the Adams-Novikov $E_2$-term given in Theorem 
\ref{H*G21} and Theorem \ref{H*G2} by using knowledge of the Adams-Novikov 
differentials for $L_{K(2)}V(1)$. However, even if the rigorous 
proof proceeds this way, we feel that the final result 
can be better appreciated from the following point of view.  
In \cite{GHMR} the algebraic resolution 
(\ref{alg}) for $\GG_2^1$ resp. the companion resolution for $\GG_2$ 
(obtained by tensoring with a minimal resolution for $\Z_3$)  
was ``realized'' by resolutions of the homotopy fixed point spectrum 
$E_2^{h\G_2^1}$ resp. of $L_{K(2)}S^0$ via homotopy fixed point spectra 
with respect to the corresponding finite subgroups of $\G_2$. In particular 
there is a resolution    
\begin{eqnarray}\label{hom}
*\to L_{K(2)}S^0\to X_0\to X_1\to X_2\to X_3\to X_4\to *  
\end{eqnarray}  
with $X_0=E_2^{hG_{24}}$, $X_1=E_2^{hG_{24}}\vee \Sigma^8E_2^{hSD_{16}}$, 
$X_2=\Sigma^8E_2^{hSD_{16}}\vee \Sigma^{40}E_2^{hSD_{16}}$,  
$X_3=\Sigma^{48}E_2^{hG_{24}}\vee \Sigma^{40}E_2^{hSD_{16}}$ and 
$X_4=\Sigma^{48}E_2^{hG_{24}}$.  
We note that the $48$-fold suspension appearing in the definition of $X_3$ 
and $X_4$ is the reason for the (abusive) change of notation from $\Delta_k$ 
in Theorem \ref{H*G21} to $\Sigma^{48}\Delta_{k-2}$ in Theorem \ref{E-infty}. 
Furthermore, the spectrum $E_2^{hG_{24}}$ 
can be identified with the $K(2)$-localization of the spectrum $TMF$ 
of topological modular forms and $E_2^{hSD_{16}}$ with ``half" of the 
$K(2)$-localization of the spectrum $TMF_0(2)$ of topological modular 
forms of level $2$ (cf. \cite{behk}). These spectra are of 
considerable independent interest and their Adams-Novikov spectral 
sequences and their homotopy is well understood (cf. the appendix or 
\cite {behk}, \cite{GHMR}).  The Adams-Novikov differentials for 
$L_{K(2)}V(0)$ can be completely understood  
by those for $E_2^{hG_{24}}\wedge V(0)$ (cf. the remarks 
following Lemma \ref{d5} and Lemma \ref{d9} for more precise statements). 
The complicated final result  described in Theorem \ref{E-infty} 
can thus be deduced, just as in the case of Theorem \ref{H*G21}, 
from more basic structures by an essentially simple though 
elaborate mechanism.  

We believe that our results have the following advantages over those 
by Shimomura \cite{shi1}. In our approach the final result relates 
well to modular forms and the homotopy of the spectrum $TMF$ 
of topological modular forms; in particular the approach 
helps to understand how the complicated structure of $\pi_*(L_{K(2)}V(0))$ 
is built from that of the comparatively simple homotopy of $TMF$. 
This is also reflected in our notation which is very different from that 
in \cite{shi} where classical chromatic notation is used.   
Furthermore we determine $E_{\infty}$ as a module over $\F_3[\beta,v_1]$.  
In contrast Theorem 2.8. in \cite{shi1} gives a direct sum decomposition 
as an $\F_3[v_1]$-module (of $E_{\infty}$ and not as claimed in \cite{shi1} 
of $\pi_*(L_{K(2)}V(1))$) and this decomposition only partially reflects 
the $\F_3[\beta,v_1]$-module structure. In fact, many non-trivial 
$\b$-multiplications are not recorded in \cite{shi1}, for example those 
on the classes $\Delta_{2.3^n(9m+2)}\a(\GG_2^1)$, 
$\Delta_{2.3^n(9m+8)}\a(\GG_2^1)$, $\overline{b}_{3(18m+11)}$,   
... , nor are the additional $\b$-relations of Theorem \ref{E-infty}.  
There are related discrepancies on the height of $\b$-torsion; for example, 
in \cite{shi1} all elements in the same bidegree as 
the elements $\Sigma^{48}\overline{\Delta}_{6m}$ 
appear to be already killed by $\b^4$.   
Finally the classes $v_2^{9m+2}\xi/v_1^3$ of \cite{shi1} 
which correspond to $v_1\Sigma^{48}\overline{\Delta}_{(6m+1)-2}$ 
in our notation and which support a non-trivial Adams-Novikov 
$d_9$-differential (cf. Lemma \ref{d9}) seem to be permanent 
cycles in \cite{shi1}. 

The paper is organized as follows. In section 
\ref{background} 
we recall 
background material on the stabilizer groups, we introduce important 
elements of $\GG_2$ and we recall the definition of its subgroups 
$SD_{16}$ and $G_{24}$ as well as that of an 
important torsionfree subgroup $K$ of finite index in $\GG_2^1$. 
In section 
\ref{resolution} 
we study the maps in the 
permutation resolution (\ref{alg}). 
In fact, in \cite{GHMR} the maps $C_3\to C_2$ and $C_2\to C_1$ of 
the permutation resolution (\ref{alg}) were not described explicitly 
so that the resolution was not ready yet to be used for 
detailed calculations. The subgroup $K$ plays a crucial role in 
finding an approximation to the map $C_2\to C_1$. 
We also show that the map $C_3\to C_2$ is 
in a suitable sense dual to the map $C_1\to C_0$. In section 
\ref{action} 
we study the action of the stabilizer group on $(E_2)_*/(3)$ and 
we derive formulae for the action of the elements of $\GG_2$ introduced 
in section 2. In section 
\ref{e2-term} we comment on Theorem \ref{shapiro} and we 
verify Theorem \ref{d1} (cf. Proposition \ref{d_1^{1,0}}, 
Proposition \ref{d_1^{2,0}} and Proposition \ref{d_1^{3,0}}).  
Most of the new results of these sections, 
in particular the formulae for the action of the stabilizer group, 
the approximation of the map $C_2\to C_1$ and the evaluation 
of the induced map 
$$
E_1^{1,0,*}=\Ext_{\Z_3[[\G_2^1]]}^0(C_1,(E_2)_*/(3))\to 
\Ext_{\Z_3[[\G_2^1]]}^0(C_2,(E_2)_*/(3))=E_1^{2,0,*}  
$$
are taken from the second author's thesis \cite{Kar}. The evaluation of 
this map is by far the hardest calculation in our approach. In section 
\ref{higher+extensions} 
we prove Proposition \ref{d2+d3} 
and Proposition \ref{extensions}.  In a short section 
\ref{passing} 
we discuss the subtleties of the K\"unneth isomorphism 
of Theorem \ref{H*G2} and section 
\ref{ANSS} 
contains the discussion of the differentials in the 
Adams-Novikov spectral sequence and proves Theorem \ref{E-infty}.  
For the convenience of the reader we have collected the description 
of the related Adams-Novikov spectral sequences for 
$E_2^{hG_{24}}\wedge V(0)$, for $E_2^{hG_{24}}\wedge V(1)$ and 
for $L_{K(2)}V(1)$ in an appendix.

\bigbreak

\section{Background on the Morava Stabilizer Group}\label{background}

\medbreak

In the sequel we will recall some of the basic
properties of the Morava stabilizer groups $\s_n$ resp. $\G_n$. The reader 
is refered to \cite{Rav} for more details (see also \cite{Henn1} and 
\cite{GHMR} 
for a summary of what will be important in this paper). 

\subsection{Generalities} 
We recall that the Morava stabilizer group $\s_n$ is the
group of automorphisms of the $p$-typical formal group law $\Gamma_n$
over the field $\F_{q}$ (with $q=p^n$) whose $[p]$-series is given by
$[p]_{\Gamma_n}(x)=x^{p^n}$. Because $\Gamma_n$ is already defined over
$\F_p$ the Galois group $\Gal(\F_q/\F_p)$ of the finite field
extension $\F_p\subset \F_q$ acts naturally on 
$\Aut(\Gamma_n)=\s_n$ and $\G_n$ can be identified 
with the semidirect product $\s_n\rtimes \Gal(\F_q/\F_p)$.

The group $\s_n$ is also equal to the group of units in the endomorphism
ring of $\Gamma_n$, and this endomorphism ring can be identified with
the maximal order $\Oa_n$ of the division algebra $\DD_n$ over $\QQ_p$
of dimension $n^2$ and Hasse invariant $\frac{1}{n}$. In more concrete
terms, $\Oa_n$ can be described as follows: let $\W:=\W_{\F_q}$ denote
the Witt vectors over $\F_{q}$. Then $\Oa_n$ is the non-commutative
ring extension of $\W$ generated by an element $S$ which
satisfies $S^n=p$ and $Sw=w^{\sigma}S$, where $w\in \W$ and
$w^{\sigma}$ is the image of $w$ with respect to the lift of the
Frobenius automorphism of $\F_q$. The element $S$ generates a two
sided maximal ideal ${\goth m}$ in $\Oa_n$ with quotient 
$\Oa_n/{\goth m}$ canonically isomorphic to $\F_q$. 
Inverting $p$ in $\Oa_n$ yields the division algebra $\DD_n$, 
and $\Oa_n$ is its maximal order.

Reduction modulo ${\goth m}$ induces an epimorphism 
$\Oa_n^{\times}\longrightarrow \F_q^{\times}$. Its kernel will be denoted by
$S_n$ and is also called the strict Morava stabilizer group. The group
$S_n$ is equipped with a canonical filtration by subgroups $F_iS_n$,
$i=\frac{k}{n}$, 
$k=1,2,\ldots$, defined by
\[
F_i S_n:=\{g\in S_n| g\equiv 1 \ \text{mod}\ (S^{in})\} \ .
\]
The intersection of all these subgroups contains only the element
$1$ and $S_n$ is complete with respect to this filtration, i.e. we
have $S_n=\lim_i S_n/F_iS_n$. Furthermore, we have canonical
isomorphisms
\[
F_iS_n/F_{i+\frac{1}{n}}S_n\cong  \F_q 
\]
induced by 
\[
x=1+aS^{in}\mapsto \bar a \ .
\]
Here $a$ is an element in $\Oa_n$, i.e. $x\in F_iS_n$ and $\bar a$ is
the residue class of $a$ in $\Oa_n/{\goth m}\cong \FF_q$.

The associated graded object $gr S_n$ with 
$gr_i S_n: =F_iS_n/F_{i+\frac{1}{n}}S_n$, 
$i=\frac{1}{n}, \frac{2}{n},\ldots$ becomes a graded Lie algebra 
with Lie bracket $[\bar a, \bar b]$ induced by
the commutator $[x,y]:=xyx^{-1}y^{-1}$ in $S_n$. Furthermore, 
if we define a function $\vp$ from the positive real numbers to 
itself by $\vp (i): =\min\lbrace i+1,pi\rbrace$ then the $p$-th power map 
on $S_n$ induces maps $P: gr_iS_n\longra gr_{\vp (i)}S_n$ which define on 
$grS_n$ the structure of a mixed Lie algebra in the sense of Lazard 
(cf. Chap. II.1. of \cite{laz}). If we identify the filtration quotients 
with $\FF_q$ as above then the Lie bracket and the map $P$ are 
explicitly given as follows (cf. Lemma 3.1.4 in \cite{Henn1}).

\begin{lem}\label{lemcom} 
Let $\bar a\in gr_iS_n$, $\bar b\in gr_jS_n$. Then 

\noindent a)
$$
[\bar a, \bar b]=
\bar a {\bar b}^{p^{ni}} - \bar b {\bar a}^{p^{nj}}\in gr_{i+j}S_n 
$$

\noindent b)
$$
P\bar a=\begin{cases}
{\bar a}^{1+p^{ni}+\ldots +p^{(p-1)ni}}         &\  i<(p-1)^{-1} \\ 
{\bar a}+{\bar a}^{1+p^{ni}+\ldots +p^{(p-1)ni}}&\  i=(p-1)^{-1} \\
{\bar a}                                        &\  i>(p-1)^{-1} . 
\qed 
\end{cases}
$$
\end{lem}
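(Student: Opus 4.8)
The plan is to carry out both computations directly inside the maximal order $\Oa_n$, using nothing beyond the two defining relations $S^n=p$ and $Sw=w^{\sigma}S$ (for $w\in\W$), the fact that $\sigma^n=\mathrm{id}$ on $\W$ (because $S^n=p$ is central, so $S^n w S^{-n}=w$), and the observation that reduction modulo ${\goth m}=(S)$ identifies $\sigma$ with the $p$-th power Frobenius of $\F_q$. A preliminary reduction: since $\W$ surjects onto $\Oa_n/{\goth m}\cong\F_q$ and the graded classes depend only on elements of $S_n$ modulo the next filtration step, I would represent $x\in F_iS_n$ and $y\in F_jS_n$ by $x=1+aS^{ni}$ and $y=1+bS^{nj}$ with $a,b\in\W$; replacing the given representatives by these alters the expressions below only by terms of strictly larger $S$-adic valuation, hence does not affect the classes in $grS_n$.

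For part a), write $k=ni$ and $l=nj$ and multiply out $xy$ and $yx$. Using $S^kb=b^{\sigma^k}S^k$ one finds $xy=1+aS^k+bS^l+ab^{\sigma^k}S^{k+l}$ and $yx=1+aS^k+bS^l+ba^{\sigma^l}S^{k+l}$, so that after the degree-$k$ and degree-$l$ terms cancel $xy-yx=(ab^{\sigma^k}-ba^{\sigma^l})S^{k+l}$. Since $(yx)^{-1}\equiv1$ modulo $(S^{\min(k,l)})$ with $\min(k,l)\geq1$, this gives $[x,y]=xy(yx)^{-1}\equiv1+(ab^{\sigma^k}-ba^{\sigma^l})S^{k+l}\pmod{(S^{k+l+1})}$, so $[x,y]\in F_{i+j}S_n$ and reducing the coefficient modulo ${\goth m}$ produces $[\bar a,\bar b]=\bar a\bar b^{p^{ni}}-\bar b\bar a^{p^{nj}}$.

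For part b), again with $k=ni$ and using that $1$ is central, I would expand $x^p=(1+aS^k)^p=1+\sum_{r=1}^{p-1}\binom{p}{r}N_r(a)S^{rk}+N_p(a)S^{pk}$, where $N_r(a)=a\,a^{\sigma^k}\cdots a^{\sigma^{(r-1)k}}$ records the iterated twist in $(aS^k)^r=N_r(a)S^{rk}$. For $0<r<p$ one has $v_S\binom{p}{r}=v_S(p)=n$, so the $r$-th intermediate summand lies in valuation $n+rk$, the lowest being $r=1$ with leading term $p\,aS^k=aS^{n+k}$ (using $\sigma^n=\mathrm{id}$), while the final summand lies in valuation $pk$. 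The three cases then drop out of comparing $pk$ with $n+k$: since $i\lessgtr(p-1)^{-1}$ is equivalent to $(p-1)k\lessgtr n$, i.e.\ to $pk\lessgtr n+k$ (and $pk<n+rk$ for all $r\geq1$ once $pk<n+k$), for $i<(p-1)^{-1}$ the term $N_p(a)S^{pk}$ dominates, $x^p\in F_{pi}S_n$ and $P\bar a=\overline{N_p(a)}=\bar a^{1+p^{ni}+\cdots+p^{(p-1)ni}}$; for $i>(p-1)^{-1}$ the term $aS^{n+k}$ dominates, $x^p\in F_{i+1}S_n$ and $P\bar a=\bar a$; and for $i=(p-1)^{-1}$ these two terms share the valuation $n+k=pk$ and sum to $(a+N_p(a))S^{n+k}$, giving $P\bar a=\bar a+\bar a^{1+p^{ni}+\cdots+p^{(p-1)ni}}$.

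I expect the only real obstacle to be organizational rather than conceptual: one must be careful with the $S$-adic valuation bookkeeping in the binomial expansion (the identity $v_S\binom{p}{r}=n$ for $0<r<p$, and the comparison of $pk$ with $n+k$ and with the $n+rk$), with the repeated rewriting $p\,aS^k=aS^{n+k}$ via $\sigma^n=\mathrm{id}$, and above all with checking that the passage to representatives in $\W$ and the truncation modulo a power of $(S)$ are harmless — which is where one invokes that $S_n$ is complete with respect to its filtration and that multiplication is compatible with it.
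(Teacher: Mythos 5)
Your argument is correct, and the paper itself gives no proof of this lemma --- it is stated with a reference to Lemma 3.1.4 of \cite{Henn1}, where the standard argument is exactly the direct computation you carry out: expanding $xy(yx)^{-1}$ for $x=1+aS^{ni}$, $y=1+bS^{nj}$ in part a), and the binomial expansion of $(1+aS^{ni})^p$ with the valuation comparison of $pk$ against $n+k$ (via $v_S\binom{p}{r}=n$ for $0<r<p$ and $pS^k=S^{n+k}$) in part b). Your preliminary reduction to Teichm\"uller-type representatives and the bookkeeping of which term dominates in each of the three cases are both handled correctly, so there is nothing to add.
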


\medbreak 
The right action of $\s_n$ on $\mathcal{O}_n$ determines a 
group homomorphism $\s_n \to GL_n{(\wv)}$. The resulting 
determinant homomorphism 
$\s_n \to \wv^\times$ extends to a homomorphism 
$$
\mathbb{G}_n \to \wv^\times \rtimes \Gal(\f/\mathbb{F}_p)
$$ 
which factors through $\mathbb{Z}_p^\times \times \Gal(\f/\mathbb{F}_p)$. 
By choosing a fixed isomorphism between the quotient of $\mathbb{Z}_p^\times$ 
by its maximal finite subgroup with $\mathbb{Z}_p$ we get the 
``reduced determinant" homomorphism 
$$
\mathbb{G}_n \to \mathbb{Z}_p \ .
$$ 
We denote its kernel by $\mathbb{G}_n^1$ and the intersection of 
$\G_n^1$ with $\mathbb{S}_n$ resp. $\mathbb{S}_n^1$ by  
$S_n$ resp. $S_n^1$. 
The center of $\mathbb{G}_n$ is equal to the center of $\mathbb{S}_n$ 
and can be identified with $\mathbb{Z}_p^\times$ (if we identify 
$\mathbb{S}_n$ with $\mathcal{O}_n^{\times}$) 
and the composite 
$$\mathbb{Z}_p^\times \to \mathbb{G}_n \to \mathbb{Z}_p^\times$$ sends 
$z$ to $z^n$. Thus if $p$ does not divide  $n$ we get an isomorphism 
$$
\mathbb{G}_n \cong \mathbb{Z}_p \times \mathbb{G}_n^1 \ .
$$

\subsection {Important subgroups in the case $n=2$ and $p=3$}
\label{subgroups} 
From now on we assume $n=2$ and $p=3$. 
Let $\omega$  be a primitive eighth root of unity in 
$\W^{\times}:=\W_{\F_9}^{\times}$. Then 
\begin{equation} 
a=-\frac{1}{2}(1+\omega S)
\end{equation}
is an element of $\s_2^1$ of order $3$. (This element was denoted $s$ in 
\cite{GHM} and \cite{GHMR}.) 
We can and will in the sequel choose $\omega$ such 
that we have the following relation in $\W/(3)\cong\F_9$
\begin{equation}\label{omegaeq}
\omega^2+\omega-1 \equiv 0\ .
\end{equation}

Next we let $t:=\omega^2$. Then we have $ta=a^2t.$
Furthermore, if $\phi\in \Gal(\F_9/\F_3)$ is the Frobenius automorphism then 
the elements $\psi:=\omega\phi$ and $t$ generate a subgroup $Q_8$ 
which normalizes the subgroup generated by $a$ so that $\psi$, $t$ and $a$ 
generate a subgroup $G_{24}$ of $\s_2^1$ of order 24 which is isomorphic to 
a semidirect product $\Z/3\rtimes Q_8$. 

The elements $\omega$ and $\phi$ generate a subgroup $SD_{16}$ of $\s_2^1$ of 
order 16, isomorphic to the semidihedral group of order $16$.

Finally there is a torsionfree subgroup $K$ of $S_2^1$ which has 
already played an important role in \cite{Henn1}.   
It is defined as follows: Lemma \ref{lemcom} implies that 
an element $1+xS$ in $S_2$ of order 3 satisfies 
$\overline{x} \not =0$ and $\overline{x}+\overline{x}^{1+3+9}=0$,  i.e. 
$\overline{x}^4=-1$ where $\overline{x}$ is the class of $x$ in 
$gr_{\frac{1}{2}}S_2^1 \cong \mathbb{F}_9$. 
There are no such elements $x$ such that $\overline{x}\in\mathbb{F}_3$. 
Hence, if we define $K$ to be the kernel of the homomorphism   
$$
S_2^1 \to gr_{\frac{1}{2}}S_2^1\cong \F_9 \to \F_9/\F_3
$$ 
then $K$ is torsion free, and we have a split short exact sequence 
\begin{equation}\label{ksz}
1 \to K \to S_2^1 \to \mathbb{Z}/3 \to 1 \ .
\end{equation}
$K$ inherits a complete filtration from $S_2$ by setting 
$F_{\frac{k}{2}}K=F_{\frac{k}{2}}S_2\cap K$ and it is easy to check that the associated 
graded is given as 
\begin{equation}\label{filtK}
gr_{{\frac{k}{2}}}K=\begin{cases}
\Ker(Tr:\F_9\to \F_3)  & k>0\ \textrm{even} \\
\F_3                   & k=1    \\
\F_9                   & k>1\ \textrm{odd} \\
\end{cases}
\end{equation}
where $Tr$ denotes the trace from $\F_9$ to $\F_3$. 
\medbreak
The following elements will play an important role in our 
later calculations. 
\begin{equation}
b:=[a,\omega], \ \ \ \ c:=[a,b], \ \ \ \ d:=[b,c] \ . 
\end{equation} 

In the next lemma we record approximations to these elements  
which we will use repeatedly.    
 
\begin{lem}\label{abcd} \mbox{}
\begin{itemize}
\item[a)] $a\equiv 1+\omega S+S^2\ \ \ \ \  \mod\ (S^3)$
\item[b)] $b\equiv 1-S-\omega S^2\ \ \ \  \ \mod\ (S^3)$  
\item[c)] $c\equiv 1-\omega^2S^2-\omega S^3 \ \mod\ (S^4)$  
\item[d)] $d\equiv 1+\omega^2S^3 \ \ \ \ \ \ \ \ \ \mod\ (S^4)$
\end{itemize}
\end{lem}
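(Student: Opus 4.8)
The plan is to verify all four congruences by direct computation in the maximal order $\Oa_2 = \W\langle S\rangle$, recalling that $S^2 = 3$, that $Sw = w^\sigma S$ for $w\in\W$, and that, $\o$ being a root of unity and hence a Teichm\"uller lift, the Frobenius lift acts by $\o^\sigma = \o^3$; since $\o^4 = -1$ this reads $S\o^k = (-1)^k\o^{-k}S$. The one feature to keep in mind throughout is that the identity $3 = S^2$ interleaves the $3$-adic and the $S$-adic filtrations: modulo $(S^j)$ one has $9\equiv 0$ and $-\tfrac12 = \sum_{i\ge 0}3^i\equiv 1 + S^2$, and the coefficient of an odd power $S^{2k+1}$ may be reduced modulo $3$ whenever one only wants the answer modulo $S^{2k+2}$, since $3S^{2k+1} = S^{2k+3}$. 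Such reductions mod $3$ will be simplified by expressing every class of $\W/(3)\cong\F_9$ in the form $x+y\o$ with $x,y\in\F_3$, using $\o^2\equiv 1-\o$ from (\ref{omegaeq}) (whence $\o^3\equiv 2\o-1$, $\o-\o^{-1}\equiv -1$, $\o^{-2}\equiv 2+\o$, and so on).

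Part a) is then immediate: $a = -\tfrac12(1+\o S)\equiv(1+S^2)(1+\o S)\equiv 1+\o S+S^2\pmod{S^3}$. Since $a$ has order $3$ we have $a^{-1}=a^2$, and squaring --- using $(\o S)^2 = \o\cdot\o^3S^2 = \o^4S^2 = -S^2$ --- gives $a^{-1}\equiv 1-\o S+S^2\pmod{S^3}$ ($a^3\equiv 1$ serves as a check). For b), write $b=[a,\o]=a\,(\o a^{-1}\o^{-1})$; conjugation by $\o$ sends $S$ to $\o S\o^{-1}=\o^{-2}S$, so $\o a^{-1}\o^{-1}=\o a^2\o^{-1}\equiv 1-\o^{-1}S+S^2\pmod{S^3}$, and multiplying by $a$ and reducing coefficients mod $3$ (here $\o-\o^{-1}\equiv -1$ and $2-\o^{-2}\equiv -\o$) yields $b\equiv 1-S-\o S^2\pmod{S^3}$.

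For c) and d) the same scheme applies, now to precision $(S^4)$, and the first point is that a) and b) already supply enough input. Writing $a=1+\alpha$, $b=1+\beta$ with $\alpha,\beta\in{\goth m}$, one has the exact identity $[1+\alpha,1+\beta] = 1 + (\alpha\beta-\beta\alpha)(1+\alpha)^{-1}(1+\beta)^{-1}$; as $\alpha\beta-\beta\alpha\in{\goth m}^2$, altering $\alpha$ or $\beta$ by an element of ${\goth m}^3$ alters the right side by an element of ${\goth m}^4$, so $c=[a,b]\bmod(S^4)$ is determined by $\alpha,\beta\bmod(S^3)$; likewise $d=[b,c]\bmod(S^4)$ is determined by $\beta$ and $\gamma:=c-1$ (which lies in ${\goth m}^2$) taken mod $(S^3)$ --- that is, by b) and c). Substituting $\alpha=\o S+S^2$, $\beta=-S-\o S^2$ one computes $\alpha\beta-\beta\alpha\equiv(\o^3-\o)S^2+(1+\o^2)S^3\pmod{S^4}$ and $(1+\alpha)^{-1}(1+\beta)^{-1}\equiv 1+(1-\o)S$ modulo $(S^2)$; multiplying out and reducing coefficients mod $3$ --- the $S^2$-coefficient $\o^3-\o$ becomes $-\o^2$, because $\o^3+\o^2-\o\equiv 0$ by (\ref{omegaeq}), and the $S^3$-coefficient collapses to $-\o$ --- one obtains $c\equiv 1-\o^2S^2-\o S^3\pmod{S^4}$. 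The same manipulation, with $\beta\equiv -S$ and $\gamma\equiv-\o^2S^2$ to the relevant orders and $S\o^2=-\o^2S$, gives $\beta\gamma-\gamma\beta\equiv\o^2S^3$, hence $d\equiv 1+\o^2S^3\pmod{S^4}$. As an independent check the leading terms agree with Lemma \ref{lemcom}(a): $[\bar a,\bar b]=[\o,-1]=-\o+\o^3\equiv-\o^2$ and $[\bar b,\bar c]=[-1,-\o^2]=\o^6-\o^2\equiv\o^2$ in the associated graded.

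The only real difficulty here is bookkeeping: the computation lives in a noncommutative ring in which scalars twist past $S$ and in which $3$ and $S^2$ are constantly traded, so one must stay disciplined about the order of factors and about which powers of $S$ survive a given truncation. The robustness remark opening the previous paragraph is precisely what keeps this under control, since it lets the entire calculation be performed modulo $(S^3)$.
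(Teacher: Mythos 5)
Your computations are correct and the argument is essentially the paper's: all four congruences are verified by direct calculation in $\Oa_2$ using $a=-\tfrac12(1+\omega S)$, $S^2=3$, the twisted commutation rule for $\omega$, and the relation (\ref{omegaeq}). The only (harmless) organizational difference is that you work with truncations throughout, controlled by the identity $[1+\alpha,1+\beta]=1+(\alpha\beta-\beta\alpha)(1+\alpha)^{-1}(1+\beta)^{-1}$ and a precision estimate, whereas the paper computes $b$ and $c$ as exact closed-form products and reads off part d) from the graded Lie bracket of Lemma \ref{lemcom} --- which is exactly the leading-term content of your commutator identity.
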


\begin{proof} a) The approximation for $a$ is immediate from its 
definition.    
\smallbreak\noindent
b) By explicit calculation in $\Oa_2$ we find  
$$
b=\frac{1}{4}(1+\omega S)\omega(1-\omega S)\omega^{-1} 
=\frac{1}{4}(1+\omega S)(1-\omega^{-1}S)=
\frac{1}{4}(1+3\omega^2+(\omega+\omega^3)S)  
$$
and then we use that our choice of $\omega$ yields     
$\omega^3+\omega=-1$ and $\omega^2-1=-\omega$ in $\F_9$. 
\smallbreak\noindent
c) Similarly we get  
$$
c=\frac{1}{64}(1+\omega S)(1+3\omega^2+(\omega+\omega^3)S)
(1-\omega S)(1-3\omega^2-(\omega+\omega^3)S)
=-\frac{1}{8}(1+6\omega^2-3\omega S)\ .
$$
d) Finally the formula for $d$ can be obtained from Lemma \ref{lemcom}.   
\end{proof} 

The information in the following proposition will be important  
for a closer inspection of the permutation resolution (\ref{alg}).
 
\begin{prop}\label{propK} \mbox{}
\begin{itemize}
\item[a)] $H^*(K;\F_3)$ is a Poincar\'e duality algebra of dimension 3. 
\item[b)] $H_2(K;\F_3)\cong H_1(K;\F_3)\cong (\F_3)^2$.  
\item[c)] $H_1(K;\mathbb{Z}_3) \cong \Z/9 \oplus \Z/3$  
where $\Z/9$ resp. $\Z/3$ is generated by $b$ resp. by $c$. 
\item[d)] $H_2(K;\mathbb{Z}_3)= 0$.
\item[e)] $H_0(K;\mathbb{Z}_3) \cong H_3(K,\Z_3)\cong \mathbb{Z}_3$. 
\end{itemize}
\end{prop}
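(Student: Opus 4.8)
The plan is to exploit the fact that $K$ is a torsionfree closed subgroup of a $3$-adic analytic group of small dimension together with the explicit description of the associated graded $grK$ given in (\ref{filtK}). First I would note that $K$ is a compact $3$-adic analytic Poincar\'e duality group: indeed $\G_2^1$ (equivalently $\s_2^1$, which is isomorphic to $S_2^1$ after the obvious identifications) is a $3$-adic analytic group of dimension $3$, and $K$ is an open subgroup, hence also $3$-dimensional; being torsionfree it is $3$-torsionfree and, by Lazard's theory (Chap. V of \cite{laz}), a torsionfree compact $p$-adic analytic group of dimension $d$ is a Poincar\'e duality group of dimension $d$ over $\Z_p$ and over $\F_p$. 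This immediately gives part a), and it gives the second isomorphism in part e) once we know $H^0(K;\Z_3)=H_0(K;\Z_3)\cong\Z_3$, which is clear, and $H_3(K;\Z_3)\cong H^0(K;\Z_3)$ by Poincar\'e duality (the dualizing module is $\Z_3$ since $K\subset \s_2^1$ is contained in a group with trivial orientation character, or one checks the action directly).

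Next I would compute the $\F_3$-homology. The restriction of the filtration on $S_2$ to $K$ has associated graded the graded Lie algebra $grK$ of (\ref{filtK}), which in low degrees is: $gr_{1/2}K=\F_3$ (spanned by the image of $b$, since $b\equiv 1-S-\dots$ by Lemma \ref{abcd}b, and $\overline{-1}\in\F_3$), $gr_1K=\Ker(Tr:\F_9\to\F_3)$ which is $1$-dimensional (spanned by the image of $c$, using Lemma \ref{abcd}c, noting $-\omega^2\in\Ker(Tr)$ since $Tr(\omega^2)=\omega^2+\omega^6=\omega^2-\omega^2=0$ in $\F_3$ — here $\omega^6=(\omega^2)^3$ is the Frobenius conjugate), $gr_{3/2}K=\F_9$ (two-dimensional, containing the image of $d$). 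One then runs the May-type spectral sequence $H_*(grK;\F_3)\Rightarrow H_*(K;\F_3)$ (the associated graded version of the mod-$3$ homology of a filtered pro-$3$ group, cf. the mixed Lie algebra formalism of Lazard, Chap. II of \cite{laz}). Because $grK$ is concentrated in the narrow range of degrees just described and the mixed-Lie $P$-operation is zero in this range (all degrees $i$ with $i\geq 1/2$ except none satisfy $i<(p-1)^{-1}=1/2$, so $P\bar a=\bar a$ or $\bar a$ shifted — one checks $P$ lands in degrees $\geq 3/2$ and contributes nothing new in homological degrees $\leq 2$), one reads off that $H_1(K;\F_3)$ and $H_2(K;\F_3)$ are each $2$-dimensional; Poincar\'e duality (part a) forces $\dim H_2 = \dim H^1 = \dim H_1$, which is consistent and pins down part b). Concretely the two classes in $H_1(K;\F_3)$ are $b$ and $c$.

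Finally, for the integral statements c) and d): $H_1(K;\Z_3)$ is the abelianization $K^{ab}$ (completed), a finitely generated $\Z_3$-module with $H_1(K;\F_3)=H_1(K;\Z_3)\otimes\F_3 \cong \F_3^2$ by b) and the universal coefficient sequence, so $K^{ab}\cong \Z/3^{a}\oplus\Z/3^{b}$ for some $a\geq b\geq 1$ (no free part, since $K\subset S_2^1$ has trivial reduced determinant and, more to the point, $\G_2^1$ has finite abelianization — one can also see $K^{ab}$ is finite because $grK^{ab}$ is finite in each degree and the commutator subgroup has finite index). To get the exponents I would compute $[b]$ and $[c]$ in $K^{ab}$ using the commutator identities $b=[a,\omega]$, $c=[a,b]$, $d=[b,c]$ and the filtration: $c=[a,b]$ and $d=[b,c]$ being commutators are trivial in $K^{ab}$? — no: $a\notin K$, so $[a,b]$ and $[a,\omega]$ need not vanish in $K^{ab}$; this is exactly the subtle point. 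The clean way is to use the conjugation action of the order-$3$ element $a$ (via (\ref{ksz})) on $K^{ab}$: from $ta=a^2t$ and the relations of Lemma \ref{abcd} one shows $aba^{-1}\equiv b\cdot c\cdot(\text{higher})$ and $aca^{-1}\equiv c\cdot(\text{higher in }gr_{3/2})$, so that in $K^{ab}$ one gets a relation of the form $3[b]=\pm[c]\cdot(\dots)$, forcing $[b]$ to have order exactly $9$ and $[c]$ order exactly $3$, giving $H_1(K;\Z_3)\cong\Z/9\oplus\Z/3$. Then d) follows: by the universal coefficient theorem $H_2(K;\F_3)$ sits in $0\to H_2(K;\Z_3)\otimes\F_3\to H_2(K;\F_3)\to \mathrm{Tor}(H_1(K;\Z_3),\F_3)\to 0$; since $H_1(K;\Z_3)\cong\Z/9\oplus\Z/3$ has $\mathrm{Tor}(-,\F_3)\cong\F_3^2$ already accounting for all of $H_2(K;\F_3)\cong\F_3^2$, we get $H_2(K;\Z_3)\otimes\F_3=0$; and $H_2(K;\Z_3)$ is also $3$-torsion (Poincar\'e duality: $H_2(K;\Z_3)\cong H^1(K;\Z_3)$, and $H^1(K;\Z_3)=\Hom(K^{ab},\Z_3)=0$ since $K^{ab}$ is finite — actually this directly gives d) and makes the Tor argument just a consistency check). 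I expect the main obstacle to be the determination of the precise exponents $9$ and $3$ in part c): showing $\F_3^2$ for the mod-$3$ homology is soft, but distinguishing $\Z/9\oplus\Z/3$ from $\Z/3\oplus\Z/3$ requires genuinely using the $S^2$- and $S^3$-level information in Lemma \ref{abcd} — i.e. tracking the $3$rd power of $b$ into $gr_{3/2}$ — which is where the identity $d=[b,c]$ and the explicit approximation $d\equiv 1+\omega^2 S^3$ do the work.
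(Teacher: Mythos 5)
Your handling of (a), (b), (d) and (e) is sound: Lazard's duality for torsionfree compact $3$-adic analytic groups is indeed what underlies part (a) (the paper simply cites Proposition 4.4 of \cite{Henn1} for (a) and (b)), and your derivation of (d) from $H_2(K;\Z_3)\cong H^1(K;\Z_3)=\Hom(K^{ab},\Z_3)=0$ is a clean alternative to the paper's ``simple Bockstein calculation''. The problem is in part (c), which you correctly single out as the crux but do not actually prove.

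The mechanism you propose for the exponents --- the conjugation action of the order-$3$ element $a$ on $K^{ab}$ --- cannot deliver them. Writing $x=[b]$, $y=[c]$ and $z$ for a generator of the image of $F_{\frac{3}{2}}K$ in $K^{ab}$, one has $a_*x=x+y$ (from $aba^{-1}=cb$) and $a_*y=y+\epsilon z$ with $z$ of filtration $\frac{3}{2}$ and $3z=0$; imposing $(a_*)^3=\mathrm{id}$ yields $3y=0$, i.e.\ $3[c]=0$, and gives no lower bound whatsoever on the order of $[b]$. A relation of the shape $3[b]=\pm[c]\cdot(\dots)$ is moreover false on its face: $b^3$ lies in $F_{\frac{3}{2}}K$, so $3[b]$ is detected in filtration $\frac{3}{2}$ of $gr(K^{ab})$, whereas $[c]$ has filtration $1$. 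What actually forces $[b]$ to have order $9$ is Lazard's $p$-th power operation of Lemma \ref{lemcom}(b) in the critical degree $i=\frac{1}{2}=(p-1)^{-1}$: $P\bar b=\bar b+\bar b^{1+3+9}=(-1)+(-1)^{13}=1\in\F_9=gr_{\frac{3}{2}}K$, and since $Tr(1)\neq 0$ this class lies outside $\Ker(Tr)=\F_3\omega^2$, which is exactly the image of the bracket $gr_{\frac{1}{2}}K\times gr_{1}K\to gr_{\frac{3}{2}}K$ and hence the degree-$\frac{3}{2}$ part of $gr(\overline{[K,K]})$; therefore $b^3\notin\overline{[K,K]}$. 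Your closing remark attributes the needed input to $d=[b,c]$ and its approximation, but that only identifies the commutator image $\Ker(Tr)$; the computation of $P\bar b$ is never carried out, and without it (together with the order count $|gr(K^{ab})|=3^3$ coming from $F_2K\subset\overline{[K,K]}$ and the codimension-one image of the bracket in degree $\frac{3}{2}$, which you also leave implicit) you cannot distinguish $\Z/9\oplus\Z/3$ from $\Z/3\oplus\Z/3$. This $P$-operation step is precisely the ``part (b) of Lemma \ref{lemcom}'' invoked in the paper's proof, and it is the one genuinely missing idea in your argument.
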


\begin{proof} Parts (a) and (b) have already been shown in 
Proposition 4.4. of \cite{Henn1}.  

For part (c) we note that  $H_1(K;\mathbb{Z}_3) \cong K/\overline{[K,K]}$ 
where $\overline{[K,K]}$ is the closure of the commutator subgroup 
$[K,K]$ of $K$. 
By Lemma \ref{lemcom} the commutator map
$$
gr_iK \times gr_jK \to gr_{i+j}K, \ \ (x,y) \mapsto [x,y]
$$
is surjective when  $i=\frac{3}{2}$ and $j=\frac{k}{2}$ with $k$ even, 
and also when $i=\frac{1}{2}$ and $j=\frac{l}{2}$ with $l>1$ odd. 
Thus $F_2K\subset [K,K]$. If $i=\frac{1}{2}$ and $j=1$ then the image of 
the commutator map is the kernel of the trace map. Together with part (b) of 
Lemma \ref{lemcom} this shows that $K/\overline{[K,K]}\cong \Z/9\oplus\Z/3$ 
and it is easy to check that $b$ and $c$ generate $\Z/9$ resp. $\Z/3$.  
 
The remaining parts (d) and (e) now follow from a simple Bockstein 
calculation.  
\end{proof}


\bigbreak
\section{The maps in the permutation resolution}\label{resolution}  

\subsection {Generalities}\label{generalities} 
Let $G$ be a profinite $p$-group. We say that $G$ is finitely generated 
if $H_1(G,\Z_p)$ is finitely generated over $\Z_p$. The kernel 
of the augmentation $\Z_p[[G]]\to \Z_p$ is denoted by $IG$, 
or simply by $I$. We say that a $\Z_p[[G]]$-module $M$ is 
$I$-complete if the filtration by the submodules $I^nM$, $n\geq 0$, 
is complete. As in \cite{GHMR} we use a Nakayama type lemma 
to show that certain homomorphisms are surjective. Its proof is the same 
as that of Lemma 4.3 of \cite{GHMR}. 

\begin{lem}\label{nak} Let $G$ be a finitely generated 
profinite $p$-group and $f:M\to N$ a homomorphism of $IG$-complete 
$\Z_p[[G]]$-modules. 
Suppose that $H_0(f):H_0(G,M)\to H_0(G,N)$ is surjective. 
Then $f$ is surjective. \qed 
\end{lem}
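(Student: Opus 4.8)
The plan is to reduce the statement to the finitely generated case and then imitate the classical topological Nakayama lemma, using $IG$-completeness in place of finiteness. First I would recall that since $G$ is a finitely generated profinite $p$-group, the augmentation ideal $IG$ is topologically finitely generated as a left ideal, and $\Z_p[[G]]/IG \cong \Z_p$; moreover $IG$ is contained in the Jacobson radical of $\Z_p[[G]]$ in the appropriate completed sense, so that $H_0(G,M) = M/IGM$. The hypothesis that $H_0(f)$ is surjective says precisely that $N = f(M) + IGN$.

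The key step is then an inductive ``bootstrapping'': from $N = f(M) + IGN$ one applies $IG\cdot(-)$ and substitutes repeatedly to obtain $N = f(M) + I^nGN$ for every $n \geq 0$. Concretely, assuming $N = f(M) + I^nGN$, multiply the relation $N = f(M)+IGN$ by $I^nG$ to get $I^nGN = I^nGf(M) + I^{n+1}GN \subseteq f(M) + I^{n+1}GN$ (using that $f$ is a $\Z_p[[G]]$-module map, so $I^nGf(M) = f(I^nGM) \subseteq f(M)$), hence $N = f(M) + I^{n+1}GN$. Therefore $N = f(M) + \bigcap_{n\geq 0} I^nGN$ is what one would like to conclude after passing to the limit, but this requires care: intersecting is not the same as the completeness statement.

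The main obstacle — and the place where $IG$-completeness of $N$ (and the finite generation of $G$) genuinely enters — is making the limiting argument rigorous. The clean way is to work with the quotient module $Q := N/f(M)$, which is again a $\Z_p[[G]]$-module and is $IG$-complete (a quotient of an $IG$-complete module by a submodule which is closed, or one argues directly that the relevant filtration quotients vanish). The relations above show $Q = IGQ$, hence $Q = I^nGQ$ for all $n$, so the filtration $\{I^nGQ\}$ is the constant filtration equal to $Q$; but $IG$-completeness of $N$ forces $Q = \lim_n Q/I^nGQ = \lim_n 0 = 0$. Here one uses that $G$ finitely generated guarantees $\Z_p[[G]]$ is Noetherian (or at least that the completed group ring behaves well enough) so that submodules and quotients inherit completeness, exactly as in the proof of Lemma 4.3 of \cite{GHMR}, which one may simply cite. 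Thus $Q = 0$, i.e. $f$ is surjective.
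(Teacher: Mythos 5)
The paper gives no proof of this lemma beyond the remark that it is identical to that of Lemma 4.3 of \cite{GHMR}, which is the standard successive-approximation argument: given $n\in N$ one uses surjectivity of $H_0(f)$ and the finite topological generation of $IG$ to build elements $m_k\in (IG)^{k-1}M$ with $n-f(m_1+\cdots+m_k)\in (IG)^kN$, sums the series using $IG$-completeness of $M$, and concludes $n=f(\sum_k m_k)$ because $\bigcap_k (IG)^kN=0$. Your bootstrapping step $N=f(M)+(IG)^nN$ for all $n$ is exactly the right starting point, and you correctly identify that passing to the limit is the only real content.

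The gap is in how you close that limit. Passing to $Q=N/f(M)$ and arguing from $Q=IGQ$ forces $Q=0$ only if $Q$ is $IG$-adically separated, i.e.\ only if $\bigcap_n\bigl(f(M)+(IG)^nN\bigr)=f(M)$; this is equivalent to $f(M)$ being closed in the $IG$-adic topology on $N$, which is essentially the statement you are trying to prove and cannot be assumed. Neither of your proposed escapes repairs this: a quotient of a complete module by a non-closed submodule need not be separated, and the claim that finite generation of $G$ makes $\Z_p[[G]]$ Noetherian is false in this generality (the completed group ring of a free pro-$p$ group on two generators is not Noetherian; Noetherianness requires $G$ to be $p$-adic analytic). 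In the paper's actual applications the modules are profinite and $f$ is continuous, so $f(M)$ is compact, hence closed, and your argument can be salvaged along those lines; but the clean proof avoids the quotient altogether and constructs an explicit preimage as a convergent series in $M$, using only completeness of $M$ to sum the corrections and separatedness of $N$ to identify the limit.
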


In \cite{GHMR} we used the analogous Lemma 4.3 for $G=S_2^1$ in order 
to show that certain $\Z_3[[S_2^1]]$-linear maps are surjective. 
Here we use Lemma \ref{nak} for $G=K$ together with the action 
of the element $a$ on $H_0(K,-)$ resulting from the exact sequence 
(\ref{ksz}) in order to show that the same maps are surjective. 
The advantage of working with $K$ will become clear when we will 
discuss the kernel of the map $C_1\to C_0$ 
(see the remark after Proposition \ref{propN2} below). We begin the 
construction of the permutation resolution exactly as in \cite{GHMR}.  

\subsection {The homomorphism $\partial_1$}\label{partial1} 
 
Let 
$C_0=\mathbb{Z}_3[[\mathbb{G}_2^1]]
\otimes_{\mathbb{Z}_3[G_{24}]}\mathbb{Z}_3$ 
and $e_0=e\otimes 1\in C_0$ if $e$ is the unit in $\mathbb{G}_2^1$. 
Let $\partial_0:C_0 \to \mathbb{Z}_3$ be the standard augmentation  
and let $N_0$ be the kernel of $\partial_0$ so that we have a 
short exact sequence 
\begin{equation}\label{sec1}  
0 \to N_0 \to C_0 \to \mathbb{Z}_3 \to 0\ .
\end{equation} 

\begin{prop}\label{propN1} \mbox{ }
\begin{itemize}
\item [a)] As $\Z_3[[K]]$-module $N_0$ is generated by the 
elements $f_1:=(e-\omega)e_0$, $f_2:=(e-b)e_0$ and $f_3:=(e-c)e_0$. 
If we denote the class of $f_i$ in $H_0(K,N_0)$ 
by $\overline{f_i}$ then we have an isomorphism 
$$
H_0(K;N_0) \cong \Z_3\{\overline{f_1}\} 
\oplus \Z/9\{\overline{f_2}\} \oplus \Z/3\{\overline{f_3}\} \ .
$$ 
\item [b)] The action of $a$ on $H_0(K,N_0)$ is given by :
$$
\begin{array}{ccccc}
a_*\overline{f_1}=\overline{f_1}+\overline{f_2}, && 
a_*\overline{f_2}=\overline{f_2}+\overline{f_3}, &&
a_*\overline{f_3}=\overline{f_3}-3\overline{f_2}\ .
\end{array}
$$
\item[c)] $H_1(K;N_0)=0$. 
\end{itemize}
\end{prop}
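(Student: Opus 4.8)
The plan is to apply the homology long exact sequence of (\ref{sec1}) for the group $K$, the crucial point being that the restriction of $C_0$ along $K\hookrightarrow\G_2^1$ is a free $\Z_3[[K]]$-module. Indeed $K$ is torsion free while $G_{24}$ is finite, so every stabilizer $K\cap gG_{24}g^{-1}$ is trivial and $\G_2^1/G_{24}$ is a free $K$-set; its set of orbits is the double-coset space $K\backslash\G_2^1/G_{24}$, which has $[\G_2^1:K]/|G_{24}|=48/24=2$ elements (using (\ref{ksz}), $[\G_2^1:S_2^1]=16$, hence $[\G_2^1:K]=48$; the rank $2$ can also be read off from the analysis of $C_0$ over $\Z_3[[S_2^1]]$ in \cite{GHMR}). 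Thus $C_0|_K\cong\Z_3[[K]]^2$, so $H_i(K,C_0)=0$ for $i>0$ and $H_0(K,C_0)\cong\Z_3^2$, the two summands being indexed by the double cosets of $e$ and of $\omega$; these are distinct because $\omega\notin KG_{24}$ --- reduction modulo the maximal ideal of $\Oa_2$ sends $KG_{24}$ into $\{\bar\omega^{\,2k}\}\cup\{\bar\omega^{\,2k+1}\bar\phi\}\subset\F_9^\times\rtimes\Gal(\F_9/\F_3)$, which does not contain $\bar\omega$. Part (c) is then immediate: the long exact sequence gives $H_1(K,N_0)\cong H_2(K,\Z_3)=0$ by Proposition \ref{propK}(d).

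For part (a), the long exact sequence (with $H_1(K,C_0)=0$) reads
$$
0\to H_1(K,\Z_3)\buildrel{\delta}\over\longra H_0(K,N_0)\longra H_0(K,C_0)\buildrel{\overline{\partial_0}}\over\longra\Z_3\to 0 .
$$
Here $\overline{\partial_0}\colon\Z_3^2\to\Z_3$ is $(x,y)\mapsto x+y$, so its kernel is free of rank one and is generated by the image of $f_1=(e-\omega)e_0$. The connecting map $\delta$ sends the class of $g\in K$ to the class of $(g-e)e_0$; combined with Proposition \ref{propK}(c) (the classes of $b$ and $c$ generate $H_1(K,\Z_3)\cong\Z/9\oplus\Z/3$) this gives $\delta(b)=-\overline{f_2}$ and $\delta(c)=-\overline{f_3}$. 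Since $\Z_3$ is free the sequence splits, yielding the stated isomorphism $H_0(K,N_0)\cong\Z_3\{\overline{f_1}\}\oplus\Z/9\{\overline{f_2}\}\oplus\Z/3\{\overline{f_3}\}$. That $f_1,f_2,f_3$ generate $N_0$ over $\Z_3[[K]]$ then follows from Lemma \ref{nak} applied to $\Z_3[[K]]^3\to N_0$, which we have just shown to be onto on $H_0(K,-)$.

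For part (b) I would use the splitting (\ref{ksz}), under which $a$ acts on $H_0(K,N_0)=(N_0)_K$ by $[v]\mapsto[av]$, with $a_*^3=\mathrm{id}$ since $a^3=e$ acts trivially on $K$-coinvariants. Because $a\in G_{24}$ one has $ae_0=e_0$, and the relations $b=[a,\omega]$, $c=[a,b]$ give $a\omega\,e_0=b\omega\,e_0$ and $ab\,e_0=cb\,e_0$. Substituting $\omega e_0=e_0-f_1$ and $be_0=e_0-f_2$ one finds $e_0-b\omega e_0=f_1+f_2+(b-e)f_1$ and $e_0-cbe_0=f_2+f_3+(c-e)f_2$, whose last summands lie in $IK\cdot N_0$; hence $a_*\overline{f_1}=\overline{f_1}+\overline{f_2}$ and $a_*\overline{f_2}=\overline{f_2}+\overline{f_3}$. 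For the final formula, $3\overline{f_3}=0$ and torsion-freeness of $\Z_3\{\overline{f_1}\}$ force $a_*\overline{f_3}=x\overline{f_2}+y\overline{f_3}$ with $3x=0$ in $\Z/9$ (so $x\in\{0,\pm3\}$) and $y$ a unit mod $3$ ($a_*$ being invertible); imposing $a_*^3=\mathrm{id}$ on the now-explicit matrix of $a_*$ then forces $x=-3$ and $y=1$. Alternatively one computes directly, using Lemma \ref{abcd} and the graded Lie algebra structure of $K$, that $[a,c]\equiv b^{-3}$ modulo $\overline{[K,K]}$, so that $aca^{-1}=[a,c]c$ has class $-3b+c$ in $H_1(K,\Z_3)$ and $a_*\overline{f_3}=[(e-aca^{-1})e_0]=-3\overline{f_2}+\overline{f_3}$ by the description of $\delta$. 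The formal homological algebra above is routine; the genuinely delicate points are the orbit count together with the verification $\omega\notin KG_{24}$ in (a)--(b), and --- on the direct route for the last formula in (b) --- pinning down the class of $[a,c]$ (equivalently, the arithmetic forcing $x=-3$), which I expect to be the main obstacle.
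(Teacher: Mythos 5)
Your proof is correct and follows essentially the same route as the paper: the long exact homology sequence for $K$ applied to (\ref{sec1}), the identification of $H_1(K,\Z_3)$ with classes of $(g-e)e_0$, the relations $a\omega e_0=b\omega e_0$ and $abe_0=cbe_0$ for the first two formulae in (b), and $a_*^3=\mathrm{id}$ for the third. You additionally spell out two points the paper leaves implicit — the freeness and rank of $C_0|_K$ via the double-coset count, and the Nakayama (Lemma \ref{nak}) step showing $f_1,f_2,f_3$ generate $N_0$ — both of which are correct.
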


\begin{proof}  a) We consider the long exact sequence in $H_*(K,-)$ 
associated to the short exact sequence (\ref{sec1}). 
As $C_0$ is free $K$-module we have $H_1(K;C_0)=0$. Furthermore,  
$H_0(K;C_0)\cong \Z_3^2$, generated by the classes of $e_0$ and $\omega e_0$ 
so that the end of this long exact sequence has the following form   
$$
0 \to H_1(K;\Z_3)\cong\Z/9\oplus\Z/3\to H_0(K;N_0) \to H_0(K;C_0)\cong\Z_3^2 
\to H_0(K;\Z_3)\cong \Z_3\to 0 \ .
$$
Now (a) follows from Proposition \ref{propK} and the identification of 
$H_1(K,\Z_3)$ with $IK/(IK)^2$ which sends $x\in K$ to the class of 
$x-e$ in $IK/(IK)^2$.   

b) By definition we have $ae_0=e_0$ and thus 
$$
a\omega e_0=a\omega a^{-1}\omega^{-1}\omega ae_0=
[a,\omega]\omega e_0=b\omega e_0 \ . 
$$
Consequently 
\begin{equation}\label{a-action1}
a(e-\omega)e_0=(e-a\omega)e_0=(e-b\omega)e_0= 
b(e-\omega)e_0+(e-b)e_0
\end{equation} 
and we obtain the first formula by passing to $K$-coinvariants. 

Similarly, $ab=cba$ and $ae_0=e_0$ imply  
\begin{equation}\label{a-action2} 
a(e-b)e_0=(e-ab)e_0=(e-cba)e_0 =(e-cb)e_0=c(e-b)e_0+(e-c)e_0
\end{equation} 
and by passing to $K$-coinvariants we get the second formula.

The third formula can now be deduced from the fact that 
${a}_*^3\overline{f}_1=\overline{f}_1$.

c) This follows from the long exact sequence in $H_0(K,-)$ associated to 
the exact sequence (\ref{sec1}) by using that $H_i(K,C_0)=0$ for $i=1,2$ 
and $H_2(K,\Z_3)=0$.   
\end{proof}

Let $C_1=\mathbb{Z}_3[[\mathbb{G}_2^1]]\otimes_{\mathbb{Z}_3[SD_{16}]}\chi$ 
where $\chi$ is the non trivial character of $SD_{16}$ defined over 
$\mathbb{Z}_3$ on which $\omega$ and $\phi$ both act by multiplication 
by $-1$.  
Let $e_1$ be the generator of $C_1$ given by $e\otimes 1$ where $e$ 
is as before the unit in $\G_2^1$. 

\begin{cor}\label{delta1} 
There is a $\Z_3[[\G_2^1]]$-linear epimorphism 
$\partial_1:C_1 \to N_0$ given by $e_1 \mapsto (e-\omega)e_0$.
\end{cor}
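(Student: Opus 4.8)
The plan is to produce the map $\partial_1 \colon C_1 \to N_0$ by the universal property of the induced module $C_1 = \Z_3[[\GG_2^1]] \otimes_{\Z_3[SD_{16}]} \chi$: a $\Z_3[[\GG_2^1]]$-linear map out of $C_1$ is the same as a $\Z_3[SD_{16}]$-linear map $\chi \to N_0$, which in turn is determined by the image $m := \partial_1(e_1) \in N_0$ of the generator $e_1$, subject only to the constraint that $m$ transforms under $SD_{16}$ the way the generator of $\chi$ does, namely $\omega m = -m$ and $\phi m = -m$. So the first step is simply to verify that the proposed element $m = (e - \omega)e_0 \in N_0$ satisfies these two relations.

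For the relation under $\omega$: since $\omega \in SD_{16}$ and $G_{24}$ contains $Q_8 \ni t = \omega^2$ but not $\omega$ itself, one must compute $\omega \cdot (e-\omega)e_0 = (\omega - \omega^2)e_0$ inside $C_0 = \Z_3[[\GG_2^1]]\otimes_{\Z_3[G_{24}]}\Z_3$. Using $\omega^2 = t \in G_{24}$, we get $\omega^2 e_0 = e_0$, hence $\omega(e-\omega)e_0 = (\omega - 1)e_0 = -(e-\omega)e_0 = -m$, as required. For the relation under $\phi$: the Frobenius $\phi$ is not in $\GG_2^1$ but $\psi = \omega\phi \in G_{24}$, so $\phi = \omega^{-1}\psi$ and $\phi e_0 = \omega^{-1}\psi e_0 = \omega^{-1}e_0 = \omega^7 e_0$; since $\omega^7 = \omega^{-1}$ and $\omega^{-2} = t^{-1} \in G_{24}$ acts trivially on $e_0$, one computes $\phi(e-\omega)e_0 = (\omega^{-1} - \omega^{-1}\cdot\omega\cdot\text{(something)})e_0$ — more carefully, $\phi\cdot\omega\cdot e_0$ via the twisted conjugation, landing on $\phi m = -m$ after reducing all group elements modulo $G_{24}$. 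These are finite, purely group-theoretic computations in the order-$24$ and order-$16$ subgroups and in the quotient set $\GG_2^1/G_{24}$, so I will not spell out every bracket here.

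Granting that $m$ has the right $SD_{16}$-equivariance, the universal property yields a well-defined $\Z_3[[\GG_2^1]]$-linear map $\partial_1 \colon C_1 \to N_0$ with $\partial_1(e_1) = m$. It remains to check surjectivity, and this is where Proposition \ref{propN1} does the work. By Lemma \ref{nak} applied to $G = K$ (note $K$ is a finitely generated pro-$3$ group and both $C_1$ and $N_0$ are $IK$-complete, being finitely generated over the completed group ring), it suffices to show that $H_0(K, \partial_1) \colon H_0(K, C_1) \to H_0(K, N_0)$ is surjective. By Proposition \ref{propN1}(a), $H_0(K, N_0)$ is generated as a group by $\overline{f_1}, \overline{f_2}, \overline{f_3}$, and $\overline{f_1} = \overline{m}$ is by construction in the image of $H_0(K,\partial_1)$. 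Using the $a$-action formulas of Proposition \ref{propN1}(b) — $a_*\overline{f_1} = \overline{f_1} + \overline{f_2}$, hence $\overline{f_2} = a_*\overline{f_1} - \overline{f_1}$, and then $\overline{f_3} = a_*\overline{f_2} - \overline{f_2}$ — together with the fact that $a$ normalizes $K$ (from the split exact sequence (\ref{ksz})), so that $a_*$ commutes with the image of $H_0(K,\partial_1)$, we conclude that $\overline{f_2}$ and $\overline{f_3}$ are also hit. Thus $H_0(K,\partial_1)$ is surjective and Lemma \ref{nak} finishes the proof.

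The main obstacle is the first step: pinning down that $(e-\omega)e_0$ genuinely satisfies $\omega m = -m$ and $\phi m = -m$ in $C_0$, which requires being careful about how $\omega$ and $\phi$ — neither of which lies in $\GG_2^1$ or in $G_{24}$ in the naive way — act after passing to the coset space $\GG_2^1/G_{24}$, using the relations $\omega^2 = t \in Q_8 \subset G_{24}$ and $\psi = \omega\phi \in G_{24}$. Once the equivariance is confirmed, the existence of $\partial_1$ is formal and the surjectivity is an immediate corollary of the already-established Proposition \ref{propN1}; no further serious computation is needed.
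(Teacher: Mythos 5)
Your proposal is correct and follows essentially the same route as the paper: check that $(e-\omega)e_0$ transforms under $\omega$ and $\phi$ by $-1$ (the paper does this in two lines by observing that $\omega^2$, $\phi\omega$ and $\omega^{-1}\phi$ all lie in $G_{24}$ and hence fix $e_0$), invoke Frobenius reciprocity to get $\partial_1$, and then deduce surjectivity from Lemma \ref{nak} applied to $K$ together with Proposition \ref{propN1} and the freeness of $C_1$ over $\Z_3[[K]]$ on $e_1$, $ae_1$, $a^2e_1$. The only part you leave slightly unfinished is the $\phi$-equivariance, but the facts you cite ($\psi=\omega\phi\in G_{24}$, $\omega^2=t\in G_{24}$) do close it: $\phi e_0=\omega e_0$ and $\phi\omega e_0=\omega^3\phi e_0=\omega^4e_0=e_0$, giving $\phi(e-\omega)e_0=-(e-\omega)e_0$ exactly as in the paper.
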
 

\begin{proof} The elements $\omega^2$, $\phi\omega$ and $\omega^{-1}\phi$ 
all belong to $G_{24}$ and hence they act trivially on $e_0$. 
Therefore we have  
$$
\omega(e-\omega)e_0=(\omega-\omega^2)e_0=-(e-\omega)e_0
$$ 
and 
$$
\phi(e-\omega)e_0=(\phi-\phi\omega)e_0=
(\omega(\omega^{-1}\phi)-e)e_0=-(e-\omega)e_0 \ .
$$  
This implies that there is a well defined homomorphism $C_1\to N_0$ 
which sends $e_1$ to $(e-\omega)e_0$. 
To see that this homomorphism is surjective we note that 
$C_1$ is free as $\Z_3[[K]]$-module of rank $3$ with generators 
$e_1$, $ae_1$ and $a^2e_1$. Then we use 
Lemma \ref{nak} and Proposition \ref{propN1}.  
\end{proof}
 
\subsection {The homomorphism $\partial_2$}\label{partial2} 
Now we turn towards the construction of the second homomorphism 
in our permutation resolution. This is substantially more intricate; 
in \cite{GHMR} its existence was established but no explicit formula was given. 

Let $N_1$ be the kernel of $\partial_1$ so that we have a 
short exact sequence 
\begin{equation}\label{sec2}  
0 \to N_1 \to C_1 \to N_0 \to 0\ . 
\end{equation}

\begin{prop}\label{propN2} \mbox{ }
\begin{itemize}
\item[a)] $H_0(K;N_1)\cong \Z_3^2$. The inclusion of $N_1$ into $C_1$ 
induces an injection $H_0(K,N_1)\to H_0(K,C_1)$ and identifies 
$H_0(K,N_1)$ with the submodule generated by the classes 
$\overline{g}_i$, $i=1,2$, of  $g_1=3(a-e)^2e_1$ and  $g_2=9(a-e)e_1$. 
\item[b)] The action of $a$ on $H_0(K,N_1)$ is determined by 
$$
\begin{array}{rcl}a_*\overline{g}_1=-2\overline{g}_1-\overline{g}_2,
&&a_*\overline{g}_2=3\overline{g}_1+\overline{g}_2\ . 
\end{array}
$$
\item[c)] $H_0(S_2^1,N_1)\cong \Z/3$ and if 
$n_1$ is any element of $N_1$ which agrees in 
$H_0(K,C_1)$ with $\overline{g}_1$ then its class in  
$H_0(S_2^1,N_1)$ is non-trivial. 
\item[d)] The elements $\omega$ and $\phi$ both act on $H_0(S_2^1,N_1)$ 
by multiplication by $-1$. 
\end{itemize}
\end{prop}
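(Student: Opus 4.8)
The plan is to run the long exact sequence in $H_*(K,-)$ attached to the short exact sequence (\ref{sec2}), exactly as in the proof of Proposition \ref{propN1}, and then to deal with the $a$-action and the passage to $S_2^1$-coinvariants by the same bootstrap we already used for $N_0$. First I would record the input: $C_1$ is free as a $\Z_3[[K]]$-module of rank $3$ (generators $e_1$, $ae_1$, $a^2e_1$), so $H_i(K;C_1)=0$ for $i>0$ and $H_0(K;C_1)\cong\Z_3^3$; from Corollary \ref{delta1} and Proposition \ref{propN1}(a),(c) we have $H_0(K;N_0)\cong\Z_3\oplus\Z/9\oplus\Z/3$ and $H_1(K;N_0)=0$. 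The relevant segment of the long exact sequence is therefore
$$
0\to H_1(K;N_0)=0\to H_0(K;N_1)\to H_0(K;C_1)\cong\Z_3^3\buildrel{(\partial_1)_*}\over\longra H_0(K;N_0)\cong\Z_3\oplus\Z/9\oplus\Z/3\to 0,
$$
so $H_0(K;N_1)$ is precisely the kernel of $(\partial_1)_*$, a subgroup of $\Z_3^3$ of rank $1$... no: the cokernel is all of $H_0(K;N_0)$, which has rank $1$, so the kernel has rank $2$ and is free, i.e. $H_0(K;N_1)\cong\Z_3^2$. This gives (a) once I identify the kernel explicitly. For that I would use the $\Z_3$-basis of $H_0(K;C_1)$ given by $\overline{e_1},\overline{ae_1},\overline{a^2e_1}$ and compute the images under $\partial_1$ in $H_0(K;N_0)=\Z_3\{\overline{f_1}\}\oplus\Z/9\{\overline{f_2}\}\oplus\Z/3\{\overline{f_3}\}$: by definition $\partial_1(e_1)=f_1$, and $\partial_1(ae_1)=af_1$, $\partial_1(a^2e_1)=a^2f_1$, whose classes in $H_0(K;N_0)$ are computed from the $a$-action formulae of Proposition \ref{propN1}(b), namely $a_*\overline{f_1}=\overline{f_1}+\overline{f_2}$ and $a_*^2\overline{f_1}=\overline{f_1}+2\overline{f_2}+\overline{f_3}$. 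From these three vectors one reads off that the kernel of $(\partial_1)_*$ is spanned over $\Z_3$ by $3(\overline{a^2e_1}-2\overline{ae_1}+\overline{e_1})=3\overline{(a-e)^2e_1}$ and $9(\overline{ae_1}-\overline{e_1})=9\overline{(a-e)e_1}$, which are exactly $\overline{g_1}$ and $\overline{g_2}$; their images in $H_0(K;C_1)$ are $\Z_3$-independent, giving the claimed injectivity and identification.

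Part (b) is then a direct computation in $H_0(K;C_1)\cong\Z_3^3$: apply $a_*$ to $\overline{g_1}=3(\overline{a^2e_1}-2\overline{ae_1}+\overline{e_1})$ and $\overline{g_2}=9(\overline{ae_1}-\overline{e_1})$, using $a^3e_1=e_1$ in $C_1$ (equivalently $a_*^3=\mathrm{id}$ on $H_0(K;C_1)$, since $a$ has order $3$), and re-express the result in the basis $\overline{g_1},\overline{g_2}$; the matrix $\left(\begin{smallmatrix}-2&3\\-1&1\end{smallmatrix}\right)$ is forced by $a_*^3=\mathrm{id}$ and a short verification. For part (c) I would invoke the split exact sequence (\ref{ksz}), $1\to K\to S_2^1\to\Z/3\to 1$ with $\Z/3$ generated by the image of $a$: then $H_0(S_2^1;N_1)=H_0(\Z/3;H_0(K;N_1))$ is the coinvariants of the order-$3$ operator $a_*$ on $\Z_3^2$, i.e. $\mathrm{coker}(a_*-\mathrm{id})$. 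From (b), $a_*-\mathrm{id}=\left(\begin{smallmatrix}-3&3\\-1&0\end{smallmatrix}\right)$, whose cokernel is $\Z_3^2/\langle(-3,-1),(3,0)\rangle\cong\Z/3$, generated by the class of $\overline{g_1}$; any $n_1\in N_1$ agreeing with $\overline{g_1}$ in $H_0(K;C_1)$ has the same nonzero class, proving (c). Finally (d): $\omega$ and $\phi$ normalize $K$ and commute with $a$ up to elements of $K$ (indeed $\phi a\phi^{-1}$ and $\omega a\omega^{-1}$ differ from a power of $a$ by a commutator in $K$, which dies in $H_0$), so they act on $H_0(S_2^1;N_1)\cong\Z/3$; since $\Z/3$ has automorphism group of order $2$, each acts by $\pm 1$, and I would pin down the sign $-1$ by tracking $\omega$ and $\phi$ on the generator $g_1=3(a-e)^2e_1$, using $\omega e_1=-e_1$, $\phi e_1=-e_1$ (the defining relations of $\chi$) together with $\omega a\omega^{-1}\equiv a^{?}$, $\phi a\phi^{-1}\equiv a^{?}$ modulo $K$: tracing through, $(a-e)^2e_1$ transforms the way $e_1$ does up to sign, so the sign is indeed $-1$.

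The main obstacle I anticipate is part (a), specifically making the identification of $\ker(\partial_1)_*$ with the span of $\overline{g_1},\overline{g_2}$ clean: one must be careful that $f_2,f_3$ are only torsion classes in $H_0(K;N_0)$ (orders $9$ and $3$), so the relations $a_*\overline{f_1}=\overline{f_1}+\overline{f_2}$ etc. are relations in a module with torsion, and the "coefficients" $3$ and $9$ in $g_1,g_2$ are precisely what is needed to kill that torsion and land in $N_1$. Concretely one checks $3(a-e)^2e_1$ and $9(a-e)e_1$ map to $0$ in $H_0(K;N_0)$ — i.e. lie in the image of $H_0(K;N_1)$ — while no smaller multiple does, and that these two classes are $\Z_3$-independent in $\Z_3^3$; the rank count from the long exact sequence then shows they generate. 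The $a$-action bookkeeping in (b) and the sign determination in (d) are routine once (a) is set up, and (c) is immediate from (b) and the split extension (\ref{ksz}).
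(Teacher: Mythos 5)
Your arguments for (a), (b) and (c) are correct and coincide with the paper's: the long exact sequence for (\ref{sec2}) together with $H_1(K;N_0)=0$ identifies $H_0(K;N_1)$ with the kernel of $(\partial_1)_*\colon H_0(K;C_1)\to H_0(K;N_0)$, and your explicit computation of that kernel, of the $a$-action on it (which, as you note, already holds in $H_0(K;C_1)\cong\Z_3^3$), and of the $\Z/3$-coinvariants via the split sequence (\ref{ksz}) is exactly the content the paper compresses into three sentences.

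Part (d) is where your proof has a genuine gap. The paper does not prove (d) here at all — it cites Lemma 4.6 of \cite{GHMR} — and your substitute argument rests on two steps that fail. First, $\omega$ does \emph{not} normalize $K$: conjugation by $\omega$ acts on $gr_{\frac{1}{2}}S_2^1\cong\F_9$ by multiplication by $\omega^{1-3}=\omega^{-2}$, an element of order $4$ in $\F_9^{\times}$, which does not preserve the line $\F_3\subset\F_9$ and hence does not preserve $K=\Ker(S_2^1\to\F_9/\F_3)$ (only $\omega^2$ and $\phi$ normalize $K$). So $\omega$ does not act on $H_0(K;C_1)$, and ``tracking $\omega$ on $g_1$'' there is not meaningful. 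Second, even at the level of $S_2^1$-coinvariants, where $\omega$ and $\phi$ do act, the sign cannot be read off from $\omega e_1=-e_1$: the map $H_0(S_2^1;N_1)\cong\Z/3\to H_0(S_2^1;C_1)\cong\Z_3$ is necessarily zero — this is precisely the point of the Remark following the proposition — so the class of $\omega n_1$ in $H_0(S_2^1;N_1)$ is invisible in $C_1$; note also that $g_1=3(a-e)^2e_1$ itself need not lie in $N_1$ (only its class in $H_0(K;C_1)$ lies in the image of $H_0(K;N_1)$), so you are not even tracking the action on an element of $N_1$. Pinning down the sign requires identifying $H_0(S_2^1;N_1)$ as a quotient of $H_1(S_2^1;N_0)$ via the connecting homomorphism of the long exact sequence for (\ref{sec2}) and computing the $SD_{16}$-action there; that computation is the content of the cited lemma of \cite{GHMR}.
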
 

\begin{proof} a) We observe that $C_1$ is a free $K$-module of 
rank $3$ generated by $e_1$, $ae_1$ and $ae_2$. Then (a) follows 
from Proposition \ref{propN1} by using the long exact sequence in $H_0(K,-)$ 
associated to the short exact sequence (\ref{sec2}) . 

b) The formulae for the action of $a$ already hold in $H_0(K,C_1)$. 
  
c) This follows from part (b) by passing to coinvariants with respect to 
the action of $a$. 

d) This has already been observed in Lemma 4.6. of \cite{GHMR}. 
\end{proof}

\remarkstyle{Remark} We remark that working with $S_2^1$-coinvariants only 
(as in \cite{GHMR}) does not give us a good hold on a generator 
of $N_1$. The reason is that the map 
$H_0(S_2^1,N_1)\cong \Z/3\to H_0(S_2^1,C_1)\cong \Z_3$ 
is necessarily trivial and therefore such a generator 
cannot be easily associated with an element in $C_1$. 
Working with $K$-coinvariants gives us a starting point, 
namely the element $g_1\in C_1$, from which we can try to construct 
an element $n_1$ of $N_1$ whose class in $H_0(K,C_1)$ agrees 
with that of $\overline{g}_1$ and thus projects to a 
generator of $H_0(S_2^1,N_1)$. A first step in the direction of 
finding such a generator $n_1$ is taken in Lemma \ref{lemn2} below.  

\begin{cor}\label{corn2} Let $C_2=\Z_3[[\GG_2^1]]\otimes_{\Z_3[SD_{16}]}\chi$,  
let $n_1\in N_1$ be any element which projects non-trivially to  
the coinvariants $H_0(S_2^1;N_1)$ and let 
$$
n_1':=\frac{1}{16}\sum_{g\in SD_{16}}\chi(g^{-1})g(n_1)\ .
$$
Then there is a $\Z_3[[\G_2^1]]$-linear epimorphism 
$\partial_2:C_2 \to N_2$ given by $e_2 \mapsto n_1'$.  
\end{cor}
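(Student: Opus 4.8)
The plan is to imitate the construction of $\partial_1$ in Corollary~\ref{delta1}, replacing Proposition~\ref{propN1} by Proposition~\ref{propN2}. First I would check that $n_1'$ lies in $N_1$: by Proposition~\ref{propN2}(c) and (d), the element $n_1$ has non-trivial class in $H_0(S_2^1,N_1)\cong\Z/3$, and $\omega$ and $\phi$ act on this $\Z/3$ by $-1$; since $N_1$ is $3$-torsion-free inside $C_1$ only after passing to coinvariants one must be slightly careful, but the averaging operator $\frac{1}{16}\sum_{g\in SD_{16}}\chi(g^{-1})g$ is an idempotent on $C_1$ (it is well-defined over $\Z_3$ because $16$ is invertible) and it preserves the $\Z_3[[\G_2^1]]$-submodule $N_1$, so $n_1'\in N_1$. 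By construction $n_1'$ satisfies $\omega(n_1')=\chi(\omega)n_1'=-n_1'$ and $\phi(n_1')=-n_1'$, which is exactly the condition needed for the map $e_2\mapsto n_1'$ to extend to a well-defined $\Z_3[[\G_2^1]]$-linear homomorphism $\partial_2:C_2\to N_1$ out of $C_2=\Z_3[[\G_2^1]]\otimes_{\Z_3[SD_{16}]}\chi$ (here I would note the typo: the target should be $N_2$ only if one renames, but as stated $\partial_2$ lands in $N_1$, and presumably $N_2:=N_1$ or the statement means the kernel-target; I would phrase it as a map to $N_1$).

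Next I would show $\partial_2$ is surjective, which is where Lemma~\ref{nak} enters. The point is that $n_1'$ and $n_1$ have the same image in $H_0(S_2^1,N_1)$: indeed $g(n_1)\equiv n_1$ modulo the augmentation-type relations in $H_0(S_2^1,-)$ for $g\in S_2^1\supset SD_{16}\cap S_2^1$, and for the Galois part one uses Proposition~\ref{propN2}(d); averaging with the character $\chi$ against the $-1$-action reproduces $n_1$ up to a unit, so the class of $n_1'$ in $H_0(S_2^1,N_1)\cong\Z/3$ is still a generator. Now I would run the $K$-version of the Nakayama argument exactly as in the proof of Corollary~\ref{delta1}: $C_2$ is free as a $\Z_3[[K]]$-module of rank $3$ with basis $e_2,ae_2,a^2e_2$, so by Lemma~\ref{nak} it suffices to see that $H_0(K,\partial_2):H_0(K,C_2)\to H_0(K,N_1)$ is surjective. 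By Proposition~\ref{propN2}(a), $H_0(K,N_1)\cong\Z_3^2$ is generated by $\overline{g}_1$ and $\overline{g}_2$, and by part (b) the $a$-action interchanges these up to the stated matrix; since $n_1'$ maps to $\overline{g}_1$ (up to a $3$-adic unit) in $H_0(K,N_1)$ and $C_2$ is generated over $\Z_3[[K]]$ by $e_2,ae_2,a^2e_2$, the images $\overline{g}_1$, $a_*\overline{g}_1=-2\overline{g}_1-\overline{g}_2$, $a_*^2\overline{g}_1$ together span $\Z_3^2$ (the first two already do, since $\det\begin{pmatrix}1&-2\\0&-1\end{pmatrix}=-1$ is a unit). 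Hence $H_0(K,\partial_2)$ is onto and Lemma~\ref{nak} gives surjectivity of $\partial_2$.

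The one genuinely delicate point — and the step I would flag as the main obstacle — is the verification that $n_1'$ still represents a generator of $H_0(S_2^1,N_1)\cong\Z/3$ after averaging, i.e.\ that the $SD_{16}$-averaging with the character $\chi$ does not accidentally kill the class. This requires knowing how $SD_{16}$ acts on $n_1$ itself (not just on its coinvariant class), and in particular that the non-$S_2^1$, non-Galois contributions all lie in the span of things that become trivial, or at worst become multiplication by the scalar $\chi$; Proposition~\ref{propN2}(d) handles the generators $\omega$ and $\phi$ of $SD_{16}$ on the \emph{coinvariants}, and one must check this is enough to control the average. Concretely I expect the clean statement to be: for $g\in SD_{16}$, $g(n_1)\equiv \chi(g)\,n_1$ in $H_0(S_2^1,N_1)$, whence $n_1'\equiv n_1$ there, which is immediate from (d) once one observes $SD_{16}$ is generated by $\omega$ and $\phi$ and the coinvariant functor is additive. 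Everything else — the idempotent property of the averaging operator, $\Z_3$-linearity since $16\in\Z_3^\times$, freeness of $C_2$ over $\Z_3[[K]]$, the matrix computation — is routine and parallels Corollary~\ref{delta1} verbatim.
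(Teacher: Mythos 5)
Your argument is correct and follows the paper's own (very terse) proof exactly: well-definedness of $\partial_2$ comes from the fact that the averaging operator forces $SD_{16}$ to act on $n_1'$ through $\chi$, and surjectivity comes from Lemma~\ref{nak} applied to $K$. The paper records only these two sentences, so your expansion of the Nakayama step --- using Proposition~\ref{propN2}(d) to see that averaging does not kill the class of $n_1$ in $H_0(S_2^1,N_1)$, and then spanning $H_0(K,N_1)\cong\Z_3^2$ via the $a$-action of Proposition~\ref{propN2}(b) --- is exactly the intended content, and you are right that the stated target $N_2$ is a typo for $N_1$. One small slip: the hypothesis on $n_1$ only controls its image in $H_0(S_2^1,N_1)$, so the class of $n_1'$ in $H_0(K,N_1)$ is $\lambda\overline{g}_1+\mu\overline{g}_2$ with $\lambda$ a unit but $\mu$ unknown, not a unit multiple of $\overline{g}_1$ alone; redoing your determinant for this general vector gives $-\lambda^2+3\lambda\mu-3\mu^2\equiv-\lambda^2 \bmod 3$, still a unit, so the conclusion stands.
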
 

\begin{proof} By construction the group $SD_{16}$ acts on $n_1'$ 
via the mod-$3$ reduction of the character $\chi$ 
and thus there is a homomorphism $\partial_2$ as claimed. 
Surjectivity of $\partial_2$ follows from Lemma \ref{nak}. 
\end{proof}

\begin{lem}\label{lemn2} \mbox{ }
\begin{itemize} 
\item[a)] 
Let 
$l_1=(a-b)e_1$, $l_2=(a-c)l_1$ and $l_3=3cl_2+(e-c)^2l_2$.  
Then 
$$
\partial_1(l_1)=(e-b)e_0, \ \ \partial_1(l_2)=(e-c)e_0 \ ,
\ \ \partial_1(l_3)=(e-c^3)e_0 \ .
$$
\item[b)] There exist elements $x,y\in IK$ such that 
$e-c^3=x(e-b)+y(e-c)$. 
\item[c)] If $x,y\in IK$ satisfy $e-c^3=x(e-b)+y(e-c)$ then   
$$
n_1:=l_3-xl_1-yl_2
$$ 
belongs to $N_1$ and projects non-trivially to $H_0(S_2^1,N_1)$.     
\end{itemize}  
\end{lem}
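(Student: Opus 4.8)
The plan is to verify parts (a), (b), (c) in turn, with (b) being the main point that requires genuine input from the structure of $K$.

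For part (a), I would simply compute $\partial_1$ on each $l_i$ using $\Z_3[[\G_2^1]]$-linearity and the defining formula $\partial_1(e_1)=(e-\omega)e_0$ together with the relations for the $a$-action on $N_0$ established in Proposition \ref{propN1}. Concretely, $\partial_1(l_1)=\partial_1((a-b)e_1)=(a-b)(e-\omega)e_0$, and by equation (\ref{a-action1}), $a(e-\omega)e_0=b(e-\omega)e_0+(e-b)e_0$, so $\partial_1(l_1)=(e-b)e_0$ as claimed. Next $\partial_1(l_2)=(a-c)\partial_1(l_1)=(a-c)(e-b)e_0$; by equation (\ref{a-action2}), $a(e-b)e_0=c(e-b)e_0+(e-c)e_0$, giving $\partial_1(l_2)=(e-c)e_0$. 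Finally $\partial_1(l_3)=\partial_1\big(3cl_2+(e-c)^2l_2\big)=\big(3c+(e-c)^2\big)(e-c)e_0=\big(3c+(e-c)^2\big)(e-c)e_0$; expanding, $3c(e-c)+(e-c)^3 = (e-c^3) + $ [terms that vanish], i.e. one uses the identity $e-c^3=(e-c)^3+3(e-c)-3(e-c)^2 = (e-c)^3+3c(e-c)$ after rearranging, so that $\partial_1(l_3)=(e-c^3)e_0$. This last identity is a purely formal computation in the group ring, valid for any group element $c$.

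For part (b), the point is that $e-c^3$ lies in the ideal of $\Z_3[[K]]$ generated by $e-b$ and $e-c$. Equivalently, after passing to $H_1(K;\Z_3)=IK/(IK)^2$, I must show that the class of $e-c^3$ lies in the $\Z_3$-span of the classes of $e-b$ and $e-c$ inside $H_0(K;IK/(IK)^2)$ — but actually more is needed, since one wants $e-c^3$ in the two-sided ideal $IK\cdot(e-b)+IK\cdot(e-c)$, not merely modulo $(IK)^2$. Here is where I would invoke Proposition \ref{propK}(c): $H_1(K;\Z_3)\cong \Z/9\oplus\Z/3$ with the $\Z/9$ generated by $b$ and the $\Z/3$ by $c$. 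Since $c^3$ has trivial image in $H_1(K;\Z_3)$ — because $9$ annihilates the $b$-component and $3$ the $c$-component, while $c^3$ maps to $3\cdot[c]=0$ — we conclude $e-c^3\in (IK)^2$. Then I would argue that $(IK)^2$ is contained in the left ideal generated by $e-b$ and $e-c$: indeed $IK$ is topologically generated as a left module over $\Z_3[[K]]$ by $e-b$ and $e-c$ (since these generate $H_1(K;\Z_3)=IK/(IK)^2$ as an abelian group, hence by the topological Nakayama lemma generate $IK$ as a $\Z_3[[K]]$-module), so $(IK)^2=IK\cdot IK\subseteq IK\cdot(e-b)+IK\cdot(e-c)$, and since we want the coefficients $x,y$ in $IK$ rather than in $\Z_3[[K]]$, we use that $e-c^3\in (IK)^2$ forces exactly that. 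I expect this to be the delicate step: one must be careful about left versus right ideals and about the completed group ring, and the cleanest route is via the Nakayama lemma (Lemma \ref{nak}) applied to $IK$ as a $\Z_3[[K]]$-module.

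For part (c), given such $x,y\in IK$ with $e-c^3=x(e-b)+y(e-c)$, we set $n_1=l_3-xl_1-yl_2$ and compute $\partial_1(n_1)=(e-c^3)e_0 - x(e-b)e_0 - y(e-c)e_0 = \big(e-c^3 - x(e-b)-y(e-c)\big)e_0 = 0$, so $n_1\in N_1=\Ker\partial_1$. To see that $n_1$ projects non-trivially to $H_0(S_2^1;N_1)\cong\Z/3$, by Proposition \ref{propN2}(c) it suffices to check that the image of $n_1$ in $H_0(K;C_1)$ agrees with $\overline{g}_1$, the class of $g_1=3(a-e)^2e_1$. Since $x,y\in IK$, the terms $xl_1$ and $yl_2$ lie in $IK\cdot C_1$, hence vanish in $H_0(K;C_1)$; so the image of $n_1$ equals the image of $l_3$. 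Now $l_3=3cl_2+(e-c)^2l_2$ and again $(e-c)^2l_2\in IK\cdot C_1$ vanishes in $H_0(K;C_1)$, while $3cl_2 \equiv 3l_2$ there (as $c\equiv e$ modulo $IK$), and $l_2=(a-c)l_1\equiv(a-e)l_1$ modulo $IK\cdot C_1$... — here I would instead track everything modulo $IK\cdot C_1$ from the start: $l_1=(a-b)e_1\equiv(a-e)e_1$, $l_2\equiv(a-e)l_1\equiv (a-e)^2 e_1$, so $3cl_2\equiv 3(a-e)^2e_1=g_1$ in $H_0(K;C_1)$. (One checks $b\equiv e$ and $c\equiv e$ mod $IK$ since $b,c\in K$.) Thus $n_1$ and $g_1$ have the same class in $H_0(K;C_1)$, and therefore the same non-trivial class in $H_0(S_2^1;N_1)$ by Proposition \ref{propN2}(c). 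This completes the plan; the only substantial work is part (b), and the rest is bookkeeping in the group ring modulo $IK$.
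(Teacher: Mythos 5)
Your proposal is correct and follows essentially the same route as the paper: part (a) via the relations (\ref{a-action1}), (\ref{a-action2}) and the group-ring identity $3c(e-c)+(e-c)^3=e-c^3$; part (b) by showing $e-c^3\in (IK)^2=IK(e-b)+IK(e-c)$ using Proposition \ref{propK}.c; and part (c) by checking $n_1\equiv 3(a-e)^2e_1$ in $H_0(K,C_1)$ and invoking Proposition \ref{propN2}.c. The only cosmetic difference is that you see $[c^3]=3[c]=0$ directly in $H_1(K;\Z_3)\cong\Z/9\oplus\Z/3$, whereas the paper notes $c^3\in F_2K\subset\overline{[K,K]}$; both rest on the same proposition.
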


\begin{proof} We start with the following two observations: 
\begin{itemize} 
\item  Proposition \ref{propN1} implies that 
$\partial_1((a-e)^2e_1)\equiv (e-c)e_0\ \mod (IK)N_0$.  
\item In $\Z_3[[K]]$ we have the relation $3c(e-c)+(e-c)^3=e-c^3$. 
\end{itemize} 

a) By equations (\ref{a-action1}) and (\ref{a-action2}) 
of the proof of Proposition \ref{propN1} we see that 
$$
\partial_1(l_1)=(e-b)e_0 \ \ \textrm{and}\ \ \partial_1(l_2)=(e-c)e_0  \ . 
$$  
The result for $\partial_1(l_3)$ is now obvious. 
\smallbreak
b) By Proposition \ref{propK}.c we know that $IK$ is generated by 
$e-b$ and $e-c$. 
Furthermore $c^3$ belongs to $F_2K$, hence it is trivial in 
$H_1(K,\Z_3)$ by Proposition \ref{propK}.c\ . Therefore 
$e-c^3$ belongs to $IK^2$  
and we get the existence of $x,y\in IK$ as required in (b). 
\smallbreak
c) By (a) and (b) $n_1$ belongs to $N_1$. Furthermore it is clear that 
$n_1$ and $3(a-e)^2e_1$ agree in $H_0(K,C_1)$, hence 
$n_1$ projects non-trivially to $H_0(S_2^1,N_1)$.  
\end{proof} 

The question becomes now how we can determine $x$ and $y$. In fact, 
we do not have explicit formulae for $x$ and $y$. However, 
in section \ref{middle-differential} we will give approximations 
for them which are sufficient for our homological calculations.

\subsection{The homomorphism $\partial_3$}\label{partial3} 
In \cite{GHMR} it was shown (by using that $K$ is a 
Poincar\'e duality group) that the kernel of $\partial_2$ 
can be identified with $\Z_3[[\G_2^1/G_{24}]]$. 
However, the identification and thus the 
construction of $\partial_3$ was not explicit. The following result 
shows that $\partial_3$ can be replaced by the dual of $\partial_1$, 
at least up to isomorphism.   
\smallbreak
If $G$ is a profinite group and $M$ a continuous left $\Z_p[[G]]$-module 
then we define its dual $M^*$ by $\Hom_{\Z_p[[G]]}(M,\Z_p[[G]])$. 
This becomes a left $\Z_p[[G]]$-module via 
$(g.\varphi)(m)=\varphi(m)g^{-1}$ if 
$g\in G$, $\varphi\in \Hom_{\Z_p[[G]]}(M,\Z_p[[G]])$ and $m\in M$. 
We observe that for a finite subgroup $H$ there is a 
canonical $\Z_p[[G]]$-linear isomorphism 
\begin{equation}\label{dual} 
\Z_p[[G/H]]\to \Z_p[[G/H]]^*\cong \Z_p[[G]]^H, \ \ 
g\mapsto \Big(g^*:\widetilde{g}\mapsto 
\widetilde{g}(\sum_{h\in H}h)g^{-1}\Big)
\leftrightarrow(\sum_{h\in H}h)g^{-1} \ . 
\end{equation} 

\begin{prop}\label{dualcomplex}
\mbox{ }
\begin{itemize}
\item[a)] There is an exact complex of $\Z_3[[\GG_2^1]]$-modules 
$$
0 \longrightarrow C_0^* 
\stackrel{{\partial}_1^*}\longrightarrow C_1^*  
\stackrel{{\partial}_2^*}\longrightarrow C_2^*  
\stackrel{{\partial}_3^*}\longrightarrow C_3^*  
\stackrel{\overline{\epsilon}}\longrightarrow  \Z_3 \longrightarrow 0
$$
in which ${\partial}_i^*$ is the dual of $\partial_i$ for $i=1,2,3$.  
\item[b)] There is an isomorphism of complexes of 
$\Z_3[[\GG_2^1]]$-modules 
$$
\begin{array}{cccccccccccccc} 
0 &\longrightarrow & C_0^*& 
\stackrel{{\partial}_1^*}\longrightarrow &C_1^* &
\stackrel{{\partial}_2^*}\longrightarrow &C_2^* & 
\stackrel{{\partial}_3^*}\longrightarrow &C_3^* & 
\stackrel{\overline{\epsilon}}\longrightarrow &  \Z_3 
&\longrightarrow &0& \\
& &\downarrow f_3&&\downarrow f_2&&\downarrow f_1&&\downarrow f_0
&&\downarrow = &&\\
0 &\longrightarrow &C_3& 
\stackrel{{\partial}_3}\longrightarrow &C_2 &
\stackrel{{\partial}_2}\longrightarrow &C_1 & 
\stackrel{{\partial}_1}\longrightarrow &C_0 & 
\stackrel{\epsilon}\longrightarrow &  \Z_3 &\longrightarrow &0& \ .\\
\end{array}
$$
such that $f_i$ induces the identity on 
$\Tor_0^{\Z_3[[S_2^1]]}(\F_3,C_i)$ for $i=2,3$ 
if we identify $C_i^{*}$ with 
$C_{3-i}$ via the isomorphism of (\ref{dual}). 

\item[c)] The homomorphism $\partial_1^*:C_0^* \to C_1^*$ 
is given by \ 
$
e_0^* \mapsto (e+a+a^2)e_1^* 
$.
\end{itemize}
\end{prop}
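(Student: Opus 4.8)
The plan is to deduce all three parts from the self-duality of the permutation modules, together with the fact, recorded in Proposition~\ref{propK}, that $K$ is a Poincar\'e duality group of dimension $3$. First note that under the isomorphism (\ref{dual}) every $C_i$ is self-dual: $C_0^*\cong C_3$ and $C_3^*\cong C_0$, and, since the character $\chi$ of $SD_{16}$ satisfies $\chi\cong\chi^{-1}$, also $C_1^*\cong C_2$ and $C_2^*\cong C_1$. Since $K$ is torsion free of finite index in $\GG_2^1$, for every finite subgroup $H$ and every $g$ one has $K\cap gHg^{-1}=\{e\}$, so each $C_i|_K$ is a free $\Z_3[[K]]$-module and the resolution (\ref{alg}) restricted to $K$ is a finite free resolution of the trivial module over $\Z_3[[K]]$. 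As $K$ has finite index, $\Z_3[[\GG_2^1]]$ is coinduced from $\Z_3[[K]]$, so $\Hom_{\Z_3[[\GG_2^1]]}(-,\Z_3[[\GG_2^1]])$ restricted to $K$ is naturally $\Hom_{\Z_3[[K]]}(-,\Z_3[[K]])$; hence the dual complex $0\to C_0^*\to C_1^*\to C_2^*\to C_3^*\to0$, restricted to $K$, has cohomology $\Ext^*_{\Z_3[[K]]}(\Z_3,\Z_3[[K]])$. By the Poincar\'e duality of $K$ this vanishes in degrees $0,1,2$ and in degree $3$ is the dualizing module $D$, free of rank one over $\Z_3$. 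Since a complex of $\Z_3[[\GG_2^1]]$-modules is exact iff it is exact over $\Z_3[[K]]$, the dual complex is exact at $C_0^*,C_1^*,C_2^*$, and $Q:=\mathrm{coker}(\partial_3^*)$ restricts over $K$ to $D$.

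The remaining point for (a) --- and the one I expect to be the main obstacle --- is to identify $Q$ with the \emph{trivial} $\GG_2^1$-module $\Z_3$ (in particular, to see that $K$ is orientable). Here I would apply the right exact functor of $\GG_2^1$-coinvariants to the exact sequence $C_2^*\to C_3^*\to Q\to0$. The coinvariants of $C_3^*\cong\Z_3[[\GG_2^1/G_{24}]]$ are $\Z_3$, whereas those of $C_2^*\cong\Z_3[[\GG_2^1]]\otimes_{\Z_3[SD_{16}]}\chi$ are $\Z_3\otimes_{\Z_3[SD_{16}]}\chi=\Z_3/(\chi(\omega)-1)\Z_3=\Z_3/2\Z_3=0$, because $\chi$ is nontrivial and $2$ is a unit in $\Z_3$; hence $Q_{\GG_2^1}\cong\Z_3$. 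Since $Q$ is free of rank one over $\Z_3$, the group $\GG_2^1$ acts through a continuous character $\epsilon:\GG_2^1\to\Z_3^\times$, and then $Q_{\GG_2^1}\cong\Z_3$ forces $\epsilon\equiv1$: a value $\epsilon(g)=-1$ would make the coinvariants vanish, and a value in $1+3\Z_3$ different from $1$ would make them finite. This gives (a), with $\overline{\epsilon}$ the resulting augmentation $C_3^*\to Q=\Z_3$.

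For (c) one computes $\partial_1^*$ directly from the pairing (\ref{dual}) and the formula $\partial_1(e_1)=(e-\omega)e_0$ of Corollary~\ref{delta1}: writing $e_0^*\leftrightarrow\sum_{h\in G_{24}}h$, using the twisted form of (\ref{dual}) on $C_1^*$, and expanding with a coset decomposition of $G_{24}$ over $G_{24}\cap SD_{16}$ together with the relations $ta=a^2t$, $\psi a\psi^{-1}\in\langle a\rangle$, one obtains $\partial_1^*(e_0^*)=(e+a+a^2)e_1^*$. For (b): by (c) the assignment $e_3\mapsto(e+a+a^2)e_2$ defines a $\Z_3[[\GG_2^1]]$-homomorphism $C_3\to C_2$ --- the element $(e+a+a^2)e_2$ is $G_{24}$-invariant because $Q_8$ normalizes $\langle a\rangle$ and acts on $e_2$ through $\chi$, which is trivial on $\omega^2$ and $\omega\phi$ --- so one may take $\partial_3$ to be this map, and then $\partial_3^*$ becomes $\partial_1$ under the identifications of (\ref{dual}). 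With these identifications as the vertical maps $f_3,f_2,f_1,f_0$, the two outer squares commute by (c) and by the construction of $\overline{\epsilon}$; for the middle square one compares $\partial_2$ with $\partial_2^*$, which by the exactness in (a) is again a map $C_2\to C_1$ with the same kernel and image, hence differs from $\partial_2$ by automorphisms of $C_2$ and $C_1$ that can be absorbed into $f_1,f_2$. The $f_i$ are isomorphisms because source and target are the same modules, and the requirement that $f_2,f_3$ induce the identity on $\Tor_0^{\Z_3[[S_2^1]]}(\F_3,C_i)$ normalizes them, fixing in particular the signs recorded in (c).
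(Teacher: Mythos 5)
Your parts (a) and (c) are essentially sound. For (a) you follow the paper in using that the $C_i$ are $\Z_3[[K]]$-free and that $K$ is a Poincar\'e duality group of dimension $3$ to get exactness at $C_0^*$, $C_1^*$, $C_2^*$; your identification of the cokernel $Q$ with the \emph{trivial} module via right-exactness of coinvariants applied to $C_2^*\to C_3^*\to Q\to 0$ (using $(C_2^*)_{\GG_2^1}=\Z_3\otimes_{\Z_3[SD_{16}]}\chi=0$ since $2\in\Z_3^\times$) is a clean, self-contained alternative to the paper's citation of the corresponding fact for $H^n(G;\Z_3[[G]])$. Part (c) is the same pairing computation as in the paper, which organizes it via the identity $(e-\omega)\sum_{h\in Q_8}h=\sum_{h\in SD_{16}}\chi(h^{-1})h$.

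Part (b), however, has a genuine gap. You propose to ``take $\partial_3$ to be'' the map $e_3\mapsto(e+a+a^2)e_2$, but $\partial_3$ is the (inexplicit) map already fixed by the exact complex (\ref{alg}); replacing it requires re-proving injectivity and $\mathrm{im}=\ker\partial_2$, and indeed the whole point of the proposition is that $\partial_3$ is only the dual of $\partial_1$ \emph{up to isomorphisms of the $C_i$}. Worse, your key step --- that $\partial_2^*$, viewed as a map $C_2\to C_1$ via (\ref{dual}), ``has the same kernel and image'' as $\partial_2$ --- is unjustified: $\mathrm{im}\,\partial_2=\ker\partial_1$ while $\mathrm{im}\,\partial_2^*=\ker\partial_3^*$, and $\partial_1$ and $\partial_3^*$ are genuinely different surjections of $C_1$ onto $\ker\epsilon$, so there is no a priori reason these two submodules of $C_1$ coincide; and even if they did, an automorphism of the common image need not extend to an automorphism of $C_1$. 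The paper instead identifies $\ker\epsilon\cong\ker\overline{\epsilon}$ via $f_0$ (adjusted by a unit), observes that both complexes are then projective resolutions of this common kernel, lifts the identity to a chain map $f_\bullet$ by the comparison theorem, and proves the $f_i$ are isomorphisms by the Nakayama-type Lemma \ref{nak} together with $\Tor_i^{\Z_3[[S_2^1]]}(\F_3,\ker\epsilon)\cong\F_3$ for $i=0,1$ (Lemma 4.5 of \cite{GHMR}); ``source and target are the same modules'' does not make a homomorphism an isomorphism. Finally, the assertion that $f_2,f_3$ induce the identity on $\Tor_0^{\Z_3[[S_2^1]]}(\F_3,C_i)$ cannot simply be imposed as a normalization: chain maps lifting the identity are only unique up to chain homotopy, and homotopies do change the induced maps on $\Tor_0$. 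The paper pins this down by noting that the resulting self-map of the spectral sequence $\Tor_j^{\Z_3[[S_2^1]]}(\F_3,C_i)\Rightarrow\Tor_{i+j}^{\Z_3[[S_2^1]]}(\F_3,\Z_3)$ converges to the identity; some argument of this kind is needed, since this statement is exactly what is used later to compute $d_1:E_1^{2,0}\to E_1^{3,0}$.
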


\begin{proof} a) Each $C_i$ is free as a $\Z_3[[K]]$-module 
and therefore the complex 
$$
0 \longrightarrow C_3\stackrel{{\partial}_3}\longrightarrow C_2 
\stackrel{{\partial}_2}\longrightarrow C_1  
\stackrel{{\partial}_1}\longrightarrow C_0  
\stackrel{\epsilon}\longrightarrow \Z_3 \longrightarrow 0
$$
is a free $\Z_3[[K]]$-resolution of $\Z_3$. Because $K$ is of 
finite index in $\GG_2^1$ the coinduced module of $\Z_3[[K]]$ 
is isomorphic to $\Z_3[[\GG_2^1]]$.  
Therefore there are natural isomorphisms 
$$
C_i^*=\Hom_{\Z_3[[\G_2^1]]}(C_i,\Z_3[[\G_2^1]])\cong 
\Hom_{\Z_3[[K]]}(C_i,\Z_3[[K]])
$$
and the $n$-th cohomology of the complex 
$\Hom_{\Z_3[[K]]}(C_i;\mathbb{Z}_3[[K]])$ 
is $H^n(K;\mathbb{Z}_3[[K]])$. Because $K$ is a 
Poincar\'e duality group this is zero except when $n=3$ 
and then it is isomorphic to $\Z_3$. Finally, one sees as in 
Proposition 5 of \cite{S-GH}  that the $\Z_3[[\GG_2^1]]$-module structure on 
$H^n(\GG_2^1;\mathbb{Z}_3[[\GG_2^1]])\cong H^n(K;\mathbb{Z}_3[[K]])$ 
is trivial.    

b) The augmentation $\Z_3[[\GG_2^1]]\to \Z_3$ induces an isomorphism 
$$
\Hom_{\Z_3[[\G_2^1]]}(\Z_3,\Z_3)\cong 
\Hom_{\Z_3[[\G_2^1]]}(\Z_3[[\G_2^1]],\Z_3)\ .
$$ 
Thus the right hand square is commutative up to a unit in $\Z_3$ 
if we choose for $f_0$ the isomorphism given in (\ref{dual}), 
and we can modify $f_0$ by a unit so that it commutes on the nose.    
Then $f_0$ induces an isomorphism $\Ker\epsilon\cong \Ker\overline{\epsilon}$  
and    
$$
C_2 \stackrel{{\partial}_2}\longrightarrow C_1  
\stackrel{{\partial}_1}\longrightarrow \Ker\epsilon\ \ \ \text{resp.}\ \ \  
C_1^*\stackrel{{\partial}_2^*}\longrightarrow C_2^*  
\stackrel{{\partial}_3^*}\longrightarrow \Ker\overline{\epsilon}
$$  
is the beginning of a resolution of $\Ker\epsilon$ resp. 
$\Ker\overline{\epsilon}$ by projective $\Z_3[[\GG_2^1]]$-modules 
and the isomorphism induced by $f_0$ lifts to 
a chain map $f_{\bullet}$ between the projective resolutions. 
By Lemma 4.5 of \cite{GHMR} we have 
$\Tor_i^{\Z_3[[S_2^1]]}(\F_3,\Ker\epsilon)\cong \F_3$ if $i=0,1$ 
and this implies that the maps $f_1:C_1\to C_2^*$ and $f_2:C_2\to C_1^*$ 
induce isomorphisms on $\Tor_0^{\Z_3[[S_2^1]]}(\F_3,-)$ and hence 
they are themselves isomorphisms by Lemma 4.3 of \cite{GHMR}. Finally, 
$f_3$ is trivially an isomorphism because $f_2$ and $f_1$ are isomorphisms.

The isomorphism of chain complexes (considered as automomorphism 
via (\ref{dual})) induces an automorphism of spectral sequences 
$$
\Tor_j^{\Z_3[[S_2^1]]}(\F_3,C_i)\Rightarrow \Tor_{i+j}^{\Z_3[[S_2^1]]}(\F_3,\Z_3)
$$ 
which converges towards the identity, and this easily implies 
the remaining part of (b).

c) By (\ref{dual}) we have for each $\widetilde{g}\in \GG_2^1$
$$
(e+a+a^2)^*(e_1^*)(\widetilde{g})=
\widetilde{g}\big(\sum_{h\in SD_{16}}\chi(h^{-1})h\big)(e+a^{-1}+a^{-2})
=\widetilde{g}\big(\sum_{h\in SD_{16}}\chi(h^{-1})h\big)(e+a+a^2)
$$ 
and 
$$
\partial_1^*(e_0^*)(\widetilde{g})=e_0^*(\widetilde{g}(e-\omega))=
\widetilde{g}(e-\omega)\big(\sum_{h\in G_{24}}h\big)=
\widetilde{g}(e-\omega)\big(\sum_{h\in Q_8}h\big)(e+a+a^2) \ . 
$$
Then we conclude via the identity 
$(e-\omega)\big(\sum_{h\in Q_8}h\big)=\sum_{h\in SD_{16}}\chi(h^{-1})h$. 
\end{proof}

\bigbreak 

\section{On the action of the stabilizer group}\label{action}

In this section we will produce formulae for the action of the elements 
$a$, $b$, $c$ and $d$ of the stabilizer group $\mathbb{G}_2$ on 
$\mathbb{F}_9[[u_1]][u^{\pm 1}]$, at least modulo suitable powers of the 
invariant ideal generated by $u_1$. It turns out that it is sufficient 
to have a formula for the action of $a$ and $b$ on $u$ modulo $(u_1^6)$, 
and for the action of $c$ and $d$ on $u$ modulo $(u_1^{10})$.   
\smallbreak

\subsection{Generalities}  
We recall (cf.  \cite{Rav}) that $BP_*\cong \Z_{(p)}[v_1,v_2,\ldots]$ 
where the Araki generators $v_i$ satisfy the following equation 
(in $BP_*\otimes \mathbb{Q}$)    
\begin{equation}\label{haz}
p\lambda_k=\sum_{0\leq i \leq  k}\lambda_iv_{k-i}^{p^i}\ .
\end{equation} 
Here the $\lambda_i\in BP_*\otimes \QQ$ are the coefficients 
of the logarithm of the universal $p$-typical formal group law $F$ 
on $BP_*$,  
$$
\log_{F}(x)=\sum_{i\geq 0}\lambda_ix^{p^i} \, 
$$ 
(with $\lambda_0=1$), and thus the $[p]$-series of $F$ is given by 
$$
[p]_{F}(x)={\sum_{i\geq 0}}^Fv_ix^{p^i}\ .
$$
The homomorphism 
$$
BP_*\to  (E_n)_*=\W_{\F_{p^n}}[[u_1,\ldots,u_{n-1}]][u^{\pm 1}],
\ \ v_i\mapsto 
\left\{\begin{array}{lc}
u_iu^{1-p^i}&i<n \\ 
u^{1-p^n}   &i=n \\ 
0           &i>n 
\end{array}\right. 
$$ 
defines a $p$-typical formal group law $F_n$ over $(E_n)_*$.  
Then the formal group law $G_n$ over $({E_n})_0$ defined by 
$G_n(x,y)=u^{-1}F_n(ux,uy)$ is a universal deformation of $\Gamma_n$ and 
is $p$-typical with $p$-series    
\begin{equation}\label{p-series}
[p]_{G_n}(x)=px+_{G_n}u_1x^p+_{G_n}\cdots
+_{G_n}u_{n-1}x^{p^{n-1}}+_{G_n}x^{p^n} \ . 
\end{equation}

Next we recall how one can get at the action of an element 
$g\in \mathbb{S}_n$ on $(E_n)_0$.  
For a given $g$ we choose a lift 
$\widetilde{g}\in (E_n)_0[[x]]$ of $g$ and let $\widetilde{G}$ 
be the formal group law defined by 
$$
\widetilde{G}(x,y)=
\widetilde{g}^{-1}\big(G_n(\widetilde{g}(x),\widetilde{g}(y))\big) 
\textrm{ .} 
$$
Then there is a unique ring homomorphism $g_*:(E_n)_0\to (E_n)_0$ and a 
unique $*$-isomorphism from $g_*G_n$ to $\widetilde{G}$ such that 
the composition  
$$
h_g:g_*G_n \to \widetilde{G} \stackrel{\widetilde{g}}\to G_n 
$$ 
is an isomorphism of $p$-typical formal group laws 
and can therefore be written (cf. Appendix 2 of  \cite{Rav}) as 
$$
h_g(x)=\sum_{i\geq 0}\phantom{}^{G_n}t_i(g)x^{p^i}
$$
for unique continuous functions  
\begin{equation}\label{ti}
t_i:\mathbb{S}_n\to(E_n)_0 \textrm{ .}
\end{equation}
We note that 
\begin{equation}\label{t_imodm}
t_i(g)\equiv g_i \ \text{mod}\ (3,u_1)
\end{equation}
if $g=\sum_ig_iS^i\in \s_n$ with $g_i^{p^2}=g_i$. 
Then we have 
\begin{equation}\label{u-action} 
g_*(u)=t_0(g)u
\end{equation}  
and the equation   
\begin{equation}\label{eqnfgl} 
h_g([p]_{g_*{G_n}}(x))=[p]_{G_n}(h_g(x)) 
\end{equation} 
can be used to recursively find better and better approximations 
for $t_0(g)$ as well as for the action of $g$ on the deformation 
parameters $u_1,\ldots,u_{n-1}$.    

\medbreak

\subsection{A formula modulo $(3)$ for the formal group law $G_2$ } 
From now on we restrict attention to the case $n=2$ and $p=3$.

\begin{lem} The logarithm and exponential of the formal group law 
$G_2$ satisfy
$$
\begin{array}{ccll}
\log_{G_2}(x)&=&x-\frac{u_1}{24}x^3+
\frac{1}{1-3^8}\big(\frac{1}{3}-\frac{u_1^4}{72}\big)x^9 
\ &\mod\ (x^{27}) \\
& & \\
\exp_{G_2}(x)&=&x+\frac{u_1}{24}x^3+\frac{3u_1^2}{24^2}x^5+
\frac{12u_1^3}{24^3}x^7-
\Big(\frac{1}{1-3^8}\big(\frac{1}{3}-\frac{u_1^4}{72}\big)+
\frac{55u_1^4}{24^4}\Big)x^9\ &\mod\ (x^{11})\ . \\ 
\end{array}
$$
\end{lem}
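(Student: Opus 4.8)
The plan is to compute the logarithm of $G_2$ directly from its defining $p$-series and then invert the resulting power series to get the exponential. Recall from (\ref{p-series}) that $[3]_{G_2}(x)=3x+_{G_2}u_1x^3+_{G_2}x^9$, and that $G_2$ is $3$-typical with logarithm $\log_{G_2}(x)=\sum_{i\geq 0}\ell_ix^{3^i}$ for suitable $\ell_i\in (E_2)_0\otimes\QQ$, normalized by $\ell_0=1$. Applying $\log_{G_2}$ to the $p$-series and using additivity gives the Araki-type relation $3\ell_k=\sum_{0\leq i\leq k}\ell_i w_{k-i}^{3^i}$ with $w_0=3$, $w_1=u_1$, $w_2=1$ and $w_j=0$ for $j\geq 3$ — this is just (\ref{haz}) transported along the map $BP_*\to (E_2)_*$, since $v_1\mapsto u_1$, $v_2\mapsto 1$ (we may set $u=1$ throughout because $u$ carries only the grading and the claimed formulae are written in the degree-zero normalization).

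First I would solve this recursion for $\ell_1,\ell_2,\ell_3$. From $3\ell_1=\ell_0 w_1^{\,1}+\ell_1 w_0=u_1+3\ell_1$ one does not directly get $\ell_1$; rather one must be careful: the correct Hazewinkel/Araki recursion reads $p\lambda_k=\sum_{0\leq i<k}\lambda_i v_{k-i}^{p^i}$ with the diagonal term on the left, i.e. $3\ell_k-\ell_k w_0\cdot[k{=}0]$; concretely $3\ell_1=\ell_0 u_1^{3^0}=u_1$ is wrong by the $S$-convention, so I would instead use the honest formula $\ell_1=\frac{1}{3}w_1=\frac{u_1}{3}$ coming from $3\ell_1=w_1+\ell_0\cdot 0$ — wait, one must track that $w_0=3$ sits in the $k=0$ slot only. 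The clean statement is $3\ell_k=\sum_{i=0}^{k-1}\ell_i w_{k-i}^{3^i}$ for $k\geq 1$, giving $\ell_1=\frac{u_1}{3}$, then $3\ell_2=\ell_0 w_2+\ell_1 w_1^3=1+\frac{u_1}{3}u_1^3=1+\frac{u_1^4}{3}$, so $\ell_2=\frac{1}{3}+\frac{u_1^4}{9}$, and then $3\ell_3=\ell_1 w_2^3+\ell_2 w_1^9=\frac{u_1}{3}+\big(\frac13+\frac{u_1^4}{9}\big)u_1^9$. Now comparing with the claimed $\log_{G_2}$ I see the coefficients there are $-\frac{u_1}{24}$ in degree $3$ and $\frac{1}{1-3^8}(\frac13-\frac{u_1^4}{72})$ in degree $9$, which are manifestly \emph{not} $\frac{u_1}{3}$ and $\frac13+\frac{u_1^4}{9}$ — so the lemma must be using the $p$-typical formal group law $F_2$ pulled back and then the substitution $G_2(x,y)=u^{-1}F_2(ux,uy)$, i.e. the genuine Araki generators with the extra relation (\ref{haz}) including the $p\lambda_k$ on both sides. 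The honest Araki recursion is $p\lambda_k=\sum_{0\leq i\leq k}\lambda_i v_{k-i}^{p^i}$, so $3\lambda_1=\lambda_0 v_1+\lambda_1 v_0^{p}$; since $v_0=p=3$ here (Araki convention $v_0=p$), this gives $3\lambda_1=v_1+3^3\lambda_1$, hence $\lambda_1=\frac{v_1}{3-27}=-\frac{v_1}{24}$, matching $-\frac{u_1}{24}$. Similarly $3\lambda_2=\lambda_0 v_2+\lambda_1 v_1^3+\lambda_2 v_0^{9}$ gives $(3-3^9)\lambda_2=v_2-\frac{v_1}{24}v_1^3=v_2-\frac{v_1^4}{24}$, i.e. $\lambda_2=\frac{1}{3-3^9}(v_2-\frac{v_1^4}{24})=\frac{1}{3(1-3^8)}(1-\frac{u_1^4}{24})$, which after pulling out the $\frac13$ and using $\frac{1}{24}\cdot\frac13=\frac{1}{72}$ is exactly $\frac{1}{1-3^8}(\frac13-\frac{u_1^4}{72})$ once one sets $v_2\mapsto 1$. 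So the key computation is: \textbf{(1)} record the Araki recursion (\ref{haz}) with $v_0=3$, \textbf{(2)} substitute $v_1\mapsto u_1$, $v_2\mapsto 1$, $v_i\mapsto 0$ for $i>2$, solve for $\lambda_1,\lambda_2$, and verify the stated $\log_{G_2}$ modulo $x^{27}$ (only $\lambda_0,\lambda_1,\lambda_2$ contribute below degree $27$).

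Then I would obtain $\exp_{G_2}$ by formally inverting $\log_{G_2}(x)=x+ax^3+bx^9+O(x^{27})$ where $a=-\frac{u_1}{24}$, $b=\frac{1}{1-3^8}(\frac13-\frac{u_1^4}{72})$, working modulo $x^{11}$. Writing $\exp_{G_2}(x)=x+c_3x^3+c_5x^5+c_7x^7+c_9x^9+O(x^{11})$ and imposing $\log_{G_2}(\exp_{G_2}(x))=x$, one matches coefficients degree by degree. The $x^3$-coefficient gives $c_3+a=0$, so $c_3=-a=\frac{u_1}{24}$; the $x^5$-coefficient gives $c_5+3ac_3=0$ hmm — I need to expand $a\,\exp_{G_2}(x)^3$ carefully: $a(x+c_3x^3+\cdots)^3=a(x^3+3c_3x^5+3c_3^2x^7+3c_5x^7+\cdots)$, so the $x^5$ term of the composite is $c_5+3ac_3=0$ giving $c_5=-3ac_3=-3\cdot(-\frac{u_1}{24})\cdot\frac{u_1}{24}=\frac{3u_1^2}{24^2}$, matching. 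Continuing, the $x^7$-coefficient is $c_7+3a(c_3^2+c_5)=0$; with $c_3^2+c_5=\frac{u_1^2}{576}+\frac{3u_1^2}{576}=\frac{4u_1^2}{576}=\frac{u_1^2}{144}$ one gets $c_7=-3a\cdot\frac{u_1^2}{144}=\frac{3u_1}{24}\cdot\frac{u_1^2}{144}=\frac{u_1^3}{1152}=\frac{12u_1^3}{24^3}$, matching. The $x^9$-coefficient involves both $a\,\exp^3$ (contributing terms $3ac_3 c_5$, $a c_3^3$, $3a c_7$) and $b\,\exp^9$ (contributing $bx^9$ at leading order), and solving $c_9 + (\text{those}) + b=0$ should reproduce $-\big(b+\frac{55u_1^4}{24^4}\big)$. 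The main obstacle is purely arithmetic bookkeeping: tracking the multinomial coefficients in $(\sum c_i x^i)^3$ up to $x^9$ and the single needed coefficient of $(\sum c_i x^i)^9$ (just the leading $x^9$, since all higher corrections are degree $\geq 11$), then checking $3a\cdot(3c_3c_5+c_3^3+3c_7)+b - c_9\cdot 1 = $ matches, i.e. confirming the residual constant $\frac{55}{24^4}$ comes out correctly — a short but error-prone rational-number computation over $\Z_{(3)}$ (note $24=2^3\cdot 3$, and $1-3^8$ is a unit in $\Z_{(3)}$, so all denominators are legitimate). No conceptual difficulty arises; I would simply present the recursion, the values of $\lambda_1,\lambda_2$, and state that inverting the series gives the claimed coefficients, perhaps displaying the $x^9$-coefficient check explicitly since it is the only one producing a non-obvious constant.
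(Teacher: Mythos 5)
Your approach coincides with the paper's: the proof there likewise reads off $\lambda_1=-\tfrac{v_1}{24}$ and $\lambda_2=\tfrac{1}{1-3^8}\bigl(\tfrac{v_2}{3}-\tfrac{v_1^4}{72}\bigr)$ from the Araki relation (\ref{haz}) (the essential point, which you do eventually isolate correctly after some back-and-forth, is that the sum includes the diagonal term and $v_0=p$), transports them along the classifying map $v_1\mapsto u_1u^{-2}$, $v_2\mapsto u^{-8}$ together with $\log_{G_2}(x)=u^{-1}\log_{F_2}(ux)$, and obtains $\exp_{G_2}$ by formal inversion. Your values of $\lambda_1,\lambda_2$ and of the coefficients of $x^3,x^5,x^7$ in the inverse are correct; one small slip is that the $(1,3,5)$ partition in the cube contributes $6ac_3c_5$, not $3ac_3c_5$.

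You should not, however, assert that the $x^9$-coefficient will come out as printed. Carrying out the computation you describe (or, faster, Lagrange inversion: for $f(x)=x+ax^3+bx^9$ one has $[x^9]f^{-1}=\tfrac19[x^8](1+ax^2+bx^8)^{-9}=\tfrac19(495a^4-9b)=55a^4-b$) gives, with $a=-\tfrac{u_1}{24}$ and $b=\tfrac{1}{1-3^8}\bigl(\tfrac13-\tfrac{u_1^4}{72}\bigr)$, the coefficient
$$
c_9=-b-a\Bigl(3c_7+6c_3c_5+c_3^3\Bigr)=-b-a\cdot\frac{55u_1^3}{24^3}=\frac{55u_1^4}{24^4}-\frac{1}{1-3^8}\Bigl(\frac13-\frac{u_1^4}{72}\Bigr)\ ,
$$
so the term $\tfrac{55u_1^4}{24^4}$ enters with the opposite sign to the one displayed in the Lemma. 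Your promised ``confirmation'' of the residual constant as stated will therefore not go through: either record the coefficient with the plus sign as above or flag the discrepancy with the printed statement. Apart from this, your plan is sound and is the same (very terse) argument the paper gives.
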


\begin{proof} From (\ref{haz}) we get 
$\lambda_1=-\frac{v_1}{24}$ and  
$\lambda_2=\frac{1}{1-3^8}\big(\frac{v_2}{3}-\frac{v_1^{4}}{72}\big)$.   
To obtain the result for $\log_{G_2}$ we use the classifying homomorphism 
for $F_2$ and that $\log_{G_2}(x)=u^{-1}\log_{F_2}(ux)$.  
\end{proof} 

\begin{cor}\label{fgl} The formal group law $G_2$ satisfies
$$
\begin{array}{rcll}x+_{G_2}y
&\equiv & x+y-u_1(xy^2+x^2y)+u_1^2(xy^4+x^4y)\\
& &-u_1^3(xy^6+x^6y)-u_1^3(x^3y^4+x^4y^3)&\\
& &-(x^3y^6+x^6y^3)+u_1^4(x^4y^5+x^5y^4)&\mod (3,(x,y)^{11}) \ .
\end{array}
$$
\end{cor}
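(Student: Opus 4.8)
The plan is to obtain the asserted congruence for $x+_{G_2}y$ directly from the formula for $\log_{G_2}$ in the preceding lemma, using the general identity
$$
x+_{G_2}y=\exp_{G_2}\bigl(\log_{G_2}(x)+\log_{G_2}(y)\bigr)\ ,
$$
everything being computed modulo $(3)$ and modulo monomials of total degree $\geq 11$ in $(x,y)$. The inputs we need are exactly the truncations already recorded: $\log_{G_2}(x)\equiv x-\frac{u_1}{24}x^3+\lambda x^9$ and $\exp_{G_2}(x)\equiv x+\frac{u_1}{24}x^3+\frac{3u_1^2}{24^2}x^5+\frac{12u_1^3}{24^3}x^7-(\lambda+\frac{55u_1^4}{24^4})x^9$, where $\lambda=\frac{1}{1-3^8}\bigl(\frac{1}{3}-\frac{u_1^4}{72}\bigr)$ (note $x^9$ is the first term of degree $\geq 9$, so the $x^{27}$ bound on $\log$ and the $x^{11}$ bound on $\exp$ are both ample for work modulo $(x,y)^{11}$).

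First I would reduce the coefficients mod $3$: since $24=8\cdot 3\equiv -3\pmod 9$ and we are allowed to invert $24$ in $(E_2)_0$, one checks $\frac{1}{24}\equiv -\frac13\cdot\frac{1}{8}$; more to the point, the relevant products $\frac{u_1}{24}$, $\frac{3u_1^2}{24^2}$, $\frac{12u_1^3}{24^3}$, $\frac{55u_1^4}{24^4}$ reduce mod $3$ to $\pm u_1$, $\pm u_1^2$, $\pm u_1^3$, $\pm u_1^4$ respectively (the numerators $1,3,12,55$ carry the $3$-adic valuations that exactly cancel against the $3$'s hidden in the powers of $24$, up to a unit), and similarly $\lambda$ reduces to a scalar plus a multiple of $u_1^4$. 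The cleanest route is to fix the signs by working out $\exp_{G_2}(\log_{G_2}(x))\equiv x$ as a consistency check, which pins down the mod-$3$ reductions without ambiguity; alternatively one simply substitutes the known mod-$3$ values. Then I would expand $\log_{G_2}(x)+\log_{G_2}(y)$, feed it into the $\exp_{G_2}$ series, and collect terms by total degree in $(x,y)$ up to degree $10$: degree $2$ contributes nothing new (it is $x+y$), degree $3$ gives the $-u_1(xy^2+x^2y)$ term from the $x^3$ coefficients of $\log$ and $\exp$ together with the binomial $3xy^2$-type cross terms (which vanish mod $3$), degree $5$ gives the $u_1^2(xy^4+x^4y)$ term, degree $7$ the two $u_1^3$ terms $-u_1^3(xy^6+x^6y)-u_1^3(x^3y^4+x^4y^3)$, and degree $9$ the terms $-(x^3y^6+x^6y^3)+u_1^4(x^4y^5+x^5y^4)$.

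The main obstacle is purely bookkeeping: in the degree-$7$ and degree-$9$ parts there are several sources contributing to each monomial — the pure $\exp$ coefficient applied to $(\log x+\log y)$, plus mixed terms where a lower $\exp$-coefficient is applied to a sum one of whose summands already carries a $\frac{u_1}{24}x^3$ correction — and one must check that, modulo $3$, the various cross terms either cancel or combine to give precisely the stated coefficients. In particular the appearance of $x^3y^4+x^4y^3$ (rather than symmetric $x^iy^j$ with $i+j=7$ coming only from $x^7$-type terms) comes from the degree-$5$ $\exp$-coefficient $3u_1^2/24^2$ hitting the cubic corrections in $\log x+\log y$; one has to verify that the nominal factor $3$ there is compensated — it is not, so that particular contribution dies mod $3$, and the $x^3y^4$ term instead arises from the degree-$7$ coefficient of $\exp$ applied to the binomial expansion of $(\log x+\log y)^{\ldots}$, where a factor like $\binom{7}{3}=35\equiv 2\pmod 3$ survives. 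Tracking which binomial coefficients are units mod $3$ is the only genuinely delicate point; once the degree-$7$ and degree-$9$ coefficients are matched the remaining degrees are immediate, and the claimed congruence follows.
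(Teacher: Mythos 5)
Your overall strategy is exactly the paper's: the printed proof of this corollary is the single line that it ``follows directly from $x+_{G_2}y=\exp_{G_2}(\log_{G_2}(x)+\log_{G_2}(y))$'', so substituting the two truncated series and collecting terms through total degree $10$ is the intended argument. The problem is with how you propose to carry out the computation. Your first step --- ``reduce the coefficients mod $3$'' --- is not available: none of the coefficients of $\log_{G_2}$ and $\exp_{G_2}$ is $3$-integral. Since $v_3(24)=1$, one has $v_3(1/24)=-1$, $v_3(3/24^2)=v_3(1/192)=-1$, $v_3(12/24^3)=v_3(1/1152)=-2$, and $v_3(55/24^4)=-4$; likewise $\lambda=\frac{1}{1-3^8}(\frac13-\frac{u_1^4}{72})$ has $3$-adic valuation $-2$. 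So the claim that the numerators $1,3,12,55$ ``exactly cancel against the $3$'s hidden in the powers of $24$'' is false in every case, and a term such as the degree-$5$ coefficient $3u_1^2/24^2$ does not ``die mod $3$'' --- it is not even integral on its own, and discarding it would be discarding something with a pole at $3$.

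The correct procedure is to compute $\exp_{G_2}(\log_{G_2}(x)+\log_{G_2}(y))$ with coefficients in $\W[1/3][[u_1]]$, observe that each homogeneous piece is integral (because $G_2$ is defined over $(E_2)_0$), and only then reduce mod $3$; the denominators cancel only after summing \emph{all} contributions in a fixed degree, typically through binomial coefficients divisible by $3$. For instance in degree $5$ the two contributions combine to $\frac{u_1^2}{192}\bigl[(x+y)^5-(x+y)^2(x^3+y^3)\bigr]=\frac{u_1^2}{64}(xy^4+x^4y)+\frac{3u_1^2}{64}(x^2y^3+x^3y^2)$, which is integral and reduces to $u_1^2(xy^4+x^4y)$ since $64\equiv 1$; following your outline literally you would instead get $0$ in degree $5$. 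The same issue infects your account of where the $x^3y^4+x^4y^3$ term in degree $7$ comes from: the mixed term $e_5\cdot 5(x+y)^4\cdot m_1(x^3+y^3)$ that you discard has $3$-adic valuation $-2$ and must be kept. So while the route is the right one, the bookkeeping rule you propose for executing it is wrong and would produce incorrect coefficients; you need to keep exact rational coefficients until each homogeneous degree has been fully assembled.
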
 

\begin{proof} This follows directly from 
$x+_{G_2}y=\exp_{G_2}(\log_{G_2}(x)+\log_{G_2}(y))$.  
\end{proof} 

\medbreak 

\subsection{Formulae for the action modulo $(3)$} 
To simplify notation we will denote in the remainder of this section 
the mod-$3$ reduction of the value of the function $t_i$ of (\ref{ti}) 
on an element $g$ again by $t_i(g)$, or even by $t_i$ 
if $g$ is clear from the context.  
 
\begin{prop}\label{t_i} Let $g\in \s_n$ and let $u_1$ be Araki's $u_1$.  
Then the following equations hold  
\begin{itemize} 
\item[a)] $g_*(u_1)=t_0^2u_1$ 
\item[b)] $t_0+t_0^6t_1u_1^3=t_0^9+t_1^3u_1$
\item[c)] $t_1-t_0^8t_1u_1^4\equiv 
t_1^9+t_2^3u_1-t_0^{18}t_1^3u_1^2-t_0^9t_1^6u_1^3
\ \ \ \ \ \ \mod\  (u_1^7)$
\item[d)] If \ $g\equiv 1 \mod (S^2)$\ \ then  \ \
$t_2\equiv t_2^9+t_3^3u_1 \ \ \ \ \  \mod\ (u_1^2)$.  
\end{itemize}
\end{prop}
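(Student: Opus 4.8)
The plan is to derive all four formulae from the master equation (\ref{eqnfgl}), namely $h_g([p]_{g_*G_2}(x))=[p]_{G_2}(h_g(x))$, by expanding both sides modulo $3$ as power series in $x$ and comparing coefficients in increasing powers of $u_1$. We have $h_g(x)=\sum_{i\geq 0}{}^{G_2}t_i(g)x^{3^i}$, and by (\ref{p-series}) the $3$-series mod $3$ is $[3]_{G_2}(x)=u_1x^3+_{G_2}x^9$; on the source side $[3]_{g_*G_2}(x)=g_*(u_1)x^3+_{G_2}x^9$. So the identity to exploit is
$$
h_g\big(g_*(u_1)x^3+_{G_2}x^9\big)=u_1\,h_g(x)^3+_{G_2}h_g(x)^9\ .
$$
First I would establish (a): comparing the lowest-order terms forces $g_*(u_1)=t_0^2u_1$ — indeed, the $x^3$-coefficient on the right is $u_1t_0^3$ (since $h_g(x)^3$ has leading term $t_0^3x^9$... more carefully, the leading $x^3$ contribution comes from $u_1t_0x^3$ after applying $h_g$ to the $x^3$-term, while the left side contributes $t_0g_*(u_1)x^3$; but the Frobenius twist in $h_g(x)^3=(\sum t_ix^{3^i})^3\equiv\sum t_i^3x^{3^{i+1}}$ has to be accounted for), giving $g_*(u_1)t_0=u_1t_0^3$, hence $g_*(u_1)=t_0^2u_1$ once we know $t_0$ is a unit. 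The key computational input is Corollary \ref{fgl}, which gives $+_{G_2}$ mod $(3,(x,y)^{11})$, so all the $G_2$-sum manipulations up through degree $9$ or so in the relevant variable are completely explicit.

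Next, for (b), I would push the comparison to the $x^9$-coefficient. The right-hand side $u_1h_g(x)^3+_{G_2}h_g(x)^9$ contributes, at $x^9$: from $u_1h_g(x)^3\equiv u_1(t_0^3x^9+\ldots)$ the term $u_1t_0^3x^9$, wait — one must be careful: we want the coefficient of $x^9$, and $u_1h_g(x)^3$ has the term $u_1\cdot 3t_0^2t_1x^{9}\cdot$(no, char 3)... so $h_g(x)^3\equiv t_0^3x^9+t_1^3x^{27}+\ldots$, giving $u_1t_0^3x^9$; and $h_g(x)^9\equiv t_0^9x^{27}+\ldots$ contributes nothing at $x^9$; but the formal sum $+_{G_2}$ mixes them using Corollary \ref{fgl}, and at $x^9$ with the first argument of size $x^9$ and second of size $x^{27}$ only the first argument matters, so RHS $x^9$-coefficient is $t_0^3 u_1$ — no wait, that can't be right either since then (b) would be trivial. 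The point is that the \emph{left} side $h_g(g_*(u_1)x^3+_{G_2}x^9)$ expands as $\sum_i t_i^{\phantom{}}(g_*(u_1)x^3+_{G_2}x^9)^{3^i}$ (formal $G_2$-sum of these), and the $i=0$ term gives $t_0(g_*(u_1)x^3+_{G_2}x^9)=t_0g_*(u_1)x^3+_{G_2}t_0x^9+(\text{corrections from Cor.\ \ref{fgl}})$, while the $i=1$ term gives $t_1(g_*(u_1)x^3+_{G_2}x^9)^3\equiv t_1g_*(u_1)^3x^9+\ldots$. Collecting the $x^9$-coefficient on the left: $t_0+t_1g_*(u_1)^3=t_0+t_1t_0^6u_1^3$; on the right, $h_g(x)^9$ contributes $t_0^9x^{81}$-type terms, but at $x^9$ we also get from $u_1h_g(x)^3$ a term and from the $+_{G_2}$-correction (Cor.\ \ref{fgl}, the term $-(x^3y^6+x^6y^3)$ etc.) — after bookkeeping one lands on $t_0^9+t_1^3u_1$, yielding (b). I expect this bookkeeping, done honestly with Corollary \ref{fgl}, to be the main obstacle: one must track several cross-terms in the $G_2$-formal sum simultaneously and not drop any mod-$3$ contribution.

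Parts (c) and (d) are the same computation carried two, respectively three, filtration steps further, using that on the associated graded the $S^i$-leading coefficient of $g$ equals $t_i(g)$ mod $(3,u_1)$ by (\ref{t_imodm}). For (c), I would expand the master identity modulo $(u_1^7)$ at the coefficient of $x^9$ (or whichever $x$-power isolates $t_2$), which requires the $\exp_{G_2},\log_{G_2}$ expansion of the preceding Lemma to higher order — but since the stated congruence is only mod $(u_1^7)$, Corollary \ref{fgl}'s range mod $(x,y)^{11}$ is exactly calibrated to suffice. For (d), the hypothesis $g\equiv 1\bmod(S^2)$ forces $t_0\equiv 1$ and $t_1\equiv 0$ mod $(3,u_1)$, which collapses most terms: the congruence $t_2\equiv t_2^9+t_3^3u_1\bmod(u_1^2)$ then falls out of the same recursion with these vanishing conditions dramatically simplifying the cross-terms. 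Throughout, the structural principle is that equation (\ref{eqnfgl}) is an identity in $(E_2)_*/(3)[[x]]$ and comparing coefficients of $x^{3^k}$ for successive $k$ gives a triangular system determining $t_0,t_1,t_2,\ldots$ recursively; parts (a)--(d) simply record the first few resulting relations to the precision in $u_1$ that we will actually use in section \ref{action}.
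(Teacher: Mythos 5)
Your proposal is correct and follows essentially the same route as the paper: one considers the identity (\ref{eqnfgl}) over $(E_2)_0/(3)[[x]]$, expands both sides using Corollary \ref{fgl}, and compares coefficients of $x^{3^k}$ for $k=1,2,3,4$, your identifications of the $x^3$- and $x^9$-coefficients (giving $g_*(u_1)t_0=u_1t_0^3$ for (a) and $t_0+t_0^6t_1u_1^3=t_0^9+t_1^3u_1$ for (b)) agreeing exactly with the paper's. The one point to fix is that for (c) the relevant coefficient is that of $x^{27}$ (not $x^9$) and for (d) that of $x^{81}$; with that correction the bookkeeping you describe is precisely the paper's proof.
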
 
 
\remarkstyle{Remark}  
If we want to know $g_*(u_1)$ modulo $(u_1^7)$ 
then (a) shows that it is enough to know $t_0$ modulo $(u_1^6)$ and 
this can be calculated from (b) if we know $t_0$ modulo 
$(u_1)$ and $t_1$ modulo $(u_1^3)$. Furthermore, $t_1$ 
can be calculated modulo $(u_1^3)$ from (c)  
if we know $t_0$, $t_1$ and $t_2$ modulo $(u_1)$. 
In the same manner we can even calculate $g_*(u_1)$ modulo $(u_1^8)$.   

Similarly, if we want to know $g_*(u_1)$ modulo $(u_1^{11})$ 
then (a) shows that it is enough to know $t_0$ modulo $(u_1^{10})$ and 
this can be calculated from (b) if we know $t_0$ modulo $(u_1)$  
and $t_1$ modulo $(u_1^7)$. Furthermore, $t_1$ can be calculated modulo 
$(u_1^7)$ from (c) 
if we know $t_0$ modulo $(u_1^3)$, $t_1$ modulo $(u_1)$ and 
$t_2$ modulo $(u_1^2)$. Finally (d) can be used to calculate 
$t_2$ modulo $(u_1^2)$ if we know $t_2$ and $t_3$ modulo $(u_1)$. 
 
\begin{proof} In this proof we abbreviate $G_2$ simply by $G$. 
We consider equation (\ref{eqnfgl})
$$
h_g([3]_{g_*G})(x)=[3]_G(h_g(x))
$$ 
over $(E_2)_0/(3)[[x]]$ and compare coefficients of 
$x^{3^k}$ for $k=1,2,3,4$.   
\smallskip
By (\ref{p-series}) we have 
$$
\begin{array}{llll}
h_g([3]_{g_*G})(x)&\equiv & \ t_0\big(g_*(u_1)x^3+_{g_*G}x^9\big)
+_G t_1\big(g_*(u_1)x^3+_{g_*G}x^9\big)^3 & \\ 
&&\ +_G t_2\big(g_*(u_1)x^3+_{g_*G}x^9\big)^9 
+_G t_3\big(g_*(u_1)x^3+_{g_*G}x^9\big)^{27}& \text{mod}\ (x^{82}) \\ 
&&&\\
{[3]_G}(h_g(x))&\equiv & \ 
u_1(t_0x+_Gt_1x^3+_Gt_2x^9+t_3x^{27})^3 \\ 
&&+_G(t_0x+_Gt_1x^3+_Gt_2x^9)^9   & \text{mod}\ (x^{82}) \ .\\ 
\end{array} 
$$

a) For the coefficient of $x^3$ we obviously get $g_*(u_1)t_0=u_1t_0^3$ 
which proves (a). 

b) The coefficient of $x^9$ in $h_g([3]_{g_*G})(x)$ 
is equal to $t_0+g_*(u_1)^3t_1$ which by (a) is equal to 
$t_0+u_1^3t_0^6t_1$. The coefficient of $x^9$ in ${[3]_G}(h_g(x))$ is 
equal to the same coefficient in 
$$
u_1(t_0x+_Gt_1x^3)^3+_G(t_0x)^9 
$$
which is clearly equal to $u_1t_1^3+t_0^9$ and hence we get (b). 
 
c) The coefficient of $x^{27}$ in $h_g([3]_{g_*G})(x)$
is equal to the coefficient of $x^{27}$ in 
$$
t_0\big(g_*(u_1)x^3+_{g_*G}x^9\big)
+_G t_1\big(g_*(u_1)x^3+_{g_*G}x^9\big)^3+_G t_2\big(g_*(u_1)x^3\big)^9 
$$ 
and the latter coefficient is equal to 
$$
t_1+t_2g_*(u_1)^9+c
$$ 
where $c$ is the coefficient of $x^{27}$ in 
$$
t_0\big(g_*(u_1)x^3+_{g_*G}x^9\big)+_Gt_1g_*(u_1)^3x^9 \ . 
$$ 
Next we observe that Corollary \ref{fgl} yields   
\begin{eqnarray}
g_*(u_1)x^3+_{g_*G}x^9&\equiv & 
g_*(u_1)x^3+x^9-g_*(u_1)^3x^{15} \nonumber \\ 
&& -g_*(u_1)^2x^{21}+g_*(u_1)^6x^{21}-g_*(u_1)^9x^{27}
\ \  \mod\ (x^{28}) \ .\nonumber   
\end{eqnarray} 
Applying Corollary \ref{fgl} once more and using (a) 
and calculating modulo $(u_1^7)$ we obtain  
$$
c\equiv -u_1t_0^2t_1g_*(u_1)^3=-u_1^4t_0^8t_1\ \ \ \mod\ (u_1^7)
$$ 
and hence modulo $(u_1^7)$ the coefficient of $x^{27}$ in 
$h_g([3]_{g_*G})(x)$ is equal to 
$$
t_1-u_1^4t_0^8t_1  \ .
$$  
On the other hand the coefficient of $x^{27}$ in ${[3]_G}(h_g(x))$ is equal 
to the same coefficient in 
$$
u_1(t_0x+_Gt_1x^3+_Gt_2x^9)^3+_G(t_0x+_Gt_1x^{3})^9   
$$ 
and this coefficient is equal to 
$$
u_1t_2^3+t_1^9+d
$$
where $d$ is the coefficient of $x^{27}$ in 
$$
u_1(t_0x+_Gt_1x^3)^3+_Gt_0^9x^9 \ .  
$$ 
Next we observe that Corollary \ref{fgl} yields   
\begin{eqnarray}
(t_0x+_Gt_1x^3)^3&\equiv & 
t_0^3x^3+t_1^3x^9-u_1^3t_0^6t_1^3x^{15} \nonumber \\ 
&&-u_1^3t_0^3t_1^6x^{21}+u_1^6t_0^{12}t_1^3x^{21}
-u_1^9t_0^{18}t_1^3x^{27}\ \ \ \ \mod\ (x^{28}) \ .\nonumber   
\end{eqnarray} 
Applying Corollary \ref{fgl} once more and using (a) 
and calculating modulo $(u_1^7)$ we obtain 
$$
\begin{array}{lcll}
u_1(t_0x+_Gt_1x^3)^3+_Gt_0^9x^9&\equiv &u_1\big(t_0^3x^3+t_1^3x^9
-u_1^3t_0^6t_1^3x^{15}-u_1^3t_0^3t_1^6x^{21})+t_0^9x^9& \\ 
&&-u_1^3\big(t_0^3x^3+t_1^3x^9-u_1^3t_0^6t_1^3x^{15}-
u_1^3t_0^3t_1^6x^{21})^2t_0^9x^9& \\ 
&&-u_1^2\big(t_0^3x^3+t_1^3x^9-u_1^3t_0^6t_1^3x^{15}-
u_1^3t_0^3t_1^6x^{21})t_0^{18}x^{18}& \\ 
&&+u_1^6\big(t_0^3x^3+t_1^3x^9-u_1^3t_0^6t_1^3x^{15}-
u_1^3t_0^3t_1^6x^{21})^4t_0^9x^9&  \text{mod} \ (x^{28})  
\end{array}
$$
and hence 
$$
d\equiv -u_1^3(t_1^6-2u_1^3t_0^9t_1^3)t_0^9-u_1^2t_0^{18}t_1^3
+4u_1^6t_0^{18}t_1^3 \mod (u_1^7) \ .
$$ 
Therefore the coefficient of $x^{27}$ in ${[3]_G}(h_g(x))$ is equal to 
$$
t_1^9+u_1t_2^3-u_1^2t_0^{18}t_1^3-u_1^3t_0^9t_1^6+2u_1^6t_0^{18}t_1^3
+4u_1^6t_0^{18}t_1^3 \ \ \ \ \ \mod (u_1^7)
$$
and (c) follows.  

d) The coefficient of $x^{81}$ in $h_g([3]_{g_*G})(x)$
is equal to the same coefficient in 
$$
t_0\big(g_*(u_1)x^3+_{g_*G}x^9\big)
+_G t_1\big(g_*(u_1)x^3+_{g_*G}x^9\big)^3+_G
t_2\big(g_*(u_1)x^3+_{g_*G}x^9\big)^9+_Gt_3(g_*(u_1)x^3)^{27}
$$ 
which modulo $(u_1^3)$ is equal to the same coefficient in 
$$
t_0\big(g_*(u_1)x^3+_{g_*G}x^9\big)+_G t_1x^{27}+_Gt_2x^{81} 
$$ 
and by Corollary \ref{fgl} this is easily seen to be equal to $t_2$ 
modulo $(u_1^2)$.

On the other hand the coefficient of $x^{81}$ in ${[3]_G}(h_g(x))$ is equal 
to the same coefficient in 
$$
u_1(t_0x+_Gt_1x^3+_Gt_2x^9+t_3x^{27})^3+_G(t_0x+_Gt_1x^{3}+t_2x^9)^9   
$$ 
and this coefficient is equal to 
$$
u_1t_3^3+t_2^9+e
$$
where $e$ is the coefficient of $x^{81}$ in the series 
$$
u_1(t_0x+_Gt_1x^3+_Gt_2x^9)^3+_G(t_0x+_Gt_1x^{3})^9  \ . 
$$ 
Now $g\equiv 1\ \text{mod}\ (S^2)$ implies 
$t_1\equiv 0\ \text{mod}\ (u_1)$  and thus modulo 
$(u_1^2)$ we find that $e$ is also the coefficient of $x^{81}$ in 
$$
u_1(t_0x+_Gt_2x^9)^3+_Gt_0^9x^9   
$$ 
and by Corollary \ref{fgl} even of the coefficient of $x^{81}$ in 
$$
u_1(t_0x+_Gt_2x^9)^3\ .   
$$ 
Now Lemma \ref{G_2mod-m} below shows that the coefficient of 
$x^{27}$ in $t_0x+_Gt_2x^9$ is trivial modulo $(u_1)$ 
(if not, either the coefficient of $x^{18}y$ or of $x^9y^2$ in $x+_Gy$ 
would have to be nontrivial modulo $(u_1)$), 
hence $e$ is trivial modulo $(u_1^2)$ 
and the proof of (d) is complete. 
\end{proof}  
\smallskip
 
\begin{lem}\label{G_2mod-m}
$$
x+_{G_2}y \equiv x+y+\sum_{i\geq 1}P_{8i+1}(x,y)  \mod (3,u_1)
$$
where $P_{8i+1}$ is a homogeneous polynomial of degree $8i+1$ without terms 
$x^{8i+1}$ and $y^{8i+1}$. 
\end{lem}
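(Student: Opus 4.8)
The plan is to identify the reduction of $G_2$ modulo $(3,u_1)$ with the Honda formal group law $\Gamma_2$ and then exploit a grading symmetry of $\Gamma_2$. Since $(E_2)_0/(3,u_1)=\W_{\FF_9}[[u_1]]/(3,u_1)\cong\FF_9$ and $G_2(x,y)=u^{-1}F_2(ux,uy)$ is by construction a universal deformation of $\Gamma_2$, its reduction modulo $(3,u_1)$ is $\Gamma_2$ itself (this can also be read off from $(\ref{p-series})$, which gives $[3]_{G_2}(x)\equiv x^9\pmod{(3,u_1)}$). Writing $\Gamma_2(x,y)=\sum_{a,b}c_{a,b}x^ay^b$ with $c_{a,b}\in\FF_9$, the Lemma is then equivalent to the assertion that $c_{a,b}=0$ whenever $a+b\not\equiv 1\pmod 8$: granting this, one sets $P_{8i+1}(x,y):=\sum_{a+b=8i+1}c_{a,b}x^ay^b$, and the strictness relations $\Gamma_2(x,0)=x$ and $\Gamma_2(0,y)=y$ force $P_1(x,y)=x+y$ and, for $i\geq 1$, the vanishing of the coefficients of $x^{8i+1}$ and of $y^{8i+1}$ in $P_{8i+1}$, which is exactly the claimed form.

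The key step will be to show that for every $\zeta\in\FF_9^{\times}$ the power series $x\mapsto\zeta x$ is an endomorphism, hence an automorphism, of $\Gamma_2$. I would argue via the Honda logarithm: $\Gamma_2$ is the reduction modulo $3$ of the formal group law $\widetilde\Gamma$ over $\W_{\FF_9}$ with logarithm $\ell(x)=\sum_{j\geq 0}3^{-j}x^{3^{2j}}$ (this $\widetilde\Gamma$ has $[3]$-series congruent to $x^9$ modulo $3$, so its reduction is $\Gamma_2$ by uniqueness of a $3$-typical formal group law with prescribed $[3]$-series). If $\widetilde\zeta\in\W_{\FF_9}^{\times}$ is the Teichm\"uller lift of $\zeta$, then $\widetilde\zeta^{\,8}=1$, and since $3^{2j}\equiv 1\pmod 8$ we get $\widetilde\zeta^{\,3^{2j}}=\widetilde\zeta$ for all $j$, whence $\ell(\widetilde\zeta x)=\widetilde\zeta\,\ell(x)$, hence $\exp_\ell(\widetilde\zeta\,t)=\widetilde\zeta\,\exp_\ell(t)$, and therefore over $\W_{\FF_9}$
$$
\widetilde\Gamma(\widetilde\zeta x,\widetilde\zeta y)=\exp_\ell\!\big(\ell(\widetilde\zeta x)+\ell(\widetilde\zeta y)\big)=\widetilde\zeta\,\exp_\ell\!\big(\ell(x)+\ell(y)\big)=\widetilde\zeta\,\widetilde\Gamma(x,y)\ .
$$
Reducing modulo $3$ gives $\Gamma_2(\zeta x,\zeta y)=\zeta\,\Gamma_2(x,y)$, and as $\widetilde\zeta$ runs over the $8$-th roots of unity $\zeta$ runs over all of $\FF_9^{\times}$. (Alternatively one can get the same relation from the framework of Section~\ref{action}: taking $g=\widetilde\zeta\in\W^{\times}\subset\s_2$, formula $(\ref{t_imodm})$ gives $t_0(\widetilde\zeta)\equiv\zeta$ and $t_i(\widetilde\zeta)\equiv 0$ for $i\geq 1$ modulo $(3,u_1)$, so the associated formal group isomorphism $h_{\widetilde\zeta}$ reduces to $x\mapsto\zeta x$; since $\s_2$ acts $\W$-linearly, hence trivially modulo $(3,u_1)$ on $\FF_9$, this is an automorphism of $\Gamma_2$.)

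To finish I would compare the coefficient of $x^ay^b$ on the two sides of $\Gamma_2(\zeta x,\zeta y)=\zeta\,\Gamma_2(x,y)$, obtaining $c_{a,b}\,\zeta^{a+b}=c_{a,b}\,\zeta$ for every $\zeta\in\FF_9^{\times}$; since $\FF_9^{\times}$ is cyclic of order $8$, this forces $c_{a,b}=0$ unless $\zeta^{a+b-1}=1$ for all such $\zeta$, i.e.\ unless $a+b\equiv 1\pmod 8$, completing the proof. The only genuinely non-formal ingredient is that multiplication by a $(3^2-1)$-st root of unity is an automorphism of $\Gamma_2$; I do not anticipate a real obstacle beyond keeping careful track of the distinction between $\FF_3$ and $\FF_9$ — working only with $\FF_3^{\times}=\{\pm 1\}$ would yield the weaker periodicity $2$ in place of $8$, which is insufficient.
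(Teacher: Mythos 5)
Your proof is correct, but it takes a genuinely different route from the paper's. The paper works with the graded formal group law $F_2$ and uses that its coefficients lie in the image of $BP_*\to (E_2)_*$, i.e.\ in $\Z_{(3)}[v_1,v_2]$; modulo $(3,u_1)$ these coefficients therefore lie in $\F_3[u^{-8}]$, so they sit in degrees divisible by $16$, and homogeneity of $F_2$ (of degree $-2$ with $|x|=|y|=-2$) then forces the coefficient of a degree-$j$ monomial to vanish unless $2j-2\equiv 0\bmod 16$, i.e.\ $j\equiv 1\bmod 8$. You instead identify the reduction of $G_2$ modulo $(3,u_1)$ with the Honda formal group law $\Gamma_2$ and exploit the automorphisms $x\mapsto \zeta x$ for $\zeta\in\F_9^{\times}$; the relation $\Gamma_2(\zeta x,\zeta y)=\zeta\,\Gamma_2(x,y)$ kills all coefficients $c_{a,b}$ with $a+b\not\equiv 1\bmod 8$, and strictness handles the missing pure powers. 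Both arguments are sound and about equally short. The paper's argument leans on the Landweber-type input (coefficients generated by $v_1,v_2$) and the gradings already set up in section 4; yours makes the period $8$ conceptually transparent as $|\F_9^{\times}|$ and generalizes verbatim to height $n$ with period $p^n-1$. Your parenthetical variant via $(\ref{t_imodm})$ is actually the cleanest version of your approach: it shows directly that $h_{\widetilde\zeta}$ reduces to the automorphism $x\mapsto\zeta x$ of $\overline{G}_2$ (since $\widetilde\zeta_*$ is the identity on $(E_2)_0/(3,u_1)\cong\F_9$), so you never need the explicit Honda logarithm nor the uniqueness of a $3$-typical formal group law with $[3]$-series $x^9$ — the one step in your main argument that deserves a word of justification (it follows because the Araki generators can be read off the $[p]$-series over an $\F_p$-algebra).
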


\begin{proof} It is enough to show this for the graded formal group law 
$F_2$ over $(E_2)_*$. This group law is a homogeneous series 
of degree $-2$ if $x$ and $y$ are given degree $-2$ and thus, if we write 
$$
x+_{G_2}y \equiv x+y+\sum_{j\geq 1}P_j(x,y) \mod (3,u_1)
$$
with homogeneous polynomials in $x$ and $y$ of degree $-2j$  
then the coefficients in $P_j$ have to be in $(E_2)_{2j-2}$. 
Furthermore, this group law has its coefficients in the 
subring generated by $u_1u^{-2}$ and $u^{-8}$. 
However, $(E_2)_*/(3,u_1)\cong \F_9[[u^{\pm 1}]]$ and thus 
$2j-2$ has to be a multiple of $16$.  
\end{proof}
\smallskip

\vfil\eject
\begin{cor}\label{ti-action} \mbox{} The following equations hold in 
$(E_2)_0/(3)$.  
\begin{itemize}				
\item[a)] Let $g=1+g_1S+g_2S^2 \mod (S^3)$. Then we have 
$$
\begin{array}{lcll} 
t_1 &\equiv & g_1+g_2^3u_1-g_1^3u_1^2-g_1^6u_1^3 
&\ \mod \ (u_1^4) \\
t_0 &\equiv & 
1+g_1^3u_1-g_1u_1^3+(g_2-g_2^3)u_1^4+g_1^3u_1^5+(g_1^2+g_1^6)u_1^6 
&\ \mod \ (u_1^7) \ .\\
\end{array}
$$
\item[b)] Let $g=1+g_2S^2+g_3S^3\mod (S^4)$. Then we have 
$$
\begin{array}{lcll} 
t_2 &\equiv & g_2+ g_3^3u_1  &\ \mod \ (u_1^2) \\
t_1 &\equiv & g_2^3u_1+g_3u_1^4+(g_2^3-g_2)u_1^5
&\ \mod \ (u_1^7) \\
t_0 &\equiv & 1+(g_2-g_2^3)u_1^4-g_3u_1^7+(g_2-g_2^3)u_1^8 
&\ \mod \ (u_1^{10}) \ . \\
\end{array}
$$
\end{itemize}
\end{cor}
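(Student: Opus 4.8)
The plan is to bootstrap from the base congruence $t_i(g)\equiv g_i\bmod(3,u_1)$ of (\ref{t_imodm}), feeding successively sharper approximations into the recursive relations of Proposition \ref{t_i}. The one structural fact that drives everything is that over $\F_9[[u_1]]$ the $3^k$-th power map is a Frobenius twist, $\big(\sum_i a_iu_1^i\big)^{3^k}=\sum_i a_i^{3^k}u_1^{3^ki}$, and that each $g_i\in\F_9$ satisfies $g_i^9=g_i$. Hence cubing multiplies $u_1$-adic valuations by $3$, and the ninth-power ``correction'' terms only ever enter in $u_1$-degree $\geq 9$; in particular, if $\delta=t_0-1$ lies in the maximal ideal then $t_0-t_0^9=\delta-\delta^9$ with $\delta^9$ of much higher valuation than $\delta$, so the implicit relation of Proposition \ref{t_i}(b) can be solved for $t_0$ order by order. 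Throughout one uses $2=-1$ in $\F_3$ and the signs appearing in Proposition \ref{t_i}(b)--(d).

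For part (a) I would first pin down $t_1$ modulo $(u_1^4)$. Reducing Proposition \ref{t_i}(c) modulo $(u_1^4)$ kills the left-hand term $t_0^8t_1u_1^4$, leaving
$$t_1\equiv t_1^9+t_2^3u_1-t_0^{18}t_1^3u_1^2-t_0^9t_1^6u_1^3\pmod{(u_1^4)},$$
into which I substitute $t_0\equiv 1$, $t_1\equiv g_1$, $t_2\equiv g_2\bmod(u_1)$ and $t_1^9\equiv g_1\bmod(u_1^9)$ to obtain $t_1\equiv g_1+g_2^3u_1-g_1^3u_1^2-g_1^6u_1^3$. Next I determine $t_0$ modulo $(u_1^7)$ from Proposition \ref{t_i}(b) rewritten as $t_0-t_0^9=t_1^3u_1-t_0^6t_1u_1^3$; since the ninth-power correction vanishes modulo $(u_1^7)$ this becomes $t_0-1\equiv t_1^3u_1-t_0^6t_1u_1^3\pmod{(u_1^7)}$. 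Solving iteratively: the lowest-order term gives $t_0\equiv 1+g_1^3u_1\bmod(u_1^2)$, which already forces $t_0^6\equiv 1-g_1u_1^3\bmod(u_1^4)$ and hence $t_0^6t_1u_1^3$ modulo $(u_1^7)$; combining with $t_1^3u_1\equiv g_1^3u_1+g_2u_1^4\bmod(u_1^7)$ (cube the formula for $t_1$, use $g_i^9=g_i$) produces the asserted expansion of $t_0$.

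Part (b) runs along the same lines with $g_1=0$. Since $g\equiv 1\bmod(S^2)$, Proposition \ref{t_i}(d) applies and reduces modulo $(u_1^2)$ to $t_2\equiv g_2+g_3^3u_1$. Feeding this into Proposition \ref{t_i}(c) modulo $(u_1^7)$ — where $t_1\equiv 0\bmod(u_1)$ makes both $t_1^9$ and $t_0^9t_1^6u_1^3$ drop out — leaves $t_1\equiv t_2^3u_1+t_0^8t_1u_1^4-t_0^{18}t_1^3u_1^2$, which after checking $t_0\equiv 1\bmod(u_1^4)$ (so that $t_0^8,t_0^{18}\equiv 1$ to the order needed) solves iteratively to $t_1\equiv g_2^3u_1+g_3u_1^4+(g_2^3-g_2)u_1^5\bmod(u_1^7)$. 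Finally Proposition \ref{t_i}(b), again in the form $t_0-1\equiv t_1^3u_1-t_0^6t_1u_1^3\bmod(u_1^{10})$ — the ninth-power correction now being negligible because $t_0-1$ is divisible by $u_1^4$ — yields $t_0\equiv 1+(g_2-g_2^3)u_1^4-g_3u_1^7+(g_2-g_2^3)u_1^8$.

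The only genuine difficulty is the bookkeeping. Relations (b)--(d) of Proposition \ref{t_i} are implicit, each expressing a quantity like $t_0-t_0^9$ or $t_1-t_0^8t_1u_1^4$ in terms of the $t_i$ themselves, so one must solve them coefficient by coefficient in $u_1$ while tracking exactly how many terms of each $t_j$ are needed to reach the advertised precision (and how the precision of $t_1$ interacts with that of $t_0$, and so on). The Frobenius-twist observation is precisely what closes the recursion: every feedback term built by raising a $t_i$ to a power of $3$ affects only strictly higher-order $u_1$-coefficients, so no step is circular, and all signs come out exactly as stated.
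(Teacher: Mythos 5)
Your proposal is correct and follows essentially the same route as the paper: starting from $t_i(g)\equiv g_i\bmod(3,u_1)$, one feeds successively refined approximations into the relations of Proposition \ref{t_i} in the same order (first $t_1$ from (c) and then $t_0$ from (b) for part (a); then $t_2$ from (d), $t_1$ from (c) and $t_0$ from (b) for part (b)), with the characteristic-$3$ Frobenius observation controlling the $3^k$-th power feedback terms exactly as you describe. The coefficient bookkeeping you outline (e.g. $t_0\bmod(u_1^2)$ determining $t_0^6\bmod(u_1^4)$, and $t_1^3u_1\equiv g_1^3u_1+g_2u_1^4\bmod(u_1^7)$) checks out and reproduces the stated formulas.
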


\begin{proof} a) From Proposition \ref{t_i}.c we obtain 
$$
t_1\equiv t_1^9+t_2^3u_1-t_0^{18}t_1^3u_1^2-t_0^9t_1^6u_1^3
\ \ \ \text{mod} \ (u_1^4)
$$ 
and by using (\ref{t_imodm}) we immediately 
get the formula for $t_1$ modulo $(u_1^4)$. 
Then Proposition \ref{t_i}.b and (\ref{t_imodm}) yield 
$$
t_0+t_0^6(g_1+g_2^3u_1-g_1^3u_1^2-g_1^6u_1^3)u_1^3\equiv 
1+(g_1^3+g_2u_1^3)u_1 \ \ \ \text{mod} \ (u_1^7) 
$$ 
from which we easily get the formula for $t_0$ modulo $(u_1^7)$. 
The formula for $g_*(u_1)$ follows now from Proposition \ref{t_i}.a\ . 

b) From Proposition \ref{t_i}.d and (\ref{t_imodm}) 
we immediately obtain the formula for $t_2$. 
Substituting the value for $t_2$ into the formula of 
Proposition \ref{t_i}.c and using (\ref{t_imodm}) 
yields
$$
t_1-t_0^8t_1u_1^4\equiv (g_2^3+g_3u_1^3)u_1-t_0^{18}t_1^3u_1^2
\ \ \ \text{mod} \ (u_1^7)\ . 
$$ 
Substituting the values of $t_0$ modulo $(u_1^7)$ and $t_1$ modulo 
$(u_1^4)$ from (a) into this yields 
$$
t_1-g_2^3u_1^5\equiv (g_2^3+g_3u_1^3)u_1-g_2u_1^5
\ \ \ \text{mod} \ (u_1^7)  
$$ 
from which we get the value of $t_1$ modulo $(u_1^7)$. 
Next we substitute this value of $t_1$ together with 
the value of $t_0$ modulo $(u_1^7)$ of (a) into the formula of 
Proposition \ref{t_i}.b and obtain 
$$
t_0+\big(1+(g_2-g_2^4)u_1^4\big)^6
\big(g_2^3u_1+g_3u_1^4+(g_2^3-g_2)u_1^5\big)
u_1^3\equiv 1+g_2u_1^4 
\ \ \ \text{mod}\ (u_1^{10}) 
$$
from which we easily get the formula for $t_0$ modulo $(u_1^{10})$. 
\end{proof}
 
\noindent

The following calculation will be used repeatedly in  
later sections. The result is only given to the precision needed later.  

\begin{lem}\label{t0k} Let $g=1+g_1S+g_2S^2 \mod (S^3)$ 
and let $k$ be an integer. 
Then we have 
$$
t_0(g)^k\equiv 
\left\{\begin{array}{lll}
1+g_1^3u_1+(k'-1)g_1u_1^3+(k'g_1^4+g_2-g_2^3)u_1^4+g_1^3u_1^5 
&\mod (3,u_1^6)&  k=3k'+1\\ 
1-g_1^3u_1+g_1^6u_1^2+(k'+1)g_1u_1^3 +\\
(g_1^4-k'g_1^4+g_2^3-g_2)u_1^4+(k'g_1^7-g_1^3-g_1^3g_2+g_1^3g_2^3)u_1^5 
&\mod (3,u_1^6)&  k=3k'+2\ .\\
\end{array} \right. 
$$
\end{lem}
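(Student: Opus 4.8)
The plan is to derive the formula by raising the expression for $t_0(g)$ in Corollary \ref{ti-action}.a to the $k$-th power via the binomial theorem, keeping track of everything modulo $(3,u_1^6)$.

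Since only the residue modulo $(3,u_1^6)$ is wanted, one may drop the $u_1^6$-term in Corollary \ref{ti-action}.a and write $t_0(g)=1+\epsilon$ with
$$
\epsilon:=g_1^3u_1-g_1u_1^3+(g_2-g_2^3)u_1^4+g_1^3u_1^5\in(u_1)\ .
$$
As $(E_2)_0/(3)\cong\F_9[[u_1]]$ is $(u_1)$-adically complete and $\epsilon\in(u_1)$, the element $1+\epsilon$ is a unit and, for every $k\in\Z$, the binomial series $t_0(g)^k=(1+\epsilon)^k=\sum_{j\geq0}\binom{k}{j}\epsilon^j$ converges (negative powers meaning powers of the inverse). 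Working modulo $(u_1^6)$ kills all terms with $j\geq6$, so it remains to compute $\epsilon^2,\ldots,\epsilon^5$ modulo $(3,u_1^6)$ and the residues of the binomial coefficients $\binom{k}{j}$ modulo $3$ for $2\leq j\leq5$.

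The powers of $\epsilon$ are a short direct computation in $\F_9[[u_1]]$: expanding the products and using that $2\equiv-1$ and $3\equiv0$ modulo $3$, together with $g_i^9=g_i$ in $\F_9$ (so that $g_1^3g_1^6=g_1$, $g_1^3g_1=g_1^4$, etc., cf. (\ref{t_imodm})), one finds that modulo $(3,u_1^6)$
$$
\epsilon^2\equiv g_1^6u_1^2+g_1^4u_1^4-g_1^3(g_2-g_2^3)u_1^5,\qquad \epsilon^3\equiv g_1u_1^3,\qquad \epsilon^4\equiv g_1^4u_1^4,\qquad \epsilon^5\equiv g_1^7u_1^5\ ,
$$
every other contribution being divisible by $3$ or lying in $(u_1^6)$. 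For the binomial residues one uses that, for $j\leq 8$, the class of $\binom{k}{j}$ modulo $3$ is $9$-periodic in $k$ (because $(1+x)^9\equiv1+x^9$ modulo $3$), equivalently that it is given by Lucas' theorem in terms of the bottom two base-$3$ digits of $k$. Writing $k=3k'+r$ with $r\in\{1,2\}$ this yields, modulo $3$,
$$
\binom{k}{2}\equiv\binom{r}{2},\qquad \binom{k}{3}\equiv k',\qquad \binom{k}{4}\equiv rk',\qquad \binom{k}{5}\equiv\binom{r}{2}k'\ .
$$

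It then remains to substitute all of this into $1+k\epsilon+\binom{k}{2}\epsilon^2+\binom{k}{3}\epsilon^3+\binom{k}{4}\epsilon^4+\binom{k}{5}\epsilon^5$, gather the coefficients of $u_1^0,\ldots,u_1^5$, and split according to the residue of $k$ modulo $3$: for $r=1$ one has $\binom{k}{2}\equiv\binom{k}{5}\equiv0$ and $\binom{k}{3}\equiv\binom{k}{4}\equiv k'$, which produces the first displayed formula; for $r=2$ one has $k\equiv-1$ and $\binom{k}{2}\equiv1$, $\binom{k}{3}\equiv k'$, $\binom{k}{4}\equiv-k'$, $\binom{k}{5}\equiv k'$, which produces the second. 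There is no conceptual difficulty here; the only place where care is genuinely needed is the bookkeeping — getting the signs correct modulo $3$ and evaluating the four binomial residues in each of the two congruence classes — after which the six coefficients in each case match the claimed formulae on the nose.
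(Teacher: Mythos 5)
Your proposal is correct and follows exactly the paper's own route: the paper likewise writes $t_0(g)^k=(1+(t_0(g)-1))^k$, expands binomially modulo $(3,u_1^6)$ keeping only $j\leq 5$, and evaluates the binomial coefficients via Lucas' theorem $\binom{k}{j}\equiv\prod_i\binom{k_i}{j_i}\bmod p$. Your computations of $\epsilon^2,\ldots,\epsilon^5$ and of the residues $\binom{k}{2},\ldots,\binom{k}{5}$ in the two cases $k\equiv 1,2\bmod 3$ check out and reproduce the stated coefficients.
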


\begin{proof} The result follows easily from Corollary 
\ref{ti-action} and from  
$$
t_0(g)^k=\big((1+(t_0(g)-1)\big))^k 
\equiv \sum_{j=1}^5 \binom{k}{3}
(t_0(g)-1)^j \ \mod \ \ (3,u_1^6)
$$ 
by using that 
$\binom{k}{j}\equiv \prod_i\binom{k_i}{j_i}\ \mod (p)$ 
if $k=\sum_ik_ip^i$ and $j=\sum_ij_ip^i$ are the 
$p$-adic expansions of $k$ and $j$ respectively.   
\end{proof}

Using Lemma \ref{abcd} we finally get the following 
information on the action of $a$, $b$, $c$ and $d$ on $(E_2)_*/(3)$. 
We use $1$ and $\omega^2$ as a basis of $\FF_9$ considered 
as an $\FF_3$-vector space (rather then $1$ and $\omega$).

\begin{cor}\label{u1-action} The action of the elements 
$a$, $b$, $c$ and $d$ on $(E_2)_*/(3)$ satisfy the formulae 
$$
\begin{array}{lcll}
a_*u_1&\equiv & u_1-(1+\omega^2)u_1^2-\omega^2u_1^3+(1-\omega^2)u_1^4- 
u_1^5 -(1+\omega^2)u_1^6  & \mod (u_1^{7}) \\  
b_* u_1&\equiv & u_1+ u_1^2+u_1^3-u_1^4 +(1+\omega^2)u_1^5+
(1-\omega^2)u_1^6  & \mod (u_1^{7}) \\  
c_*u_1&\equiv & u_1-\omega^2u_1^5+(-1+\omega^2)u_1^8-(1+\omega^2)u_1^9  
& \mod (u_1^{11})\\ 
d_*u_1&\equiv &u_1+\omega^2u_1^8 & \mod (u_1^{11}) \\  
&&&\\
a_*u&\equiv & \big(1+(1+\omega^2)u_1+(-1+\omega^2)u_1^3
+(1+\omega^2)u_1^5\big)u
& \mod (u_1^{6}) \\  
b_* u&\equiv &  \big(1-u_1+u_1^3-\omega^2u_1^4-u_1^5\big)u
& \mod (u_1^{6}) \\  
c_*u&\equiv & \big(1+\omega^2u_1^4+(1-\omega^2)u_1^7+\omega^2u_1^8\big)u
& \mod (u_1^{10})\\ 
d_*u&\equiv &\big(1-\omega^2u_1^7\big)u& \mod (u_1^{10})\ . \qed  
\end{array}
$$
\end{cor}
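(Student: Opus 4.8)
The plan is to reduce everything to the formulae of Corollary \ref{ti-action} and Lemma \ref{t0k} together with the explicit approximations of Lemma \ref{abcd}. The only two inputs needed are $g_*(u)=t_0(g)u$ from (\ref{u-action}) and $g_*(u_1)=t_0(g)^2u_1$ from Proposition \ref{t_i}.a, so in each of the four cases it suffices to pin down the power series $t_0(g)$ and $t_0(g)^2$ to the required $u_1$-adic precision and then multiply by $u$ resp. $u_1$.

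First I would treat $a$ and $b$. Lemma \ref{abcd}(a),(b) present these elements modulo $(S^3)$ in the shape $g=1+g_1S+g_2S^2$ demanded by Corollary \ref{ti-action}(a) and Lemma \ref{t0k}, with $(g_1,g_2)=(\omega,1)$ for $a$ and $(g_1,g_2)=(-1,-\omega)$ for $b$. Taking $k=1$ in Lemma \ref{t0k} (equivalently, reading off Corollary \ref{ti-action}(a)) gives $t_0(a)$ and $t_0(b)$ modulo $(u_1^6)$, and $k=2$ gives $t_0(a)^2$ and $t_0(b)^2$ modulo $(u_1^6)$; multiplying by $u$ resp. $u_1$ then yields the assertions about $a_*u$, $a_*u_1$, $b_*u$, $b_*u_1$ modulo $(u_1^6)$ resp. $(u_1^7)$. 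The one step that is not purely mechanical is that the coefficients emerge as $\F_9$-combinations of $1$ and $\omega$ (through quantities such as $g_1^3=\omega^3$ and $g_1^6=\omega^6$), and these must be rewritten in the chosen basis $1,\omega^2$; here one uses the relation (\ref{omegaeq}), i.e. $\omega=1-\omega^2$, together with $\omega^4=-1$ (so $\omega^3=1+\omega^2$, $\omega^6=-\omega^2$), after which the expressions match the stated ones verbatim.

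For $c$ and $d$ the procedure is identical but one step deeper in the filtration: Lemma \ref{abcd}(c),(d) present them modulo $(S^4)$ in the shape $g=1+g_2S^2+g_3S^3$ demanded by Corollary \ref{ti-action}(b), with $(g_2,g_3)=(-\omega^2,-\omega)$ for $c$ and $(g_2,g_3)=(0,\omega^2)$ for $d$. Corollary \ref{ti-action}(b) then produces $t_0(c)$ and $t_0(d)$ modulo $(u_1^{10})$ outright, and since $t_0(g)-1$ now lies in $(u_1^4)$ the squaring is immediate, giving $t_0(c)^2$ and $t_0(d)^2$ modulo $(u_1^{10})$. Multiplying by $u$ resp. $u_1$ delivers $c_*u$, $c_*u_1$, $d_*u$, $d_*u_1$ modulo $(u_1^{10})$ resp. $(u_1^{11})$, again after the same conversion of $\omega$-coefficients into the basis $1,\omega^2$ (for instance $g_2-g_2^3=\omega^2$ for $c$, which is what produces the $\omega^2u_1^4$ term in $c_*u$).

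I do not anticipate a real obstacle. The two things to watch are: (i) that the precision to which Lemma \ref{abcd} determines $a,b,c,d$ is exactly what Corollary \ref{ti-action} and Lemma \ref{t0k} consume — the higher $S$-adic corrections do not influence $t_0$ in the relevant range of $u_1$-powers, a fact already encoded in those two statements; and (ii) the bookkeeping in the $\F_9$-linear algebra. Neither calls for a new idea, so the remaining work is a term-by-term verification against the displayed formulae.
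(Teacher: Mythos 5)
Your proposal is correct and is exactly the argument the paper intends (the corollary is stated with no separate proof precisely because it is the mechanical substitution of Lemma \ref{abcd} into Corollary \ref{ti-action} and Lemma \ref{t0k}, followed by the change of basis to $1,\omega^2$ via $\omega=1-\omega^2$ and $\omega^4=-1$). I checked the resulting coefficients against the displayed formulae and they agree.
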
  
\bigbreak

\section{The $E_2$-term of the algebraic spectral sequence}\label{e2-term} 

\subsection{The $E_1$-term}\label{e1-term} We begin by giving 
some background on Theorem \ref{shapiro}, or equivalently, on the 
$E_1$-term of the spectral sequence (\ref{algss}) 
$$
E_1^{s,t,*}=\Ext^t_{\Z_3[[\GG_2^1]]}(C_s,(E_2)_*/(3))
\Longrightarrow H^{s+t}(\GG_2^1,(E_2)_*/(3)) \ . 
$$ 
We note that for $s=1,2$ the module $C_s$ is projective 
as $\Z_3[[\GG_2^1]]$-module and thus we have a Shapiro isomorphism 
\begin{equation}
E_1^{s,t}=
\Ext^t_{\Z_3[[\GG_2^1]]}(\chi\uparrow^{\G_2^1}_{SD_{16}},(E_2)_*/(3))
\cong 
\begin{cases} \Hom_{\Z_3[SD_{16}]}(\chi,(E_2)_*/(3)) & t=0\\
0                                                 & t>0 \  .
\end{cases} 
\end{equation}
The action of $SD_{16}$ on $(E_2)_{*}$ is known (cf. the proof of Lemma 22 
of \cite{hennkn} for an explicit reference) to be given by 
\begin{equation}\label{SD16-action}
\omega_* u_1 = \omega^2 u_1\ \ \textrm{and}\ \ \omega_* u = \omega u 
\end{equation} 
and the Frobenius $\phi$ acts $\Z_3$-linearly by extending the 
action of Frobenius on $\W$ via 
\begin{equation}\label{Frobenius-action}
\phi_*u_1 =u_1 \ \ \textrm{and} \ \ \phi_*u =u \ .
\end{equation}
This implies immediately that 
$(E_2)_*/(3)^{SD_{16}}$ is isomorphic to $\FF_3[[u_1^4]][v_1,u^{\pm 8}]$ 
as a graded algebra and that there 
is an isomorphism of $(E_2)_*/(3)^{SD_{16}}$-modules  
\begin{equation}\label{TMF(2)}
\Ext^0_{\Z_3[SD_{16}]}(\chi,(E_2)_*/(3)) \cong 
\omega^2u^{4}\mathbb{F}_3[[u_1^{4}]][v_1,u^{\pm8}].
\end{equation}

For $s=0,3$ we have a Shapiro isomorphism 
$$
E_1^{s,t}=\Ext^t_{\Z_3[[\GG_2^1]]}(\Z_3[[\GG_2^1/G_{24}]],(E_2)_*/(3))
\cong H^t(G_{24},(E_2)_*/(3)) \ .
$$ 
Let $G_{12}$ be the subgroup of $G_{24}$ generated by 
the elements $a$ and $t$. The calculation of the cohomology algebra 
$H^*(G_{12},(E_2)_*/(3))$ was deduced from that of $H^*(G_{12},(E_2)_*)$ 
in section 1.3 of \cite{GHM}. In precisely the same way one deduces 
the calculation of $H^*(G_{24},(E_2)_*/(3))$ from that of 
$H^*(G_{12},(E_2)_*)$ which was given in section 3 of \cite{GHMR}. 
In particular there are classes  
$$
\begin{array}{ll}
\Delta\in H^0(G_{24},(E_2)_{24}/(3)),&  \a\in H^1(G_{24},(E_2)_{4}/(3))\\ 
\widetilde{\a}\in H^1(G_{24},(E_2)_{12}/(3)),& \b\in H^2(G_{24},(E_2)_{12}/(3)) \\
\end{array}
$$
and an isomorphism of algebras 
\begin{equation}\label{TMF} 
H^*(G_{24},(E_2)_*/(3))\cong 
\F_3[[v_1^6\Delta^{-1}]][v_1,\Delta^{\pm 1},\b,\a,\widetilde{\a}]/
(\a^2,\widetilde{\a}^2,v_1\a,v_1\widetilde{\a},\a\widetilde{\a}+v_1\b)\ .
\end{equation} 
In the sequel we need some control over the elements 
occuring in this isomorphism (cf. section 1.3 of \cite{GHM}). 
First we recall that $\alpha$ is defined as $\delta^0(v_1)$ 
where $\delta^0$ is the Bockstein with respect to the 
short exact sequence of continuous $\Z_3[[\GG_2]]$-modules
\begin{equation}\label{delta^0}
0\to (E_2)_*/(3)\stackrel{3}\to (E_2)_*/(9)\to (E_2)_*/(3)\to 0 \ .
\end{equation}
Similarly, $v_2:=u^{-8}$ determines an invariant in 
$H^0(\GG_2,(E_2)_{16}/(3,u_1))$ and  
$\widetilde{\a}$ is defined as $\delta^1(v_2)$ where $\delta^1$ 
is the Bockstein with respect to the short exact sequence 
of continuous $\Z_3[[\GG_2]]$-modules 
\begin{equation}\label{delta^1}
0\to\Sigma^4(E_2)_*/(3)\stackrel{v_1}\to (E_2)_*/(3)\to 
(E_2)_*/(3,u_1)\to 0 \ . 
\end{equation}
Next $\b$ is defined to be the mod $3$-reduction of $\d^0\d^1(v_2)$. 
These elements are thus defined as elements in $H^*(\GG_2,(E_2)_*/(3))$. 
We denote their restriction to $H^*(G_{24},(E_2)_*/(3))$ 
by the same name.  

The relation between $\Delta$ (which lifts to an invariant 
of the same name in $H^0(G_{24},(E_2)_{24})$) and the classes 
$u$ and $u_1$ is more subtle. Here we record the following result. 
\smallbreak 

\begin{prop}\label{Delta}  
$\Delta\equiv (1-\omega^2u_1^2+u_1^4)\omega^2u^{-12}
\ \mod\ (3,u_1^6)$.   
\end{prop}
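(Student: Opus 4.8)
The plan is to pin down $\Delta$ by combining three pieces of information: that $\Delta$ is $G_{24}$-invariant of degree $24$, that it is a unit in $(E_2)_*/(3)$, and that the power series $v_1^6\Delta^{-1}$ generates the maximal ideal of $M^{G_{24}}_0$ as in Theorem \ref{shapiro}. Since $v_1=u_1u^{-2}$, an invariant of degree $24$ in $(E_2)_*/(3)\cong\F_9[[u_1]][u^{\pm1}]$ can be written uniquely as $f(u_1)u^{-12}$ with $f\in\F_9[[u_1]]$; writing $f(u_1)=\sum_{i\geq0}c_iu_1^i$ with $c_i\in\F_9$, the condition is that this class be invariant under $a$ and under $t=\omega^2$ (the elements $a,t$ generating $G_{12}\subset G_{24}$, and $\psi=\omega\phi$ generating the rest). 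So first I would reduce to determining the $c_i$ for $i\leq 5$.

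The element $t=\omega^2$ acts by $t_*u_1=\omega^4u_1$ (from $\omega_*u_1=\omega^2u_1$ in (\ref{SD16-action})) and $t_*u=\omega^2u$; hence $t_*(u_1^iu^{-12})=\omega^{4i}\omega^{-24}u_1^iu^{-12}=\omega^{4i}u_1^iu^{-12}$, and $\omega^{4i}=1$ exactly when $i$ is even. So $t$-invariance forces $c_i=0$ for $i$ odd, leaving $\Delta\equiv(c_0+c_2u_1^2+c_4u_1^4)\omega^{-12}\cdot(\omega^{12}u^{-12})$ modulo $(u_1^6)$ — and $\omega^{12}=\omega^4=-1$, so up to the harmless normalization choice of $\Delta$ we may write $\Delta\equiv(c_0+c_2u_1^2+c_4u_1^4)\omega^2 u^{-12}\bmod(u_1^6)$ after absorbing a unit; choosing the standard representative makes $c_0=1$. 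Next I would impose invariance under $a$, using Corollary \ref{u1-action}: $a_*u_1\equiv u_1-(1+\omega^2)u_1^2-\omega^2u_1^3+(1-\omega^2)u_1^4-u_1^5\bmod(u_1^6)$ and $a_*u\equiv\big(1+(1+\omega^2)u_1+(-1+\omega^2)u_1^3+(1+\omega^2)u_1^5\big)u\bmod(u_1^6)$. Substituting into $a_*\big((1+c_2u_1^2+c_4u_1^4)\omega^2u^{-12}\big)$ and expanding modulo $(u_1^6)$, using $\omega^2+\omega-1\equiv0$ (i.e. (\ref{omegaeq})) to simplify coefficients in $\F_9$, yields a system of linear equations in $c_2,c_4$ (the odd-degree coefficients of the difference must vanish automatically by the $t$-argument, providing a consistency check). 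Solving should give $c_2=-\omega^2$ and $c_4=1$, which is the claimed formula.

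The routine-but-delicate part is the expansion of $a_*(u^{-12})=\big(1+(1+\omega^2)u_1+(-1+\omega^2)u_1^3+(1+\omega^2)u_1^5\big)^{-12}$ modulo $(u_1^6)$: one needs the binomial coefficients $\binom{-12}{j}\bmod 3$ and has to track the $\F_9$-arithmetic carefully. I expect this to be the main obstacle — not conceptually, but in that a sign or an $\omega$-power slip propagates. A useful safeguard: since $t_*$-invariance is automatic, only the coefficients of $u_1^2$ and $u_1^4$ in $a_*(\Delta)-\Delta$ carry information, so I only need those two coefficients, and I can cross-check by also verifying $\psi=\omega\phi$ invariance (which, by (\ref{Frobenius-action}), reduces to checking that $c_2,c_4$ lie in the appropriate Galois-fixed set once the $\omega$-twist is accounted for, and $1,-\omega^2\in\F_9$ pair correctly with the sign $-1$ coming from $\omega_*$ on the twelfth power). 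Finally one records that the result is stated only modulo $(3,u_1^6)$, so no convergence issue arises and the three displayed coefficients determine $\Delta$ to the asserted precision.
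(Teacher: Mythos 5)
Your overall strategy --- cut down the shape of $\Delta$ by invariance under the finite part of $G_{24}$, then determine the remaining coefficients from $a$-invariance using Corollary \ref{u1-action} --- is the same as the paper's (which additionally uses the full $Q_8$-invariance to force the unknowns into $\F_3$ rather than $\F_9$, and which reads the leading term $\omega^2u^{-12}$ off the explicit definition $\Delta=\omega^2/\bigl(4(x\,(a_*x)(a_*(a_*x)))^4\bigr)$ of \cite{GHMR}). However, the step where you decide \emph{which} coefficients of $a_*\Delta-\Delta$ to use is wrong, and it is fatal to the plan as written. You claim that ``the odd-degree coefficients of the difference must vanish automatically by the $t$-argument'' and that therefore only the coefficients of $u_1^2$ and $u_1^4$ carry information. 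The situation is exactly reversed. Since $ta=a^2t$, the element $a_*\Delta$ is $t$-invariant only if $a_*\Delta=\Delta$, so nothing forces its odd coefficients to vanish a priori; on the contrary, it is the \emph{even} coefficients of $a_*\Delta-\Delta$ that vanish identically. Concretely, with $D_\lambda=(1+\lambda_2\omega^2u_1^2+\lambda_4u_1^4)\omega^2u^{-12}$, Corollary \ref{u1-action} gives $(a_*u_1)^2\equiv u_1^2+(1+\omega^2)u_1^3$, $(a_*u_1)^4\equiv u_1^4-(1+\omega^2)u_1^5$ and $t_0(a)^{-12}\equiv 1-\omega u_1^3$ modulo $(u_1^6)$, whence
$$
a_*D_\lambda-D_\lambda\equiv\Big(\big(\lambda_2\omega^2(1+\omega^2)-\omega\big)u_1^3-\big(\lambda_4(1+\omega^2)+\lambda_2\omega^3\big)u_1^5\Big)\omega^2u^{-12}\ \ \mod\ (u_1^6)
$$
for \emph{every} choice of $\lambda_2,\lambda_4$. (This is structural: since $-12\equiv 0\ \mod (3)$, Lemma \ref{I-action} already forces $(a-e)_*$ to push the even part of $\F_9[[u_1]]u^{-12}$ into odd $u_1$-degrees to this precision.) So the linear system you propose to solve, from the $u_1^2$- and $u_1^4$-coefficients, reads $0=0$ and determines nothing, while the $u_1^3$- and $u_1^5$-coefficients you discard as a ``consistency check'' are precisely the two equations the paper uses; using $1+\omega^2=\omega^3$ they give $\lambda_2=-1$ and $\lambda_4=1$.

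A secondary gap: the leading coefficient $\omega^2$ cannot be obtained by ``choosing the standard representative.'' Invariance determines a degree-$24$ unit invariant modulo $(u_1^6)$ only up to a scalar, whereas the proposition asserts a congruence for the specific class $\Delta$ fixed in \cite{GHMR}. The paper supplies this by quoting $\Delta\equiv\omega^2u^{-12}\ \mod\ (3,u_1)$ from the formula above together with $x\equiv u\ \mod\ (3,u_1)$; without that input your argument proves the stated congruence only up to a unit.
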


\begin{proof}
By (3.11) of \cite{GHMR} the integral lift of $\Delta$ is defined as  
$$
\Delta = \frac{\omega^2}{4\big(x(a_*x)(a_*(a_*x))\big)^4}
$$ 
and by the proof of Lemma 3.1 of \cite{GHMR} we know 
$x\equiv u \ \text{mod}\ (3,u_1)$, hence 
$\Delta \equiv \omega^2u^{-12} \ \text{mod}\ (3,u_1)$.  
Because $\Delta$ is invariant with respect to $G_{24}$, 
in particular with respect to $Q_8$, we get from 
(\ref{SD16-action}) and (\ref{Frobenius-action}) 
that $\Delta$ is of the form 
$$
\Delta\equiv (1+\lambda_2\omega^2u_1^2+\lambda_4u_1^4)\omega^2u^{-12} 
\ \ \ \mod (3,u_1^6)\ \ 
\textrm{with} \ \ \lambda_i \in \mathbb{F}_3 \subset \mathbb{F}_9\ . 
$$ 
Because $\Delta$ is also invariant with respect to the action of $a$ we get 
from (\ref{u-action}) 
$$
\Delta=a_*(\Delta)\equiv 
t_0(a)^{-12}(1+\l_2\omega^2a_*(u_1)^2+\l_4a_*(u_1)^4)\omega^{2}u^{-12} 
\ \ \ \text{mod}\ (3,u_1^6)\ \ . 
$$ 
The right hand side of this equation can be evaluated modulo $(u_1^6)$  
by using Corollary \ref{ti-action}.a and Corollary \ref{u1-action}. 
By looking at the coefficients of $u_1^3$ and $u_1^5$ in the 
right hand side we obtain $\l_2=-1$ and $\l_4=1$.  
\end{proof} 
\medbreak

\subsection{The $d_1$-differential}
 
First of all we note that all differentials are $v_1$-linear.  

\begin{lem}\label{propD} Let $k\not\equiv 0 \mod (3)$. 
Then the differential $d_1:E_1^{0,0} \to E_1^{1,0}$ satisfies 
$$
d_1(\Delta^k) \equiv  
\left\{\begin{array}{llcl}
(-1)^{m+1}\omega^2(1+u_1^4)u^{-12k}&\mod (u_1^8)& &k=2m+1 \\
(-1)^{m+1}m\omega^2u_1^2u^{-12k} &\mod (u_1^6)& &k=2m. \\
\end{array}\right.
$$ 
\end{lem}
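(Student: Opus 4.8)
The plan is to compute $d_1(\Delta^k)$ using the standard description of the $d_1$-differential in the algebraic spectral sequence (\ref{algss}): for a class in $E_1^{0,0,*}=H^0(G_{24},(E_2)_*/(3))$, the differential $d_1$ is induced by the dual map $\partial_1^*:C_0^*\to C_1^*$, which by Proposition \ref{dualcomplex}(c) sends $e_0^*\mapsto(e+a+a^2)e_1^*$. Concretely, an invariant class $x\in H^0(G_{24},(E_2)_*/(3))\subset(E_2)_*/(3)$ is sent to the image under the projection $(E_2)_*/(3)\to\mathrm{Hom}_{\Z_3[SD_{16}]}(\chi,(E_2)_*/(3))$ of $(e+a_*+a_*^2)x = x + a_*x + a_*^2x$. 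So the first step is to apply the formula $a_*x = x + a_*x + a_*^2 x$ — wait, more precisely I need to compute $(1+a_*+a_*^2)(\Delta^k)$ modulo $3$ and modulo a suitable power of $u_1$, then identify its image in the rank-one free $M^{SD_{16}}$-module $\omega^2u^4\F_3[[u_1^4]][v_1,u^{\pm8}]$ from Theorem \ref{shapiro}(b).

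First I would use Proposition \ref{Delta}, which gives $\Delta\equiv(1-\omega^2u_1^2+u_1^4)\omega^2u^{-12}\bmod(3,u_1^6)$, to get an expression for $\Delta^k$ modulo $(3,u_1^6)$ (and I would push the approximation of $\Delta$ one step further, to modulo $(3,u_1^8)$, in the odd case $k=2m+1$, by the same argument as in the proof of Proposition \ref{Delta} — looking at coefficients of $u_1^7$ — since the statement claims precision modulo $u_1^8$ there). Then I would apply $a_*$ using Corollary \ref{u1-action} for $a_*u_1$ and $a_*u$ (both known modulo the relevant powers of $u_1$) and Lemma \ref{t0k} for $t_0(a)^{-12k}$; here I use $a_*(u^{-12k})=t_0(a)^{-12k}u^{-12k}$ from (\ref{u-action}). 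Since $\Delta^k$ is $G_{24}$-invariant, $(1+a_*+a_*^2)\Delta^k = 3\Delta^k + (\text{correction terms})$; but $3=0$ mod $3$, so the point is that after reducing mod $3$ one is left purely with the "defect" of $\Delta^k$ being $a$-invariant only up to the chosen approximation. Because the approximation to $\Delta$ is only valid modulo $(u_1^6)$ (resp. $(u_1^8)$), the sum $\Delta^k+a_*\Delta^k+a_*^2\Delta^k$ need not vanish mod $3$ at that precision, and its leading term is exactly what I want to extract.

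The key computational step is organizing the binomial/Frobenius arithmetic: write $k=2m+1$ or $k=2m$, split further by the $3$-adic valuation as in the statements of Theorem \ref{d1}, and track the coefficient of $u_1^4u^{-12k}$ (resp. $u_1^2u^{-12k}$) in $\sum_{i=0}^2 a_*^i(\Delta^k)$. Since $d_1(\Delta_k)$ lands in $E_1^{1,0,16\ell+8}$ which is the free rank-one module on $\omega^2u^4\cdot(\text{powers of }u_1^4\text{ and }u^{\pm8})$, and $u^{-12k}=u^4\cdot u^{-12k-4}$ with $-12k-4\equiv 0\bmod 8$ when $k$ is odd — good — the image is $\omega^2u^{-12k}$ times a power series in $u_1^4$, and I need the constant term plus the $u_1^4$-term, which is where the $(1+u_1^4)$ in the odd case comes from. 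In the even case $k=2m$ the target is $\omega^2u_1^2u^{-12k}$ and the claim is the coefficient is simply $(-1)^{m+1}m$, so I only need the leading ($u_1^2$) term. The main obstacle I anticipate is bookkeeping the signs and the mod-$3$ binomial coefficients $\binom{-12k}{j}$ via the $p$-adic digit formula (Lemma \ref{t0k}), together with correctly reducing the $\FF_9$-coefficients using $\omega^2+\omega-1\equiv0$ — i.e. making sure the purely formal expansion collapses to the clean real-coefficient answers $(-1)^{m+1}$ and $(-1)^{m+1}m$; the case split on $\nu_3(k)$ will show up because the binomial coefficient $\binom{-12k}{3}$ (equivalently the coefficient $k'-1$ or $k'+1$ appearing in Lemma \ref{t0k}) detects that valuation, and this is presumably why the even-$k$ formula carries the factor $m$ and interacts with the finer statement of Theorem \ref{d1}. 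A secondary check is that all the "unwanted" intermediate terms (odd powers of $u_1$, or $u_1$-powers not of the form $4\ell$ in the odd case) must cancel after projecting to $\mathrm{Hom}_{\Z_3[SD_{16}]}(\chi,-)$; this cancellation is forced by $SD_{16}$-equivariance but should also be visible directly from the congruences, providing a useful consistency test.
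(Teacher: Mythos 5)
Your identification of the map inducing $d_1\colon E_1^{0,0}\to E_1^{1,0}$ is wrong, and this is fatal to the computation. The $d_1$-differential out of filtration $s$ is precomposition with $\partial_{s+1}\colon C_{s+1}\to C_s$; for $s=0$ this is $\partial_1\colon C_1\to C_0$, $e_1\mapsto (e-\omega)e_0$ (Corollary \ref{delta1}), so on $E_1^{0,0}=M^{G_{24}}$ one has $d_1(x)=x-\omega_*x$, which automatically lands in $\Hom_{\Z_3[SD_{16}]}(\chi,M)$. The map $\partial_1^*\colon C_0^*\to C_1^*$, $e_0^*\mapsto (e+a+a^2)e_1^*$, from Proposition \ref{dualcomplex}(c) is, via the self-duality identifying $C_i^*$ with $C_{3-i}$, the map $\partial_3\colon C_3\to C_2$, and hence computes the differential $d_1\colon E_1^{2,0}\to E_1^{3,0}$ at the \emph{other} end of the resolution (this is exactly how it is used in Lemma \ref{d_1^{2,0}u} and Proposition \ref{d_1^{2,0}}).

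The error is not merely one of labelling. Since $a\in G_{24}$ and $\Delta^k$ is genuinely $G_{24}$-invariant, $(e+a_*+a_*^2)\Delta^k=3\Delta^k=0$ identically in $(E_2)_*/(3)$, so your recipe returns zero. The ``defect of invariance up to the chosen approximation'' you propose to extract is an artifact of replacing $\Delta^k$ by an approximation: it lives entirely inside the error term of that approximation and carries no information about $d_1(\Delta^k)$. By contrast $\omega\notin G_{24}$, so $x-\omega_*x$ is genuinely nonzero, and the paper's proof is just the short computation of $\Delta^k-\omega_*(\Delta^k)$ using Proposition \ref{Delta} for $\Delta^k$ modulo $(u_1^6)$ together with $\omega_*u_1=\omega^2u_1$ and $\omega_*u=\omega u$; the improvement to a congruence modulo $(u_1^8)$ for $k$ odd then comes for free because the target $\Hom_{\Z_3[SD_{16}]}(\chi,M)$ only contains $u_1$-powers of the form $u_1^{4\ell}$ (Theorem \ref{shapiro}(b)), not from any finer expansion of $\Delta$.
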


\begin{proof}
By Corollary \ref{delta1} the differential is induced by the 
homomorphism $C_1\to C_0$ which sends  $e_1$ to $(e-\omega)e_0$. 
Furthermore Proposition \ref{Delta} and (\ref{SD16-action}) give 
$$
\begin{array}{lcll} 
\Delta^k&\equiv & 
\big(1-k\omega^2u_1^2+ku_1^4-\binom{k}{2}u_1^4\big)\omega^{2k}u^{-12k} 
&\ \mod (u_1^6) \\
\omega_*(\Delta^k)&\equiv &
\big(1+k\omega^2u_1^2+ku_1^4-\binom{k}{2}u_1^4\big)\omega^{2k-12k}u^{-12k} 
&\ \mod (u_1^6) \\
\end{array}
$$ 
and the result follows easily. (Note that by (\ref{TMF(2)}) 
the congruence for $d_1(\Delta^{2m+1})$ improves to a 
congruence modulo $(u_1^8)$ rather than only modulo $(u_1^6)$.)  
\end{proof} 
\medbreak 

\begin{prop}\label{d_1^{0,0}}  
For each integer $k\neq 0$ there exists 
an element $\Delta_k \in E_1^{0,0,24k}$ 
such that
\begin{itemize}
\item[a)] $\Delta_k \equiv \Delta^k \mod  (u_1^6)$
\item[b)] the differential $d_1:E_{1}^{0,0}\to E_1^{1,0}$ satisfies 
$$
d_1(\Delta_k) \equiv  
\left\{\begin{array}{lll}
(-1)^{m+1}\omega^2u^{-12k}& \mod (u_1^4)& k=2m+1  \\
(-1)^{m+1}m\omega^2u_1^{4.3^n-2}u^{-12k}  
&\mod (u_1^{4.3^n+2})& 
k=2.3^nm, \ m\not\equiv 0 \mod\ (3) \\
0 &   & k=0 \ .\\
\end{array}\right.
$$
\end{itemize}
\end{prop}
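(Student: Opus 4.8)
The plan is to construct $\Delta_k$ by induction on the $3$-adic valuation $v_3(k)$, always working inside the module $E_1^{0,0,24k}$, which by Theorem \ref{shapiro}a is the $u_1$-adic completion of the $\F_3$-span of the elements $(v_1^6\Delta^{-1})^j\Delta^k$, $j\geq 0$; since $v_1^6\Delta^{-1}$ has $u_1$-valuation $6$, these are adapted to the $u_1$-adic filtration. Two facts are used throughout. First, $d_1$ is $v_1$-linear, and under the Shapiro identification (\ref{Shapiro}) together with Corollary \ref{delta1} the map $d_1\colon E_1^{0,0}\to E_1^{1,0}$ is $x\mapsto x-\o_* x$ on $M^{G_{24}}$, where $\o_*$ is an involution since $\o^2\in G_{24}$. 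Second (sparseness of the target), by Theorem \ref{shapiro}b the nonzero $u_1$-powers occurring in $E_1^{1,0,24k}$ all lie in a single residue class modulo $4$; for $k$ even that class is $2$.

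For $k$ prime to $3$ I take $\Delta_k:=\Delta^k$: then (a) is trivial and (b) is exactly Lemma \ref{propD} (with $k=2m$ the $n=0$ instance of the second formula, and $k=2m+1$ the first). For the inductive step I write $k=3k'$, so $v_3(k')=v_3(k)-1$ and $\Delta_{k'}$ is available. The crucial point is that the $\G_2$-action on $M$ is by ring automorphisms, so in characteristic $3$
$$
d_1(x^3)=x^3-(\o_* x)^3=(x-\o_* x)^3=(d_1x)^3\qquad(x\in M^{G_{24}}).
$$
Hence $\Delta_{k'}^3\in E_1^{0,0,24k}$ still satisfies (a) (from $\Delta_{k'}\equiv\Delta^{k'}\bmod (u_1^6)$ one gets $\Delta_{k'}^3\equiv\Delta^k\bmod (u_1^{18})$), and $d_1(\Delta_{k'}^3)=(d_1\Delta_{k'})^3$. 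If $k$ is odd then $k'$ is odd, the leading $u_1$-degree of $d_1\Delta_{k'}$ is $0$, and cubing keeps it at $0$; a short computation with $\o^6=-\o^2$ and $m=3m'+1$ (writing $k=2m+1$, $k'=2m'+1$) gives $((-1)^{m'+1}\o^2)^3=(-1)^{m+1}\o^2$, so $\Delta_k:=\Delta_{k'}^3$ already works.

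The delicate case is $k$ even with $n:=v_3(k)\geq 1$. Here $d_1\Delta_{k'}\equiv c'u_1^{a'}u^{-12k'}\bmod (u_1^{a'+4})$ with $a'=4\cdot 3^{n-1}-2$ and $c'=(-1)^{m+1}m\o^2$, so by the cube identity $d_1(\Delta_{k'}^3)\equiv (c')^3u_1^{3a'}u^{-12k}$ up to $u_1$-degree $3a'+12=a+8$, where $a:=4\cdot 3^{n}-2$; thus its only contribution below degree $a+4$ sits in degree $3a'=a-4$, which is $4$ short of the target. I repair this by setting
$$
\Delta_k:=\Delta_{k'}^3+\lambda_0\,(v_1^6\Delta^{-1})^{j_0}\Delta^k,\qquad j_0:=2\cdot 3^{n-1}-1,
$$
with $\lambda_0\in\F_3$ still to be chosen. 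One checks $k-j_0$ is odd and prime to $3$, so Lemma \ref{propD} computes $d_1(\Delta^{k-j_0})$; since $6j_0=a-4$, $v_1$-linearity then shows that $d_1$ of the correction term equals $\lambda_0(\pm 1)\o^2(u_1^{a-4}+u_1^{a})u^{-12k}\bmod (u_1^{a+4})$, the two $u_1$-powers carrying equal coefficient because of the ``$1+u_1^4$'' in Lemma \ref{propD}. Choosing $\lambda_0$ to cancel the degree-$(a-4)$ term therefore forces the degree-$a$ coefficient of $d_1(\Delta_k)$ to be exactly $(-1)^{m+1}m\o^2$, while the $u_1$-degrees strictly between $a-4$ and $a$ never enter, by sparseness. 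Finally $6j_0=a-4\geq 6$ when $n\geq 1$, so the correction term vanishes $\bmod (u_1^6)$ and (a) persists; and $k=0$ is trivial since $\Delta_0=1$, $d_1(1)=0$.

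The main obstacle is the bookkeeping in this last case: one must align the precision in Lemma \ref{propD} (mod $(u_1^8)$ on odd powers of $\Delta$, mod $(u_1^6)$ on even ones) with the $u_1$-adic sparseness of $E_1^{1,0}$, so that exactly two $u_1$-degrees interact, and then carry all the signs --- through the cubing, and through the expansion of $\Delta^k$ supplied by Proposition \ref{Delta} --- to be sure the surviving leading coefficient is $(-1)^{m+1}m\o^2$ rather than its negative. By comparison, the identity $d_1(x^3)=(d_1x)^3$, immediate from Corollary \ref{delta1} and the ring-map property of the Morava action, and the base and odd cases, are routine.
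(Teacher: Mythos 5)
Your construction is correct and coincides with the paper's own proof: for $k$ not requiring correction you take $\Delta_k=\Delta^k$, and for $k=2\cdot 3^{n}m$ with $n\geq 1$ your $\Delta_k=\Delta_{k/3}^3+\lambda_0(v_1^6\Delta^{-1})^{j_0}\Delta^{k}$ with $j_0=2\cdot 3^{n-1}-1$ is exactly the paper's recursion $\Delta_{3k'}=\Delta_{k'}^3-mv_1^{3(4\cdot 3^{n-1}-2)}\Delta^{3k'-2\cdot 3^{n-1}+1}$, your $\lambda_0$ (determined by cancelling the degree-$(a-4)$ term) being the paper's explicit coefficient $-m$. The key ingredients --- $d_1(x^3)=(d_1x)^3$, Lemma \ref{propD} applied to the odd, prime-to-$3$ exponent of the correction term, and $v_1$-linearity together with the sparseness of $E_1^{1,0}$ --- are the same.
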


\begin{proof} For $k=0$, $k$ odd or $k=2m$ with 
$m\not\equiv 0 \textrm{ mod } (3)$ 
we define $\Delta_k$ to be equal to 
$\Delta^k$. The formula for $d_1$ is then satisfied by 
the previous result. 

If $k=2.3^{n}m$ with $n\geq 0$ and 
$m\not \equiv 0 \textrm{ mod }(3)$ we recursively define 
\begin{equation}
\Delta_{3k}:\ =\Delta_k^3-
mv_1^{3(4\cdot 3^n-2)}\Delta^{3k-2.3^n+1}.
\end{equation}
The previous proposition gives   
$$
d_1(\Delta^{3k-2.3^n+1})
\equiv (-1)^{m}\omega^2(1+u_1^4)u^{-12(3k-2.3^n+1)}\mod (u_1^6)\ .
$$
and by induction on $n$ we have  
$$
d_1(\Delta_{k})^3\equiv 
\big((-1)^{m+1}m\omega^2u_1^{4.3^n-2}u^{-12k}\big)^3
\equiv (-1)^{m}m\omega^2u_1^{4.3^{n+1}-6}u^{-12\cdot3k}
\mod (u_1^{4.3^{n+1}+6})\ .
$$ 
Therefore $v_1$-linearity of $d_1$ yields 
$$
\begin{array}{llll}
d_1(\Delta_{3k})&=&d_1(\Delta_{k}^3)-mv_1^{3(4.3^n-2)}d_1(\Delta_{3k-2.3^n+1})& \\
&\equiv &(-1)^{m+1}\omega^2mu_1^{4.3^{n+1}-2}u^{12\cdot3k} 
&\mod (u_1^{4.3^{n+1}+2}) 
\end{array}
$$ 
and the induction step is established. 
\end{proof}

\begin{cor} There is an isomorphism of  $\mathbb{F}_3[v_1]$-modules 
$E_2^{0,0}\cong \mathbb{F}_3[v_1]$. \qed
\end{cor}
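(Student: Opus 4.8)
The plan is to identify $E_2^{0,0}$ with $\ker\bigl(d_1\colon E_1^{0,0}\to E_1^{1,0}\bigr)$ --- there is no incoming differential, and the higher differentials out of $(0,0)$ land in zero groups --- and then to compute this kernel explicitly from the formulae of Proposition \ref{d_1^{0,0}}. One inclusion is free: since $d_1$ is $v_1$-linear and $d_1(\Delta_0)=d_1(1)=0$, the submodule $\mathbb{F}_3[v_1]\cdot 1=\mathbb{F}_3[v_1]\cdot\Delta_0$ lies in the kernel, and it is free over $\mathbb{F}_3[v_1]$ because $v_1=u_1u^{-2}$ acts injectively on $(E_2)_*/(3)=\mathbb{F}_9[[u_1]][u^{\pm 1}]$. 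So the whole content is the reverse inclusion $\ker d_1\subseteq\mathbb{F}_3[v_1]\cdot 1$.

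First I would record that, by Theorem \ref{shapiro}(a) together with Proposition \ref{d_1^{0,0}}(a), the elements $\Delta_k v_1^{\,j}$ ($k\in\mathbb{Z}$, $j\ge 0$) form a topological $\mathbb{F}_3$-basis of $E_1^{0,0}$: one has $\Delta_k\equiv\Delta^k\pmod{u_1^6}$, and in each degree $4n$ the monomials $\Delta^k v_1^{\,n-6k}$ (with $k\le n/6$) have pairwise distinct $u_1$-valuations $n-6k$ tending to $+\infty$ as $k\to-\infty$, so the change of basis $\Delta^k\rightsquigarrow\Delta_k$ is unipotent for the $u_1$-adic filtration and preserves the topological-basis property. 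Hence a general homogeneous element of degree $4n$ is a convergent sum $x=\sum_{k\le n/6}c_k\,\Delta_k v_1^{\,n-6k}$ with $c_k\in\mathbb{F}_3$, and by continuity and $v_1$-linearity $d_1(x)=\sum_{k\ne 0}c_k\,v_1^{\,n-6k}\,d_1(\Delta_k)$.

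Next I would exploit the $u_1$-adic valuation $\nu$ on $E_1^{1,0}\cong\omega^2u^4\mathbb{F}_3[[u_1^4]][v_1,u^{\pm 8}]$, which is complete for it and has $\nu(v_1)=1$. From Proposition \ref{d_1^{0,0}}(b), $\nu(d_1(\Delta_k))=j(k)$, where $j(k)=0$ for $k$ odd and $j(k)=4\cdot 3^r-2$ when $k=2\cdot 3^r\mu$ with $\mu\not\equiv 0\pmod 3$ (the leading coefficients $(-1)^{m+1}$, resp. $(-1)^{\mu+1}\mu$, being units in $\mathbb{F}_3$). So the $k$-th term of $d_1(x)$ has valuation $n+\psi(k)$, where $\psi(k):=j(k)-6k$. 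The crux is the arithmetic \textbf{key claim}: $\psi$ is injective on $\mathbb{Z}\setminus\{0\}$ and $\psi(k)\to+\infty$ as $k\to-\infty$. Granting it, any homogeneous $x\in\ker d_1$ with some $c_{k}\ne 0$ ($k\ne 0$) would have, among its nonzero terms, a unique one of minimal valuation $n+\psi(k_0)$ (the minimum exists because $\psi$ is proper on $\{k\le n/6\}$, again by the key claim), while every other nonzero term has valuation $\ge n+\psi(k_0)+1$; hence $d_1(x)$ would have valuation exactly $n+\psi(k_0)<\infty$, contradicting $d_1(x)=0$. Thus $c_k=0$ for $k\ne 0$ and $x\in\mathbb{F}_3[v_1]\cdot 1$, which finishes the argument.

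The main obstacle is the key claim, i.e. injectivity of $\psi$. I would prove it by residue-class and $3$-adic bookkeeping: $\psi(k)=-6k\equiv 6\pmod{12}$ for $k$ odd, while $\psi(2\cdot 3^r\mu)=4\cdot 3^r-2-12\cdot 3^r\mu$ is $\equiv 2\pmod{12}$ for $r=0$ and $\equiv 10\pmod{12}$ for $r\ge 1$, so the three families occupy disjoint residues mod $12$; $\psi$ is affine, hence injective, on the odd integers, and affine in $\mu$ for each fixed $r$; and for $r\ge 1$ one has $\tfrac12\psi(k)+1=2\cdot 3^r(1-3\mu)$ with $1-3\mu$ prime to $3$, so $r$, then $\mu$, then $k$ are recovered from $\psi(k)$. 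Properness ($\psi(k)\to+\infty$ as $k\to-\infty$) is immediate from the same formulae, since $k\to-\infty$ forces either $-6k\to+\infty$ or $3^r(4-12\mu)\to+\infty$. Everything apart from this lemma --- the reduction to $\ker d_1$, the topological basis, and the valuation estimate --- is routine.
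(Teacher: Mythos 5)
Your proof is correct and follows essentially the route the paper leaves implicit (the corollary is stated without proof, and the remark immediately after it explains that the point is that the $\Delta_k$ form a topological basis of the continuous graded $\F_3[v_1]$-module $E_1^{0,0}$): one computes $\ker d_1$ on that basis using Proposition \ref{d_1^{0,0}}. Your $u_1$-adic valuation bookkeeping --- the injectivity and properness of $\psi(k)=j(k)-6k$ --- is simply an explicit way of recording the fact the paper uses implicitly, namely that the images $d_1(\Delta_k)$, $k\neq 0$, are units times $v_1$-powers of pairwise distinct topological basis elements $b_{i(k)}$ of $E_1^{1,0}$.
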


\remarkstyle{Remark} 
By (\ref{TMF}) the elements $\Delta_k$ form a 
topological basis of the continuous graded $\F_3[v_1]$-module 
$E_1^{0,0}$ (in fact, this has been implicitly used in the last 
corollary) and by (\ref{TMF(2)}) a topological basis of the 
continuous graded $\F_3[v_1]$-module $E_1^{1,0}$ can be 
given by any family of elements $b_{2k+1}$, $k\in \Z$, such that 
$b_{2k+1}\equiv \omega^2u^{-8k-4}\ \text{mod} \ (u_1^4)$. 
By Proposition \ref{d_1^{0,0}} we know that there are such 
elements $b_{2.(3m+1)+1}$ for $m\in \Z$ and $b_{2.3^n(3m-1)+1}$ 
for $n\geq 0$, $m\not\equiv 0\ \text{mod} \ (3)$, such that 
the first formula of Theorem \ref{d1} holds. Because the 
$E_1$-term is torsion free as a $\F_3[v_1]$-module and the 
$d_1$-differential is $\F_3[v_1]$-linear it is clear that 
those $b_{2k+1}$'s are in the kernel of the differential 
$d_1:E_1^{1,0}\to E_1^{2,0}$. To complete this family 
to a topological basis we need to choose elements $b_{2k+1}$ for 
$k=3^{n+1}(3m+1)$ with $n\geq 0$, $m\in \Z$, for $k=3^n(9m+8)$ with 
$n\geq 0$, $m\in \Z$, and for $k=0$. Thus we are lead to 
concentrate on the differential on $\omega^2u^{-4(2k+1)}$ 
for such $k$. The crucial step is given by Proposition 
\ref{d_1b_k} below whose proof is quite elaborate and 
will be postponed to section \ref{middle-differential}. 
The proof of Proposition \ref{b_0} will be given in section 
\ref{higher+extensions}. 

\begin{prop}\label{b_0} There exists an element $b_1 \in E_1^{1,0}$ such that 
$b_1 \equiv \omega^2u^{-4} \mod (u_1^4)$ and 
$v_1\alpha=b_1$ in $H^*(\mathbb{G}_2^1;(E_2)_{*}/(3))$. In particular, 
$d_1(b_1)=0$. 
\end{prop}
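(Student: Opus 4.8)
The plan is to identify $v_1\a$ as a cohomology class of algebraic filtration exactly $1$, and then to determine its leading term by a cochain-level Bockstein computation.

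Recall that $\a$ is detected in $E_1^{0,1}=H^1(G_{24},(E_2)_*/(3))$, the $s=0$ column being the Shapiro term for $C_0=\Z_3[[\GG_2^1/G_{24}]]$, and that by Theorem \ref{shapiro} one has $v_1\a=0$ in $H^*(G_{24},(E_2)_*/(3))$. Since the edge homomorphism $H^1(\GG_2^1,(E_2)_*/(3))\to E_1^{0,1}$ is restriction to $G_{24}$, it annihilates $v_1\a$; equivalently, applying $\Ext^*_{\Z_3[[\GG_2^1]]}(-,(E_2)_*/(3))$ to the exact sequence $0\to N_0\to C_0\to\Z_3\to 0$ of Section \ref{partial1}, the class $v_1\a$ lies in the image of the connecting map out of $\Hom_{\Z_3[[\GG_2^1]]}(N_0,(E_2)_*/(3))$. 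Pulling back along the epimorphism $\partial_1\colon C_1\to N_0$, $e_1\mapsto(e-\o)e_0$, of Corollary \ref{delta1} then produces a representative $b_1\in\Hom_{\Z_3[[\GG_2^1]]}(C_1,(E_2)_*/(3))=E_1^{1,0}$ of $v_1\a$, and because $\mathrm{im}(\partial_2)=\Ker(\partial_1\colon C_1\to C_0)$ this $b_1$ automatically satisfies $d_1(b_1)=b_1\circ\partial_2=0$. It remains to show that one may take $b_1\equiv\o^2u^{-4}$ modulo $(u_1^4)$. By Theorem \ref{shapiro}(b) the part of $E_1^{1,0}=\Hom_{\Z_3[SD_{16}]}(\chi,(E_2)_*/(3))$ in internal degree $8$ is topologically free of rank one over $\FF_3[[u_1^4]]$ on $\o^2u^{-4}$, while the image of $d_1$ on the part of $E_1^{0,0}$ in internal degree $8$ is divisible by $u_1$ (immediate from Theorem \ref{shapiro}(a), as $v_1=u_1u^{-2}$); so the assertion amounts to showing that the $u_1^0$-coefficient of $b_1$ is a unit, and in particular that $v_1\a\ne 0$.

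To get at that coefficient one reduces modulo $(3,u_1)$. Since $\a=\delta^0(v_1)$, it is represented by the $1$-cocycle $g\mapsto\frac{1}{3}(g_*(\widetilde v_1)-\widetilde v_1)\in(E_2)_4/(3)$ for the lift $\widetilde v_1=u_1u^{-2}\in(E_2)_4/(9)$, so $v_1\a$ is represented by $g\mapsto u_1u^{-2}\cdot\frac{1}{3}(g_*(\widetilde v_1)-\widetilde v_1)\in(E_2)_8/(3)$. Tracing the construction of the previous paragraph, the corresponding $b_1\colon N_0\to(E_2)_8/(3)$ is obtained by solving $g_*w-w=-v_1\a(g)$ for $g\in G_{24}$ --- possible because $v_1\a$ restricts to $0$ on $G_{24}$ --- and setting $b_1((g-h)e_0)=g_*w-h_*w+v_1\a(g)-v_1\a(h)$; in particular $b_1(f_1)=w-\o_*w$ (using $\a(\o)=0$, which holds since $\o_*$ fixes $u_1u^{-2}$ by (\ref{SD16-action})), so the reduction of $b_1(f_1)$ modulo $(3,u_1)$ is controlled by the $u_1^0$-term of $w$. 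That term is in turn determined by the equation $a_*w-w=-v_1\a(a)$ together with the formulae of Corollary \ref{u1-action} and Corollary \ref{ti-action} for the action of $a$, $b$, $c$ on $u$ and $u_1$, and the modulo $9$ information on $g_*(u_1)$ that enters $\a(a)=\frac{1}{3}(a_*(\widetilde v_1)-\widetilde v_1)$. Carrying this out gives $b_1\equiv\o^2u^{-4}$ modulo $(u_1^4)$ (the sign being fixed by the explicit formulae), and, combined with $d_1(b_1)=0$, proves the proposition.

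The hard part is precisely this last computation. The delicate point is that $\delta^0$ must be evaluated modulo $9$, so one has to control the failure of $v_1$ and of $u_1$ to be genuinely $\GG_2$-invariant --- only their reductions modulo $3$ are --- with enough precision to pin down the leading coefficient of $w$, and then match the outcome with the Shapiro identification $E_1^{1,0}=\Hom_{\Z_3[SD_{16}]}(\chi,(E_2)_*/(3))$ through the explicit homomorphism $\partial_1$.
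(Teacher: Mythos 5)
Your strategy is essentially the one the paper uses: since $v_1\a$ restricts to zero in $H^1(G_{24},M)=E_1^{0,1}$, it is detected in filtration one, and its representative in $E_1^{1,0}=\Hom_{\Z_3[SD_{16}]}(\chi,M)$ is obtained by producing a $G_{24}$-primitive $w$ for $v_1\a$ and evaluating along $\partial_1(e_1)=(e-\omega)e_0$; the paper merely organizes this bookkeeping through the explicit periodic resolution of Lemma \ref{resol-G24}, the chain map of Lemma \ref{phi} and (implicitly) Lemma \ref{v1-free}, rather than through inhomogeneous cocycles. Your reductions are all sound: the indeterminacy in internal degree $8$ is divisible by $u_1$, a congruence modulo $(u_1)$ upgrades automatically to one modulo $(u_1^4)$ by (\ref{TMF(2)}), so everything comes down to the $u_1^0$-coefficient of $w-\omega_*w$; and your sign conventions (solving $g_*w-w=-v_1\a(g)$ and evaluating $w-\omega_*w$) are internally consistent and would reproduce $+\omega^2u^{-4}$.

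The one genuine gap is that the decisive numerical input is asserted rather than supplied, and it cannot be extracted from the results you invoke. Corollaries \ref{ti-action} and \ref{u1-action} describe the action of $a$ only on $(E_2)_*/(3)$, whereas the cocycle value $\a(a)=\tfrac13\bigl(a_*\widetilde v_1-\widetilde v_1\bigr)$ requires knowing $a_*(u_1u^{-2})$ in $(E_2)_4/(9)$; you correctly identify this as ``the delicate point'' but neither carry out the mod-$9$ computation nor point to where its answer can be found. Everything hinges on the resulting leading term --- the paper imports precisely this datum from the proof of Lemma 1 of \cite{GHM}, namely that the representing cocycle for $\a$ is congruent to $\omega u^{-2}$ modulo $(u_1)$ --- and without it the coefficient of $\omega^2u^{-4}$ in $b_1$ (in particular its sign, and even its nonvanishing) is not actually established. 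With that single input added, the rest of your argument closes up exactly as in the paper.
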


\begin{prop}\label{d_1b_k} Let $k$ be an integer such that $8k+4$ is not 
divisible by $3$. Then the differential $d_1:E_1^{1,0}\to E_1^{2,0}$ satisfies  
$$
\begin{array}{lcrll}
d_1(\omega^2u^{8k+4})&\equiv 
&-(k'+k'^2)\omega^2u_1^{12}u^{8k+4}&\mod (u_1^{16})&\ 8k+4=3k'+1 \\
d_1(\omega^2u^{8k+4})&\equiv 
&(k'-k'^2)\omega^2u_1^{8}u^{8k+4}&\mod (u_1^{12})  &\ 8k+4=3k'+2\ .
\end{array}
$$
\end{prop}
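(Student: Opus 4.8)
The plan is to compute $d_1$ directly from the permutation resolution, exploiting that its $E_1$-term is controlled by Shapiro's lemma. By the construction of the algebraic spectral sequence (\ref{algss}) and the isomorphisms (\ref{Shapiro}), the differential $d_1\colon E_1^{1,0}\to E_1^{2,0}$ is, after identifying both groups with $\Hom_{\Z_3[SD_{16}]}(\chi,M)=\omega^2u^4M^{SD_{16}}$ (Theorem \ref{shapiro}(b)), the map sending a $\Z_3[[\GG_2^1]]$-linear homomorphism $\varphi\colon C_1\to M$ to $\varphi\circ\partial_2$; the element of $\Hom_{\Z_3[[\GG_2^1]]}(C_2,M)$ this represents is determined by its value $\varphi(\partial_2e_2)=\varphi(n_1')$ on $e_2$. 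By Corollary \ref{corn2} and Lemma \ref{lemn2} we have $n_1'=\frac{1}{16}\sum_{g\in SD_{16}}\chi(g^{-1})g(n_1)$ with $n_1=l_3-xl_1-yl_2$, and $\omega^2u^{8k+4}$ corresponds to the $\varphi$ with $\varphi(e_1)=\omega^2u^{8k+4}$. Since $\varphi$ is $\GG_2^1$-linear and $SD_{16}\subset\GG_2^1$, one gets $\varphi(n_1')=\frac{1}{16}\sum_{g\in SD_{16}}\chi(g^{-1})g_*\varphi(n_1)$, so that $d_1(\omega^2u^{8k+4})$ is nothing but the projection of the single element $\varphi(n_1)\in M$ onto the $\chi$-eigenspace $\omega^2u^4M^{SD_{16}}$. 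The task is thereby reduced to computing $\varphi(n_1)=\varphi(l_3)-x_*\varphi(l_1)-y_*\varphi(l_2)\in M$ to sufficient $u_1$-adic precision and reading off, after this projection, the coefficients of $\omega^2u_1^{12}u^{8k+4}$ and of $\omega^2u_1^8u^{8k+4}$.

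Two ingredients are needed. First, one must pin down $x$ and $y$: by Lemma \ref{lemn2}(b) they lie in $IK$ and satisfy $e-c^3=x(e-b)+y(e-c)$, and although no closed formula is available, Proposition \ref{propK}(c) (which identifies $IK/(IK)^2$ with $H_1(K;\Z_3)\cong\Z/9\oplus\Z/3$), the commutator and $p$-th power formulae of Lemma \ref{lemcom}, and the approximations of Lemma \ref{abcd}, determine $x$ and $y$ modulo a sufficiently high power of $IK$, hence, after they act on $M$, modulo a sufficiently high power of $(u_1)$; establishing these approximations is the first order of business below. Second, one evaluates, with the help of Corollary \ref{u1-action} and (\ref{SD16-action}), the chain $\varphi(l_1)=(a_*-b_*)(\omega^2u^{8k+4})$, then $\varphi(l_2)=(a_*-c_*)\varphi(l_1)$, then $\varphi(l_3)=(e-c_*)^2\varphi(l_2)$ (the summand $3c_*\varphi(l_2)$ disappearing mod $3$), followed by the subtraction and the $\chi$-projection. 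The bookkeeping is organised by the $u_1$-adic filtration: since by hypothesis $8k+4\not\equiv 0\pmod 3$, the operators $a_*-b_*$ and $a_*-c_*$ each raise the $u_1$-valuation by exactly one, $e-c_*$ raises it by at least four, and $x_*,y_*$ raise it by at least one, so $\varphi(l_3)$ contributes only in $u_1$-degrees $\geq 10$ and $\varphi(n_1)$ is divisible by $u_1^2$. Moreover $\varphi(n_1)$ is homogeneous in the internal degree of $\omega^2u^{8k+4}$, so every monomial appearing is $\omega^iu_1^au^{8k+4}$, and the projection onto $\omega^2u^4M^{SD_{16}}$ retains only the $\omega^2u_1^au^{8k+4}$ with $a\equiv 0\pmod 4$. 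Only the coefficients of $u_1^0,u_1^4,u_1^8,u_1^{12}$ therefore need be computed, and the proposition reduces to showing that the first two vanish, that the $u_1^{12}$-coefficient equals $-(k'+k'^2)$ when $8k+4=3k'+1$, and that the $u_1^8$-coefficient equals $k'-k'^2$ when $8k+4=3k'+2$.

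The dependence on $k$ enters only through the scalars $t_0(g)^{8k+4}$ by which $a_*,b_*,c_*$ rescale $u^{8k+4}$; expanding these as in Lemma \ref{t0k} replaces $8k+4$ by mod-$3$ binomial coefficients, which in each relevant $u_1$-degree are affine in $k'=\lfloor(8k+4)/3\rfloor$. Because the three operators above occur \emph{composed} in $n_1$, a product of two such affine factors appears, and this is precisely the source of the quadratic polynomials $k'+k'^2$ and $k'-k'^2$; the signs and exact coefficients come out by carrying the constants of Lemma \ref{abcd} and Corollary \ref{u1-action}, together with the identities $\omega^2+\omega-1=0$ and $\omega^4=-1$, honestly through the computation.

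The main obstacle is the length and delicacy of this last computation: it combines the only approximately known elements $x,y$, the power-series approximations to the actions of $a,b,c$, and a precision that the formulae of the previous sections are just enough to supply, so a great many monomials have to be controlled; what makes it manageable is that, by homogeneity and the $\chi$-projection, all but four graded pieces of the answer are automatically zero. As consistency checks one has $d_1\circ d_1=0$ together with the formula for $d_1$ on $E_1^{0,0}$ from Theorem \ref{d1}, and the self-duality of the resolution (Proposition \ref{dualcomplex}) relating $\partial_2$ to its own dual; each of these constrains, and in a number of cases determines, the coefficients obtained by the direct calculation.
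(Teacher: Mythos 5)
Your proposal follows essentially the same route as the paper's proof in the section on the middle differential: evaluate $\varphi(n_1)$ for $n_1=l_3-xl_1-yl_2$, produce explicit approximations to $x$ and $y$ from the commutator structure of $K$ (the paper's Lemma \ref{approx}), use valuation estimates for the action of powers of $I$ to show only the $u_1^{8}$ and $u_1^{12}$ coefficients can survive the $SD_{16}$-averaging (the paper's Lemmas \ref{ideals}--\ref{averaging}), and extract the quadratic dependence on $k'$ from the expansions of $t_0(g)^{8k+4}$ as in Lemma \ref{t0k}. The only caveat is that $a_*-b_*$ and $a_*-c_*$ raise the $u_1$-valuation by \emph{at least} (not always exactly) one, but since you use this only for lower bounds the argument is unaffected.
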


\begin{proof} This will be proved in section \ref{middle-differential}.  
\end{proof}

\begin{prop}\label{d_1^{1,0}} 
For each integer $k$ there exists an element 
$b_{2k+1} \in E_1^{1,0,8(2k+1)}$ such that 
\begin{itemize}
\item[a)] $b_{2k+1}\equiv \omega^2 u^{-4(2k+1)} \mod (u_1^4)$
\item[b)]
$d_1(\Delta_k)=
\left\{\begin{array}{ll}
(-1)^{m+1}b_{2(3m+1)+1}& k=2m+1, m\in \Z \\ 
(-1)^{m+1}mv_1^{4.3^n-2}b_{2.3^n(3m-1)+1}& k=2m.3^n,
\ m\not\equiv 0 \mod (3) \\
0 & k=0 \\
\end{array}\right.$ 
\item[c)] the differential $d_1:E_1^{1,0} \to E_1^{2,0}$ satisfies 
$$
d_1(b_{2k+1})\equiv  
\left\{\begin{array}{llll} 
(-1)^n\omega^2u_1^{6.3^{n}+2}u^{-4(2k+1)}&\mod (u_1^{2.3^{n+1}+6}) 
& k=3^{n+1}(3m+1), &m\in \Z  \\
(-1)^n\omega^2u_1^{10.3^n+2}u^{-4(2k+1)}  &\mod (u_1^{10.3^n+6})
& k=3^n(9m+8), &m\in \Z \\
0 &&\textrm{otherwise} \ .&  
\end{array}\right.
$$
\end{itemize}
\end{prop}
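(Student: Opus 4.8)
The plan is to build the family $\{b_{2k+1}\}_{k\in\Z}$ from four disjoint sources indexed by the residue of $k$ modulo $3$ and its $3$-adic valuation, and then to read off (a), (b), (c) case by case. First, whenever $k\equiv 1\pmod 3$, or $k$ has $3$-free part $\equiv 2$ or $5\pmod 9$, Proposition \ref{d_1^{0,0}} already supplies an element $b_{2k+1}$ with $b_{2k+1}\equiv\omega^2u^{-4(2k+1)}\pmod{u_1^4}$ for which the corresponding instance of the $d_1(\Delta_j)$-formula (b) holds by construction; since $v_1^a\,b_{2k+1}$ equals, up to a unit, $d_1(\Delta_j)$ for a suitable $j$ and exponent $a$, and $d_1\circ d_1=0$, we get $v_1^a\,d_1(b_{2k+1})=0$ in $E_1^{2,0}\cong\omega^2u^4\F_3[[u_1^4]][v_1,u^{\pm8}]$, which is $v_1$-torsion free, so $d_1(b_{2k+1})=0$ — the ``otherwise'' clause of (c). For $k=0$ we take $b_1$ as produced by Proposition \ref{b_0}; then $d_1(b_1)=0$ and $b_1$ does not occur in (b).

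The remaining indices are $k=3^{n+1}(3m+1)$ and $k=3^{n}(9m+8)$ with $n\geq0$, handled by induction on $n$. The structural fact I will lean on is that on each of the maps $E_1^{s,0}\to E_1^{s+1,0}$ the differential $d_1$ is given by a single element $Q\in\Z_3[[\GG_2^1]]$ acting on the relevant $\chi$-eigenspace of $(E_2)_*/(3)$, i.e. $d_1(x)=\sum_g q_g\,g_*(x)$; since these eigenspaces are closed under cubing, since each $g_*$ is a ring endomorphism of $(E_2)_*/(3)$, since every $q\in\Z_3$ satisfies $q^3\equiv q\pmod 3$, and since multinomial coefficients $\binom{3}{\cdots}$ are divisible by $3$ unless concentrated, we obtain $d_1(x^3)=d_1(x)^3$ in characteristic $3$ (exactly as is used tacitly in the proof of Proposition \ref{d_1^{0,0}}). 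In the base case $n=0$ we set $b_{2k+1}:=\omega^2u^{-4(2k+1)}$, so (a) is immediate and (b) is vacuous; for (c) we apply Proposition \ref{d_1b_k} with exponent $-8k-4$, checking that for $k=9m+3$ this exponent is $\equiv2\pmod3$ and the coefficient $k'-k'^2$ reduces to $1\bmod 3$, giving $d_1(b_{2k+1})\equiv\omega^2u_1^{8}u^{-4(2k+1)}\bmod u_1^{12}$, while for $k=9m+8$ the exponent is $\equiv1\pmod3$ and $-(k'+k'^2)\equiv1\bmod 3$, giving $d_1(b_{2k+1})\equiv\omega^2u_1^{12}u^{-4(2k+1)}\bmod u_1^{16}$. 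For the inductive step $n-1\to n$ one defines $b_{2\cdot3k_0+1}$ (with $k_0$ the parameter-$(n-1)$ index in the same family) by a formula of the same shape as in Proposition \ref{d_1^{0,0}}: a monomial correction of the ``obvious'' choice by an explicit $v_1$-power times a lower, already controlled $b$, arranged so that (a) still holds and so that in $d_1(b_{2\cdot3k_0+1})$ the leading term of the cube-type summand (whose $u_1$-order is one power of $u_1^4$ too high, by the displayed computation in the proof of Proposition \ref{d_1^{0,0}} applied with $d_1(x^3)=d_1(x)^3$ and Lemma \ref{propD}-type data one order beyond the naive need) is cancelled, leaving the $v_1$-power correction to supply the true leading term $(-1)^n\omega^2u_1^{6\cdot3^n+2}u^{-4(2k+1)}$ (resp. $(-1)^n\omega^2u_1^{10\cdot3^n+2}u^{-4(2k+1)}$), with the sign fixed by the cancellation. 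Finally, as the two described families together with the $\Delta$-derived $b$'s and $b_1$ exhaust all indices $2k+1$, $k\in\Z$, these elements form the promised topological $\F_3[v_1]$-basis of $E_1^{1,0}$, and continuity propagates (c) from this basis.

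The main obstacle is the inductive step: making the cancellation of leading terms rigorous forces one to carry Proposition \ref{d_1b_k} (and the refinements of it worked out in section \ref{middle-differential}) together with Lemma \ref{propD} one $u_1^4$-order beyond the target in every case, to keep exact track of the signs $(-1)^n$, and to verify the bookkeeping that each index falls into exactly one of the four cases and that the recursion never refers to an undefined element. The genuinely hard analytic input underneath all of this is Proposition \ref{d_1b_k} itself, whose elaborate verification is deferred to section \ref{middle-differential}; everything here is then essentially formal manipulation on top of it.
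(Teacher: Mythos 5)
Your overall architecture matches the paper's: the same four-way case split on $k$, the same use of Proposition \ref{d_1^{0,0}} together with $v_1$-torsion-freeness of $E_1^{2,0}$ for the ``otherwise'' clause of (c), Proposition \ref{b_0} for $k=0$, Proposition \ref{d_1b_k} for the base case $n=0$ (your residue checks there are correct), and the identity $d_1(x^3)=d_1(x)^3$, which you justify correctly. The genuine gap is in the inductive step, where the mechanism you describe is not the one that works and, as stated, cannot be carried out. The correct construction is
$$
b_{6k+1}:=v_1^{-4}\bigl(b_{2k+1}^3+b_{2(3k+1)+1}\bigr),
$$
where $b_{2(3k+1)+1}=(-1)^{k+1}d_1(\Delta_{2k+1})$ is the already-defined element of the $\Delta$-derived family. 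The cancellation of leading terms ($\omega^6+\omega^2=0$, via Lemma \ref{propD}) takes place at the level of \emph{elements}; it serves only to make the sum divisible by $v_1^4$ and to give (a), and has nothing to do with the differential. The correction term, being a $d_1$-boundary, contributes \emph{nothing} to the differential: one gets $d_1(b_{6k+1})=v_1^{-4}d_1(b_{2k+1})^3$, and the leading term of the cube \emph{is} the answer, the sign $(-1)^{n+1}$ coming from $\omega^6=-\omega^2$.

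Your version reverses these roles: you want the leading term of the cube-type summand of the differential (at $u_1$-order $6\cdot 3^{n+1}+6$) to be cancelled and the $v_1$-power correction to supply the true leading term at the strictly \emph{lower} order $6\cdot 3^{n+1}+2$. A cancellation at order $N+4$ cannot produce a new leading term at order $N$, and if the correction already carried a term at order $N$ there would be nothing for the cancellation to accomplish. Worse, your closing claim that one must ``carry Proposition \ref{d_1b_k} one $u_1^4$-order beyond the target in every case'' would doom the argument: that proposition controls $d_1$ on monomials only up to $u_1^{12}$ resp.\ $u_1^{16}$, and no direct computation to order $u_1^{6\cdot 3^n+2}$ is available for $n\geq 2$. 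The entire point of the cubing trick is that it bootstraps precision for free --- knowing $d_1(b_{2k+1})$ modulo $u_1^{6\cdot 3^n+6}$ gives $d_1(b_{2k+1})^3$ modulo $u_1^{3(6\cdot 3^n+6)}$, far more than needed --- so Proposition \ref{d_1b_k} is invoked exactly once, in the base case. Note also a degree constraint you gloss over: $b_{2k+1}^3$ sits in internal degree $8(2(3k+1)+1)$, not $8(6k+1)$, so no formula literally ``of the same shape as in Proposition \ref{d_1^{0,0}}'' can define $b_{6k+1}$ without the division by $v_1^4$, which your description omits.
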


\begin{proof} 
For  $k=3m+1$ with $m\in \Z$ we define $b_{2k+1}$ to be 
$(-1)^{m+1}d_1(\Delta_{2m+1})$. For $k=3^n(3m-1)$ with $n\geq 0$ and 
$m\not\equiv 0\ \text{mod}\ (3)$ we note that Proposition 
\ref{d_1^{0,0}} shows that $d_1(\Delta_{2m.3^n})$ is divisible 
by $v_1^{4.3^n-2}$ and we define $b_{2k+1}$ to be 
$(-1)^{m+1}mv_1^{-(4.3^n-2)}d_1(\Delta_{2m.3^n})$. For $k=0$ 
we take the element given in Proposition \ref{b_0}. 
With these definitions (b) holds as well as the last case of (c).  

For $k=3^{n+1}(3m+1)$ resp. $k=3^n(9m+8)$, 
with $n\geq 0$ and $m\in \Z$, we define elements 
$b_{2k+1}$ by induction 
on $n$ such that (a) and (c) are satisfied. 
In fact, for $n=0$ we define 
$$
b_{2k+1}:\ =\omega^2u^{-4(2k+1)} 
$$
and then Proposition \ref{d_1b_k} gives
$$
d_1(b_{18m+7})\equiv\omega^2u_1^8u^{-4(18m+7)} \mod (u_1^{12}), \ \ 
d_1(b_{18m+17})\equiv\omega^2u_1^{12}u^{-4(18m+17)} \mod (u_1^{12})\ . 
$$
Now suppose that $b_{2k+1}$ has already been defined for 
$k=3^{n+1}(3m+1)$ resp. $k=3^n(9m+8)$ with  
with $n\leq N$ and $m\in \Z$ so that (a) and (c) are satisfied.   
Then we observe that by Proposition \ref{propD} the elements   
$$
b_{2k+1}^3+b_{2(3k+1)+1}=
b_{2k+1}^3+(-1)^{k+1}d_1(\Delta_{2k+1})\equiv 
\big(\omega^6+\omega^2(1+u_1^4)\big)u^{-4(6k+3)} 
\ \text{mod} \ (u_1^8)
$$
are divisible by $v_1^4$ and thus we can define 
\begin{equation}
b_{6k+1}:\  =v_1^{-4}\big(b_{2k+1}^3+b_{2(3k+1)+1}\big)  \ . 
\end{equation}
Then it is clear that 
$b_{6k+1}\equiv \omega^2u^{-4(6k+1)}\mod (u_1^4)$. 
Furthermore,  $d_1d_1(\Delta_{2k+1})=0$ and because $d_1$ 
commutes with taking third powers  
and is $\F_3[v_1]$-linear we see that both (a) and (c) are 
satisfied for $k=3^{n+1}(3m+1)$ resp. $k=3^n(9m+8)$   
with $n\leq N+1$ and $m\in \Z$ and thus the induction step is complete. 
\end{proof}

\begin{cor} There is an isomorphism of $\mathbb{F}_3[v_1]$-modules  
$$
E_2^{1,0}\cong\prod_{n\geq 0,m\in\mathbb{Z}\backslash 3\mathbb{Z}} 
\mathbb{F}_3[v_1]/(v_1^{4.3^n-2})\{b_{2.3^n(3m-1)+1}\}\times 
\mathbb{F}_3[v_1]\{b_1\}\ . \qed 
$$
\end{cor}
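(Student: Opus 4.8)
The plan is to identify $E_2^{1,0}$ with the cohomology of the complex $E_1^{0,0}\stackrel{d_1}{\longra}E_1^{1,0}\stackrel{d_1}{\longra}E_1^{2,0}$, using that by (\ref{TMF}) the $\Delta_k$ ($k\in\Z$) form a topological $\F_3[v_1]$-basis of $E_1^{0,0}$, by (\ref{TMF(2)}) the $b_{2k+1}$ ($k\in\Z$) form one of $E_1^{1,0}$, and likewise $E_1^{2,0}$ is topologically free over $\F_3[v_1]$ on elements $\overline{b}_{2j+1}\equiv\omega^2u^{-8j-4}$. All modules here are profinite and the differentials are continuous and $\F_3[v_1]$-linear, which is why the answer comes out as a product of cyclic modules.

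First I would pin down the image of $d_1\colon E_1^{0,0}\to E_1^{1,0}$. By Proposition \ref{d_1^{1,0}}(b) we have $d_1(\Delta_0)=0$, $d_1(\Delta_{2m+1})=(-1)^{m+1}b_{2(3m+1)+1}$, and $d_1(\Delta_{2\cdot3^nm})=(-1)^{m+1}m\,v_1^{4\cdot3^n-2}b_{2\cdot3^n(3m-1)+1}$ for $3\nmid m$; since $(-1)^{m+1}m$ is a unit of $\F_3$ in the last case and $d_1$ is continuous, the image is the closed $\F_3[v_1]$-submodule of $E_1^{1,0}$ generated by the $b_{2(3m+1)+1}$ ($m\in\Z$) and the $v_1^{4\cdot3^n-2}b_{2\cdot3^n(3m-1)+1}$ ($n\geq0$, $3\nmid m$). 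The book-keeping that makes this usable is that every integer $k$ belongs to exactly one of the five families $0$, $3m+1$, $3^n(3m-1)$ with $3\nmid m$, $3^{n+1}(3m+1)$, $3^n(9m+8)$ — obtained by writing $k=3^{v_3(k)}k'$ with $3\nmid k'$ and sorting by $k'\bmod 9$ — and that the generators above sit on pairwise distinct basis lines; hence the image meets $\F_3[v_1]b_{2k+1}$ in the whole line when $k\equiv1\bmod(3)$, in $v_1^{4\cdot3^n-2}\F_3[v_1]b_{2k+1}$ when $k=3^n(3m-1)$ with $3\nmid m$, and in $0$ otherwise.

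Next I would determine $\Ker\bigl(d_1\colon E_1^{1,0}\to E_1^{2,0}\bigr)$. By Proposition \ref{d_1^{1,0}}(c) together with Proposition \ref{b_0} one has $d_1(b_{2k+1})=0$ exactly for the indices $k$ in the first three of the above families (the case $k=0$ being Proposition \ref{b_0}), while for $k=3^{n+1}(3m+1)$ resp. $k=3^n(9m+8)$ the element $d_1(b_{2k+1})$ has $u_1$-leading term $\pm\omega^2u_1^{e_k}u^{-4(2k+1)}$ with $e_k=6\cdot3^n+2$ resp. $10\cdot3^n+2$. Substituting $u_1=v_1u^2$ rewrites this leading term as $\pm v_1^{e_k}\overline{b}_{2j_k+1}$ with $j_k=k-e_k/4\in\Z$, and since $k\mapsto j_k$ is injective on the union of these two families, the leading terms of $d_1(b_{2k+1})$ for distinct such $k$ hit distinct basis lines of $E_1^{2,0}$; hence $d_1$ restricted to the closed $\F_3[v_1]$-span of $\{b_{2k+1}: k=3^{n+1}(3m+1)\ \text{or}\ 3^n(9m+8)\}$ is injective, and $\Ker(d_1\colon E_1^{1,0}\to E_1^{2,0})$ equals the closed $\F_3[v_1]$-span of $b_1$, the $b_{2(3m+1)+1}$, and the $b_{2\cdot3^n(3m-1)+1}$ with $3\nmid m$.

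Finally, $\mathrm{im}(d_1\colon E_1^{0,0}\to E_1^{1,0})\subset\Ker(d_1\colon E_1^{1,0}\to E_1^{2,0})$, and both have just been described diagonally in the topological basis $\{b_{2k+1}\}$, so the quotient splits line by line: the lines through $b_{2(3m+1)+1}$ die, each line through $b_{2\cdot3^n(3m-1)+1}$ with $3\nmid m$ contributes $\F_3[v_1]/(v_1^{4\cdot3^n-2})$, and the line through $b_1$ survives freely; assembling these over $n\geq0$ and $m\in\Z\setminus3\Z$ yields the stated isomorphism. The one place where genuine work is needed is the kernel computation of the previous paragraph: that $d_1$ kills the "good" $b_{2k+1}$ is immediate, but ruling out any further kernel uses the precise leading terms of Proposition \ref{d_1^{1,0}}(c) and the injectivity of $k\mapsto k-e_k/4$ on the two "bad" families, which in turn comes down to comparing $3$-adic valuations of the differences of the base residues $(3^{n+1}-1)/2$ and $(11\cdot3^n-1)/2$.
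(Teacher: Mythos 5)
Your proposal is correct and follows the same route as the paper: the paper's proof is the one-line observation that, since the $\Delta_k$ and $b_{2k+1}$ form topological $\F_3[v_1]$-bases of $E_1^{0,0}$ and $E_1^{1,0}$, the corollary follows immediately from Proposition \ref{d_1^{1,0}} (whose parts (b) and (c), together with Proposition \ref{b_0}, are exactly what you use). You have simply made explicit the diagonal form of image and kernel in these bases that the paper treats as immediate.
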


\begin{proof} Because the elements $\Delta_k$ and $b_k$ form a topological 
basis of the graded continuous $\F_3[v_1]$-modules $E_1^{0,0}$ 
and $E_1^{1,0}$ this follows immediately from Proposition \ref{d_1^{1,0}}. 
\end{proof}

\remarkstyle{Remark} By inspection one sees that the infinite product 
is finite in each bidegree and therefore it can also be identified 
with the direct sum.

To evaluate the homomorphism $d_1:E_1^{2,0}\to E_1^{3,0}$ we need 
the following result.

\begin{lem}\label{d_1^{2,0}u} Let $k$ be any integer.  
Then 
$$
\begin{array}{llll} 
(e_*+a_*+(a^2)_*)(u^k)&\equiv & \big((k-k^2)\omega^2u_1^2
+(k\binom{k}{3}+k-k^2)u_1^4\big)u^k& \mod \ (3,u_1^5) \ .
\end{array}
$$
\end{lem}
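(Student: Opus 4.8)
Since each $g_*$ is a ring homomorphism of $(E_2)_*/(3)=\FF_9[[u_1]][u^{\pm1}]$ and, by (\ref{u-action}), $g_*(u)=t_0(g)u$, we have $g_*(u^k)=t_0(g)^ku^k$; as $a$ has order $3$ we may write $a^2=a^{-1}$, so
\[
(e_*+a_*+(a^2)_*)(u^k)=\big(1+t_0(a)^k+t_0(a^{-1})^k\big)u^k\ .
\]
Thus the assertion is equivalent to
\[
1+t_0(a)^k+t_0(a^{-1})^k\equiv(k-k^2)\,\omega^2u_1^2+\big(k\binom{k}{3}+k-k^2\big)u_1^4\pmod{(3,u_1^5)}\ ,
\]
and the plan is to extract this from Corollary \ref{ti-action}.a by exploiting the symmetry between $a$ and $a^{-1}$.

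The first step is to record the Teichm\"uller $S$-expansions of the two elements. For $a$ this is Lemma \ref{abcd}a: $a\equiv1+\omega S+S^2\pmod{S^3}$. For $a^{-1}$, from $a=-\tfrac12(1+\omega S)$ one checks $a\cdot\big(-\tfrac12(1-\omega S)\big)=\tfrac14\big(1-(\omega S)^2\big)=\tfrac14(1-3\omega^4)=1$ (using $\omega^4=-1$), so $a^{-1}=-\tfrac12(1-\omega S)$; since $-\tfrac12=\sum_{n\ge0}3^n=1+S^2+S^4+\cdots$ in $\Oa_2$ (because $3=S^2$), this gives $a^{-1}=(1+S^2+\cdots)(1-\omega S)\equiv1-\omega S+S^2\pmod{S^3}$. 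Hence, in the notation of Corollary \ref{ti-action}.a, both $a$ and $a^{-1}$ have $g_1=\pm\omega$ and --- this is the crucial point --- the same value $g_2=1$. Since then $g_2-g_2^3=0$, Corollary \ref{ti-action}.a yields, modulo $(3,u_1^5)$,
\[
t_0(a)\equiv1+Q,\qquad t_0(a^{-1})\equiv1-Q,\qquad Q:=\omega^3u_1-\omega u_1^3\ ;
\]
indeed the part of $t_0$ even in $g_1$ reduces to $1$ here precisely because $g_2=1$ kills its $u_1^4$-coefficient, while the part odd in $g_1$ changes sign when $\omega$ is replaced by $-\omega$.

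Since $1+Q$ is a unit, $(1\pm Q)^k$ is defined for all $k\in\Z$, and as $2\equiv-1\pmod3$ one gets
\[
1+t_0(a)^k+t_0(a^{-1})^k\equiv1+(1+Q)^k+(1-Q)^k\equiv1-\sum_{j\ \mathrm{even}}\binom{k}{j}Q^j\pmod{(3,u_1^5)}\ .
\]
Here $\binom{k}{0}Q^0=1$ cancels the leading $1$, and $Q^{2j}\equiv0\pmod{u_1^5}$ for $j\ge3$, so this equals $-\binom{k}{2}Q^2-\binom{k}{4}Q^4$; a short computation with $\omega^4=-1$ and $\omega^6=-\omega^2$ shows $Q^2\equiv-\omega^2u_1^2-u_1^4$ and $Q^4\equiv-u_1^4\pmod{(3,u_1^5)}$, so the expression becomes $\binom{k}{2}\omega^2u_1^2+\big(\binom{k}{2}+\binom{k}{4}\big)u_1^4$. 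It then remains to invoke the mod-$3$ congruences $\binom{k}{2}\equiv k-k^2$ (from $2^{-1}\equiv-1\pmod3$) and $\binom{k}{4}=\binom{k}{3}\cdot\tfrac{k-3}{4}\equiv k\binom{k}{3}\pmod3$ (from $4\equiv1\pmod3$), which turn this into the claimed formula. The one genuinely delicate step is the $S$-expansion of $a^{-1}$: naively expanding $a^2=\tfrac14(1+\omega S)^2=\tfrac14(1+2\omega S+\omega^2S^2)$ and reading off $g_2=\omega^2$ is wrong, because the relation $S^2=3$ in $\Oa_2$ forces $(\omega S)^2=3\omega^4=-3=-S^2$ and, after dividing by $4=1+S^2$ and re-expanding $-\tfrac12$, one recovers $g_2=1$ as for $a$. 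Since $g_2$ is exactly what controls the $u_1^4$-coefficient of the $g_1$-even part of $t_0$, getting it wrong would introduce a spurious $u_1^4$-term and destroy the match; once it is correct, the remainder is routine arithmetic in $\FF_9$ and with mod-$3$ binomial coefficients.
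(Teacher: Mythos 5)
Your proof is correct, and it takes a genuinely different route from the paper's. The paper also reduces everything to Corollary \ref{ti-action}.a, but it handles the third summand by writing $(a^2)_*(u^k)=t_0(a)^k\,a_*(t_0(a)^k)\,u^k$, which forces it to expand $t_0(a)^k$ term by term, push that expansion through the action of $a$ on $u_1$ (Corollary \ref{u1-action}), and multiply the two series back together; the cancellation of the $u_1$- and $u_1^3$-terms only emerges at the end of a fairly long computation. You instead use $a^2=a^{-1}$ and feed the $S$-expansion $a^{-1}\equiv 1-\omega S+S^2 \bmod (S^3)$ directly into Corollary \ref{ti-action}.a, so that $t_0(a^{-1})\equiv 1-Q$ mirrors $t_0(a)\equiv 1+Q$ and the sum $1+(1+Q)^k+(1-Q)^k$ is manifestly even in $Q$: the vanishing of the odd-degree terms is structural rather than the result of cancellation, Corollary \ref{u1-action} is not needed at all, and the answer drops out as $\binom{k}{2}\omega^2u_1^2+\bigl(\binom{k}{2}+\binom{k}{4}\bigr)u_1^4$ in two lines. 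The one point that genuinely required care — that $a^{-1}=-\frac{1}{2}(1-\omega S)$ has Teichm\"uller expansion with $g_2=1$ rather than $g_2=\omega^2$, because $S\omega=\omega^3S$ gives $(\omega S)^2=3\omega^4=-3$ — you handle correctly, and it is precisely $g_2=1$ for both $a$ and $a^{-1}$ (hence $g_2-g_2^3=0$) that removes the $u_1^4$-contribution from the $g_1$-even part of $t_0$. The remaining arithmetic ($Q^2\equiv-\omega^2u_1^2-u_1^4$, $Q^4\equiv-u_1^4$, $\binom{k}{2}\equiv k-k^2$, and $\binom{k}{4}\equiv k\binom{k}{3}$ mod $3$, the last valid for all integers $k$ via $4\binom{k}{4}=(k-3)\binom{k}{3}$) checks out against the stated formula.
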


\begin{proof} 
By (\ref{u-action}) we have $a_*(u^k)=u^kt_0(a)^k$ 
and $(a^2)_*(u^k)=u^kt_0(a)^ka_*(t_0(a)^k)$. Corollary \ref{ti-action} 
gives 
$$
\begin{array}{lcll}
t_0(a)^k&\equiv &
\big(1+(1+\omega^2)u_1+(-1+\omega^2)u_1^3\big)^k &\\
&\equiv& 1+k(1+\omega^2)u_1+k(-1+\omega^2)u_1^3 &\\
&&-\binom{k}{2}\omega^2u_1^2
-\binom{k}{2}(1+\omega^2)(-1+\omega^2)u_1^4+
\binom{k}{3}(1+\omega^2)^3u_1^3-\binom{k}{4}u_1^4& \\
&\equiv & 1+k(1+\omega^2)u_1-\binom{k}{2}\omega^2u_1^2& \\
&&+(\binom{k}{3}-k)(1-\omega^2)u_1^3
-(\binom{k}{2}+\binom{k}{4})u_1^4 & \text{mod}\ (3,u_1^{5})\\
\end{array}
$$ 
and by Corollary \ref{u1-action} we get 
$$
\begin{array}{lcll}
a_*(t_0(a)^k)&\equiv& 1+k(1+\omega^2)
\big(u_1-(1+\omega^2)u_1^2-\omega^2u_1^3+(1-\omega^2)u_1^4\big)&\\
&&-\binom{k}{2}\omega^2\big(u_1^2+(1+\omega^2)u_1^3\big)&\\
&&+(\binom{k}{3}-k)(1-\omega^2)u_1^3-(\binom{k}{2}+\binom{k}{4})u_1^4 &\\
&\equiv & 1+ k(1+\omega^2)u_1+(k-\binom{k}{2})\omega^2u_1^2 &\\
&&+(\binom{k}{3}+\binom{k}{2})(1-\omega^2)u_1^3
-(k+\binom{k}{2}+\binom{k}{4})u_1^4 & \text{mod}\ (3,u_1^{5}) \ .\\ 
\end{array}
$$
Finally an easy calculation (which only uses that 
$\binom{k}{2}\equiv -k(k-1)\ \text{mod}\ (3)$ and 
$k^3\equiv k\ \text{mod}\ (3)$) gives   
$$
\begin{array}{llll}
t_0(a)^ka_*(t_0(a)^k)&\equiv &
1-k(1+\omega^2)u_1+(k^2-k)\omega^2u_1^2 &\\
&&+(-\binom{k}{3}+k)(1-\omega^2)u_1^3 &\\
&&+(\binom{k}{4}+\binom{k}{2}+k\binom{k}{3}+k-k^2)u_1^4 
& \text{mod} \ (3,u_1^5) 
\end{array} 
$$ 
and the result clearly follows. 
\end{proof} 

\begin{prop}\label{d_1^{2,0}} For each integer $k$ there exists an element 
$\overline{b}_{2k+1} \in E_1^{2,0,4(2k+1)}$ such that 
\begin{itemize}
\item[a)] $\overline{b}_{2k+1} \equiv \omega^2u^{-4(2k+1)} \mod (u_1^4)$
\item[b)] 
$d_1(b_{2k+1})=
\left\{\begin{array}{ll}
(-1)^nv_1^{6.3^{n}+2}\overline{b}_{3^{n+1}(6m+1)} & k=3^{n+1}(3m+1) \\
(-1)^nv_1^{10.3^n+2}\overline{b}_{3^{n}(18m+11)}  &k=3^n(9m+8)
\end{array}\right.$
\item[c)] the differential $d_1:E_1^{2,0}\to E_1^{3,0}$ satisfies 
$$
d_1(\overline{b}_{2k+1})\equiv  
\left\{\begin{array}{lll} 
-u_1^2u^{-4(2k+1)}&\mod (u_1^{4})& 2k+1=6m+1 \\
\omega^2u_1^{4.3^n}u^{-4(2k+1)} &\mod (u_1^{4.3^n+2}) 
& 2k+1=3^n(18m+17)  \\
-\omega^2u_1^{4.3^n}u^{-4(2k+1)} &\mod (u_1^{4.3^n+2}) 
& 2k+1=3^n(18m+5)  \\
0 &&\textrm{otherwise}\ . 
\end{array}\right.
$$
\end{itemize}
\end{prop}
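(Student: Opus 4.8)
The plan is to realise the differential $d_1\colon E_1^{2,0}\to E_1^{3,0}$ as (essentially) the ``norm'' homomorphism $m\mapsto (e_*+a_*+(a^2)_*)(m)$ and then to pin down the explicit formulae by $v_1$-linearity and by the Frobenius, in the inductive spirit of the proofs of Proposition~\ref{d_1^{0,0}} and Proposition~\ref{d_1^{1,0}}. First I would identify $d_1$. By definition it is induced by $\partial_3\colon C_3\to C_2$, and under the canonical identifications $C_3\cong C_0^*$, $C_2\cong C_1^*$ of (\ref{dual}) together with the isomorphism of complexes of Proposition~\ref{dualcomplex}(b), the map $\partial_3$ corresponds to $\partial_1^*\colon C_0^*\to C_1^*$, which by Proposition~\ref{dualcomplex}(c) sends $e_0^*$ to $(e+a+a^2)e_1^*$. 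Passing through Shapiro's lemma, so that $E_1^{2,0}=\Hom_{\Z_3[SD_{16}]}(\chi,M)$ is the $\chi$-isotypic subspace of $M:=(E_2)_*/(3)$ and $E_1^{3,0}=H^0(G_{24},M)=M^{G_{24}}$, one obtains that $d_1$ is, up to a correction coming from the isomorphisms $f_2,f_3$ of Proposition~\ref{dualcomplex}(b), the map $m\mapsto (e_*+a_*+(a^2)_*)(m)$ --- conceptually this is the transfer $M^{Q_8}\to M^{G_{24}}$ along $Q_8\subset G_{24}=Q_8\amalg aQ_8\amalg a^2Q_8$, restricted to the $\chi$-isotypic subspace (which lies in $M^{Q_8}$ because $\chi|_{Q_8}$ is trivial). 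As $f_2,f_3$ induce the identity on $\Tor_0^{\Z_3[[S_2^1]]}(\F_3,-)$, the correction takes values in $u_1M$; that it affects no coefficient relevant to part (c) will be deduced a posteriori from the fact that $d_1(\overline b_{2k+1})$ lies in $M^{G_{24}}$.

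Next I would choose the elements $\overline b_{2k+1}$. As in the Remark after Proposition~\ref{d_1^{1,0}}, any family of elements $\equiv\omega^2u^{-4(2k+1)}\bmod(u_1^4)$ is a topological $\F_3[v_1]$-basis of $E_1^{2,0}$. For the indices $2k+1=3^{n+1}(6m+1)$, resp.\ $2k+1=3^n(18m+11)$, occurring as targets in (b), Proposition~\ref{d_1^{1,0}}(c) shows that $d_1(b_{\,\cdot\,})$ is divisible by the stated power of $v_1$; I define $\overline b_{2k+1}$ to be the appropriate signed quotient, so that (b) holds by construction and (a) follows from the leading term of Proposition~\ref{d_1^{1,0}}(c). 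For all remaining (``generic'') indices I put $\overline b_{2k+1}:=\omega^2u^{-4(2k+1)}$, so (a) is immediate. A short base-$3$ computation shows that the generic indices are exactly those of the form $6m+1$, $3^n(18m+17)$ and $3^n(18m+5)$, i.e.\ precisely the indices occurring in the non-trivial cases of (c); hence for every special index $d_1(\overline b_{2k+1})=0$, since $v_1^e\,d_1(\overline b_{2k+1})=\pm d_1d_1(b_{\,\cdot\,})=0$ and $E_1^{3,0}$ is $v_1$-torsion free.

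There remain the formulae for the generic indices. Combining the identification of $d_1$ above with Lemma~\ref{d_1^{2,0}u} applied to $u^N$, $N=-4(2k+1)$, and reducing the occurring binomial coefficients modulo $3$ by Lucas' theorem while using $\omega^4=-1$, one gets $d_1(\omega^2u^{-4(2k+1)})\equiv-u_1^2u^{-4(2k+1)}\bmod(u_1^4)$ when $2k+1\equiv1\bmod6$ and $d_1(\omega^2u^{-4(2k+1)})\equiv\pm\omega^2u_1^4u^{-4(2k+1)}\bmod(u_1^5)$ when $2k+1\equiv5\bmod6$ (the sign determined by $2k+1\bmod18$), the leading $u_1^2$-coefficient vanishing in the latter case. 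Now $d_1(\overline b_{2k+1})\in M^{G_{24}}$, and by Theorem~\ref{shapiro} and Proposition~\ref{Delta} this subgroup has, in each fixed internal degree, all of its $u_1$-adic valuations congruent mod $6$ to a single value ($2$, $0$ or $4$ according as $2k+1\equiv1,3$ or $5\bmod6$); this forces the $f_2,f_3$-corrections and the odd-order coefficients to vanish and improves the second congruence to one modulo $(u_1^6)$, which yields the $n=0$ cases of (c). For $n\ge1$ I would use that the cube map $x\mapsto x^3$ is additive and $\GG_2$-equivariant on $M$, hence induces a self-map of the algebraic spectral sequence commuting with $d_1$: from $\omega^2u^{-4(2(3\ell+1)+1)}=-(\omega^2u^{-4(2\ell+1)})^3$ one obtains $d_1(\overline b_{3^{n+1}q})=-d_1(\overline b_{3^{n}q})^3$ for the relevant $q$, and since cubing triples the $u_1$-adic precision the $n=0$ cases propagate to all $n\ge1$ with the stated (in fact better) moduli.

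The step I expect to be the main obstacle is the first one: controlling the discrepancy between $\partial_3$ and the literal norm map $(e+a+a^2)$ --- equivalently, the isomorphisms $f_2,f_3$ of Proposition~\ref{dualcomplex}(b) --- with enough precision, absorbing the error into the rigid $u_1$-adic structure of $H^0(G_{24},M)$. The rest, namely the base-$3$ bookkeeping separating the three non-trivial cases of (c) and the tracking of signs, is elaborate but routine.
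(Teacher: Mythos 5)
Your overall route is the same as the paper's: identify $d_1\colon E_1^{2,0}\to E_1^{3,0}$ with the norm $(e+a+a^2)_*$ via Proposition~\ref{dualcomplex}, define the $\overline b$'s for the indices $3^{n+1}(6m+1)$ and $3^n(18m+11)$ as $v_1$-quotients of $d_1(b_{\,\cdot\,})$ and as $\omega^2u^{-4(2k+1)}$ otherwise, and read off the leading coefficients from Lemma~\ref{d_1^{2,0}u}. Your base-$3$ partition of the odd integers, your sign bookkeeping, and your Frobenius/cubing induction for $n\ge 1$ (which the paper leaves implicit here, though it uses the identical device in the proof of Proposition~\ref{d_1^{1,0}}) are all correct, and the mod-$6$ rigidity of valuations in $M^{G_{24}}$ does legitimately upgrade the precision of a known leading term from $(u_1^5)$ to the stated moduli.

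The one step that does not close as you have written it is the disposal of the $f_2,f_3$-corrections. You propose to deduce ``a posteriori'' that they contribute nothing relevant from the fact that $d_1(\overline b_{2k+1})$ lies in $M^{G_{24}}$ and that the valuations there are rigid mod $6$. But the correction term is itself (the image under the norm of) an element whose contribution automatically lands in $M^{G_{24}}$ in the same internal degree, so it automatically respects that same mod-$6$ pattern; rigidity alone therefore cannot force it to vanish, and a priori it could sit exactly at the leading valuation ($u_1^4$ in the $2k+1\equiv 5\bmod 6$ case) and corrupt the coefficient you are trying to compute. What is actually needed is a quantitative lower bound: the induced automorphism of $E_1^{2,0}$ is the identity modulo $(u_1^4)$ --- because it is the identity on $\Tor_0^{\Z_3[[S_2^1]]}(\F_3,-)$ and $\Hom_{\Z_3[SD_{16}]}(\chi,M)$ only contains $u_1$-valuations divisible by $4$ in the relevant degree --- and the norm $(e+a+a^2)_*$ then raises $u_1$-valuation by at least $2$ more (its constant term is $3\equiv 0$, cf.\ Lemma~\ref{d_1^{2,0}u}), pushing the source-side error to valuation $\ge 6$, strictly above the leading terms at valuation $2$ resp.\ $4$; the target-side automorphism is handled by the ``identity mod $(u_1^2)$'' statement of Proposition~\ref{dualcomplex} combined with $v_1$-linearity. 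This is exactly what the paper's proof invokes (and, for your cubing step with $n\ge 1$, one needs the analogous sharpening that for $3^n\mid -4(2k+1)$ the error already has valuation $\ge 4\cdot 3^n$, which follows from the fact that $t_0(g)^{N}-1=(t_0(g)^{N/3^n}-1)^{3^n}$ lies in $u_1^{3^n}\F_9[[u_1^{3^n}]]$). So: right skeleton, but the obstacle you flagged is real and is not resolved by the rigidity argument you offer; it is resolved by the valuation estimates above.
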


\begin{proof} For $2k+1=3^{n+1}(3m+1)$ resp. $2k+1=3^n(9m+8)$  
Proposition \ref{d_1^{1,0}} shows that $d_1(b_{2k+1})$ 
is divisible by $v_1^{6.3^n+2}$ resp. $v_1^{10.3^n+2}$ and can thus 
be written as $(-1)^nv_1^{6.3^n+2}\overline{b}_{3^{n+1}(6m+1)}$ resp. 
$(-1)^nv_1^{10.3^n+2}\overline{b}_{3^n(18m+11)}$ for unique elements 
$\overline{b}_{3^{n+1}(6m+1)}$ resp. $\overline{b}_{3^n(18m+11)}$ 
which satisfy (a) and (b).

So we still need to define $\overline{b}_{2k+1}$ if $2k+1$ can be written  
as $2k+1=6m+1$ with $m\in \Z$ or $2k+1=3^n(18m+11\pm 6)$ with $n\geq 0$ and 
$m\in\Z$. In those cases we define 
$\overline{b}_{2k+1}:\ =\omega^2u^{-4(2k+1)}$
and note that $-4(2k+1)\equiv 2\ \text{mod}\ (3)$ if $2k+1=6m+1$ and that 
$-4(2k+1)\equiv 7\ \text{mod}\ (9)$ if $2k+1=18m+5$ resp. 
$-4(2k+1)\equiv 4\ \text{mod}\ (9)$ if $2k+1=18m+17$. 
Then (c) holds by Lemma \ref{d_1^{2,0}u}
and Proposition \ref{dualcomplex}.c, at least if we pretend that 
the differential is induced by the map $\partial_1^*:C_0^*\to C_1^*$
after identification of  $C_3$ with $C_0^*$ and of $C_2$ with $C_1^*$ 
via the isomorphisms given by (\ref{dual}). 
In reality the differential is induced by $\partial_1^*$ 
only up to the automorphisms of 
$E_1^{i,0}$, $i=2,3$, induced by the isomorphisms $f_i$ of Proposition 
\ref{dualcomplex} and the isomorphisms of (\ref{dual}). 
However, by Proposition \ref{dualcomplex} these automorphisms 
induce the identity on $\Tor_0^{\Z_3[[S_2^1]]}(\F_3,C_i)$ for $i=2,3$.  
Then Corollary \ref{ti-action} shows that they induce 
automorphisms of $E_1^{i,0}$ as continuous graded $\F_3[v_1]$-modules 
which map $\omega^2u^{-4(2k+1)}$ to itself modulo $(u_1^4)$ 
respectively $\omega^{2k}u^{-12k}$ to itself modulo $(u_1^2)$ and 
part (c) follows. 
\end{proof}

\begin{cor} There is an isomorphism of $\mathbb{F}_3[v_1]$-modules  
$$
E_2^{2,0} \cong \mathop{\prod_{n\geq 0}}_{m \in \mathbb{Z}}
\mathbb{F}_3[v_1]/(v_1^{2.3^{n+1}+2})\{\overline{b}_{3^{n+1}(6m+1)}\}\times 
\mathop{\prod_{n\geq 0}}_{m \in \mathbb{Z}} 
\mathbb{F}_3[v_1]/(v_1^{10.3^n+2})\{\overline{b}_{3^n(18m+11)}\}\ .
\qed  
$$ 
\end{cor}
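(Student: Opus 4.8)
The plan is to identify $E_2^{2,0}$ with the cohomology of the three-term complex $E_1^{1,0}\stackrel{d_1}\longra E_1^{2,0}\stackrel{d_1}\longra E_1^{3,0}$ at its middle term, and to read off both the image of the incoming and the kernel of the outgoing $d_1$ from Propositions \ref{d_1^{1,0}}, \ref{d_1^{2,0}} and \ref{d_1^{3,0}} (equivalently, from Theorem \ref{d1}). First I would record the structure of the relevant $E_1$-terms as continuous graded $\F_3[v_1]$-modules: by Shapiro's lemma and (\ref{TMF(2)}) one has $E_1^{2,0}\cong\omega^2u^4\F_3[[u_1^4]][v_1,u^{\pm 8}]$, so that any family $\overline b_{2k+1}$, $k\in\Z$, with $\overline b_{2k+1}\equiv\omega^2u^{-4(2k+1)}\mod (u_1^4)$ — in particular the elements produced in Proposition \ref{d_1^{2,0}} — is a topological basis, whence $E_1^{2,0}=\prod_{k\in\Z}\F_3[v_1]\{\overline b_{2k+1}\}$; and by (\ref{TMF}) the module $E_1^{3,0}\cong\F_3[[v_1^6\Delta^{-1}]][\Delta^{\pm 1},v_1]$ is $v_1$-torsion free and has topological basis $\{\overline\Delta_k\}_{k\in\Z}$.

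Next I would compute the image of $d_1\colon E_1^{1,0}\to E_1^{2,0}$. Combining Proposition \ref{d_1^{1,0}}(c) with Proposition \ref{d_1^{2,0}}(b), it is the closure of the $\F_3[v_1]$-span of the elements $v_1^{6\cdot 3^n+2}\overline b_{3^{n+1}(6m+1)}$ and $v_1^{10\cdot 3^n+2}\overline b_{3^{n}(18m+11)}$ ($n\geq 0$, $m\in\Z$); moreover the parametrising assignments $k=3^{n+1}(3m+1)\mapsto 3^{n+1}(6m+1)$ and $k=3^{n}(9m+8)\mapsto 3^{n}(18m+11)$ are injective with disjoint images, so each basis element $\overline b_{2k+1}$ receives at most one relation. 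Dually, Proposition \ref{d_1^{3,0}} (the third formula of Theorem \ref{d1}) shows that $d_1\colon E_1^{2,0}\to E_1^{3,0}$ annihilates $\overline b_{2k+1}$ unless $2k+1$ has one of the three shapes $6m+1$, $3^{n}(18m+17)$, $3^{n}(18m+5)$, in which case $d_1(\overline b_{2k+1})$ equals a unit times $v_1^{j}\overline\Delta_{\sigma(2k+1)}$ for a suitable exponent $j$ and an injective index function $\sigma$. Since $E_1^{3,0}$ is $v_1$-torsion free and the $\overline\Delta_k$ form a topological basis, $d_1$ is injective on each such summand and, by injectivity of $\sigma$, on their closed sum; hence $\Ker(d_1\colon E_1^{2,0}\to E_1^{3,0})$ is exactly the completed $\F_3[v_1]$-span of those $\overline b_{2k+1}$ whose index $2k+1$ is not of one of the three shapes just listed.

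The remaining step is a purely arithmetic bookkeeping. Writing an odd integer $\ell$ uniquely as $3^{v}u$ with $v=v_3(\ell)$ and $u$ prime to $3$, the residue of $u$ modulo $18$ (it lies in $\{1,5,7,11,13,17\}$) together with whether $v=0$ sorts $\ell$ into exactly one of the five families $6m+1$, $3^{n+1}(6m+1)$, $3^{n}(18m+5)$, $3^{n}(18m+11)$, $3^{n}(18m+17)$; the first, third and fifth are precisely the indices of non-cycles and contribute nothing to $E_2^{2,0}$, while the second and fourth consist of cycles, each receiving one relation — $v_1^{6\cdot 3^n+2}=v_1^{2\cdot 3^{n+1}+2}$ times the generator in the first case and $v_1^{10\cdot 3^n+2}$ times the generator in the second. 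As $d_1\circ d_1=0$ the incoming image automatically lies inside the kernel, so passing to the quotient produces the asserted product of cyclic modules, and since each internal degree meets only finitely many of these summands (by inspection of the degrees) the product may be rewritten as a direct sum. The only point demanding genuine attention beyond invoking Propositions \ref{d_1^{1,0}}, \ref{d_1^{2,0}} and \ref{d_1^{3,0}} is exactly this partition of the odd integers together with the injectivity of the two parametrising assignments and of $\sigma$; all of these follow at once from the $3$-adic valuation and residues modulo $18$, so no real obstacle remains, the substance of the computation having been done in Theorem \ref{d1}.
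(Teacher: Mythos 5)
Your argument is correct and is exactly the one the paper intends: the corollary is left as an immediate consequence of Propositions \ref{d_1^{1,0}}, \ref{d_1^{2,0}} and \ref{d_1^{3,0}}, using (as in the preceding corollary for $E_2^{1,0}$) that the elements $\overline{b}_{2k+1}$ and $\overline{\Delta}_k$ form topological bases of the continuous graded $\F_3[v_1]$-modules $E_1^{2,0}$ and $E_1^{3,0}$, so that kernel and image of $d_1$ can be read off summand by summand. Your explicit partition of the odd integers via the $3$-adic valuation and the residue of the prime-to-$3$ part modulo $18$ is precisely the bookkeeping the paper suppresses.
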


\remarkstyle{Remark} By inspection one sees again that the infinite 
product is finite in each bidegree and therefore it can also be 
identified with the direct sum.

\begin{prop}\label{d_1^{3,0}} For each integer $k$ there exists an element  
$\overline{\Delta}_k \in E_1^{3,0}$ such that
\begin{itemize}
\item[a)] $\overline{\Delta}_k \equiv \Delta^k \mod (u_1^2)$
\item[b)] The differential $d_1:E_{1}^{2,0}\to E_1^{3,0}$ is given by 
$$
d_1(\overline{b}_{2k+1})\equiv 
\left\{\begin{array}{ll}
(-1)^{m+1}v_1^2\overline{\Delta}_{2m}&  2k+1=6m+1 \\
(-1)^{m+n}v_1^{4\cdot3^n}\overline{\Delta}_{3^n(6m+5)}&2k+1=3^n(18m+17) \\  
(-1)^{m+n+1}v_1^{4\cdot3^n}\overline{\Delta}_{3^n(6m+1)} &2k+1=3^n(18m+5) \\  
0&\textrm{otherwise}\ . 
\end{array}\right.$$
\end{itemize}
\end{prop}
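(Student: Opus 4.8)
The plan is to read off the differential $d_1\colon E_1^{2,0}\to E_1^{3,0}$ from the computation already carried out (to the needed $u_1$-adic precision) in Proposition~\ref{d_1^{2,0}}.c, and to extract from it the elements $\overline{\Delta}_k$ together with the divisibilities implicit in the formula of (b). The decisive structural input is the explicit form of $E_1^{3,0}\cong H^0(G_{24},(E_2)_*/(3))$ from Theorem~\ref{shapiro}.a: in each internal degree this is a $v_1$-torsion-free $\F_3$-module with topological $\F_3$-basis $\{v_1^{a_0+6j}\Delta^{b_0-j}\}_{j\ge 0}$, where $a_0$ is the least nonnegative integer congruent to one quarter of the degree modulo $6$. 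Since $\Delta\equiv\omega^2u^{-12}\bmod u_1^2$ by Proposition~\ref{Delta}, each $\Delta^{b}$ is a $u_1$-adic unit, so $v_1^{a_0+6j}\Delta^{b_0-j}$ has $u_1$-adic valuation exactly $a_0+6j$; hence an element $y=\sum_j c_jv_1^{a_0+6j}\Delta^{b_0-j}$ of $E_1^{3,0}$ in that degree has $\mathrm{ord}_{u_1}(y)=a_0+6j_0$ with $j_0=\min\{j:c_j\ne 0\}$, and in particular $v_1^{\mathrm{ord}_{u_1}(y)}$ divides $y$ in $E_1^{3,0}$. This is the mechanism that turns the $u_1$-adic leading terms of Proposition~\ref{d_1^{2,0}}.c into divisibilities by powers of $v_1$.

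First, the ``otherwise'' indices are precisely the $2k+1$ of the form $3^{n+1}(6m+1)$ or $3^{n}(18m+11)$; comparing with Proposition~\ref{d_1^{2,0}}.b one sees these are exactly the $\overline{b}_{2k+1}$ lying in the image of $d_1\colon E_1^{1,0}\to E_1^{2,0}$, so $d_1(\overline{b}_{2k+1})=d_1d_1(\cdots)=0$ for them. Next, for $2k+1=6m+1$ one has $k\equiv 0\bmod 3$, whence $a_0=2$, so every element of $E_1^{3,0}$ in the relevant degree --- in particular $d_1(\overline{b}_{6m+1})$, which equals $-u_1^2u^{-4(6m+1)}\bmod u_1^4$ by Proposition~\ref{d_1^{2,0}}.c --- is divisible by $v_1^2$; one sets $\overline{\Delta}_{2m}:=(-1)^{m+1}v_1^{-2}d_1(\overline{b}_{6m+1})$, and dividing the leading term by $v_1^2=u_1^2u^{-4}$ and using $\Delta^{2m}\equiv\omega^{4m}u^{-24m}=(-1)^mu^{-24m}\bmod u_1^2$ gives $\overline{\Delta}_{2m}\equiv\Delta^{2m}\bmod u_1^2$ and the first case of (b). Finally, for $2k+1=3^n(18m+17)$ (resp.\ $3^n(18m+5)$) Proposition~\ref{d_1^{2,0}}.c forces all $u_1$-terms of $d_1(\overline{b}_{2k+1})$ below order $4\cdot 3^n$ to vanish while the coefficient of $u_1^{4\cdot 3^n}$ is a unit multiple of $u^{-4(2k+1)}$; hence $\mathrm{ord}_{u_1}(d_1(\overline{b}_{2k+1}))=4\cdot 3^n$ and $v_1^{4\cdot 3^n}$ divides it, and one defines $\overline{\Delta}_{3^n(6m+5)}$ (resp.\ $\overline{\Delta}_{3^n(6m+1)}$) to be the appropriate sign times $v_1^{-4\cdot 3^n}d_1(\overline{b}_{2k+1})$. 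Since every integer $\ell$ is hit by exactly one of these three families --- even $\ell$ by the first, odd $\ell$ according to the residue mod $18$ of its prime-to-$3$ part --- this defines all the $\overline{\Delta}_\ell$, and (a) together with the three nontrivial cases of (b) holds by construction.

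The routine but delicate point, which I expect to be the main obstacle, is pinning down the signs, i.e.\ checking that the unique $\varepsilon\in\{\pm 1\}$ with $\varepsilon\,v_1^{-j}d_1(\overline{b}_{2k+1})\equiv\Delta^{\ell}\bmod u_1^2$ is the asserted $(-1)^{m+1}$, $(-1)^{m+n}$ or $(-1)^{m+n+1}$. After dividing the leading term of $d_1(\overline{b}_{2k+1})$ by $v_1^{j}=u_1^{j}u^{-2j}$ one obtains $\pm\omega^2u^{-12\ell}$, and comparison with $\Delta^{\ell}\equiv\omega^{2\ell}u^{-12\ell}\bmod u_1^2$ introduces a factor $\pm\omega^{2-2\ell}$, which is a genuine sign since $\ell$ is odd in these cases ($\omega^4=-1$); matching this against the parities of $m$ and $n$ read off from the exponents $3^n(18m+17)$ and $3^n(18m+5)$ is the bookkeeping to be done with care. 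Everything else --- the identification of the ``otherwise'' set with the image of $d_1$, the divisibilities, and the consistency of the assignment $\ell\mapsto\overline{\Delta}_\ell$ --- is formal once Proposition~\ref{d_1^{2,0}} and the description of $E_1^{3,0}$ are in hand.
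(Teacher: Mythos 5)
Your proposal is correct and follows essentially the same route as the paper's (very terse) proof: use Proposition \ref{d_1^{2,0}} to get divisibility of $d_1(\overline{b}_{2k+1})$ by the appropriate power of $v_1$, define $\overline{\Delta}_k$ as the normalized quotient, and fix the sign by comparing leading coefficients with $v_1^{j}\Delta^{\ell}\equiv\omega^{2\ell}u_1^{j}u^{-2j-12\ell}$. You in fact make explicit the step the paper leaves implicit, namely that the topological basis $\{v_1^{a_0+6j}\Delta^{b_0-j}\}$ of $E_1^{3,0}$ in each degree converts the $u_1$-adic valuation supplied by Proposition \ref{d_1^{2,0}}.c into genuine $v_1$-divisibility.
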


\begin{proof} 
Proposition \ref{d_1^{2,0}} shows that  
$d_1(\overline{b}_{2k+1})$ is divisible by the appropriate 
power of $v_1$. The sign is then determined by comparing the coefficients 
of the ``leading term" $u_1^2u^{-4(2k+1)}$ resp. 
$\omega^2u_1^{4.3^n}u^{-4{2k+1}}$ in $d_1(\overline{b}_{2k+1})$ 
on one hand and in $v_1^2\Delta^{2m}$ resp. 
$v_1^{4.3^n}\Delta^{3^n(6m+3\pm 2)}$  
on the other hand.    
\end{proof}

\begin{cor}\label{E2-30}  
There is an isomorphism of $\mathbb{F}_3[v_1]$-modules 
$$
E_2^{3,0} \cong 
\prod_{m \in \mathbb{Z}}\mathbb{F}_3[v_1]/(v_1^2)
\{\overline{\Delta}_{2m}\}\times 
\prod_{{n\geq 0}, m \in \mathbb{Z}}
\mathbb{F}_3[v_1]/(v_1^{4\cdot 3^n})
\{\overline{\Delta}_{3^n(6m+1)},\overline{\Delta}_{3^n(6m+5)}\} \ .  
\qed
$$
\end{cor}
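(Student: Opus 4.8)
The plan is to observe that, since the permutation resolution (\ref{alg}) has length $3$, we have $E_1^{s,t}=0$ for $s\geq 4$; in particular the $d_1$-differential out of bidegree $(3,0)$ vanishes, so that $E_2^{3,0}$ is simply the cokernel of $d_1\colon E_1^{2,0}\to E_1^{3,0}$, and the whole proof comes down to reading off this cokernel from Proposition \ref{d_1^{3,0}}. First I would record that, via the Shapiro isomorphism $E_1^{3,0}\cong H^0(G_{24},(E_2)_*/(3))=((E_2)_*/(3))^{G_{24}}$ and Theorem \ref{shapiro}(a), the continuous graded $\mathbb{F}_3[v_1]$-module $E_1^{3,0}$ is --- exactly as $E_1^{0,0}$ --- topologically free on any family $(\overline{\Delta}_k)_{k\in\mathbb{Z}}$ of elements with $\overline{\Delta}_k\equiv\Delta^k\bmod(u_1^2)$, the change of basis from $(\Delta^k)_k$ to $(\overline{\Delta}_k)_k$ being triangular for the $u_1$-adic filtration; similarly the $\overline{b}_{2k+1}$, $k\in\mathbb{Z}$, form a topological $\mathbb{F}_3[v_1]$-basis of $E_1^{2,0}$. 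Since $d_1$ is $\mathbb{F}_3[v_1]$-linear and continuous, $\mathrm{im}(d_1)$ is the closed $\mathbb{F}_3[v_1]$-submodule of $E_1^{3,0}$ generated by the elements $d_1(\overline{b}_{2k+1})$, which Proposition \ref{d_1^{3,0}} computes explicitly.

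The core of the argument is then a piece of elementary bookkeeping. By Proposition \ref{d_1^{3,0}} the nonzero values of $d_1$ on the $\overline{b}_{2k+1}$ are $(-1)^{m+1}v_1^2\overline{\Delta}_{2m}$ for $2k+1=6m+1$, $(-1)^{m+n}v_1^{4\cdot3^n}\overline{\Delta}_{3^n(6m+5)}$ for $2k+1=3^n(18m+17)$, and $(-1)^{m+n+1}v_1^{4\cdot3^n}\overline{\Delta}_{3^n(6m+1)}$ for $2k+1=3^n(18m+5)$. I would check that every basis element $\overline{\Delta}_k$ occurs as a target exactly once: if $k$ is even, $k=2m$, it is hit only by $d_1(\overline{b}_{6m+1})$, with leading power $v_1^2$; if $k$ is odd, write $k=3^n\ell$ with $\ell$ prime to $3$, so $\ell\equiv 1$ or $5\bmod 6$, and then $\overline{\Delta}_k$ is hit only by $d_1(\overline{b}_{3^n(18m+5)})$ when $\ell=6m+1$, resp. $d_1(\overline{b}_{3^n(18m+17)})$ when $\ell=6m+5$, with leading power $v_1^{4\cdot3^n}$. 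Since the even and odd cases never collide and distinct pairs $(n,\ell)$ yield distinct $k$, the image of $d_1$ is the closed span of a family $\{v_1^{e_k}\overline{\Delta}_k\}_{k\in\mathbb{Z}}$ with exactly one monomial per basis vector, where $e_{2m}=2$ and $e_{3^n(6m\pm1)}=4\cdot3^n$. Dividing out, $E_2^{3,0}$ becomes the direct product of the cyclic modules $\mathbb{F}_3[v_1]/(v_1^{e_k})\{\overline{\Delta}_k\}$, which is precisely the asserted description; and since each internal degree meets only finitely many of the $\overline{\Delta}_k$, as one checks by inspection, the infinite product agrees there with the direct sum, exactly as in the previous corollaries.

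The only delicate point --- and the closest thing to an obstacle here --- is to keep the computation insensitive to the genuine ambiguity in the choice of the $\overline{\Delta}_k$. By Proposition \ref{d_1^{2,0}}(c) the relevant value $d_1(\overline{b}_{2k+1})$ is divisible by the stated power of $v_1$ and has the indicated $u_1$-leading term; I would therefore simply \emph{define} $\overline{\Delta}_k$, for the unique $\overline{b}_{2k+1}$ mapping onto it, as $\pm v_1^{-e_k}d_1(\overline{b}_{2k+1})$, with the sign fixed by the requirement $\overline{\Delta}_k\equiv\Delta^k\bmod(u_1^2)$ (which holds by the leading-term computation and which simultaneously reproves parts (a) and (b) of Proposition \ref{d_1^{3,0}}). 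Then $d_1(\overline{b}_{2k+1})$ is literally $\pm v_1^{e_k}\overline{\Delta}_k$, so no completion or triangularity subtlety survives and the cokernel decomposition above is immediate.
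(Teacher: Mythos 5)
Your proposal is correct and is exactly the argument the paper intends: the corollary is stated with an immediate $\qed$ because, the resolution having length $3$, $E_2^{3,0}$ is the cokernel of the $d_1$ computed in Proposition \ref{d_1^{3,0}}, and your bookkeeping (each $\overline{\Delta}_{2m}$ hit once with leading power $v_1^2$, each $\overline{\Delta}_{3^n(6m\pm 1)}$ hit once with $v_1^{4\cdot 3^n}$) together with the topological-basis observation is precisely what that $\qed$ suppresses. Your final remark about normalizing $\overline{\Delta}_k$ as $\pm v_1^{-e_k}d_1(\overline{b}_{2k+1})$ also matches how the paper actually constructs these elements in the proof of Proposition \ref{d_1^{3,0}}.
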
 

\remarkstyle{Remark} By inspection one sees once again that the infinite 
product is finite in each bidegree and it can therefore also be 
identified with the direct sum.   

\medbreak

\subsection{The proof of Proposition \ref{d_1b_k}}\label{middle-differential}

By Corollary \ref{corn2} the differential $E_1^{1,0}\to E_1^{2,0}$ 
is induced by the homomorphism $C_2\to C_1,\ e_2\mapsto n_1'$ where 
$$
n_1':=\frac{1}{16}\sum_{g\in SD_{16}}\chi(g^{-1})g(n_1) \ ,  
$$ 
and by Lemma \ref{lemn2} we can take for $n_1$ any element of the form 
$n_1=\theta e_1$ with 
\begin{equation}\label{theta}  
\theta:=3c(a-c)(a-b)+(e-c)^2(a-c)(a-b)-x(a-b)-y(a-c)(a-b) 
\end{equation}
and $x,y\in IK$ satisfying $e-c^3=x(e-b)+y(e-c)$.  The next result  
gives approximations for $x$ and $y$ which are 
sufficient for our homological calculations.

\begin{lem}\label{approx}  
Let 
$$
\begin{array}{lll}
\widetilde{x}&=&b^{-1}d^{-1}(e-d)-b^{-1}d^{-1}(e-b)b^{-1}c^{-1}(e-c) \\
\widetilde{y}&=&b^{-1}d^{-1}(e-b)b^{-1}c^{-1}(e-b)\ . 
\end{array}
$$ 
Then there exists $z\in IF_{\frac{5}{2}}S_2^1+IK.IF_2S_2^1$ 
such that the following identity holds in $IK$
$$
e-c^3=\widetilde{x}(e-b)+\widetilde{y}(e-c)+z \ .
$$
\end{lem}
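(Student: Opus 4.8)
The plan is to reduce the asserted identity to the elementary relation $e-c^3=3c(e-c)+(e-c)^3$ already used in the proof of Lemma \ref{lemn2}, and to carry out the computation modulo the ideal $\mathfrak{a}:=IF_{\frac{5}{2}}S_2^1+IK\cdot IF_2S_2^1$. Since $K$ is normal in $S_2^1$ (it is the kernel of $S_2^1\to gr_{\frac12}S_2^1\to\F_9/\F_3$), the submodules $IK$ and $IF_2S_2^1$, and hence $\mathfrak{a}$, are two-sided ideals; moreover $F_2S_2^1\subseteq K$ gives $IF_2S_2^1\subseteq IK$ and so $(IF_2S_2^1)^2\subseteq IK\cdot IF_2S_2^1\subseteq\mathfrak{a}$. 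I will use freely that $b\in F_{\frac12}S_2^1$, $c\in F_1S_2^1$ and $d\in F_{\frac32}S_2^1$ all lie in $K$ (Lemma \ref{abcd}), that the Lazard filtration on $\Z_3[[S_2^1]]$ is multiplicative, and that a group commutator of $F_iS_2^1$ and $F_jS_2^1$ lies in $F_{i+j}S_2^1$ with leading term given by the Lie bracket of Lemma \ref{lemcom}(a).

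The first step is a formal manipulation. Expanding $\widetilde{x}(e-b)+\widetilde{y}(e-c)$ from the definitions, the $\widetilde{y}$-term combines with the second summand of $\widetilde{x}$ to produce the factor $(e-b)(e-c)-(e-c)(e-b)=bc-cb$; using the defining relation $[b,c]=d$, i.e. $bc-cb=(d-e)cb$, the whole expression collapses to
$$
\widetilde{x}(e-b)+\widetilde{y}(e-c)=b^{-1}d^{-1}\bigl[(\bar d-d)+db-b\bar d\bigr],\qquad\bar d:=(cb)^{-1}d(cb)\ .
$$
Now $\bar d d^{-1}=[(cb)^{-1},d]$ is a group commutator of an element of $F_{\frac12}S_2^1$ with $d\in F_{\frac32}S_2^1$, hence $\bar d-d\in IF_2S_2^1$; rewriting $b\bar d=bd+b(\bar d-d)$ turns the bracket into $(db-bd)+(e-b)(\bar d-d)$, and here $(e-b)(\bar d-d)\in IK\cdot IF_2S_2^1\subseteq\mathfrak{a}$ while $db-bd\in IF_2S_2^1$. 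Since finally $b^{-1}d^{-1}-e\in IK$, left multiplication by $b^{-1}d^{-1}$ alters an element of $IF_2S_2^1$ only by something in $IK\cdot IF_2S_2^1\subseteq\mathfrak{a}$, so altogether $\widetilde{x}(e-b)+\widetilde{y}(e-c)\equiv db-bd\pmod{\mathfrak{a}}$.

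It remains to prove $db-bd\equiv e-c^3\pmod{\mathfrak{a}}$. Both sides lie in $IF_2S_2^1$: indeed $c^3\in F_2S_2^1$ because $F_1S_2^1$ cubes into $F_2S_2^1$ (Lemma \ref{lemcom}), and $db-bd=([d,b]-e)bd$ with $[d,b]\in F_2S_2^1$. Their difference lies in the kernel of the leading-term homomorphism $IF_2S_2^1\to gr_2S_2^1\cong\F_9$, and this kernel is contained in $\mathfrak{a}$: it is generated by $IF_{\frac52}S_2^1$, by $(IF_2S_2^1)^2$, and by the elements $3(g-e)$ with $g\in F_2S_2^1$, the last lying in $\mathfrak{a}$ because $g^3-e=3(g-e)+3(g-e)^2+(g-e)^3$ with $g^3\in F_3S_2^1$, $3(g-e)^2\in 3(IF_2S_2^1)^2\subseteq3\mathfrak{a}\subseteq\mathfrak{a}$ and $(g-e)^3\in(IF_2S_2^1)^3\subseteq\mathfrak{a}$. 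So it suffices to match leading terms. By Lemma \ref{lemcom}(a) the leading term of $db-bd$ is $[\bar d,\bar b]$, and by Lemma \ref{lemcom}(b) that of $e-c^3$ is $-P\bar c=-\bar c$; reading $\bar b=-1$, $\bar c=-\omega^2$, $\bar d=\omega^2$ off Lemma \ref{abcd}, and using $x^{27}=x^3$ and $\omega^6=-\omega^2$ in $\F_9$, one finds
$$
[\bar d,\bar b]=\bar d\bar b^{27}-\bar b\bar d^{3}=-\omega^2-\omega^2=-2\omega^2=\omega^2=-\bar c\ .
$$
Hence $z:=(e-c^3)-\widetilde{x}(e-b)-\widetilde{y}(e-c)$ lies in $\mathfrak{a}\cap IK$, which is the claim.

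The main obstacle is the filtration bookkeeping underlying the third paragraph: one must check that the kernel of the leading-term map on $IF_2S_2^1$ is indeed generated by the three families listed and that each of them lands in $IF_{\frac52}S_2^1+IK\cdot IF_2S_2^1$ rather than merely in Lazard filtration $\geq\frac52$. This rests on the normality of $K$ (which makes $IK$, $IF_2S_2^1$ and $\mathfrak{a}$ two-sided and gives $IF_2S_2^1\cdot IK\equiv IK\cdot IF_2S_2^1$ modulo $IF_{\frac52}S_2^1$) and on a careful use of the multiplicativity of the Lazard filtration together with the $P$-operator identities of Lemma \ref{lemcom}.
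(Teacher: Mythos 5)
Your argument is correct in substance but takes a genuinely different route from the paper's. The paper constructs the identity \emph{forwards}: it first establishes the group congruences $c^3\equiv[b^{-1},d^{-1}]$ modulo $F_{\frac{5}{2}}S_2^1$ and $d\equiv[b^{-1},c^{-1}]$ modulo $F_2S_2^1$ from Lemma \ref{lemcom} and Lemma \ref{abcd}, and then converts them into ring identities via the elementary formulae (\ref{eqcomm}) and (\ref{eqcom}); the elements $\widetilde x,\widetilde y$ fall out of the two substitutions, and the error terms are visibly in $IF_{\frac{5}{2}}S_2^1$ resp.\ $IK\cdot IF_2S_2^1$ at each step. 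You instead verify the formula \emph{backwards}: you collapse $\widetilde x(e-b)+\widetilde y(e-c)$ to $db-bd$ modulo $\mathfrak{a}$ using only $[b,c]=d$ (this part is a clean and correct computation), and then compare $db-bd$ with $e-c^3$ by matching leading terms; your identity $[\bar d,\bar b]=-\bar c$ is precisely the $gr_2$-computation that underlies the paper's unproved congruence $c^3\equiv[b^{-1},d^{-1}]$, so the two proofs rest on the same Lie-algebra input. What your version costs is the leading-term homomorphism on $IF_2S_2^1$, and that is the one place needing repair: its kernel is not generated by $IF_{\frac{5}{2}}S_2^1$, $(IF_2S_2^1)^2$ and the $3(g-e)$ as you claim. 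Working inside $\Z_3[[K]]$ (which you must -- over $\Z_3[[S_2^1]]$ the kernel contains $IS_2^1\cdot IF_2S_2^1$, which is \emph{not} contained in $\mathfrak{a}$), the kernel is $IK\cdot IF_2S_2^1+IF_{\frac{5}{2}}S_2^1+3\,IF_2S_2^1$: for instance $(e-b)(g-e)$ with $g\in F_2S_2^1$ lies in the kernel but not in your three families, since $e-b\notin IF_2S_2^1$. Fortunately the corrected, larger kernel still lies in $\mathfrak{a}$ -- the first summand by definition of $\mathfrak{a}$, the third by your own computation with $g^3-e$ -- so your argument closes once the generation claim is replaced by this direct inclusion. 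The paper's route buys freedom from this bookkeeping by staying at the level of group elements until the last moment; yours buys a transparent explanation of \emph{why} the stated $\widetilde x,\widetilde y$ work and makes the hidden $gr_2$-identity explicit.
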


\begin{proof} From Lemma \ref{lemcom} and Lemma \ref{abcd} we deduce  
\begin{equation}\nonumber
c^3 \equiv [b^{-1},d^{-1}]  
\mod F_{\frac{5}{2}}S_2^1 \label{c3}. 
\end{equation} 
Thus by using the elementary formulae   
\begin{equation}\label{eqcomm}
1-[X,Y]=XY\big((1-Y^{-1})(1-X^{-1})-(1-X^{-1})(1-Y^{-1})\big)
\end{equation}
\begin{equation}\label{eqcom} 
1-XY=X(1-Y)+(1-X)
\end{equation} 
which hold in any associative algebra we obtain 
\begin{equation}\label{e-c3}
e-c^3 \equiv b^{-1}d^{-1}\big((e-d)(e-b)-(e-b)(e-d)\big) 
\ \textrm{mod}\  IF_{\frac{5}{2}}S_2^1 \ .
\end{equation}
Using Lemma \ref{lemcom} and Lemma \ref{abcd} again we get   
$$
d=[b,c]\equiv [b^{-1},c^{-1}]\ \text{mod}\ F_{2}S_2^1
$$ 
and hence we obtain from (\ref{eqcomm}) 
\begin{equation}\nonumber
e-d \equiv b^{-1}c^{-1}\big((e-c)(e-b)-(e-b)(e-c)\big)
\ \textrm{mod}\  IF_{2}S_2^1 \ .
\end{equation} 
Substituting this into (\ref{e-c3}) gives the result. 
\end{proof}

We will thus be interested in analyzing the action of  
\begin{equation}\label{thetatilde}
3c(a-c)(a-b)+(e-c)^2(a-c)(a-b)-\widetilde{x}(a-b)-\widetilde{y}(a-c)(a-b)
\end{equation}
as well as in the influence of the ``error term" $z$ 
on the elements $\omega^2u^{-4k+2}$.  This analysis will be simplified 
by the following result in which $I$ denotes the ideal $IS_2^1$. 

\begin{lem}\label{ideals} Let $r\geq 1$ be an integer. Then 
we have the following inclusions of left ideals 
\begin{itemize}
\item[a)] $IF_{r}S_2^1\subset 
I^{3^{r}-1}(e-b)+I^{2.(3^{r-1}-1)}(e-c)+3I\subset I^{2.3^{r-1}}+3I$ 
\item[b)] $IF_{\frac{r}{2}}S_2^1
\subset I^{3^{r}-1}(e-b)+I^{3^{r}-2}(e-c)+3I\subset I^{3^r}+3I$. 
\end{itemize}
\end{lem}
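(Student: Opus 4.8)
The plan is to prove (a) and (b) together by induction on $r$, after reducing each to a statement about a topological generating set of the relevant subgroup. Write $J_r$ and $J_r'$ for the left ideals on the right-hand sides of (a) and (b). The first step is formal: since $J$ (either one) is a left ideal and $e-g_1g_2=(e-g_1)+g_1(e-g_2)$, $e-g^{-1}=-g^{-1}(e-g)$, the set $\{g\in S_2^1:e-g\in J\}$ is a closed subgroup of $S_2^1$, so it suffices to check $e-g\in J_r$ (resp. $J_r'$) for $g$ ranging over a topological generating set of $F_rS_2^1$ (resp. $F_{r/2}S_2^1$). At the outset I would also assemble the inputs that the induction will consume: $e-b\in I$, $e-a^{\pm1}\in I$, the relation $(a-e)^3=-3(a-e)a$ forced by $a^3=e$ (so $(a-e)^3\in 3I$), $e-c\in I^2$ (from $c=[a,b]$ and identity (\ref{eqcomm}), each factor lying in $I$), and the more refined $e-d\in I^2(e-b)+I(e-c)$ obtained from the same computation applied to $d=[b,c]$. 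From $e-b\in I$ and $e-c\in I^2$ one gets $I^m(e-b)\subseteq I^{m+1}$ and $I^{m'}(e-c)\subseteq I^{m'+2}$, so the coarse (second) inclusions in (a) and (b) follow at once from the fine (first) ones, using $I^{3^r-1}(e-b)\subseteq I^{3^r}$, $I^{2(3^{r-1}-1)}(e-c)\subseteq I^{2\cdot3^{r-1}}$, $I^{3^r-2}(e-c)\subseteq I^{3^r}$, and $I^{3^r}\subseteq I^{2\cdot3^{r-1}}$.

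For the generating set I would exploit the mixed Lie algebra $gr\,S_2^1$ of Lemma \ref{lemcom}. Using the explicit formulas for the bracket and for the $p$-th power operation, one checks that $gr\,S_2^1$ is generated, as a mixed Lie algebra, by $gr_{1/2}S_2^1$ (whose lifts one may take to be $a$ and $b$), and hence that $F_{r/2}S_2^1$ is topologically generated by $F_{(r+1)/2}S_2^1$ together with finitely many elements, each of which is a commutator $[x,g]$ with $x\in\{a,b\}$ and $g$ a chosen generator of a lower stage $F_{j/2}S_2^1$, or a cube $h^3$ with $h$ a chosen generator of a lower stage. Because $S_2^1$ is complete with respect to the filtration, a successive-approximation argument then reduces the whole claim to these two kinds of generator, the finitely many small values of $r$ being handled directly from the relations listed above. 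For integer $r$ one runs the same scheme, noting that the commutators feeding $gr_rS_2^1$ can be taken to come from $[gr_{1/2}S_2^1,gr_{(2r-1)/2}S_2^1]$, which is what produces the sharper $(e-c)$-exponent $2(3^{r-1}-1)$ in part (a).

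The inductive step is the computational core. For a cube one writes $e-h^3=-3(h-e)-3(h-e)^2-(h-e)^3$, the first two summands lying in $3I$; if $e-h$ belongs to the inductively known ideal $I^{\alpha}(e-b)+I^{\beta}(e-c)+3I$, one expands $(h-e)^3$ and pushes every factor of $I$ to the left past the $(e-b)$'s and $(e-c)$'s (using $(e-b)I^k\subseteq I^{k+1}$, $(e-c)I^k\subseteq I^{k+2}$), obtaining an ideal of the same shape with the two exponents essentially tripled. For a commutator one applies (\ref{eqcomm}), $e-[x,g]=xg\big((e-g^{-1})(e-x^{-1})-(e-x^{-1})(e-g^{-1})\big)$, and substitutes $e-x^{-1}=-x^{-1}(e-x)$, $e-g^{-1}=-g^{-1}(e-g)$ with $e-g$ in the inductively known ideal; in each resulting term the "bad" factor $e-a^{\pm1}$ (which is only in $I$, not $I^2$) is adjacent to a factor already carrying an $(e-b)$ or an $(e-c)$, and whenever a surviving power of $(e-a)$ threatens to sit unflanked on the right it is removed using $(a-e)^3\in 3I$; collecting exponents then gives the claimed bounds. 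Verifying that this collecting really yields exactly $3^r-1$ on $(e-b)$ and $3^r-2$ (resp. $2(3^{r-1}-1)$) on $(e-c)$ — that is, that the commutator contributions never undercut the cube contributions and that the various `$+1$'s and `$+2$'s introduced by the commutations balance correctly — is the main obstacle, and it is where one must keep scrupulous track of which of $a$, $b$ appears in each commutator and of the effect of the torsion in $a$.
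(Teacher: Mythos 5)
Your opening moves match the paper's: reduce to a topological generating set of the filtration subgroup (the set $\{g\in S_2^1: e-g\in J\}$ is a subgroup for any left ideal $J$), assemble the inputs $e-c\in I^2$, $e-d\in I^2(e-b)+I(e-c)$, $e-g^3\equiv (e-g)^3 \bmod 3I$, and deduce the coarse inclusions from the fine ones. The gap is in the engine. You propose to generate $F_{r/2}S_2^1$ by iterated commutators $[x,g]$, $x\in\{a,b\}$, and by cubes of previously constructed generators, and to propagate the normal form $I^{\alpha}(e-b)+I^{\beta}(e-c)+3I$ through both operations inductively --- and you yourself flag the resulting bookkeeping as ``the main obstacle'' without resolving it. That obstacle is real, and as described the induction does not close. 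In the cube step, if $e-h\in I^{\alpha}(e-b)+I^{\beta}(e-c)+3I$ then $(e-h)^3$ contains mixed terms such as $I^{\alpha}(e-b)\,I^{\beta}(e-c)\,I^{\alpha}(e-b)\subseteq I^{2\alpha+\beta+3}(e-b)$; in case (a), with $\alpha=3^{r-1}-1$ and $\beta=2(3^{r-2}-1)$, one gets $2\alpha+\beta+3=8\cdot 3^{r-2}-1$, strictly smaller than the target exponent $3\alpha+2=3^{r}-1$. In the commutator step, identity (\ref{eqcomm}) forces you to multiply elements of $I^{\alpha}(e-b)+I^{\beta}(e-c)$ on the right by $e-x^{\pm 1}\in I$; the inclusions $(e-b)I^k\subseteq I^{k+1}$, $(e-c)I^k\subseteq I^{k+2}$ you invoke only record membership in a power of $I$ and destroy the ``$(e-b)$ or $(e-c)$ on the right'' shape, and restoring that shape costs ring commutators (conjugates of $e-c$, and worse) that your scheme does not control.

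The paper sidesteps all of this by a sharper choice of generators for which no commutation is ever needed: by Lemma \ref{lemcom}, $gr_qS_2^1$ is spanned by the image of $c^{3^{q-1}}$, and $gr_{q+\frac{1}{2}}S_2^1$ by the images of $d^{3^{q-1}}$ and $b^{3^{q}}$ --- pure $3^k$-th powers of the three fixed elements $b,c,d$, not iterated commutators. Since $e-g^{3^k}\equiv (e-g)^{3^k}=(e-g)^{3^k-1}(e-g) \bmod 3I$, each generator lands in the required left ideal in one line, with the exponents read off from $e-b\in I$, $e-c\in I^2$ and $e-d\in I^2(e-b)+I(e-c)\subseteq I^3$, and with no mixed terms at all. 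That is the missing idea. (A side remark: as the paper's own proof and its application to $IF_{5/2}S_2^1$ make clear, part (b) must be read as a statement about $F_{r+\frac{1}{2}}S_2^1$; taken literally for $F_{1/2}S_2^1=S_2^1$ and $r=1$ it would assert $I\subseteq I^3+3I$, which is false, so the version you set out to prove cannot be proved as stated.)
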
 

\begin{proof} We note that for every integer $r\geq 0$ 
we have an isomorphism  
$$
IF_{\frac{r}{2}}S_2^1 \cong 
\mathop{\lim_{q>r}}I(F_{\frac{r}{2}}S_2^1/F_{\frac{q}{2}}S_2^1)  
$$ 
and it will therefore be enough to show the corresponding statements for 
the corresponding ideals in the finite quotient groups 
$F_rS_2^1/F_qS_2^1$.   
Next we remark that for every finite $p$-group $G$ the ideal 
$IG$ is a free $\Z_p$-module with basis 
$e-g$, $g\in G-\{e\}$. Therefore 
it is enough to show that 
$$
e-g\in I^{3^{r}-1}(e-b)+I^{2.(3^{r-1}-1)}(e-c)+3I\ \ \text{for\ every} \ 
g\in F_{r}S_2^1/F_qS_2^1
$$ 
resp. 
$$
e-g\in I^{3^{r}-1}(e-b)+I^{3^{r}-2}(e-c)+3I\ \ \text{for\ every} \ 
g\in F_{r+\frac{1}{2}}S_2^1/F_qS_2^1\ .
$$ 
(By abuse of notation we do not distinguish between $g\in S_2^1$ 
and its image in the quotients $S_2^1/F_{q}S_2^1$.)  
Furthermore by (\ref{eqcom}) it is enough to show  this for a system of 
multiplicative generators of $F_{r}S_2^1/F_qS_2^1$ resp. 
$F_{r+\frac{1}{2}}S_2^1/F_qS_2^1$. By Lemma \ref{lemcom} 
the element $c^{3^{q-1}}$ forms a basis of the one dimensional 
$\F_3$-vector space $F_qS_2^1/F_{q+\frac{1}{2}}S_2^1$, 
and $d^{3^{q-1}}$ and $b^{3^q}$ form a basis of the two dimensional 
$\F_3$-vector space $F_{q+\frac{1}{2}}S_2^1/F_{q+1}S_2^1$ and therefore 
it is enough to consider those elements.   

We have $c=[a,b]$ and $d=[b,c]$, and thus (\ref{eqcomm}) 
shows first 
$$
e-c\in I^2
$$ 
and then 
$$
e-d\in I^2(e-b)+I(e-c)\subset I^3\ .
$$ 
Furthermore $e-g^3\equiv (e-g)^3\ \text{mod}\ (3)$ and hence, 
modulo $(3)$, we obtain for any integer $r\geq 1$ 
$$   
\begin{array}{lll}
e-c^{3^{r-1}}&\equiv &(1-c)^{3^{r-1}-1}(e-c)
\subset I^{2.(3^{r-1}-1)}(e-c) \\
e-b^{3^r}&\equiv &(e-b)^{3^{r}-1}(e-b)
\subset I^{3^r-1}(e-b) \\
e-d^{3^{r-1}}&\equiv &(e-d)^{3^{r-1}-1}(e-d)\subset I^{3^r-3}(e-d)
\subset I^{3^r-1}(e-b)+I^{3^r-2}(e-c)\  \\
\end{array}
$$ 
and (a) and (b) follow. 
\end{proof}

\remarkstyle{Remark} The previous lemma can in principle 
also be used to get better explicit approximations of  
the elements $x$ and $y$ of Lemma \ref{approx}, at least modulo $(3)$. 
For this one has to express the element 
$c^3[b^{-1},d^{-1}]^{-1}$ in $F_{\frac{5}{2}}S_2^1/F_qS_2^1$ as explicit 
product of the elements $b^{3^{r-1}}$, $d^{3^{r-2}}$ and $c^{3^{r-1}}$ 
for $q\geq r\geq 3$ and then use (\ref{eqcom}) and the formulae in the 
proof of the previous lemma.   

We will now give a qualitative description of 
the action of powers of $I$ on $(E_2)_*/(3)$.  
The following lemma is an immediate consequence of 
Lemma \ref{t0k} and of the formula 
$g_*(u_1^lu^k)=t_0(g)^{k+2l}u_1^lu^k$ 
(cf. (\ref{u-action}) and Lemma \ref{t_i}.a).    

 
\begin{lem}\label{I-action}\mbox{ } 
\begin{itemize}
\item[a)] $IS_2^1$ sends the $\FF_9[[u_1]]$-submodule of    
$\FF_9[[u_1]]u^k$ generated by  
$u_1^lu^k$ to the submodule generated by $u_1^{l+1}u^k$. 

\item[b)] If $k+2l\equiv 1\ \text{mod}\ (3)$ then 
$IS_2^1$ sends $u_1^lu^k$ to the additive subgroup of $u^k\FF_9[[u_1]]$ 
generated by $u_1^{l+1}u^k$ and the ideal generated by $u_1^{l+3}u^k$.  

\item[c)] If $k+2l\equiv 0\ \text{mod}\ (3)$  
then $IS_2^1$ sends the $\FF_9[[u_1^3]]$-submodule generated 
by $u_1^lu^k$ to the $\FF_9[[u_1^3]]$-submodule generated 
by $u_1^{l+3}u^k$.  \qed 
\end{itemize} 
\end{lem}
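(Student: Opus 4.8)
The plan is to derive all three parts from the single identity $g_*(u_1^lu^k)=t_0(g)^{k+2l}u_1^lu^k$, which combines (\ref{u-action}) with Proposition \ref{t_i}.a, together with the explicit description of $t_0(g)^k$ modulo $(3,u_1^6)$ supplied by Lemma \ref{t0k}. Since $IS_2^1$ is the closed $\Z_3$-span of the elements $g-e$ with $g\in S_2^1$, since the action on $(E_2)_*/(3)$ is continuous, and since each of the three modules named as a target in the statement is closed for the $u_1$-adic topology, it suffices to check in each of the three cases that the additive operator $m\mapsto g_*m-m$ carries the indicated source into the indicated target for a single $g\in S_2^1$. The first step, then, is to record this reduction and afterwards work only with the operators $g_*-1$.

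The one elementary fact about $t_0$ needed throughout is that $t_0(g)\equiv 1\mod(3,u_1)$ for every $g\in S_2^1\subset S_2$; this is immediate from (\ref{t_imodm}), the leading coefficient of an element of $S_2$ being $1$. For part (a) this already gives $t_0(g)^{k+2l}-1\in u_1\FF_9[[u_1]]$, and, writing an arbitrary element of the $\FF_9[[u_1]]$-submodule as $f(u_1)u_1^lu^k$ and using $g_*u_1=t_0(g)^2u_1$, the operator $g_*-1$ sends it to $\bigl(f(t_0(g)^2u_1)t_0(g)^{k+2l}-f(u_1)\bigr)u_1^lu^k$; the bracket is divisible by $u_1$ since modulo $(u_1)$ it collapses to $f(0)-f(0)=0$. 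For part (b), with $k+2l=3k'+1$, Lemma \ref{t0k} shows that the $u_1^2$-coefficient of $t_0(g)^{k+2l}$ vanishes, so that $t_0(g)^{k+2l}-1$ lies in $\FF_9\,u_1+u_1^3\FF_9[[u_1]]$; multiplying by $u_1^lu^k$ yields exactly the additive subgroup described in the statement, the whole point being the absence of a $u_1^{l+2}$-term. For part (c), with $k+2l=3k''$, the key observation is that cubing is the Frobenius on the characteristic-$3$ ring $\FF_9[[u_1]]$: hence $t_0(g)^{k+2l}=(t_0(g)^{k''})^3$ and $t_0(g)^6=(t_0(g)^2)^3$ both lie in $\FF_9[[u_1^3]]$, while $t_0(g)\equiv 1\mod(u_1)$ forces $t_0(g)^{k+2l}-1\in u_1^3\FF_9[[u_1^3]]$; then $g_*-1$ carries an arbitrary element $h(u_1^3)u_1^lu^k$ of the $\FF_9[[u_1^3]]$-submodule to $\bigl(h(t_0(g)^6u_1^3)t_0(g)^{k+2l}-h(u_1^3)\bigr)u_1^lu^k$, whose bracket lies in $u_1^3\FF_9[[u_1^3]]$ because it reduces to $h(0)-h(0)=0$ modulo $(u_1^3)$.

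I do not expect a genuine obstacle. All the actual content has been pushed into Lemma \ref{t0k} — in particular the vanishing of the $u_1^2$-coefficient of $t_0(g)^{3k'+1}-1$, which is exactly what makes (b) true — and into the remark that the Frobenius preserves $\FF_9[[u_1^3]]$, which is what makes (c) true. What is left is bookkeeping together with the routine closedness argument that passes from the topological generators $g-e$ to all of $IS_2^1$; this is why the statement can fairly be called an immediate consequence of the preceding results.
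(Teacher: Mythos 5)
Your argument is correct and is exactly the route the paper intends: it declares the lemma an immediate consequence of the identity $g_*(u_1^lu^k)=t_0(g)^{k+2l}u_1^lu^k$ together with Lemma \ref{t0k}, and your write-up just supplies the bookkeeping (the vanishing of the $u_1^2$-coefficient of $t_0(g)^{3k'+1}$ for (b), the Frobenius observation for (c), and the reduction to the topological generators $g-e$ of $IS_2^1$). Nothing to add.
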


\begin{lem}\label{ideal-action} \mbox{ }
\begin{itemize}
\item[a)] Let $k+2l=3m+1$ and $r\geq 1$ be an integer.  
Then 
$(IS_2)^r$ sends $u_1^lu^k$ to an element of the form 
$(\a u_1^{l+3(r-1)+1}+\b u_1^{l+3r})u^k$ modulo $(u_1^{l+3r+1})$ 
for suitable $\a,\b\in\F_9$.

\item[b)] Let $k+2l=3m+2$ and $r\geq 2$ be an integer. 
Then $(IS_2)^r$ sends $u_1^lu^k$ to an element of the form 
$(\g u_1^{l+3(r-2)+2}+\delta u_1^{l+3(r-2)+4})u^k$ 
modulo $(u_1^{l+3(r-2)+5})$ for suitable $\g,\d\in\F_9$. 
\end{itemize}
\end{lem}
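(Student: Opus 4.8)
The plan is to prove both statements by induction on $r$, with Lemma \ref{I-action} as the one-step input (it is stated there for $S_2^1$, but the same proof applies to $S_2$): it records how $IS_2$ moves a monomial $u_1^au^k$ according to the residue of its weight $k+2a$ modulo $3$. The first move is to reformulate the claim in a form stable under the induction. In case (a) I would let $\mathcal F_r\subseteq(E_2)_*/(3)$ be the closed $\F_9$-subspace spanned by the monomials $u_1^du^k$ with $d\ge l+3r-2$ and $d\ne l+3r-1$; in case (b) (where $r\ge 2$) let $\mathcal F_r$ be spanned by the $u_1^du^k$ with $d\ge l+3r-4$ and $d\ne l+3r-3$. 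Reducing $\mathcal F_r$ modulo $(u_1^{l+3r+1})$, resp. modulo $(u_1^{l+3(r-2)+5})$, leaves exactly the span of the two monomials named in the statement, so it suffices to prove $(IS_2)^r\cdot(u_1^lu^k)\subseteq\mathcal F_r$. Since $\s_2$, and hence $S_2$, acts $\W$-linearly on $(E_2)_*$, the $IS_2$-action on $(E_2)_*/(3)$ is $\F_9$-linear and continuous, so $IS_2\cdot\mathcal F_r$ is the closed $\F_9$-span of the sets $IS_2\cdot(u_1^du^k)$ over the admissible $d$, and each of these is controlled by Lemma \ref{I-action}.

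Next I would settle the base cases. For $r=1$ in case (a) the monomial $u_1^lu^k$ has weight $\equiv 1\ \text{mod}\ (3)$, so Lemma \ref{I-action}(b) gives $IS_2\cdot(u_1^lu^k)\subseteq\F_9u_1^{l+1}u^k+(u_1^{l+3})u^k=\mathcal F_1$ outright. For $r=2$ in case (b) the weight is $\equiv 2$, so one application of $IS_2$ lands inside $(u_1^{l+1})u^k$ by Lemma \ref{I-action}(a); for the second application only the degree $l+1$ and degree $l+2$ monomials there can reach degree $l+3$, and because their weights are $\equiv 1$ and $\equiv 0\ \text{mod}\ (3)$, Lemma \ref{I-action}(b) and (c) show that neither does, so the image lies in $\mathcal F_2$.

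For the inductive step I would assume $(IS_2)^r\cdot(u_1^lu^k)\subseteq\mathcal F_r$ and prove $IS_2\cdot\mathcal F_r\subseteq\mathcal F_{r+1}$. Let $f$ be the single degree missing from $\mathcal F_{r+1}$ (so $f=l+3r+2$ in case (a) and $f=l+3r$ in case (b)). Since $IS_2$ raises $u_1$-degree by at least $1$, only monomials $u_1^du^k$ of $\mathcal F_r$ with $d\le f-1$ can contribute to degree $f$, and there are exactly three such: the bottom monomial of $\mathcal F_r$ and the two lying just above the degree $f-3$ that $\mathcal F_r$ omits; a direct check gives their weight residues modulo $3$ as $0$, $1$, $0$ in order of degree. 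For the two of weight $\equiv 0$, Lemma \ref{I-action}(c) moves the degree up by a positive multiple of $3$, hence stays in the residue class of the starting degree, and $f$ differs from each of those two degrees by $1$ or $4$, so degree $f$ is missed. The weight-$\equiv 1$ monomial sits in degree $f-2$, and Lemma \ref{I-action}(b) moves it to degree $f-1$ and otherwise to degrees $\ge f+1$, so again $f$ is missed. Therefore $IS_2\cdot\mathcal F_r\subseteq\mathcal F_{r+1}$ and the induction closes.

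I do not anticipate a genuine obstacle; the entire argument is bookkeeping of $u_1$-adic degrees and of weights modulo $3$. The only point needing care is to pin down, at each stage, which of the three low-degree generators of $\mathcal F_r$ carries which weight residue, so that Lemma \ref{I-action}(c) (degree jumping by a multiple of $3$) and Lemma \ref{I-action}(b) (one intermediate degree skipped, the $u_1^2$-gap coming from Lemma \ref{t0k}) are invoked exactly where they create the needed gap at degree $f$; once the residue pattern $0,1,0$ is in hand this is automatic.
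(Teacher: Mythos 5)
Your overall strategy---induction on $r$ with Lemma \ref{I-action} as the one-step input, organized around the ``allowed degree sets'' $\mathcal{F}_r$---is exactly the paper's (its entire proof is the phrase ``easy induction on $r$ using the previous lemma''), and your bookkeeping of degrees and of the weight residues $0,1,0$ is correct, as is your observation that Lemma \ref{I-action} applies to $S_2$ and not just $S_2^1$ (Lemma \ref{t0k} is stated for arbitrary $g\equiv 1\ \mathrm{mod}\ (S)$). There is, however, one place where your stated conclusion outruns what you literally prove: in the inductive step you only verify that the single gap degree $f$ is not hit, but $\mathcal{F}_{r+1}$ also excludes the degrees $f-3$ and $f-2$, which lie strictly below its bottom monomial $u_1^{f-1}u^k$; since the bottom of $\mathcal{F}_r$ sits at degree $f-4$ and ``$IS_2$ raises the $u_1$-degree by at least $1$'' only pushes the image up to degree $f-3$, the sentence ``therefore $IS_2\cdot\mathcal{F}_r\subseteq\mathcal{F}_{r+1}$'' does not follow from ``degree $f$ is missed'' alone. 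The repair uses only facts already in your analysis: the bottom monomial has weight $\equiv 0$, so by Lemma \ref{I-action}(c) its image lies in degrees $\ge f-1$ congruent to $f-1$ modulo $3$, and it is the only monomial of $\mathcal{F}_r$ in degree $<f-2$ (degree $f-3$ being the omitted one); hence degrees $f-3$ and $f-2$ are also avoided and the image of every generator of $\mathcal{F}_r$ does land in $\mathcal{F}_{r+1}$. With that one added observation the induction closes and the proof is complete.
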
  

\begin{proof} a) This follows by an easy induction 
on $r$ by using the previous lemma. 

b) If $k+2l=3m+2$, the previous lemmma shows that 
$(IS_2)^2$ sends $u_1^lu^{k}$ to $(\l u_1^{l+2}+\mu u_1^4)u^k$ 
modulo $(u_1^{l+5})$ for suitable $\l,\mu\in \F_9$.  
Now the result follows again by an easy induction on $r\geq 2$  
by using once more the previous lemma.   
\end{proof}

The following immediate corollary tells us that for the evaluation of the 
differential we should concentrate on the coefficients of 
$u_1^8$ and $u_1^{10}$ in the case of $u^{3k'+2}$ resp. of $u_1^{10}$ and 
$u_1^{12}$ in the case of $u^{3k'+1}$. It also gives 
us more flexibility for approximating $\theta$. 
  
\begin{cor}\label{8-12-approx}  
Let $k'$ be an integer and $\vartheta\in (3,I^4)$.  Then there exist 
$\a,\b,\g,\d\in \FF_9$ which only depend on $\vartheta$ modulo $(3,I^5)$   
such that we have the following congruences. 
\begin{itemize}
\item[a)] $\vartheta_*(u^{3k'+1})\equiv (\a u_1^{10}+\b u_1^{12})u^{3k'+1}
\ \mod \  (3,u_1^{13})$ for suitable $\a,\b\in \F_9$. 

\item[b)] $\vartheta_*(u^{3k'+2})\equiv (\g u_1^{8}+\d u_1^{10})u^{3k'+2}
\ \ \ \mod \ (3,u_1^{11})\ \ $ for a suitable $\g,\d\in \F_9$. \qed 
\end{itemize} 
\end{cor}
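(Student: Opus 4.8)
The plan is to read this off directly from Lemma \ref{ideal-action}, so the ``proof'' is really just bookkeeping of exponents. The first step is to dispose of the coefficient $3$: multiplication by $3$ is zero on $(E_2)_*/(3)$, so for the purpose of computing $\vartheta_*$ on $(E_2)_*/(3)$ an element $\vartheta\in(3,I^4)$ acts exactly as its image in the power ideal $I^4=(IS_2^1)^4$. Hence it suffices to control the image of $u^{3k'+1}$ and of $u^{3k'+2}$ under the full ideal $(IS_2^1)^4$.

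Next I would apply Lemma \ref{ideal-action} with $l=0$ and $r=4$ in each of the two cases. For $u^{3k'+1}$ we have $k+2l=3k'+1\equiv 1\pmod 3$, so part (a) of Lemma \ref{ideal-action} applies: it places $\vartheta_*(u^{3k'+1})$ in the $\FF_9$-span of $u_1^{\,l+3(r-1)+1}u^{3k'+1}=u_1^{10}u^{3k'+1}$ and $u_1^{\,l+3r}u^{3k'+1}=u_1^{12}u^{3k'+1}$ modulo $(u_1^{\,l+3r+1})=(u_1^{13})$, which gives the coefficients $\alpha,\beta$ of part (a). For $u^{3k'+2}$ we have $k+2l\equiv 2\pmod 3$ and $r=4\ge 2$, so part (b) of Lemma \ref{ideal-action} places $\vartheta_*(u^{3k'+2})$ in the $\FF_9$-span of $u_1^{\,l+3(r-2)+2}u^{3k'+2}=u_1^{8}u^{3k'+2}$ and $u_1^{\,l+3(r-2)+4}u^{3k'+2}=u_1^{10}u^{3k'+2}$ modulo $(u_1^{\,l+3(r-2)+5})=(u_1^{11})$, yielding $\gamma,\delta$.

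For the independence statement I would argue as follows: if $\vartheta'\equiv\vartheta\pmod{(3,I^5)}$ then $\vartheta'-\vartheta$ acts on $(E_2)_*/(3)$ as an element of $(IS_2^1)^5$, and applying Lemma \ref{ideal-action} once more with $r=5$, $l=0$ shows that $(IS_2^1)^5$ sends $u^{3k'+1}$ into the ideal $(u_1^{\,l+3(r-1)+1})=(u_1^{13})$ and sends $u^{3k'+2}$ into $(u_1^{\,l+3(r-2)+2})=(u_1^{11})$. Thus $(\vartheta'-\vartheta)_*(u^{3k'+1})\in(u_1^{13})$ and $(\vartheta'-\vartheta)_*(u^{3k'+2})\in(u_1^{11})$, so the coefficients $\alpha,\beta$ (read modulo $(u_1^{13})$) and $\gamma,\delta$ (read modulo $(u_1^{11})$) are unchanged.

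The only genuine point requiring care — and it is already absorbed into the formulation and inductive proof of Lemma \ref{ideal-action}, via Lemma \ref{I-action} — is that these statements govern the whole power ideal $(IS_2^1)^r$ and its action on $(E_2)_*/(3)$, not merely a single $r$-fold product of elements of $IS_2^1$; and that one may pass by continuity to the convergent (the $(u_1)$-adic filtration on $(E_2)_*/(3)$ being complete) infinite sums making up a general element of the completed ideal. Granting Lemma \ref{ideal-action}, this is the entire argument, and no further computation with $S_2^1$ or with the formal group $G_2$ is needed.
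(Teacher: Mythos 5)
Your argument is correct and is exactly the argument the paper intends: the corollary is stated there with no separate proof precisely because it is the bookkeeping you carry out, namely killing the coefficient $3$ on the $\F_3$-module $(E_2)_*/(3)$ and then applying Lemma \ref{ideal-action} with $l=0$ and $r=4$ (for the congruences) and $r=5$ (for the independence modulo $(3,I^5)$). The exponent arithmetic in both cases matches the statement, so there is nothing to add.
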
  

The next lemma gives a simplified approximation to $\theta$ and hence to $d_1$. 

\begin{prop}\label{d-theta'} Let 
$$
x'=(e-d)-(e-b)(e-c) \textrm{ and } \ y'=(e-b)(e-b)  
$$ 
and define 
$$
\widetilde{\theta}':=\frac{1}{16}\sum_{g \in SD_{16}}\chi(g^{-1})g
\Big(3c(a-c)(a-b)+(e-c)^2(a-c)(a-b)-x'(a-b)-y'(a-c)(a-b)\Big)\ . 
$$ 
Then the differential 
$d_1:E_1^{1,0}\to E_1^{2,0}$ satisfies 
$$
\begin{array}{lllll}
d_1(\omega^2u^{-4(2k+1)})&\equiv &(\widetilde{\theta}')_*(\omega^2u^{-4(2k+1)}) 
&\mod (u_1^{16})&-4(2k+1)=3k'+1  \\ 
d_1(\omega^2u^{-4(2k+1)})&\equiv &(\widetilde{\theta}')_*(\omega^2u^{-4(2k+1)}) 
&\mod (u_1^{12})&-4(2k+1)=3k'+2\  . \\
\end{array}
$$
\end{prop}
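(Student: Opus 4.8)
The plan is to start from the description of $d_1$ provided by Corollary \ref{corn2}: the homomorphism $E_1^{1,0}\to E_1^{2,0}$ is induced by $e_2\mapsto n_1'=\frac1{16}\sum_{g\in SD_{16}}\chi(g^{-1})g(n_1)$ with $n_1=\theta e_1$, where $\theta$ is the operator (\ref{theta}) attached to a \emph{fixed} pair $x,y\in IK$ with $e-c^3=x(e-b)+y(e-c)$. Identifying $v:=\omega^2u^{-4(2k+1)}$ with the $\Z_3[[\GG_2^1]]$-linear map $C_1\to(E_2)_*/(3)$, $e_1\mapsto v$, and evaluating on $n_1'$, one obtains $d_1(v)=P(\theta_*v)$, where $P=\frac1{16}\sum_{g\in SD_{16}}\chi(g^{-1})g_*$ is the idempotent of $(E_2)_*/(3)$ projecting onto its $\chi$-isotypic part; the same computation gives $(\widetilde\theta')_*v=P(\theta''_*v)$, where $\theta''$ is (\ref{theta}) with $x,y$ replaced by $x',y'$. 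Since every element of $SD_{16}$ rescales $u_1$ by a unit, $P$ preserves the $u_1$-adic filtration, so it suffices to show that $(\theta-\theta'')_*v$ lies in $u_1^{16}(E_2)_*/(3)$ when $-4(2k+1)\equiv 1\bmod 3$ and in $u_1^{12}(E_2)_*/(3)$ when $-4(2k+1)\equiv 2\bmod 3$; the hypothesis that $8k+4$ is prime to $3$ forces exactly one of these cases.

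The bulk of the argument is then the estimate $\theta-\theta''=-\big((x-x')(a-b)+(y-y')(a-c)(a-b)\big)\in IF_{5/2}S_2^1$ in the group-ring filtration. To get there I would interpolate through the $\widetilde x,\widetilde y$ of Lemma \ref{approx}. Expanding $\widetilde x-x'$ and $\widetilde y-y'$ by means of the commutator identities (\ref{eqcomm})--(\ref{eqcom}) and reading off leading terms from Lemma \ref{abcd} (so $b^{-1}d^{-1}-e$ and $b^{-1}c^{-1}-e$ have filtration $1/2$, $e-d$ has filtration $3/2$ and $e-c$ has filtration $1$) shows $\widetilde x-x'\in IF_2S_2^1$ and $\widetilde y-y'\in IF_{3/2}S_2^1$; combined with Lemma \ref{approx} this yields $w:=(e-c^3)-x'(e-b)-y'(e-c)=z+(\widetilde x-x')(e-b)+(\widetilde y-y')(e-c)\in IF_{5/2}S_2^1$. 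As $w\in IK$ and $IK$ is generated by $e-b$ (of filtration $1/2$) and $e-c$ (of filtration $1$) by Proposition \ref{propK}(c), the fixed valid pair can be taken so that $x=x'+\delta x$ and $y=y'+\delta y$ with $\delta x\in IF_2S_2^1\cap IK$, $\delta y\in IF_{3/2}S_2^1\cap IK$ and $\delta x(e-b)+\delta y(e-c)=w$. Multiplying by $a-b\in IF_{1/2}S_2^1$ and by $(a-c)(a-b)\in IF_1S_2^1$ and using multiplicativity of the filtration then gives $\theta-\theta''=-\big(\delta x(a-b)+\delta y(a-c)(a-b)\big)\in IF_{5/2}S_2^1\subseteq IF_2S_2^1$.

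Finally, Lemma \ref{ideals}(a) with $r=2$ gives $IF_2S_2^1\subseteq (IS_2^1)^6+3\,IS_2^1$. Over $(E_2)_*/(3)$ the summand $3\,IS_2^1$ acts by zero, while Lemma \ref{ideal-action} applied with $r=6$ and $l=0$ to $v=\omega^2u^{-4(2k+1)}$ (whose $u$-exponent is $\equiv 1$ or $\equiv 2$ mod $3$ by hypothesis) shows that $(IS_2^1)^6$ carries $v$ into $u_1^{16}(E_2)_*/(3)$ in the first case and into $u_1^{14}(E_2)_*/(3)\subseteq u_1^{12}(E_2)_*/(3)$ in the second. Hence $(\theta-\theta'')_*v$ lies in the required power of $u_1$ and the proposition follows.

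I expect the real work, and the main obstacle, to be in the second paragraph. One part is purely computational: identifying the leading terms of $\widetilde x-x'$ and $\widetilde y-y'$ in the associated graded of $\Z_3[[S_2^1]]$, which is where Lemmas \ref{lemcom} and \ref{abcd} are genuinely used. The subtler point is that the error term $z$ of Lemma \ref{approx} has no available closed-form splitting along $e-b$, $e-c$; one must argue that it can nonetheless be split \emph{within} $IK$ with the two factors in the stated filtrations. I would handle this by a filtration analysis of the submodule $\Z_3[[K]]l_1+\Z_3[[K]]l_2\subseteq C_1$ (notation of Lemma \ref{lemn2}) and of the $\Z_3[[K]]$-module $N_0$ (Proposition \ref{propN1}), using that $w$ has $S_2^1$-filtration at least $5/2$ while $e-b$ and $e-c$ have filtrations $1/2$ and $1$.
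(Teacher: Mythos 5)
Your overall architecture (reduce to comparing $\theta$ with the operator $\theta''$ built from $x',y'$, control the error $z$ via Lemma \ref{ideals}, finish with Lemma \ref{ideal-action}) is the paper's, but the reduction in your first paragraph gives away the decisive ingredient. By asking for $(\theta-\theta'')_*v\in (u_1^{16})$ resp.\ $(u_1^{12})$ \emph{before} applying the averaging operator $P$, you commit yourself to a strictly stronger statement than the proposition, and one your estimates cannot reach. The paper only establishes the unaveraged congruence modulo $(u_1^{13})$ resp.\ $(u_1^{11})$ --- which is the best the $I$-adic bounds allow, since $\theta-\theta''\in(3,I^5)$ but not $(3,I^6)$, and Lemma \ref{ideal-action} with $r=5$ leaves room for nonzero coefficients at $u_1^{13},u_1^{15}$ (resp.\ $u_1^{11},u_1^{13}$) --- and then invokes Lemma \ref{averaging}: after averaging only the terms $u_1^lu^k$ with $k+2l\equiv 4\bmod 8$, i.e.\ $l\equiv 0\bmod 4$ for the relevant $k$, survive, and this sparseness is exactly what upgrades $(u_1^{13})$ to $(u_1^{16})$ and $(u_1^{11})$ to $(u_1^{12})$. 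That step (together with Corollary \ref{8-12-approx}, which is what makes ``agreement mod $(3,I^5)$'' translate into agreement of the leading coefficients) cannot be bypassed.

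The attempt to compensate by a finer filtration estimate in your second paragraph does not work, because the containments are asserted for the wrong filtration. The ideals $IF_rS_2^1$ are generated by the augmentation ideals of the normal subgroups $F_rS_2^1$, i.e.\ they are the kernels of $\Z_3[[S_2^1]]\to\Z_3[[S_2^1/F_rS_2^1]]$; this family is \emph{not} multiplicative in the sense $IF_iS_2^1\cdot IF_jS_2^1\subseteq IF_{i+j}S_2^1$ (already $(e-b)^2\notin IF_1S_2^1$, as its image in $\Z_3[[S_2^1/F_1S_2^1]]$ is $(e-\bar b)^2\neq 0$), and the specific memberships fail: the summand $(b^{-1}d^{-1}-e)(e-d)$ of $\widetilde{x}-x'$ has nonzero image in $\Z_3[[S_2^1/F_2S_2^1]]$ because $d\notin F_2S_2^1$, so $\widetilde{x}-x'\notin IF_2S_2^1$, and likewise $w\notin IF_{5/2}S_2^1$. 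What the leading-term analysis via Lemmas \ref{lemcom} and \ref{abcd} actually yields is $\widetilde{x}-x'\in I^4+3I^2$ and $\widetilde{y}-y'\in I^3$, hence $\theta-\theta''\in I^5+3I$ after the $z$-corrections; this feeds Lemma \ref{ideal-action} with $r=5$, not $r=6$, and lands you exactly at the intermediate precision $(u_1^{13})$ resp.\ $(u_1^{11})$. To close the gap you must keep $P$ in play and add the Corollary \ref{8-12-approx} / Lemma \ref{averaging} step, at which point your argument coincides with the paper's.
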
 

\begin{proof} First we note that Lemma \ref{ideals} shows 
that the element $z$ of Lemma \ref{approx} belongs to 
$(I^8(e-b)+I^7(e-c))+I.(I^8(e-b)+I^4.(e-c))$ 
and therefore does not contribute to the calculation of $d_1$ 
modulo the specified precision. Furthermore $\widetilde{\theta}'$ 
belongs to $(3,I^4)$. Now the last corollary shows that we have 
equality modulo $(u_1^{13})$ resp. $(u_1^{11})$ if we replace $\widetilde{x}$ 
and $\widetilde{y}$ from Lemma \ref{approx} by $x'$ and $y'$, 
and then the following lemma implies equality even modulo 
$(u_1^{16})$ resp. $(u_1^{12})$.     
\end{proof} 

\begin{lem}\label{averaging} Let $k$ and $l\geq 0$ be 
integers and $\l\in \F_9$. Then 
$$
\frac{1}{16}\sum_{g \in SD_{16}}\chi(g^{-1})g(\l u_1^lu^k)=
\left\{\begin{array}{cl}
\frac{1}{2}(\l-\l^3)u_1^lu^k&k+2l\equiv 4 \mod  (8) \\ 
0&\textrm{else} \ . \end{array}\right.
$$
\end{lem}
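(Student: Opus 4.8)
The plan is to compute the averaging operator directly, using the explicit structure of $SD_{16}$ and the formulae (\ref{SD16-action}) and (\ref{Frobenius-action}) for the action on $(E_2)_*/(3)$.

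First I would record the group-theoretic data. Recall $SD_{16}$ is generated by $\omega$ of order $8$ and $\phi$ of order $2$ with $\phi\omega\phi^{-1}=\omega^3$, so its sixteen elements are $\omega^i$ and $\omega^i\phi$ for $0\le i\le 7$; the character $\chi$, determined by $\chi(\omega)=\chi(\phi)=-1$, then takes the values $\chi(\omega^{-i})=(-1)^i$ and $\chi((\omega^i\phi)^{-1})=\chi(\phi\omega^{-i})=(-1)^{i+1}$. On the action side, (\ref{SD16-action}) says $\omega_*$ is $\W$-linear with $\omega_*u_1=\omega^2u_1$ and $\omega_*u=\omega u$, while (\ref{Frobenius-action}) says $\phi_*$ is $\sigma$-semilinear over $\W$ (hence acts on the coefficient field $\F_9$ by $\lambda\mapsto\lambda^3$ mod $3$) and fixes $u_1$ and $u$. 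Writing $N:=k+2l$, this gives $\omega^i_*(\lambda u_1^lu^k)=\lambda\,\omega^{iN}u_1^lu^k$ and, composing, $(\omega^i\phi)_*(\lambda u_1^lu^k)=\lambda^3\,\omega^{iN}u_1^lu^k$.

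Next I would assemble the sum. Splitting the sum over $SD_{16}$ into the two cosets $\{\omega^i\}$ and $\{\omega^i\phi\}$ gives
$$
\sum_{g\in SD_{16}}\chi(g^{-1})g_*(\lambda u_1^lu^k)
=(\lambda-\lambda^3)\,u_1^lu^k\sum_{i=0}^{7}(-1)^i\omega^{iN}\ ,
$$
where the overall sign $-1$ in front of $\lambda^3$ is the extra factor $\chi(\phi)=-1$ attached to the $\omega^i\phi$-coset. The key elementary point is that $-1=\omega^4$ in $\F_9$, so $\sum_{i=0}^7(-1)^i\omega^{iN}=\sum_{i=0}^7\omega^{(N+4)i}$, which equals $8$ when $N\equiv 4\pmod{8}$ and vanishes otherwise, being a geometric series in a power of a primitive eighth root of unity. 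Multiplying by $\tfrac1{16}$ and using that $16\equiv 1$ and $8\equiv 2$ modulo $3$ (so that $8/16=1/2$ in $\F_3$) yields exactly the claimed formula.

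There is no serious obstacle here; the only points requiring care are keeping track of the character values on the two cosets -- in particular that $\chi((\omega^i\phi)^{-1})$ carries the extra sign responsible for the combination $\lambda-\lambda^3$ -- and remembering that $\phi_*$ is semilinear, so that it is precisely this semilinearity which converts the $\phi$-coset contribution from $\lambda$ to $\lambda^3$.
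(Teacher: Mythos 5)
Your proof is correct and follows essentially the same route as the paper: split the sum over the two cosets $\{\omega^j\}$ and $\{\omega^j\phi\}$, use the semilinearity of $\phi_*$ to produce the combination $\lambda-\lambda^3$, and reduce to the vanishing or non-vanishing of $\sum_{j=0}^7(-1)^j\omega^{j(k+2l)}$ according to whether $k+2l\equiv 4 \bmod (8)$. The only difference is that you spell out the geometric-series argument (via $-1=\omega^4$) that the paper leaves implicit.
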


\begin{proof} By (\ref{SD16-action}) and 
(\ref{Frobenius-action}) we have 
$$
\begin{array}{lll}
\sum_{g \in SD_{16}}\chi(g^{-1})g(\l u_1^lu^k)
&=&\sum_{j=0}^7(-1)^j(\omega^j)_*(\l u_1^lu^k)+
\sum_{j=0}^7(-1)^{j+1}(\omega^j\phi)_*(\l u_1^lu^k) \\
&=&\big(\sum_{j=0}^7(-1)^j\omega^{j(k+2l)}\big)\l u_1^lu^k-
\big(\sum_{j=0}^7(-1)^{j}\omega^{j(k+2l)}\big)\l^3u_1^lu^k \ .
\end{array}
$$
Furthermore 
$\sum_{j=0}^7(-1)^j\omega^{j(k+2l)}=0$ unless 
$k+2l\equiv 4\ \text{mod}\ (8)$ in which case it is equal to $8$. 
The result follows. 
\end{proof}

The previous result tells us how to get the ``leading term" 
in the differential once we know the coefficients  
$\a$, $\b$, $\g$ and $\d$ of Corollary  \ref{8-12-approx} in the case of 
$\vartheta=\widetilde{\theta}:=\sum_{i=1}^4\widetilde{\theta}_i$ with  
$$
\begin{array}{l} 
\widetilde{\theta}_1:=3c(a-c)(a-b)+(e-c)^2(a-c)(a-b)\\
\widetilde{\theta}_2:=-(e-d)(a-b)\\
\widetilde{\theta}_3:=(e-b)(e-c)(a-b)\\
\widetilde{\theta}_4:=-(e-b)(e-b)(a-c)(a-b)  \ ,\\
\end{array}
$$ 
and in fact $\a$ and $\d$ will not even matter. 
The coefficients $\b$ and $\g$ of Corollary \ref{8-12-approx} 
for the action of each $\widetilde{\theta}_i$ are given in the 
following result.

\begin{lem} \mbox{ } Let $k$ be an integer not divisible by $3$. 
For $k=3k'+1$ there are elements $\a_{i,k},\b_{i,k}\in\F_9$, $i=1,2,3,4$, 
and for $k=3k'+2$ there are elements $\g_{i,k},\d_{i,k}\in\F_9$, 
$i=1,2,3,4$, such that we have the following congruences 
$$
(\widetilde{\theta}_i)_*u^k \equiv \left\{ 
\begin{array}{lll} 
(\a_{i,k}u_1^{10}+\b_{i,k}u_1^{12})u^k &\mod (3,u_1^{13})  &k=3k'+1  \\
(\g_{i,k}u_1^8+\d_{i,k}u_1^{10})u^k &\mod (3,u_1^{11})   &k=3k'+2  \ .\\
\end{array} \right.
$$ 
Furthermore we have 
$$
\begin{array}{llrcllrc} 
\b_{1,k}&=&0 & &\g_{1,k}&=&0&\\
\b_{2,k}&=&-(1+\omega^2) & &\g_{2,k}&=&-(1+\omega^2)&\\
\b_{3,k}&=&(k'-1)\omega^2 & &\g_{3,k}&=&(k'+1)\omega^2&\\
\b_{4,k}&=&-\big(k'^2+k'-1+(k'+1)\omega^2\big) & &
\g_{4,k}&=&1+k'-k'^2-k'\omega^2&\ .\\
\end{array} 
$$
\end{lem}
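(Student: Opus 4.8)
The goal is to compute, for each $i=1,2,3,4$, the coefficients of $u_1^{12}u^k$ (for $k=3k'+1$) and of $u_1^8u^k$ (for $k=3k'+2$) in $(\widetilde\theta_i)_*u^k$, since by Corollary \ref{8-12-approx} these are the only terms that survive modulo the relevant powers of $u_1$. The overall strategy is to push the group-ring element $\widetilde\theta_i$ through $u^k$ using only the approximate formulae already established: Corollary \ref{u1-action} (for the action of $a,b,c,d$ on $u_1$ and $u$), Lemma \ref{t0k} (for powers $t_0(g)^k$ modulo $(3,u_1^6)$), and Lemma \ref{I-action} and Lemma \ref{ideal-action} (for the qualitative effect of iterated applications of $IS_2^1$, which tell us which powers of $u_1$ can appear after applying a product of $r$ factors $(g-e)$). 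The key bookkeeping point is that each $\widetilde\theta_i$ lies in $(3,I^4)$, so by Lemma \ref{ideal-action} its action on $u^k$ lands in the predicted range of $u_1$-powers; we only need to track the leading two coefficients in each case.

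The computation proceeds factor by factor. First I would handle $\widetilde\theta_1=3c(a-c)(a-b)+(e-c)^2(a-c)(a-b)$: the term $3c(\cdots)$ is divisible by $3$ and hence zero mod $(3)$, while $(e-c)^2(a-c)(a-b)$ — a product in $I^2\cdot I\cdot I = I^4$, but with the two $(e-c)$ factors — can be analyzed using $c_*u\equiv(1+\omega^2u_1^4+\cdots)u \bmod (u_1^{10})$, which shows $(e-c)$ raises the $u_1$-filtration by at least $4$; combined with $(a-c)(a-b)\in I^2$, the product lands too high to contribute to $\b_{1,k}$ or $\g_{1,k}$, giving $\b_{1,k}=\g_{1,k}=0$. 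For $\widetilde\theta_2=-(e-d)(a-b)$, I would use $d_*u\equiv(1-\omega^2u_1^7)u\bmod(u_1^{10})$ and $d_*u_1\equiv u_1+\omega^2u_1^8\bmod(u_1^{11})$ together with $(a-b)_*u^k = (t_0(a)^k - t_0(b)^k)u^k$ computed via Lemma \ref{t0k}; the leading behaviour of $(a-b)$ on $u^k$ contributes a factor $u_1$ (from the $g_1$ term, with $g_1=\omega$ for $a$ and $g_1=-1$ for $b$, so the difference involves $(1+\omega^2)u_1$ after the change of basis), and then $(e-d)$ multiplies by $\omega^2u_1^7$; multiplying out yields $\b_{2,k}=\g_{2,k}=-(1+\omega^2)$. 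For $\widetilde\theta_3=(e-b)(e-c)(a-b)$ and $\widetilde\theta_4=-(e-b)(e-b)(a-c)(a-b)$ one proceeds the same way, carefully composing the actions: here the dependence on $k'$ enters through Lemma \ref{t0k}, which gives $t_0(g)^k$ with a term $(k'-1)g_1u_1^3$ or $(k'+1)g_1u_1^3$ depending on $k\equiv 1$ or $2\bmod 3$, and through the binomial coefficients; this is where the asymmetry between the $k=3k'+1$ and $k=3k'+2$ cases (e.g. $\b_{3,k}=(k'-1)\omega^2$ versus $\g_{3,k}=(k'+1)\omega^2$) comes from.

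The main obstacle is purely computational bookkeeping in the $\widetilde\theta_4$ case: it is a product of four factors from $I$, so naively it lands in $I^4$ and one must track how the action on $u^k$ distributes the $u_1$-filtration jumps, and the $k'$-dependence is quadratic, requiring careful use of $\binom{k}{j}\equiv\prod_i\binom{k_i}{j_i}\bmod 3$ as in Lemma \ref{t0k}. The change of $\F_9$-basis from $\{1,\omega\}$ to $\{1,\omega^2\}$ (using $\omega^2+\omega-1\equiv 0$, i.e. $\omega\equiv 1-\omega^2$) must be applied consistently, and signs must be tracked through each commutator-derived relation. I would organize the calculation by first reducing each $\widetilde\theta_i$ modulo $(3,I^5)$ to a manageable $\Z_3$-linear combination of monomials $g$ in $\{a,b,c,d\}$ (using Lemma \ref{ideal-action} to justify discarding higher terms), then applying $g_*$ to $u^k$ via (\ref{u-action}) and Lemma \ref{t0k}, and finally collecting the coefficients of $u_1^{12}u^k$ resp. $u_1^8u^k$. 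Once these four coefficients are in hand, Lemma \ref{averaging} applied to $\widetilde\theta=\sum_i\widetilde\theta_i$ extracts the leading term of the differential, completing the proof of Proposition \ref{d-theta'} and hence of Proposition \ref{d_1b_k}.
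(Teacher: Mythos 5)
Your overall strategy is the one the paper itself follows (and, like the paper, leaves the bulk of the work as a long but routine expansion): existence of the coefficients comes from Corollary \ref{8-12-approx} since each $\widetilde{\theta}_i\in(3,I^4)$; the $i=1$ term is killed by ideal-power considerations; and the remaining coefficients are extracted by composing the actions of $a,b,c,d$ on $u_1^lu^k$ via Lemma \ref{t0k} and Corollary \ref{u1-action}, with Lemma \ref{I-action} guaranteeing that knowing the action of $I$ on $u_1^lu^k$ modulo $(u_1^{l+6})$ suffices. So the route is the same; the issue is one specific step.

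The gap is in your argument that $\b_{1,k}=0$. Your filtration count for $(e-c)^2(a-c)(a-b)$ --- each $(e-c)$ raises the $u_1$-filtration by at least $4$, and $(a-c)(a-b)\in I^2$ raises it by at least $2$ --- only yields that the result lies in $(u_1^{10})$. That disposes of $\g_{1,k}$ (the coefficient of $u_1^{8}$ when $k=3k'+2$), but it does \emph{not} dispose of $\b_{1,k}$, which is the coefficient of $u_1^{12}$ when $k=3k'+1$. To close this you need the sharper statement: by Lemma \ref{ideals} one has $(e-c)\in I^2$, hence $\widetilde{\theta}_1\in I^6+3I$, and Lemma \ref{ideal-action}.a with $r=6$ says that $I^6$ sends $u^{3k'+1}$ into $\F_9u_1^{16}+\F_9u_1^{18}+(u_1^{19})$, so no $u_1^{12}$ term can occur. (A related caution for $\widetilde{\theta}_2=-(e-d)(a-b)$: your leading-term heuristic $u_1\cdot u_1^{7}=u_1^{8}$ would, for $k=3k'+1$, produce a term \emph{below} the range $(\a u_1^{10}+\b u_1^{12})$ claimed in the lemma; it is saved only because the relevant coefficient is $(k+2)\omega^2\equiv 0\bmod 3$, and the actual $u_1^{12}$ contribution comes from the $u_1^{5}$-term of $(a-b)_*u^k$ paired with the $u_1^{7}$ from $(e-d)$. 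So the residue of $k+2l$ modulo $3$ must be tracked at every stage, not just the nominal filtration jumps.) With these corrections the computation does reproduce the stated values, and the final averaging step via Lemma \ref{averaging} is exactly as in the paper.
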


\begin{proof} The first part of the lemma is clear by 
Corollary \ref{8-12-approx}. Furthermore, the case of $\b_{i,k}$ and 
$\g_{i,k}$ follows immediately from Lemma \ref{ideal-action} 
because by Lemma \ref{ideals} we have $\widetilde{\theta}_1\in I^6+3I$. 
The actual evaluation of the other $\b_{i,k}$ and $\g_{i,k}$  
uses Lemma \ref{t0k} and is a lengthy but straightforward calculation 
whose verification we leave to the reader.  
Here we just note that Lemma \ref{I-action} guarantees that for 
this calculation it is enough to know the action of $I$ on $u_1^lu^k$ 
modulo $(u_1^{l+6})$.
\end{proof}

\noindent {\it Proof of Proposition \ref{d_1b_k}.}  
This is an immediate consequence of the previous lemma, of 
Proposition \ref{d-theta'} and Lemma \ref{averaging}.  \qed

\bigbreak

\section{Higher differentials and extensions in the 
algebraic spectral sequence}\label{higher+extensions}

In this section we will give a proof of Proposition \ref{b_0} 
and we will determine the extensions and higher differentials in the 
algebraic spectral sequence (\ref{algss}) in the case of $M_*=(E_2)_*/(3)$.  
This spectral sequence allows for non-trivial $d_2$- and $d_3$-differentials. 
Their evaluation will be reduced to studying the long exact sequence 
in $\Ext_{\Z_3[[\GG_2^1]]}(-,M_*)$ associated to the short exact sequence 
(\ref{sec1}). In fact, we have the following more general lemma. 

\begin{lem}\label{higher-differentials} 
Let $R$ be a ring and $n>0$ be an integer and 
let $M$ be a left $R$-module. Suppose that  
$$
0 \longrightarrow  C_{n+1} 
\stackrel{{\partial}_{n+1}}\longrightarrow C_n
\stackrel{{\partial}_n}\longrightarrow
\ldots \stackrel{{\partial}_1}\longrightarrow C_0 
\stackrel{{\partial}_0}\longrightarrow L\longrightarrow 0   
$$ 
is an exact complex of left $R$-modules such that $C_i$ is 
projective for each 
$0<i<n+1$.   
\begin{itemize}
\item[a)] Then there is a first quadrant cohomological 
spectral sequence $E_r^{*,*}$, $r\geq 1$, converging to $Ext_R^*(L,M)$
with  
$$
E_1^{s,t}=\Ext_R^t(C_s,M) \Rightarrow \Ext_R^{s+t}(L,M)
$$ 
in which $E_1^{s,t}=0$ for $0<s<n+1$ and $t>0$, and $E_1^{s,t}=0$ for 
$t\geq 0$ and $s>n+1$. 
\item[b)] The higher differentials in this spectral sequence can be 
described as follows. Let $N_0$ be the kernel of $\partial_0$ and 
let $j:N_0\to C_0$ denote the resulting inclusion. 
Then there are isomorphisms which are natural in $M$ 
$$
\Ext^t_R(N_0,M) \cong  \left\{ 
\begin{array}{lr} 
\Ker: E_1^{1,0}\stackrel{d_1^{1,0}}\longrightarrow E_1^{2,0} & t=0 \\
E_2^{t+1,0}=E_{t+1}^{t+1,0}        &1\leq t\leq n  \\
E_{n+1}^{n+1,t-n}    &t>n      \\ 
\end{array} \right.
$$ 
such that the homomorphism $\Ext^{t}_R(C_0,M)\to \Ext^{t}_R(N_0,M)$ 
induced by $j$ identifies with 
$d_{t+1}^{0,t}:E_t^{0,t}\to E_{t}^{t+1,0}$ if $1\leq t\leq n$ and with 
$d_{n+1}^{0,t}:E_{n+1}^{0,t}\to E_{n+1}^{n+1,t-n}$ if $t>n$. 
(Note that by (a) these are the only potentially non-trivial differentials 
in this spectral sequence.) 
\end{itemize}
\end{lem}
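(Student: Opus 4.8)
The plan is to set up the hyperhomology spectral sequence of the truncated complex by brute force, extracting it from the given exact complex by chopping it into short exact sequences, and then to read off the differentials from the connecting maps.

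First I would prove part (a). Since $\partial_0:C_0\to L$ is surjective with kernel $N_0$, and the remaining part $0\to C_{n+1}\to\cdots\to C_1\to N_0\to 0$ is an exact complex of $R$-modules with $C_1,\dots,C_{n+1}$ projective, this is a (partial) projective resolution of $N_0$; since $C_{n+1}$ need not be projective it is exact but only $C_1,\dots,C_n$ are projective (we do not need $C_{n+1}$ projective for the spectral sequence, only for identifying certain pieces). I would then apply the standard construction of the hyperhomology spectral sequence associated to a filtration of this complex, or more concretely: filter the complex $C_\bullet$ (in degrees $0,\dots,n+1$) by brutal truncation and take $\Ext_R(-,M)$. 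The $E_1$-page is $E_1^{s,t}=\Ext_R^t(C_s,M)$ and the edge-homomorphism/convergence statement $\Rightarrow\Ext_R^{s+t}(L,M)$ follows because $L$ is quasi-isomorphic (as an object of the derived category) to the complex $C_\bullet$ concentrated in degrees $0,\dots,n+1$. The vanishing statements are immediate: $\Ext_R^t(C_s,M)=0$ for $t>0$ when $C_s$ is projective, i.e.\ for $0<s<n+1$, and $E_1^{s,t}=0$ for $s>n+1$ because there are no modules $C_s$ in those degrees.

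For part (b) the key device is the long exact sequence in $\Ext_R(-,M)$ attached to $0\to N_0\to C_0\to L\to 0$, which reads
$$
\cdots\to\Ext^{t}_R(L,M)\to\Ext^t_R(C_0,M)\xrightarrow{j^*}\Ext^t_R(N_0,M)\xrightarrow{\delta}\Ext^{t+1}_R(L,M)\to\cdots\ .
$$
On the other hand, the truncated complex $C_{n+1}\to\cdots\to C_1$ computes $\Ext_R^*(N_0,M)$ through dimension $n$ via a separate spectral sequence (the brutal-truncation one for this shorter complex), which for $0<t\le n$ collapses to give $\Ext^t_R(N_0,M)\cong E_2^{t+1,0}$, where $E_2^{t+1,0}$ is exactly the cohomology of $\Ext^0_R(C_{t}, M)\to\Ext^0_R(C_{t+1},M)\to\Ext^0_R(C_{t+2},M)$ shifted by one (using that the $E_1$-page is concentrated on the row $t=0$ in the range $0<s<n+1$, so $d_2=d_3=\cdots$ all land here and $E_2^{s,0}=E_{s}^{s,0}=E_\infty^{s,0}$ for $s\le n$ up to the one remaining differential from the $0$-column). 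For $t>n$ the short complex no longer suffices and one gets $\Ext^t_R(N_0,M)\cong E_{n+1}^{n+1,t-n}$ via the same bookkeeping, now with $C_{n+1}$ supplying the last term. The naturality in $M$ of all these identifications is automatic since every construction is functorial. The final assertion — that $j^*:\Ext^t_R(C_0,M)\to\Ext^t_R(N_0,M)$ is, under these identifications, the differential $d_{t+1}^{0,t}$ — is then a matter of unwinding the definition of the differentials in the brutal-truncation spectral sequence: a class in $E_1^{0,t}=\Ext^t_R(C_0,M)$ survives to $E_{t+1}$ iff it pulls back through $\partial_0$, and its $d_{t+1}$ is computed by exactly the zig-zag $C_0\leftarrow C_1\leftarrow\cdots\leftarrow C_{t+1}$, which is precisely the cocycle representing $j^*$ of that class viewed in $\Ext^t_R(N_0,M)\cong E_2^{t+1,0}$.

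The main obstacle I expect is the careful bookkeeping in identifying $\Ext^t_R(N_0,M)$ with the correct entry $E_2^{t+1,0}$ (resp.\ $E_{n+1}^{n+1,t-n}$) and checking that the connecting map $j^*$ matches $d_{t+1}$ \emph{on the nose}, including getting the indexing shift right and verifying that no other differentials can interfere (which is where the vanishing in part (a) is used: since every column $0<s<n+1$ is concentrated in the row $t=0$, the only differentials hitting $E^{s,0}$ come from the $s=0$ column, so $E_2^{s,0}=E_s^{s,0}$ and the first possibly-nonzero differential out of it is $d_{s}^{?}$ — one must track which). I would handle this by comparing two spectral sequences — the one for $C_\bullet\to L$ and the one for the truncation computing $\Ext_R^*(N_0,M)$ — via the evident map of filtered complexes induced by $j$, which forces the differentials to agree; this avoids an explicit cocycle chase. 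Everything else (existence of the spectral sequence, convergence, vanishing) is standard homological algebra.
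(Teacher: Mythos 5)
Your proposal is correct, and the spectral sequence you construct is the same one the paper uses; the difference is only in packaging. The paper obtains it as the exact couple of the spliced short exact sequences $0\to N_i\to C_i\to N_{i-1}\to 0$ (with $N_{-1}=L$), whereas you obtain it as the hypercohomology spectral sequence of the brutally truncated complex; these coincide, and your identification of $\Ext^t_R(N_0,M)$ with the stated entries via the spliced projective resolution of $N_0$ is exactly the paper's. The one place where the packaging matters is the final identification of $j^*$ with $d_{t+1}^{0,t}$: in the exact-couple formulation this is essentially definitional, since $d_r=j\circ i^{-(r-1)}\circ k$ where $k=j^*$ is induced by $N_0\hookrightarrow C_0$ and the maps $i^{-1}$ are precisely the isomorphisms identifying $\Ext^t_R(N_0,M)$ with $E_2^{t+1,0}$. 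In your formulation the analogous step requires the zig-zag you sketch (or the two-column double complex of the paper's subsequent remark); be aware that your fallback of "comparing the two filtrations" only shows the two maps have the same kernel and image, which does not determine the map, so the cocycle-level identification cannot be entirely avoided. Since the paper itself leaves this last verification as an "inspection of the higher differentials in an exact couple", your level of detail is comparable, and no genuine gap results.
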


\begin{proof}
a) The spectral sequence can be obtained as the spectral sequence 
of an exact couple. In fact, if $N_i$ is the kernel of $\partial_i$ 
then we have short exact sequences $0\to N_i\to C_i\to N_{i-1}\to 0$ for 
$0\leq i\leq n$ (with $N_{-1}:\ =L$) and the long exact sequences 
in $\Ext^*_R(-,M)$ combine to give an exact couple from which 
the spectral sequence is derived. Projectivity of the modules $C_i$ 
for $1\leq i\leq n$ gives the vanishing results. 

b) For the first statement we note that $N_0$ admits a projective 
resolution $Q_{\bullet}$ which is obtained from 
splicing the exact complex 
$$
0 \longrightarrow  C_{n+1} 
\stackrel{{\partial}_{n+1}}\longrightarrow C_n 
\stackrel{{\partial}_n}\longrightarrow
\ldots \stackrel{{\partial}_2}\longrightarrow C_1 
\stackrel{{\partial}_1}\longrightarrow N_0\longrightarrow 0   
$$ 
with a projective resolution of $C_{n+1}$. The second statement 
is easily seen by inspection of the higher differentials 
in an exact couple. We leave the details to the reader. 
\end{proof}

\remarkstyle{Remark} The higher differentials can therefore 
be evaluated if we know projective resolutions $Q_{\bullet}$ of $N_0$ 
and $P_{\bullet}$ of $C_0$ as well as a chain map 
$\phi:Q_{\bullet}\to P_{\bullet}$ covering $j$. These data can 
also be assembled in a double complex $T_{\bullet \bullet}$ 
with $T_{\bullet 0}=P_{\bullet}$, 
$T_{\bullet 1}=Q_{\bullet}$, vertical differentials $\delta_P$ and $\delta_Q$ 
and ``horizontal differentials" $(-1)^n\phi_n:Q_n\to P_n$. 
The lemma implies that the filtration of the spectral sequence of this 
double complex agrees (up to reindexing) with that of the spectral sequence 
of the lemma. Hence extension problems in the spectral sequence of the lemma 
can also be studied by using the double complex.  

We apply this lemma and the remark to the algebraic spectral sequence 
associated to the case of the exact complex (\ref{alg}).  
We will make use of explicit projective resolutions $Q_{\bullet}$ 
of $N_0$ and $P_{\bullet}$ of $C_0$ and a suitable chain map 
$\phi:Q_{\bullet}\to P_{\bullet}$ covering $j$. The essential step is given 
in the following elementary result whose proof is left to the reader.

\begin{lem}\label{resol-G24} 
Let $\overline{\chi}$ be the $\Z_3[Q_8]$-module 
whose underlying $\Z_3$-module is $\Z_3$ and on which $t$ 
acts by multiplication by $-1$ and $\psi$ by the identity. 
Then the trivial $\Z_3[G_{24}]$-module $\Z_3$ admits a 
projective resolution $P_{\bullet}'$ of period $4$ 
of the following form  
$$
\stackrel{a^2-a}{\longrightarrow} 
1\uparrow^{G_{24}}_{Q_8}\stackrel{e+a+a^2}{\longrightarrow} 
1\uparrow^{G_{24}}_{Q_8} \stackrel{a^2-a}{\longrightarrow} \overline{\chi} 
\uparrow^{G_{24}}_{Q_8} \stackrel{e+a+a^2}{\longrightarrow} 
\overline{\chi} \uparrow^{G_{24}}_{Q_8} 
\stackrel{a^2-a}{\longrightarrow} 
1\uparrow^{G_{24}}_{Q_8} {\longrightarrow} \mathbb{Z}_3 \ .\qed 
$$ 
\end{lem}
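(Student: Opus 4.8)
The plan is to obtain the claimed complex by inducing up to $G_{24}$ the classical $2$-periodic free resolution of the trivial module $\Z_3$ over $\Z_3[\langle a\rangle]$, where $\langle a\rangle\cong\Z/3$ is the normal Sylow $3$-subgroup of $G_{24}$, and to keep careful track of the twist introduced by the conjugation action of $Q_8$. Write $G_{24}=\langle a\rangle\rtimes Q_8$ with $Q_8=\langle t,\psi\rangle$; a direct computation with the formulas of Section \ref{subgroups} gives $tat^{-1}=a^{-1}$ and $\psi a\psi^{-1}=a$, so the conjugation action of $Q_8$ on $\langle a\rangle$ has kernel $\langle\psi\rangle$. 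First I would record the identifications of the two induced modules occurring in the statement. Since $[G_{24}:Q_8]=3$, the module $1\uparrow^{G_{24}}_{Q_8}=\Z_3[G_{24}]\otimes_{\Z_3[Q_8]}\Z_3$ is $\Z_3$-free of rank $3$; writing $\Z_3[G_{24}]\cong\Z_3[\langle a\rangle]\otimes_{\Z_3}\Z_3[Q_8]$ as a $(\Z_3[\langle a\rangle],\Z_3[Q_8])$-bimodule and choosing $e,a,a^2$ as coset representatives, one identifies $1\uparrow^{G_{24}}_{Q_8}$ with $\Z_3[\langle a\rangle]$ carrying its left $\langle a\rangle$-multiplication together with the $Q_8$-action by conjugation ($t$ acting by the ring automorphism $a\mapsto a^{-1}$, $\psi$ acting trivially). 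In the same way $\overline{\chi}\uparrow^{G_{24}}_{Q_8}$ is identified with $\Z_3[\langle a\rangle]$ with the same $\langle a\rangle$-action but with the $Q_8$-action twisted by $\overline{\chi}$, that is, $t$ acting by $x\mapsto-(a\mapsto a^{-1})(x)$ and $\psi$ acting trivially.

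Next I would write down the complex with terms $1\uparrow^{G_{24}}_{Q_8}$ in degrees $\equiv 0,3\pmod 4$ and $\overline{\chi}\uparrow^{G_{24}}_{Q_8}$ in degrees $\equiv 1,2\pmod 4$, with the odd differentials given by left multiplication by $a^2-a$ on $\Z_3[\langle a\rangle]$, the even differentials by left multiplication by $N:=e+a+a^2$, and the augmentation $\epsilon\colon 1\uparrow^{G_{24}}_{Q_8}\to\Z_3$ at the bottom, and then check that this is a complex of $\Z_3[G_{24}]$-modules. All these maps are left $\Z_3[\langle a\rangle]$-linear, so only compatibility with the $Q_8$-actions is at stake. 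The element $N$ is fixed under conjugation by $t$ and by $\psi$, so multiplication by $N$ is $Q_8$-equivariant for each of the actions above; this handles the even differentials $1\uparrow\to1\uparrow$ and $\overline{\chi}\uparrow\to\overline{\chi}\uparrow$. On the other hand conjugation by $t$ sends $a^2-a$ to $a^{-2}-a^{-1}=-(a^2-a)$ while conjugation by $\psi$ fixes it, so multiplication by $a^2-a$ is $Q_8$-equivariant exactly when source and target differ by the character $\overline{\chi}$; this forces the alternation $\cdots\to 1\uparrow\to 1\uparrow\to\overline{\chi}\uparrow\to\overline{\chi}\uparrow\to 1\uparrow\to\Z_3$ and explains why the period is $4$ rather than $2$ (the underlying cyclic resolution has period $2$, the sign character $\overline{\chi}$ has order $2$). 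That consecutive maps compose to zero comes from the identity $(a^2-a)(e+a+a^2)=a(a-e)(e+a+a^2)=a(a^3-e)=0$ in $\Z_3[\langle a\rangle]$ (and commutativity of $\Z_3[\langle a\rangle]$ takes care of the reversed composite) together with $\epsilon\circ(\text{mult.\ by }a^2-a)=0$.

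It then remains to prove projectivity of the terms and exactness. Projectivity is immediate because $8=|Q_8|$ is invertible in $\Z_3$: the modules $\Z_3$ and $\overline{\chi}$ are direct summands of the free $\Z_3[Q_8]$-module $\Z_3[Q_8]$, split off by the idempotents $\frac{1}{8}\sum_{g\in Q_8}g$ and $\frac{1}{8}\sum_{g\in Q_8}\overline{\chi}(g^{-1})g$, so $1\uparrow^{G_{24}}_{Q_8}$ and $\overline{\chi}\uparrow^{G_{24}}_{Q_8}$ are projective (indeed direct summands of $\Z_3[G_{24}]$) as $\Z_3[G_{24}]$-modules. For exactness I would restrict the complex along $\langle a\rangle\hookrightarrow G_{24}$: every term becomes a free rank-one $\Z_3[\langle a\rangle]$-module, and since $a$ is a unit in $\Z_3[\langle a\rangle]$ the differential $a^2-a=a(a-e)$ has the same image and kernel as $a-e$; hence the restricted complex is isomorphic (as a complex of $\Z_3[\langle a\rangle]$-modules) to the classical $2$-periodic free resolution of the trivial $\Z_3[\Z/3]$-module $\Z_3$, which is exact. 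Exactness of a complex of $\Z_3[G_{24}]$-modules is detected on underlying $\Z_3$-modules, hence after restriction to $\langle a\rangle$, so the complex of $\Z_3[G_{24}]$-modules is exact and the resolution is as claimed.

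Finally, I expect the only real subtlety to be the sign-and-character bookkeeping of the second paragraph: recognising that the $2$-periodic cyclic resolution does not induce up ``constantly'', and that the precise twisting module is the character $\overline{\chi}$ (trivial on $\psi$, the sign on $t$), forced by the way conjugation by $Q_8$ acts on the element $a^2-a$. Everything else is routine.
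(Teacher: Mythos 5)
Your proof is correct, and since the paper explicitly leaves this "elementary result" to the reader, it supplies exactly the intended argument: identify $1\uparrow^{G_{24}}_{Q_8}$ and $\overline{\chi}\uparrow^{G_{24}}_{Q_8}$ with $\Z_3[\langle a\rangle]$ carrying the conjugation action of $Q_8$ (untwisted resp.\ twisted by the sign character), observe that conjugation by $t$ fixes $e+a+a^2$ but negates $a^2-a$ (forcing the period-$4$ alternation of twists), and deduce projectivity from $|Q_8|\in\Z_3^{\times}$ and exactness by restriction to $\langle a\rangle$. The verification $tat^{-1}=a^{-1}$, $\psi a\psi^{-1}=a$, which pins down $\overline{\chi}$ as the conjugation character, is the one genuinely non-routine point and you handle it correctly.
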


In the sequel we work with the induced projective resolution 
$P_{\bullet}:=\Z_3[[\GG_2^1]]\otimes_{\Z_3[G_{24}]}P_{\bullet}'$ of 
$C_0=\Z_3[[\GG_2^1]]\otimes_{\Z_3[G_{24}]}\Z_3$ and the resolution 
$Q_{\bullet}$ of $N_0$ which is obtained from splicing the exact complex 
obtained from (\ref{alg})
$$
0\to C_3\to C_2\to C_1\to N_0\to 0
$$ 
with a projective resolution of $C_3=C_0$, %
i.e. by splicing it with $P_{\bullet}$. 
The next result records all we need to know about the 
chain map $\phi$.  

\begin{lem}\label{phi} 
There is a chain map $\phi:Q_{\bullet}\to P_{\bullet}$ 
covering the homomorphism $j$ such that 
$$
\phi_0:Q_0=C_1\to 1\uparrow_{Q_8}^{\GG_2^1}=P_0 
$$ 
sends $e_1$ to $(e-\omega)\widetilde{e}_0$.  
(Here the generator $\widetilde{e_0}\in P_0$ 
is given by $e\otimes 1\in P_0$ and we continue to denote 
the generator of $C_1$ introduced in section \ref{partial1}, and 
also given by $e\otimes 1$, by $e_1$.)    
\end{lem}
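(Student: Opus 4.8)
The plan is to verify directly that the asserted formula for $\phi_0$ defines a $\Z_3[[\GG_2^1]]$-linear map which covers $j$ over the augmentations, and then to obtain the whole chain map $\phi_\bullet\colon Q_\bullet\to P_\bullet$ from the standard comparison theorem for projective resolutions.

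First I would check that $e_1\mapsto(e-\omega)\widetilde{e}_0$ does give a well-defined $\Z_3[[\GG_2^1]]$-homomorphism $C_1=\Z_3[[\GG_2^1]]\otimes_{\Z_3[SD_{16}]}\chi\to P_0=1\uparrow^{\GG_2^1}_{Q_8}$. Since $C_1$ is generated over $\Z_3[[\GG_2^1]]$ by $e_1$, on which $\omega$ and $\phi$ act through $\chi$ (i.e.\ by $-1$), this amounts to showing that $\omega$ and $\phi$ act on $(e-\omega)\widetilde{e}_0$ by multiplication by $-1$, and this is exactly the computation carried out in the proof of Corollary \ref{delta1}, with $e_0$ replaced by $\widetilde{e}_0$. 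The relevant input is that $\omega^2=t$, $\phi\omega=\omega^3\phi=t\psi$ and $\omega^{-1}\phi=\omega^{-2}\psi=t^{-1}\psi$ all lie in the subgroup $Q_8$ (generated by $t=\omega^2$ and $\psi=\omega\phi$) and hence fix $\widetilde{e}_0$; consequently $\omega(e-\omega)\widetilde{e}_0=(\omega-\omega^2)\widetilde{e}_0=-(e-\omega)\widetilde{e}_0$ and $\phi(e-\omega)\widetilde{e}_0=\phi\widetilde{e}_0-\widetilde{e}_0=\omega\widetilde{e}_0-\widetilde{e}_0=-(e-\omega)\widetilde{e}_0$. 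Next I would check that $\phi_0$ covers $j$: if $\varepsilon_P\colon P_0=\Z_3[[\GG_2^1/Q_8]]\to C_0=\Z_3[[\GG_2^1/G_{24}]]$ denotes the canonical projection (the augmentation of $P_\bullet$, with $\varepsilon_P(\widetilde{e}_0)=e_0$) and $\partial_1\colon Q_0=C_1\to N_0$ the augmentation of $Q_\bullet$, then both $\varepsilon_P\phi_0$ and $j\,\partial_1$ send $e_1$ to $(e-\omega)e_0\in C_0$, so the square relating $\phi_0$ and $j$ commutes.

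Finally, every term of $Q_\bullet$ is projective over $\Z_3[[\GG_2^1]]$ — $Q_0=C_1$ and $Q_1=C_2$ are projective as recalled in Section \ref{e1-term}, while the remaining terms $Q_{n+2}=P_n$ are induced from the projective $\Z_3[G_{24}]$-modules appearing in Lemma \ref{resol-G24} — and $P_\bullet\to C_0$ is a resolution, so the comparison theorem applies: starting from $\phi_0$ and the relation $\varepsilon_P\phi_0=j\,\partial_1$ of the previous step, one lifts degree by degree, at each stage using that the composite of $\phi_n$ with the differential of $Q_\bullet$ has image in the kernel of the differential of $P_\bullet$, which by exactness of $P_\bullet$ is the image of the next differential, and then using projectivity of the relevant $Q_{n+1}$. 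I do not expect a real obstacle: the only step carrying genuine content is the well-definedness check, which simply duplicates the argument already used for Corollary \ref{delta1}, and everything else is the formal machinery of comparing resolutions.
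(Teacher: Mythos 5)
Your proposal is correct and follows essentially the same route as the paper: the paper's proof consists precisely of observing that $SD_{16}$ acts on $(e-\omega)\widetilde{e}_0$ via $\chi$ by the same computation as in Corollary \ref{delta1} (so $\phi_0$ is well defined), that $\phi_0$ visibly covers $j$, and leaving the extension to a full chain map to the standard comparison theorem for projective resolutions. Your additional explicit verifications (that $\omega^2$, $\phi\omega$, $\omega^{-1}\phi$ lie in $Q_8$, and that both composites send $e_1$ to $(e-\omega)e_0$) are exactly the details the paper suppresses.
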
 

\begin{proof} In fact, as in the proof of Corollary \ref{delta1} 
we see that $SD_{16}$ acts on $(e-\omega)\widetilde{e_0}$ via the 
character $\chi$. Hence $\phi_0$ is well defined and 
it is clear that $\phi_0$ covers $j$. \end{proof} 

\noindent {\it Proof of Proposition \ref{b_0}.}  
From its definition it is clear that 
$\a\in H^1(G_{24},M_4)$ is a permanent cycle in the algebraic 
spectral sequence for $M_*=(E_2)_*/(3)$ 
(the restriction of the class with the same name in $H^1(\GG_2^1,M_4)$), 
i.e. there are cochains $c\in \Hom_{\Z_3[[\GG_2^1]]}(P_1,M_4)$ and 
$d\in \Hom_{\Z_3[[\GG_2^1]]}(Q_0,M_4)$ such that $c+d$ is a 
cocycle in the total complex of the double complex 
$\Hom_{\Z_3[[\GG_2^1]]}(T_{\bullet \bullet},M_4)$ and 
such that $c$ represents  
$\a\in \Ext^1_{\Z_3[[\GG_2^1]]}(C_0,M_4)=H^1(G_{24},M_4)$. 
Furthermore, the cocycle $c$ can be obtained   
as the mod-$3$ reduction of a cocycle representing $\delta^0(v_1)$ 
(cf. the discussion in section \ref{e1-term}), i.e. we can take 
$c=\frac{1}{3}({a_*}^2-{a_*})v_1$ and this is known to be of the form  
$\omega u^{-2}\ \text{mod}\ (u_1)$ (cf. the proof of Lemma 1 in \cite{GHM}). 
Then $v_1c$ is a cocycle representing $v_1\a$. However, $v_1\a$ is trivial 
in $\Ext^1_{\Z_3[[\GG_2^1]]}(C_0,M_4)$ by Theorem \ref{shapiro}, 
and hence there exists $h\in \Hom_{\Z_3[[\GG_2^1]]}(P_0,M_4)$ 
such that 
$$
v_1c=\delta_P(h)=a_*^2h-a_*h \ .
$$ 
Because $v_1c$ is equal to $\omega u_1u^{-4}\ \text{mod}\ (u_1^2)$,  
$h$ must have the form $\epsilon u^{-4}\ \text{mod}\ (u_1)$ 
for some unit $\epsilon\in \F_9$. Corollary \ref{u1-action} shows that 
$$
({a_*}^2-a_*)(u^{-4})=-(1+\omega^2)u_1u^{-4}\ \text{mod} \ (u_1)
$$ 
and hence $\epsilon=\omega^2$ by (\ref{omegaeq}). In the double complex 
$\Hom_{\Z_3[[\GG_2^1]]}(T_{\bullet \bullet},M_4)$ 
the cochain $v_1c$ is therefore cohomologous to 
$$
-\phi_0^*(h)\equiv-(e-\omega)_*(\omega^2u^{-4})
\equiv -\omega^2u^{-4}+\omega^{-2}u^{-4}\equiv \omega^2 u^{-4}
\ \text{mod}\ (u_1) 
$$ 
and hence $v_1(c+d)$ is cohomologous 
to $\omega^2 u^{-4}+v_1d\ \text{mod}\ (u_1)$ and this   
implies the proposition. \qed 

Now we turn towards the calculation of higher differentials 
and extensions.     
  
\begin{prop}\label{d2+ext} \mbox{ }
\begin{itemize}
\item[a)] 
The differential $d_2:E_2^{0,1} \to E_2^{2,0}$ in the 
algebraic spectral sequence (\ref{algss}) for $M=(E_2)_*/(3)$ 
is given by  
$$
\begin{array}{lcl}
d_2(\Delta_k \alpha) &= &
\left\{\begin{array}{lcl} 
(-1)^{m+n+1}v_1^{6.3^{n}+1}\overline{b}_{3^{n+1}(6m+1)} &&k=2.3^n(3m+1) \\
(-1)^{m+n}v_1^{10.3^{n+1}+1}\overline{b}_{3^{n+1}(18m+11)}&&k=2.3^n(9m+8) \\
0 &&  \text{otherwise}
\end{array}\right. \\
&\\
d_2(\Delta_k \widetilde{\alpha}) &=& 
\left\{\begin{array}{lcl} 
(-1)^{m}v_1^{11}\overline{b}_{18m+11}&&\ \ \ \ \ \ \ \ \ \ \ \ \ \ \ \ \ \ k=6m+5  \\
0 &&\ \ \ \ \ \ \ \ \ \ \ \ \ \ \ \ \ \ \text{otherwise} \ . 
\end{array}\right. \end{array}
$$
\item[b)] In $H^*(\mathbb{G}_2^1;(E_2)_*/(3))$ we have the following 
relations 
$$
\begin{array}{lll}
v_1\Delta_{2.3^n(9m+2)}\alpha &=&(-1)^{m+1}b_{2.3^{n+1}(9m+2)+1} \\
v_1\Delta_{2.3^n(9m+5)}\alpha &=&(-1)^{m+1}b_{2.3^{n+1}(9m+5)+1} \\
v_1\Delta_{2k+1}\alpha&=&0 \\
v_1\Delta_{6m+1}\widetilde{\alpha}&=&(-1)^mb_{2.(9m+2)+1}  \\
v_1\Delta_{6m+3}\widetilde{\alpha}&=&(-1)^{m+1}b_{2.(9m+5)+1}  \\
v_1\Delta_{2k}\widetilde{\alpha}&=&0\ .  \\
\end{array}
$$
\end{itemize}
\end{prop}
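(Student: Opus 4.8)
The plan is to extract both statements from the long exact sequence in $\Ext_{\Z_3[[\GG_2^1]]}(-,M)$ attached to $0\to N_0\to C_0\to\Z_3\to 0$, made explicit through the double complex of the Remark following Lemma \ref{higher-differentials}. Concretely I would work with the projective resolution $P_\bullet=\Z_3[[\GG_2^1]]\otimes_{\Z_3[G_{24}]}P_\bullet'$ of $C_0$ built from Lemma \ref{resol-G24}, the resolution $Q_\bullet$ of $N_0$ obtained by splicing $0\to C_3\to C_2\to C_1\to N_0\to 0$ with $P_\bullet$ (so $Q_0=C_1$, $Q_1=C_2$, $Q_2=P_0$, \dots), and the chain map $\phi\colon Q_\bullet\to P_\bullet$ covering $j$ with $\phi_0(e_1)=(e-\omega)\widetilde{e}_0$ from Lemma \ref{phi}. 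By Lemma \ref{higher-differentials}(b), using $E_2^{0,1}=E_1^{0,1}=H^1(G_{24},M)$ and the identification $E_2^{2,0}\cong\Ext^1_{\Z_3[[\GG_2^1]]}(N_0,M)=\Ker(\partial_3^*)/\partial_2^*\Hom_{\Z_3[[\GG_2^1]]}(C_1,M)$, the differential $d_2\colon E_2^{0,1}\to E_2^{2,0}$ is the restriction map $j^*\colon H^1(G_{24},M)\to\Ext^1_{\Z_3[[\GG_2^1]]}(N_0,M)$, computed on cochains by $c\mapsto c\circ\phi_1$ for $c\in\Hom_{\Z_3[[\GG_2^1]]}(P_1,M)$ a $\delta_P$-cocycle representing the given class.

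For part (a) I would first restrict the possible targets. Since $d_2$ is $v_1$-linear and $v_1\a=v_1\widetilde{\a}=0$ in $H^*(G_{24},M)$ by Theorem \ref{shapiro}(a), the images $d_2(\Delta_k\a)$ and $d_2(\Delta_k\widetilde{\a})$ are annihilated by $v_1$, hence lie in the $v_1$-socle of $E_2^{2,0}$, which by the corollary to Proposition \ref{d_1^{2,0}} is spanned by the elements $v_1^{6\cdot3^n+1}\overline{b}_{3^{n+1}(6m+1)}$ and $v_1^{10\cdot3^n+1}\overline{b}_{3^n(18m+11)}$; sparseness shows this socle is at most one-dimensional in each internal degree. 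A comparison of internal degrees then shows that $d_2(\Delta_k\a)$ can be nonzero only for $k=2\cdot3^n(3m+1)$ or $k=2\cdot3^n(9m+8)$, and $d_2(\Delta_k\widetilde{\a})$ only for $k=6m+5$, identifies in each such case the unique possible target up to an $\F_3$-scalar, and forces $d_2=0$ in all remaining cases. To show the scalar is a unit and to pin down its value I would carry out the cochain computation $(\Delta_k c)\circ\phi_1$: here $\phi_1\colon C_2\to P_1$ is recovered, modulo a sufficiently large power of the augmentation ideal (which is all that is needed), from $\phi_0$ and the formula $\partial_2(e_2)=n_1'$ of Corollary \ref{corn2} together with the approximation of $\theta$ by $\widetilde{\theta}'$ from Section \ref{middle-differential}, using that $P_1\to P_0$ is multiplication by $a^2-a$; and $\a$ is represented by the mod-$3$ reduction of $\tfrac13(a_*^2-a_*)v_1\equiv\omega u^{-2}$ modulo $(u_1)$ (as in the proof of Proposition \ref{b_0}), $\widetilde{\a}$ analogously. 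Evaluating $(\Delta_k c)\circ\phi_1$ on $e_2$ then becomes a calculation with the $\GG_2$-action on $(E_2)_*/(3)$, via Corollary \ref{u1-action} and Lemma \ref{t0k}, entirely parallel to the proof of Proposition \ref{d_1b_k}, and yields the stated formulae with their signs.

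For part (b), fix a surviving generator $\Delta_k\a$ (resp.\ $\Delta_k\widetilde{\a}$), i.e.\ one with $d_2=0$ by part (a), and a representing $\delta_P$-cocycle $c\in\Hom_{\Z_3[[\GG_2^1]]}(P_1,M)$. Since $d_2(\Delta_k\a)=0$ there is $d\in\Hom_{\Z_3[[\GG_2^1]]}(C_1,M)=\Hom_{\Z_3[[\GG_2^1]]}(Q_0,M)$ with $c\circ\phi_1=d\circ\partial_2$ up to sign, so that $(c,d)$ is a cocycle of the total complex lifting $\Delta_k\a$ to $H^1(\GG_2^1,M)$. Now $v_1c$ is a $\delta_P$-cocycle representing $v_1\Delta^k\a=0\in H^1(G_{24},M)$, hence $v_1c=\delta_P(h)$ for some $h\in\Hom_{\Z_3[[\GG_2^1]]}(P_0,M)$; subtracting the total coboundary of $(h,0)$ shows that $v_1\Delta_k\a$ is represented by the $1$-cochain $v_1d-h\circ\phi_0\in\Hom_{\Z_3[[\GG_2^1]]}(C_1,M)=E_1^{1,0}$, whose class in $E_\infty^{1,0}\subset H^1(\GG_2^1,M)$ is the value of the $v_1$-extension. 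Using $(h\circ\phi_0)(e_1)=(1-\omega_*)\bigl(h(\widetilde{e}_0)\bigr)$ and Corollary \ref{u1-action} I would compute this representative modulo a high power of $u_1$ and, comparing with the topological basis $\{b_{2k+1}\}$ of $E_1^{1,0}$ from Section \ref{e1-term} and using $v_1$-linearity, identify it as $\pm b_{2\cdot3^{n+1}(9m+2)+1}$, $\pm b_{2\cdot3^{n+1}(9m+5)+1}$, $\pm b_{2(9m+2)+1}$ or $\pm b_{2(9m+5)+1}$ in the four nonzero cases (the generator $\a$ itself being exactly Proposition \ref{b_0}); in the remaining cases (odd-subscript $\Delta_k\a$, even-subscript $\Delta_k\widetilde{\a}$) the relevant bidegree of $E_\infty^{1,0}$ vanishes, so $v_1\Delta_k\a=0$ for dimension reasons.

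The main obstacle is not conceptual but computational: pinning down $\phi_1$ (and, for part (b), the cochains $h$ and $d$) to sufficient $u_1$-adic precision forces one through the non-explicit element $\theta$ of Section \ref{resolution} and its approximations in Section \ref{middle-differential}, and the ensuing bookkeeping with the $\GG_2$-action on $(E_2)_*/(3)$ needed to extract the exact leading coefficients and signs is essentially a rerun, in cohomological degree one, of the long calculation behind Proposition \ref{d_1b_k}, with part (b) a systematic version of the argument already used for Proposition \ref{b_0}.
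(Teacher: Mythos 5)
Your overall framework---the double complex from the Remark after Lemma \ref{higher-differentials}, the identification of $d_2$ with $j^*$ on $\Ext^1_{\Z_3[[\GG_2^1]]}(C_0,M)$, and the chain map $\phi$ with $\phi_0(e_1)=(e-\omega)\widetilde{e}_0$---is the paper's, and your mechanism for the nonzero $v_1$-extensions in (b) (write $v_1c=\delta_P(h)$, subtract the total coboundary of $h$, read off $-h\circ\phi_0$ modulo $(u_1^4)$) is exactly the paper's Lemma \ref{v1-free}. The execution of part (a), however, has a real gap: you propose to evaluate $d_2$ as $(\Delta_kc)\circ\phi_1$, which requires an explicit approximation of $\phi_1(e_2)$, i.e.\ a preimage of $\phi_0(\partial_2e_2)$ under the map $P_1\to P_0$ induced by $a^2-a$. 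You give no procedure for this division ($a^2-a$ is a zero divisor, being killed by $e+a+a^2$, and it is unclear how much $I$-adic precision survives the lift), and the paper never constructs $\phi_1$. The device you already use in (b) makes this unnecessary: from $\delta_Q(\phi_0^*(\Delta_kh))=\phi_1^*(\delta_P(\Delta_kh))=v_1\,\phi_1^*(\Delta_kc)$ one gets that $d_2(\Delta_k\a)$ is, up to sign, $v_1^{-1}\delta_Q$ applied to the $u_1$-degree-zero part of $\phi_0^*(\Delta_kh)$; since $\phi_0^*(\Delta_kh)\equiv(1-(-1)^{-3k-1})\omega^{2k+2}u^{-12k-4}\bmod(u_1^4)$, that part is $\pm b_{3k+1}$ for $k$ even, and part (a) reduces to the $d_1$-differential already recorded in Theorem \ref{d1}---no rerun of the Proposition \ref{d_1b_k} computation and no $\phi_1$ are needed. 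This is precisely Lemma \ref{v1-free}(b).

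Second, your argument for the vanishing cases of (b) is incorrect as stated: you claim $v_1\Delta_{2k+1}\a=0$ and $v_1\Delta_{2k}\widetilde{\a}=0$ because ``the relevant bidegree of $E_\infty^{1,0}$ vanishes'', but it does not. For example $v_1\Delta_{2k+1}\a$ lies in internal degree $24(2k+1)+8$, where $E_\infty^{1,0}$ contains the nonzero element $v_1^{6(2k+1)}b_1$ (the class $b_1$ generates a $v_1$-torsion-free summand by Theorem \ref{E-infinity}(b)), so no dimension count is available. The vanishing has to come from the same leading-term computation as the nonzero cases: for $k$ odd the factor $1-(-1)^{-3k-1}$ above is zero, so $\phi_0^*(\Delta_kh)$ is divisible by $v_1^4$, its $u_1$-degree-zero part $d'$ vanishes, and Lemma \ref{v1-free}(c) gives $v_1\Delta_k\a=0$; the case of $\Delta_{2k}\widetilde{\a}$ is handled the same way using $\phi_0^*(\Delta_{2k}v_2)$. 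With these two repairs your outline coincides with the paper's proof.
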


\remarkstyle{Remark} We repeat that after the fact 
one can check that the $d_2$-differential is, up to sign, 
determined by $v_1$-linearity and the principle 
that it is non-trivial whenever it has a chance to be so.

In the proof of the proposition we make repeated use of 
the following lemma.  

\begin{lem}\label{v1-free} 
Let $s>0$, $x\in H^s(G_{24},M_*)$ 
and $c\in \Hom_{\Z_3[[\GG_2^1]]}(P_s,M_*)$ be a representing cocycle.  
Suppose that $v_1^kx=0\in H^s(G_{24},M_*)$ and 
suppose that $h\in \Hom_{\Z_3[[\GG_2^1]]}(P_{s-1},M_*)$ satisfies 
$\delta_P(h)=v_1^kc$. 
\begin{itemize}
\item[a)] Then there are elements 
$d,d'\in \Hom_{\Z_3[[\GG_2^1]]}(Q_{s-1},M_*)$ and 
$d''\in \Hom_{\Z_3[[\GG_2^1]]}(Q_{s},M_*)$ 
such that $\phi_{s-1}^*(h)=d'+v_1^kd$ and $\delta_Q(d')=v_1^kd''$. 
\item[b)] If $d$, $d'$ and $d''$ are as in (a) then 
$$
j^*(x)=(-1)^s[d'']\in \Ext^{s}(N_0,M_*)\ . 
$$ 
\item[c)] If $d$, $d'$ and $d''$ are as in (a) and $d''=0$ 
then 
$$
v_1^kx'=(-1)^s[d']\in H^*(\GG_2^1,M_*)
$$ 
for any $x'$ in $H^*(\GG_2^1,M_*)$ which restricts to $x$. 
In particular, if $d'=0$, then $v_1^kx'=0$. 
\end{itemize} 
\end{lem}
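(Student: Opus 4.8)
The whole argument takes place inside the double complex $T_{\bullet\bullet}$ of the Remark following Lemma~\ref{higher-differentials}, after applying $\Hom_{\Z_3[[\GG_2^1]]}(-,M_*)$: its $P$-column computes $\Ext^*_{\Z_3[[\GG_2^1]]}(C_0,M_*)=H^*(G_{24},M_*)$, its $Q$-column computes $\Ext^*_{\Z_3[[\GG_2^1]]}(N_0,M_*)$, the chain map $\phi$ of Lemma~\ref{phi} induces $j^*$, and the total complex computes $H^*(\GG_2^1,M_*)$. Normalising the horizontal differential of $T_{\bullet\bullet}$ as $(-1)^n\phi_n$, the total differential of a cochain $(a,b)$ with $a\in\Hom_{\Z_3[[\GG_2^1]]}(P_s,M_*)$ and $b\in\Hom_{\Z_3[[\GG_2^1]]}(Q_{s-1},M_*)$ is $D(a,b)=(\delta_P a,\ (-1)^s\phi_s^*(a)+\delta_Q b)$. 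The one arithmetic input used throughout is that $M_*=(E_2)_*/(3)$ is $v_1$-torsion free (with $v_1=u_1u^{-2}$ and $u_1$ a non zero-divisor), hence so is each $\Hom_{\Z_3[[\GG_2^1]]}(P_i,M_*)$ and $\Hom_{\Z_3[[\GG_2^1]]}(Q_i,M_*)$; in particular division by $v_1^k$ is unique whenever possible.

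For (a) I would argue: since $\phi$ is a chain map, $\delta_Q\circ\phi_{s-1}^*=\phi_s^*\circ\delta_P$, so $\delta_Q(\phi_{s-1}^*(h))=\phi_s^*(\delta_P h)=\phi_s^*(v_1^kc)=v_1^k\phi_s^*(c)$. Consequently, for \emph{any} way of writing $\phi_{s-1}^*(h)=d'+v_1^kd$, one has $\delta_Q(d')=v_1^k\big(\phi_s^*(c)-\delta_Q(d)\big)$, which is $v_1^k$-divisible; torsion-freeness gives a unique $d''=\phi_s^*(c)-\delta_Q(d)$ with $\delta_Q(d')=v_1^kd''$, and applying $\delta_Q$ and cancelling $v_1^k$ yields $\delta_Q(d'')=0$, so $[d'']\in\Ext^s_{\Z_3[[\GG_2^1]]}(N_0,M_*)$ is defined. (So (a) is pure existence — taking $d=0$ already works — and the content of (b), (c) is that they hold for \emph{any} admissible $(d,d',d'')$.) For (b): $d''=\phi_s^*(c)-\delta_Q(d)$ and $\delta_Q(d)$ is a coboundary, so $[d'']=[\phi_s^*(c)]=j^*[c]=j^*(x)$ in $\Ext^s(N_0,M_*)$, the sign $(-1)^s$ being the bookkeeping from the $(-1)^n\phi_n$ normalisation of the horizontal differential.

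For (c), assume $d''=0$. Cancelling $v_1^k$ in $v_1^k\delta_Q(d)=\delta_Q(\phi_{s-1}^*h)-\delta_Q(d')=v_1^k\phi_s^*(c)$ gives $\delta_Q(d)=\phi_s^*(c)$, whence $(c,(-1)^{s-1}d)$ is a total cocycle (its $P$-component is $\delta_P c=0$, its $Q$-component is $(-1)^s\phi_s^*(c)+(-1)^{s-1}\delta_Q(d)=0$) whose restriction to the $P$-column is $[c]=x$; it thus represents a lift $x'$ of $x$. Regarding $h$ as a cochain of total degree $s-1$, $D(h,0)=\big(v_1^kc,\ (-1)^{s-1}\phi_{s-1}^*(h)\big)=(0,(-1)^{s-1}d')+v_1^k(c,(-1)^{s-1}d)$, so in $H^s(\GG_2^1,M_*)$ we get $v_1^kx'=v_1^k[(c,(-1)^{s-1}d)]=-[(0,(-1)^{s-1}d')]=(-1)^s[d']$, where on the right $[d']$ is read via the image of the connecting homomorphism $\Ext^{s-1}(N_0,M_*)\to H^s(\GG_2^1,M_*)$ of~(\ref{sec1}); in particular $d'=0$ forces $v_1^kx'=0$. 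Any other lift $x''$ of $x$ differs from $x'$ by an element of the kernel of restriction $H^s(\GG_2^1,M_*)\to H^s(G_{24},M_*)$, which by the long exact $\Ext$-sequence of~(\ref{sec1}) is exactly the image of that connecting homomorphism, so the identity persists for $x''$ as soon as $v_1^k$ kills that image in the relevant internal degrees.

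The expected main obstacle is not conceptual but careful: fixing a consistent sign convention so that the $(-1)^s$'s come out as stated, and — the only genuinely non-formal point — justifying in (c) that the right-hand side is independent of the lift $x'$, which forces one to feed in the explicit $v_1$-torsion bounds on $E_2^{s,0}$ (and $E_3^{3,*}$) established in Section~\ref{e2-term} rather than relying on homological algebra alone.
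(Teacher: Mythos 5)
Your argument is essentially identical to the paper's: both work in the double complex $\Hom_{\Z_3[[\GG_2^1]]}(T_{\bullet\bullet},M_*)$, use $v_1$-torsion-freeness of its terms to divide by $v_1^k$, identify $d''$ with $\phi_s^*(c)$ up to the coboundary $\delta_Q(d)$, and exhibit $v_1^k(c+(-1)^{s-1}d)$ and $(-1)^sd'$ as cohomologous via the total coboundary of $(h,0)$. The only divergences are that the paper's proof of (a) makes the specific choice $d'=u^tf_0$ with $f_0$ a polynomial of $u_1$-degree less than $k$ (the splitting actually used later to extract leading terms), whereas you note any splitting suffices, and that your closing caveat about independence of the lift $x'$ is a genuine subtlety which the paper's own proof also passes over in silence.
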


\begin{proof} We can write  
$\phi_{s-1}^*(h)\in \Hom_{\Z_3[[\GG_2^1]]}(Q_{s-1},M_*)\subset M_*$ 
as $u^tf$ for some $t\in \Z$ and $f$ a power series in $u_1$. 
Then we write $f=f_0+u_1^kf_1$ with $f_0$ a polynomial in $u_1$ 
of degree less than $k$ and $f_1$ a power series in $u_1$. If   
we put $d'=u^{t}f_0$ and $d=u^{2k+t}f_1$ then the first part of (a) holds.  
Next we use that the double complex 
$\Hom_{\Z_3[[\GG_2^1]]}(T_{\bullet \bullet},M_*)$ 
is a double complex of torsionfree $\F_3[[v_1]]$-modules and 
$\F_3[[v_1]]$-linear differentials. Then we get   
$$
\delta_Q(d')+v_1^k\delta_Q(d)=\delta_Q(d'+v_1^kd)=
\delta_Q(\phi_{s-1}^*(h))=\phi_s^*(\delta_P(h))=\phi_s^*(v_1^kc)
\equiv 0\ \text{mod}\ (v_1^k)  
$$  
and 
$$
\begin{array}{lll}
v_1^kj^*(c)&=&v_1^k(-1)^s\phi_s^*(c)=(-1)^s\phi_s^*(v_1^kc)
=(-1)^s\phi_s^*(\delta_P(h))\\
&=&(-1)^s\delta_Q\phi_{s-1}^*(h)
=(-1)^s\delta_Q(d'+v_1^kd)=(-1)^sv_1^kd''+(-1)^sv_1^k\delta_Q(d)\ .  
\end{array}
$$
and thus the second part of (a) and (b) follow. 

If $d''=0$ then the equations  
$$
v_1^kc=\delta_P(h), \ \ \   
(-1)^{s-1}v_1^kd+(-1)^{s-1}d'=(-1)^{s-1}\phi^*_{s-1}(h)
$$  
show that $v_1^k(c+(-1)^{s-1}d)$ and $(-1)^sd'$ 
are cohomologous in the double complex. Furthermore, $d'$ is a cycle by 
assumption and hence $v_1^k(c+(-1)^{s-1}d)$ is a cycle. Because the 
double complex is $v_1$-torsion free, $c+(-1)^{s-1}d$ is a cycle as well 
and (c) follows. 
\end{proof}

\noindent {\it Proof of Proposition \ref{d2+ext}.} 
We begin with the case of $\Delta_k\a$. As in the proof of 
Proposition \ref{b_0} let 
$c\in \Hom_{\Z_3[[\GG_2^1]]}(P_0,M_4)$ be a cocycle representing 
$\a\in \Ext^1_{\Z_3[[\GG_2^1]]}(C_0,M_4)$. 
Then $\Delta_k\a$ is represented by the cocycle $\Delta_kc$.  
As $\Delta_k$ is $G_{24}$-invariant we have 
$$
\Delta_{k}v_1c=\delta_P(\Delta_kh)   
$$ 
where $h$ is as in the proof of Proposition \ref{b_0} above. In particular  
$h\equiv \omega^2u^{-4}$ and hence  
$$
\phi_0^*(\Delta_kh)\equiv (e-\omega)_*(\omega^{2k+2}u^{-12k-4})
\equiv (1-(-1)^{-3k-1})\omega^{2k+2}u^{-12k-4}\ \text{mod}\ (u_1^4)\ .
$$
(We note that the congruence is modulo $(u_1^4)$ by (\ref{TMF(2)})!) 
In particular, if $k$ is odd we see that 
$$
\phi_0^*(\Delta_kh)=v_1^4z
$$ 
for some $z\in \Hom_{\Z_3[[\GG_2^1]]}(Q_0,M_*)$ and  
if $k$ is even, $k=2.3^n(3m\pm 1)$, we get 
$$
\phi_0^*(\Delta_kh)=(-1)^mb_{3k+1}+v_1^4z
$$ 
for some $z\in \Hom_{\Z_3[[\GG_2^1]]}(Q_0,M_*)$. Then Lemma 
\ref{v1-free} and Theorem \ref{d1} give easily the differentials 
and extensions on all elements $\Delta_k\a$ for $k\neq 0$. 

In the case of $\Delta_k\widetilde{\a}$ the 
definition of $\widetilde{\a}$ (cf. section \ref{e1-term}) 
shows that $\widetilde{\a}$ can be represented by a cocycle 
$\widetilde{c}\in \Hom_{\Z_3[[\GG_2^1]]}(P_1,M_{12})$ such that  
$v_1\widetilde{c}=\delta_P(v_2)$. Because $\Delta_k$ is $G_{24}$-invariant 
we have $\delta_P(\Delta_kv_2)=\Delta_k\delta_P(v_2)$. 
Furthermore, 
$$
\Delta_kv_2\equiv \omega^{2k}u^{-12k-8} 
\ \text{mod}\ (u_1^2)
$$ 
and thus 
$$
\phi_0^*(\Delta_kv_2)\equiv(e-\omega)_*(\omega^{2k}u^{-12k-8})
\equiv (1-(-1)^{-3k-2})\omega^{2k}u^{-12k-8}\ \text{mod}\ (u_1^4)\ .
$$
(Again we note that the congruence is modulo $(u_1^4)$ by (\ref{TMF(2)})!) 
In particular, if $k$ is even we deduce that 
$$
\phi_0^*(\Delta_kv_2)=v_1^4z
$$ 
for some $z\in \Hom_{\Z_3[[\GG_2^1]]}(Q_0,M_*)$ and 
for $k=6m+j$ with $j\in \{1,3,5\}$ we have 
$$
\phi_0^*(\Delta_kv_2)=(-1)^{m+1}\omega^{2j-2}b_{18m+3j+2}+v_1^4z
$$
for some $z\in \Hom_{\Z_3[[\GG_2^1]]}(Q_0,M_*)$  
and as before Lemma \ref{v1-free} and Theorem \ref{d1}   
give easily the differentials 
and extensions on all elements $\Delta_k\widetilde{\a}$. \qed 
\medbreak 

The following result together with 
Proposition \ref{b_0} and Proposition \ref{d2+ext} finishes off the 
proof of Proposition \ref{d2+d3} and Proposition \ref{extensions}.  

\begin{prop} \mbox{ } 
\begin{itemize} 
\item[a)] 
For $i\geq 3$ the differentials $d_i$ in the 
algebraic spectral sequence (\ref{algss}) for the mod-$3$ Moore spectrum 
are all trivial.  
\item[b)] For each integer $k\in \Z$ and $l>0$ we have 
$v_1^2\Delta_k \beta^l=v_1\Delta_k\beta^l\alpha=
v_1\Delta_k\beta^l\widetilde{\alpha}=0$ 
in $H^*(\mathbb{G}_2^1;(E_2)_*/(3))$. 
\end{itemize}
\end{prop}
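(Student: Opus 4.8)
The plan is to use the double-complex machinery of Lemma~\ref{higher-differentials} together with the chain map $\phi$ of Lemma~\ref{phi}, exactly as in the proof of Proposition~\ref{d2+ext}. For part (a), the only potentially non-trivial differentials are $d_i^{0,i}$ for $i\ge 3$, since $E_1^{s,t}=0$ for $0<s<4$, $t>0$ and for $s>4$, $t\ge0$ (Lemma~\ref{higher-differentials}(a)). First I would identify the source: by Theorem~\ref{shapiro}(a), $H^i(G_{24},M)$ for $i\ge 3$ is, as an $\F_3[v_1]$-module, concentrated in classes of the form $\Delta_k\beta^l$, $\Delta_k\beta^l\alpha$, $\Delta_k\beta^l\widetilde\alpha$ with $l\ge 1$ (together with $l=0$ cases already handled). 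So everything reduces to computing $d_i$ on these and, simultaneously, the $v_1$-extensions claimed in part (b). The point is that $\beta$ lifts to $H^2(\GG_2^1,M_{12})$ (as recalled before Theorem~\ref{E-infinity}), so multiplication by $\beta$ commutes with all the differentials and with $j^*$; hence it suffices to control the classes $1=\Delta_0$, $\Delta_k$, $\Delta_k\alpha$, $\Delta_k\widetilde\alpha$ after multiplying by $\beta^l$.

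Concretely: for part (b), a class $\Delta_k\beta^l$ is represented in the total complex by $\Delta_k\beta^l$ times a representing cocycle for $\beta^l$ placed in the appropriate $\Hom(P_{2l},M)$ spot. Since $\Delta_k$ is $G_{24}$-invariant and $v_1^2\Delta_k\beta^l$ (resp.\ $v_1$ times the $\alpha$- or $\widetilde\alpha$-decorated version) is already zero in $E_1^{0,2l}=H^{2l}(G_{24},M)$ by the relations $v_1\alpha=v_1\widetilde\alpha=0$ and the fact that $\beta$ has $v_1$-torsion-free annihilator only through $v_1^2\beta\ne0$ but $v_1^2\beta\cdot(\text{nothing})$... more carefully: the annihilator computations in Theorem~\ref{shapiro}(a) show $v_1^2\beta^l=0$ is false in $H^*(G_{24})$, so instead I must argue these classes die on passing to $\GG_2^1$. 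I would apply Lemma~\ref{v1-free}(c): choose $h$ with $\delta_P(h)=v_1^2\cdot(\text{cocycle for }\Delta_k\beta^l)$ — such $h$ exists because $v_1^2\Delta_k\beta^l$ restricted along $P_\bullet$ is a boundary, using that $v_1^2\beta=0$ is not the issue; rather, $v_1^2$ kills the generator $1$ with annihilator $(\beta v_1^2)$ and $v_1$ kills $\alpha,\widetilde\alpha$ — and then compute $\phi^*(h)$ modulo $(u_1^4)$ via Corollary~\ref{u1-action} and (\ref{TMF(2)}), checking that the resulting $d'$ and $d''$ both vanish. For part (a), the same input shows that $\phi^*_{s-1}(h)$, after the splitting $\phi^*_{s-1}(h)=d'+v_1^kd$ of Lemma~\ref{v1-free}(a), has $d''=0$: the leading term $(e-\omega)_*$ applied to the relevant power of $u$ lands in the image of $v_1$ whenever sparseness (Lemma~\ref{G_2mod-m}, the $u^8$-periodicity mod $(3,u_1)$) forces it, so $j^*$ of the class is zero and hence $d_{t+1}^{0,t}$ vanishes; the remaining $d_i$ with $i>n+1=4$ are zero for dimension reasons.

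The main obstacle will be organizing the bookkeeping of the $\beta$-multiples so that a single computation modulo $(u_1^4)$ — essentially the one already done in Proposition~\ref{b_0} and Proposition~\ref{d2+ext} for the $l=0$ and $l=1$ pieces — propagates to all $l$. The key structural fact making this work is that $\beta\in H^2(\GG_2^1,M_{12})$ is a genuine lift, so the whole calculation takes place $\beta$-linearly: one checks the statements on the $\F_3[v_1]$-module generators of $H^{\ge3}(G_{24},M)/(\beta)$ and then tensors up. I expect the verification that no new differential or extension can appear for $i\ge 3$ beyond what is forced — i.e.\ that the $d_2$-differential of Proposition~\ref{d2+ext} has already consumed everything it can, leaving $E_3=E_\infty$ — to follow by a direct comparison of the $\F_3[v_1]$-module structures of $E_2$ and the target $H^*(\GG_2^1,M)$ of Theorem~\ref{H*G21}, using that the latter is (by construction of the argument) obtained from $E_\infty$ purely by the $v_1$- and $\beta$-extensions listed in Proposition~\ref{extensions}; any surviving $d_{\ge3}$ would shrink $E_\infty$ below what Theorem~\ref{H*G21} records, which is impossible once the extensions are pinned down. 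I would phrase this last step as: the $d_2$ of Proposition~\ref{d2+ext} already reduces $E_2$ to a module whose associated graded matches $H^*(\GG_2^1,M)$ additively, so all higher differentials must vanish, and the residual $v_1$-extensions are exactly those produced by Lemma~\ref{v1-free}(c) applied to the generators in part (b), all of which evaluate to $0$.
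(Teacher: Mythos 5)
Your overall skeleton --- reduce by $\beta$-linearity to the generators $\Delta_k\beta$, $\Delta_k\beta\alpha$, $\Delta_k\beta\widetilde{\alpha}$ and then run Lemma \ref{v1-free} through the chain map $\phi$ --- is indeed the paper's strategy, but the execution has a genuine error at its base. You assert that ``the annihilator computations in Theorem \ref{shapiro}(a) show $v_1^2\beta^l=0$ is false in $H^*(G_{24})$.'' This is backwards: the relations $\alpha\widetilde{\alpha}+v_1\beta=0$ and $v_1\alpha=v_1\widetilde{\alpha}=0$ give $v_1^2\beta=-v_1\alpha\widetilde{\alpha}=0$, and likewise $v_1\beta\alpha=v_1\beta\widetilde{\alpha}=0$, already in $H^*(G_{24},M)$. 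This is not a side issue: the vanishing of $v_1^2\beta$ (resp.\ $v_1\beta\alpha$, $v_1\beta\widetilde{\alpha}$) in $\Ext^*(C_0,M)$ is precisely the hypothesis of Lemma \ref{v1-free} that produces the cochain $h$ with $\delta_P(h)=v_1^kc$; without it your ``choose $h$'' step has no justification, and with it the actual content of part (b) is only that these relations suffer no hidden extension on passage to $\GG_2^1$. Your proposal never resolves this internal contradiction.

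Second, your plan to evaluate $\phi_{s-1}^*(h)$ via ``the leading term $(e-\omega)_*$'' cannot work for the classes at issue: the formula $e_1\mapsto(e-\omega)\widetilde{e}_0$ of Lemma \ref{phi} describes only $\phi_0$, which governs $d_2$ and the extensions out of $E^{0,1}$ (Propositions \ref{b_0} and \ref{d2+ext}). For $\beta$ (filtration $s=2$) and for $\beta\alpha,\beta\widetilde{\alpha}$ ($s=3$) one needs $\phi_1^*(h)\in\Hom(Q_1,M_{20})$ and $\phi_2^*(h)\in\Hom(Q_2,M_{20})$, and $\phi_1,\phi_2$ are not explicitly known. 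The missing idea is that no computation is needed: $\Hom(Q_1,M_*)\cong\omega^2u^4\F_3[[u_1^4]][v_1,u^{\pm 8}]$ and $\Hom(Q_2,M_*)\cong (M_*)^{Q_8}$ are sparse enough that in internal degree $20$ every element is divisible by $v_1^3$, respectively by $v_1$, which forces $d'=d''=0$ in Lemma \ref{v1-free}; the only extra input, for the twist by $\Delta_k$, is to write $\phi_1$ as multiplication by an element of $\Z_3[[S_2^1]]$ and use $g_*(\Delta_k)\equiv\Delta_k \bmod (u_1^2)$. Finally, your fallback --- that any surviving $d_{\geq 3}$ would contradict Theorem \ref{H*G21} --- is circular, since that theorem is deduced from the collapse and the extensions you are trying to prove.
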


\begin{proof} The differential is linear with respect to 
the natural $\Ext_{\Z_3[[\GG_2^1]]}(\Z_3,\F_3)=H^*(\GG_2^1,\F_3)$-module 
structure on its target and source. Hence it is enough show 
that the classes $\Delta_k\b\a$, $\Delta_k\b\widetilde{\a}$ and $\Delta_k\b$ 
are $d_3$-cycles and to prove (b) in the case $l=1$. 
In both cases we use Lemma \ref{v1-free} once again. 

We begin with the case of $\Delta^k\b$. Let $c_1$ be a cocycle 
in $\Hom_{\Z_3[[\GG_2^1]]}(P_2,M_{12})$ representing $\b$. 
As $v_1^2\beta=0$ in $H^2(G_{24},M_{20})$ 
there exists $h_1\in \Hom_{\Z_3[[\GG_2^1]]}(P_1,M_{20})$
such that 
$$
v_1^2c_1=\delta_P(h_1)=(e+a+a^2)_*h_1 \ . 
$$
Next we use that 
$Q_1=C_2=\mathbb{Z}_3[[\mathbb{G}_2^1]]\otimes_{\Z_3[SD_{16}]}\chi$.  
Hence we have (cf. (\ref{TMF(2)}))    
$$
\Hom_{\mathbb{G}_2^1}(Q_1,M_*)  
\cong \omega^2u^4\mathbb{F}_3[[u_1^4]][v_1,u^{\pm 8}]
$$ 
and by degree reasons we need to have $\phi_1^*(h_1)=v_1^3z$
for some $z\in \Hom_{\mathbb{Z}_3[[{\GG}_2^1]]}(Q_1,M_{8})$, and 
then Lemma \ref{v1-free} shows that $\b$ is not only the 
restriction of a permanent cycle (which we knew anyway), 
but also that $v_1^2\b=0$ in $H^2(\GG_2^1,M_{20})$. 
Furthermore, as $\Delta_k$ is $G_{24}$-invariant we have 
$\delta_P(\Delta_kh_1)=\Delta_kv_1^2c_1$
and in order to apply Lemma \ref{v1-free} in the case of $\Delta_k\b$ 
we need to understand $\phi_1^*(\Delta_kh_1)$. For this we 
recall that $Q_1=C_2$ is a free $\Z_3[[S_2^1]]$-module generated by $e_2$.  
Hence we can write $\phi_1(\widetilde{e_1})=xe_2$ for (a unique)  
$x\in \Z_3[[S_2^1]]$ and thus $\phi_1^*(\Delta_kh_1)=x_*(\Delta_kh_1)$. 
Furthermore, because $\Delta \equiv \omega^2 u^{-12} \mod (u_1^2)$ 
we have for any $g\in S_2^1$  
$$
g_*(\Delta_kh_1)=g_*(\Delta_k)g_*(h_1)\equiv  
t_0(g)^{12k}\Delta_kg_*(h_1)\equiv \Delta_kg_*(h_1) \mod (u_1^2) \ , 
$$   
thus  
$$
\phi_1^*(\Delta_kh_1)=x_*(\Delta_kh_1) 
\equiv \Delta_kx_*(h_1)=\Delta_k \phi_1^*(h_1) \equiv 0 \mod (u_1^2)
$$
and then Lemma \ref{v1-free} shows that $\Delta_k\beta$  
is a permanent cycle and in $H^2(\mathbb{G}_2^1;M)$ we have 
$v_1^2\Delta_k\beta=0$. 

In the case of $\Delta_k\beta\alpha$ we choose a representing cocycle 
$c_2\in \Hom_{\Z_3[[\GG_2^1]]}(P_3,M_{16})$ for $\b\a$. Because $v_1\b\a=0$ in 
$H^1(G_{24},M_{20})$ there exists 
$h_2\in \Hom_{\Z_3[[\GG_2^1]]}(P_2,M_{20})$ such that 
$v_1c_2=\delta_P(h_2)$.  
Then 
$$
\phi_2^*(h_2)\in \Hom_{\mathbb{Z}_3[[{\GG}_2^1]]}(Q_2,M_{20})
=\Hom_{\mathbb{Z}_3[[{\GG}_2^1]]}(P_0,M_{20})\cong (M_{20})^{Q_8}
$$
is divisible by $v_1$ by degree reasons 
(cf. (\ref{SD16-action}) or Remark 3.12.3 of \cite{GHMR})
and then Lemma \ref{v1-free} shows that $\Delta_k \beta\a$ 
is a permanent cycle and in $H^2(\mathbb{G}_2^1,M_*)$ we have 
$v_1\Delta_k\beta\a=0$. 
The case of $v_1\beta\widetilde{\alpha}$ is completely analogous. 
\end{proof} 

\medbreak 

\begin{prop} If $x\in H^1(\GG_2^1,M_*)$ is represented in 
$E_{\infty}^{1,0,*}$ then $\b x=0$ in $H^3(\GG_2^1,M_*)$.  
\end{prop}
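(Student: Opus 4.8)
The plan is to use the explicit double complex $\Hom_{\Z_3[[\GG_2^1]]}(T_{\bullet \bullet},M_*)$ introduced after Lemma \ref{higher-differentials} together with the machinery of Lemma \ref{v1-free}. Recall that the elements represented in $E_\infty^{1,0,*}$ are exactly the surviving classes $b_{2k+1}$ listed in Theorem \ref{E-infinity}(b), namely $b_1$ and the classes $b_{2.3^n(3m-1)+1}$ with $m\not\equiv 0\bmod 3$; it suffices to prove $\b x=0$ for each of these. By $v_1$-linearity and the relations already established (Proposition \ref{extensions}: $v_1\a=b_1$, and in general the $b_{2k+1}$ surviving to $E_\infty^{1,0}$ are $v_1$-multiples of classes $\Delta_l\a$ or $\Delta_l\widetilde\a$ up to sign, or lie in the image of $d_1$ modulo $v_1$), one reduces to showing $\b\cdot(v_1\a)=0$, $\b\cdot(v_1\widetilde\a)=0$ and more precisely that the relevant products vanish at the chain level after clearing the appropriate power of $v_1$.

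First I would set up the cochain picture exactly as in the proof of Proposition \ref{d2+ext}: pick cocycles $c_\beta\in\Hom(P_2,M_{12})$ representing $\b$ and $c_\a\in\Hom(P_0,M_4)$ (respectively $\widetilde c\in\Hom(P_1,M_{12})$) representing $\a$ (respectively $\widetilde\a$), so that $\b x$ for $x$ of the form (a $v_1$-divided) $\Delta_l\a$ or $\Delta_l\widetilde\a$ is represented, after multiplying through, by $\Delta_l c_\beta c_\a$ or $\Delta_l c_\beta \widetilde c$ in $\Hom(P_3,M_*)$ or $\Hom(P_2,M_*)$. The key structural input is that we already know from the previous proposition that $v_1^2\b=0$ in $H^2(\GG_2^1,M_*)$, with a nullhomotopy $h_1\in\Hom(P_1,M_{20})$, $\delta_P(h_1)=v_1^2 c_\beta$, and that $\phi_1^*(h_1)$ is divisible by $v_1^3$ in $\Hom(Q_1,M_*)\cong \omega^2u^4\F_3[[u_1^4]][v_1,u^{\pm8}]$; similarly $v_1\a=b_1$ and $v_1\widetilde\a$ equals $\pm b_{2(9m\pm\cdot)+1}$ up to the relations of Proposition \ref{extensions}. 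The strategy is then to combine the nullhomotopy for $v_1^2\b$ with a nullhomotopy for the $v_1$-torsion relation on $x$ in $E_\infty^{1,0}$: since $v_1 x$ is (up to sign) a $b$-class which in $E_\infty^{1,0}$ is annihilated by $\b$ — indeed the annihilator ideals in Theorem \ref{E-infinity}(b) all contain $\b$ — one first checks $\b\cdot b_{2k+1}=0$ already in $E_\infty^{2,*,*}$, hence $\b\cdot b_{2k+1}$ is a boundary, and then bootstraps: writing $\b x$ with $v_1 x=\pm b_{2k+1}$, the product $v_1\cdot\b x=\pm\b b_{2k+1}$ is a $v_1$-torsion-free-quotient boundary, and $v_1$-torsion-freeness of the double complex (as in the last paragraph of the proof of Lemma \ref{v1-free}) forces $\b x$ itself to be a boundary.

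Concretely, the main computation is to produce, for each surviving $b_{2k+1}$, an element $H\in\Hom(P_1,M_*)$ with $\delta_P(H)=\b c_{b}$ where $c_b\in\Hom(P_2,M_*)$ represents $b_{2k+1}$ in the appropriate Ext-group — this is possible because $\b$ restricted to $H^3(G_{24},M_*)$ times the relevant class is zero, which follows from Theorem \ref{shapiro}(a): in $H^*(G_{24},M)$ one has $v_1\b=\a\widetilde\a + $ (nothing), and more to the point $\b\cdot(\text{the image of }b_{2k+1}\text{ in }H^2(G_{24},-))$ lies in $H^4(G_{24},M_*)$ where the relations $\a^2=\widetilde\a^2=v_1\a=v_1\widetilde\a=0$ make the relevant class vanish — then transport $H$ across $\phi_1$ and apply Lemma \ref{v1-free}(c) with $d'=d''=0$, which gives $\b x=0$ in $H^3(\GG_2^1,M_*)$ after dividing by the power of $v_1$. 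The honest statement to verify is that $\phi_1^*(H)$, which lies in $\Hom(Q_1,M_*)\cong\omega^2u^4\F_3[[u_1^4]][v_1,u^{\pm8}]$, is divisible by a high enough power of $v_1$ that, combined with the $v_1$-divided form of $x$, it contributes nothing — exactly the degree argument used for $\Delta_k\b\a$ at the end of the previous proof, where divisibility of $\phi_2^*(h_2)$ by $v_1$ was forced by $\Hom(Q_2,M_*)\cong (M_*)^{Q_8}$ together with \eqref{SD16-action}.

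The main obstacle is the bookkeeping of the several cases of surviving $b_{2k+1}$ in Theorem \ref{E-infinity}(b) and matching each to the correct $v_1$-power via Proposition \ref{extensions}: for $b_1=v_1\a$ the argument is cleanest, but for $b_{2.3^n(3m-1)+1}$ with $n\ge 1$ one must use that this class is, up to sign and up to the $d_1$- and $d_2$-images, a $v_1^{4\cdot 3^n-2}$-multiple of $\Delta_{2.3^n(3m-1)}$ pushed into filtration $1$, so the relevant nullhomotopy must absorb that power of $v_1$ — the degree/divisibility bookkeeping in $\Hom(Q_1,M_*)=\omega^2u^4\F_3[[u_1^4]][v_1,u^{\pm8}]$ is where one has to be careful, though no genuinely new computation beyond those already done in this section is required. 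I expect the whole proof to run in under a page once the reduction to $b_1$ and to the single relation $v_1^2\b=0$ is made explicit.
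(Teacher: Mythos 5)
Your opening reduction is exactly the paper's: the classes represented in $E_{\infty}^{1,0,*}$ are generated over $\F_3[v_1]$ by $b_1$ and the $b_{2\cdot 3^n(3m-1)+1}$ with $m\not\equiv 0\bmod 3$, and by Proposition \ref{b_0} and Proposition \ref{d2+ext}(b) each of these equals $v_1\a$, $\pm v_1\Delta_l\a$ or $\pm v_1\Delta_l\widetilde{\a}$. At that point the proof is one line: $\b b_{2k+1}=\pm v_1\Delta_l\b\a$ (resp.\ $\pm v_1\Delta_l\b\widetilde{\a}$), and part (b) of the immediately preceding proposition already states $v_1\Delta_k\b^l\a=v_1\Delta_k\b^l\widetilde{\a}=0$ in $H^*(\GG_2^1,M_*)$. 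You quote only the $v_1^2\b=0$ clause of that proposition and overlook that the clause you actually need is sitting next to it; this is the missing step, and it is the whole proof.

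The substitute chain-level argument you build instead has genuine gaps. First, ``$\b\cdot b_{2k+1}=0$ already in $E_{\infty}^{2,*,*}$, hence $\b\cdot b_{2k+1}$ is a boundary'' is a non sequitur: vanishing in the expected filtration degree of $E_{\infty}$ only says the product has higher filtration, and since $E_{\infty}^{1,2}=E_{\infty}^{2,1}=0$ the product could still survive as a hidden extension detected in $E_{\infty}^{3,0,*}$ --- excluding exactly that is the content of the proposition, so this step assumes what is to be proved. Second, the ``bootstrap'' via $v_1$-torsion-freeness of the double complex does not descend to cohomology: Lemma \ref{v1-free}(c) uses that a cochain $w$ with $v_1w$ a cocycle is itself a cocycle, which is not the implication $v_1[z]=0\Rightarrow[z]=0$; indeed $E_{\infty}^{3,0,*}$, where the potential obstruction lives, is entirely $v_1$-torsion. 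Third, the cochain bookkeeping is off: $b_{2k+1}$ has filtration $1$ and is represented in $\Hom_{\Z_3[[\GG_2^1]]}(Q_0,M_*)=\Hom_{\Z_3[[\GG_2^1]]}(C_1,M_*)$, not by a cocycle in $\Hom(P_2,M_*)$, and it has no ``image in $H^2(G_{24},-)$''. The correct chain-level input (null-homotopies for $v_1c_{\b\a}$, $v_1$-divisibility of $\phi_2^*(h_2)$ in $(M_{20})^{Q_8}$) was already extracted in the proof of the preceding proposition; citing that proposition's statement, rather than re-deriving it incorrectly, closes the argument.
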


\begin{proof} It is enough to show $\beta b_1=0$ and 
$\beta b_{2.3^n(3m-1)+1}=0$ whenever $m\not\equiv 0 \mod (3)$.  
By Proposition \ref{b_0} we have $b_1=v_1\a$ and thus 
$\b b_1=v_1\b\a=0$ by the previous proposition. 
Similarly, by Proposition \ref{d2+ext} we have  
$b_{2.3^{n+1}(3m-1)+1}=\pm v_1\Delta_{2.3^n(3m-1)}\alpha$ 
and $b_{2(3m-1)+1}=\pm v_1\Delta_{2m-1}\widetilde{\alpha}$ and using  
the previous proposition once more shows $\b b_{2.3^{n}(3m-1)+1}=0$. 
\end{proof} 

The following result finishes off the proof of Proposition \ref{extensions}.

\begin{prop}\label{beta-extensions}  
The following relations hold in $H^*(\mathbb{G}_2^1;M_*)$   
$$ 
\begin{array}{lll}
\beta \overline{b}_{3^{n+1}(6m+1)}&=
&\pm \overline{\Delta}_{3^n(6m+1)}\widetilde{\alpha} \\
\beta \overline{b}_{3^{n+1}(18m+11)}&=
&\pm \overline{\Delta}_{3^n(18m+11)}\widetilde{\alpha}\\
\beta \overline{b}_{18m+11}&=&\pm 
\overline{\Delta}_{6m+4}\a  \ .   \\
\end{array}
$$
\end{prop}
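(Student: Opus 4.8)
The plan is to prove these three $\beta$-extensions by reducing them to the much simpler situation modulo the invariant ideal $(u_1)$, exactly as the text hints when it says ``these $\beta$-extensions are simple consequences of the calculation of $H^*(\GG_2,(E_2)_*/(3,u_1))$, cf. \cite{GHM}''. First I would invoke the short exact sequence of $\Z_3[[\GG_2^1]]$-modules
$$
0\to \Sigma^4(E_2)_*/(3)\stackrel{v_1}\longra (E_2)_*/(3)\longra (E_2)_*/(3,u_1)\to 0\ ,
$$
i.e. the sequence (\ref{delta^1}) restricted to $\GG_2^1$, and note that the classes $\overline{b}_{3^{n+1}(6m+1)}$, $\overline{b}_{3^{n+1}(18m+11)}$ and $\overline{b}_{18m+11}$ all live in filtration $2$ with $v_1$-torsion of exponent at least one (they are killed by a positive power of $v_1$ by Theorem \ref{H*G21}), so they lie in the image of the connecting homomorphism $\delta^1: H^1(\GG_2^1,(E_2)_*/(3,u_1))\to H^2(\GG_2^1,(E_2)_*/(3))$ up to $v_1$-multiples; more precisely I would show that each $\overline{b}_{2k+1}$ with $2k+1$ of the relevant form is $\delta^1$ applied to a class in $H^1(\GG_2^1,(E_2)_*/(3,u_1))$ detected by the appropriate monomial $\omega^2u^{-4(2k+1)}$ in the $E_1$-term, using that modulo $(u_1)$ the algebraic spectral sequence degenerates onto the $SD_{16}$-invariant pieces.

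Next, the key input is the product structure of $H^*(\GG_2,(E_2)_*/(3,u_1))$, which is computed in \cite{GHM}: there $\b$ is a polynomial generator and multiplication by $\b$ carries the height-one classes represented by $\omega^2u^{-4(2k+1)}$ in cohomological degree $1$ isomorphically onto classes in degree $3$ represented by $\Delta^j$-multiples, i.e. by $\omega^{2j}u^{-12j}$-type monomials, precisely because in $(E_2)_*/(3,u_1)$ one has $v_2=u^{-8}$ invertible and the relevant Massey/product relations hold. Applying $\delta^1$, which is $\beta$-linear (as $\beta$ is the mod-$3$ reduction of $\delta^0\delta^1(v_2)$ and the Bocksteins are module maps over $H^*(\GG_2^1,\F_3)$), I would get
$$
\beta\,\overline{b}_{2k+1}=\beta\,\delta^1(y_{2k+1})=\pm\,\delta^1(\beta y_{2k+1})
$$
where $\beta y_{2k+1}$ is the degree-$3$ class mod $(u_1)$; and since $\widetilde{\a}$ is itself $\delta^1(v_2)$, and $\overline\Delta_k$ reduces mod $(u_1)$ to $\Delta^k$ (Proposition \ref{d_1^{3,0}}a), the class $\delta^1(\beta y_{2k+1})$ is detected in $E_\infty^{3,0,*}$ exactly by $\overline\Delta_{3^n(6m+1)}\widetilde\a$, resp. $\overline\Delta_{3^n(18m+11)}\widetilde\a$, resp. $\overline\Delta_{6m+4}\a$ — one reads off which generator from the internal degree and from the list in Theorem \ref{H*G21}, using sparseness to see that no other $E_\infty$-generator sits in that bidegree. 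That nails down the three formulae up to a sign.

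The main obstacle, as the text warns (``determining the sign for the $\b$-relations would require an extra effort''), is precisely the sign, and I would not attempt to pin it down: the identification of $\beta\overline{b}_{2k+1}$ with a nonzero element in the one-dimensional $\F_3$-space spanned by the named $\overline\Delta$-class is the content, and the ambiguity $\pm$ is harmless for everything that follows. A secondary technical point is making rigorous the claim that the relevant $\overline{b}$-classes lift from $(E_2)_*/(3,u_1)$-cohomology via $\delta^1$ — one has to check that the obstruction, namely a possible $v_1$-divisible correction term, does not interfere, but this follows from the $v_1$-torsion exponents already recorded in Theorem \ref{E-infinity}(c) together with $\beta$-linearity, since multiplying the correction by $\beta$ lands in a bidegree where Theorem \ref{H*G21} shows the group is already $v_1$-torsion of the right height. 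One then assembles these observations into the stated three identities.
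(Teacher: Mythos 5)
Your overall strategy --- reducing the three extensions to the known $\beta$-multiplication in $H^*(\GG_2^1,(E_2)_*/(3,u_1))$ --- is the right one and is what the paper does, but the mechanism you propose is broken. You want to write $\overline{b}_{2k+1}=\delta^1(y_{2k+1})$ and then use $\beta$-linearity of $\delta^1$. However, the long exact sequence associated to $0\to \Sigma^4(E_2)_*/(3)\stackrel{v_1}\longra (E_2)_*/(3)\longra (E_2)_*/(3,u_1)\to 0$ identifies the image of $\delta^1$ with the kernel of multiplication by $v_1$, and by Theorem \ref{H*G21} the classes $\overline{b}_{3^{n+1}(6m+1)}$ and $\overline{b}_{3^{n}(6m+5)}$ have $v_1$-torsion exponent $6\cdot 3^n+1$ resp.\ $10\cdot 3^n+1$, which is always $>1$; so none of the three families of classes in the statement lies in the image of $\delta^1$. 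The hedge ``up to $v_1$-multiples'' does not rescue this: the $v_1$-multiples $v_1^j\overline{b}_{2k+1}$ with $j\geq 1$ are all killed by $\beta$ (the annihilator ideals in Theorem \ref{H*G21} contain $\b v_1$), so applying $\beta$ to them yields $0=0$ and says nothing about $\beta\overline{b}_{2k+1}$ itself. A second, related problem is that there is no class $y_{2k+1}\in H^1(\GG_2^1,(E_2)_*/(3,u_1))$ detected by $\omega^2u^{-4(2k+1)}$ in filtration $1$: by Theorem \ref{E_1V(1)}.b the differential $d_1^{0,0}$ of the mod-$(3,u_1)$ algebraic spectral sequence kills all of $E_1^{1,0}$, and the nontrivial $\beta$-extension of Theorem \ref{E_1V(1)}.c goes from filtration $2$ (the classes $\omega^2u^4v_2^ke_2$, cohomological degree $2$) to filtration $3$, not from degree $1$ to degree $3$ as your outline assumes.

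The paper's proof goes in the opposite direction: instead of trying to lift along the connecting homomorphism, it pushes forward along the reduction map $H^*(\GG_2^1,(E_2)_*/(3))\to H^*(\GG_2^1,(E_2)_*/(3,u_1))$, which is $\F_3[\b]$-linear. One checks that this map sends $\widetilde{\a}\mapsto\pm\omega^2u^{-4}\a$ (because $\widetilde{\a}$ is not divisible by $v_1$), $\a\mapsto\a$, $\Delta_{2k+1}\mapsto\pm\omega^2v_2^{3k+1}u^{-4}$, and $\overline{b}_{2k+1}$ to the class detected by $\omega^2u^{-4}v_2^ke_2$; the relation $\beta\,\omega^2u^4v_2^ke_2=\pm v_2^k\a e_3$ of Theorem \ref{E_1V(1)}.c then forces $\beta\overline{b}_{2k+1}\neq 0$ and identifies it, by naturality and inspection of the bidegree, with the stated $\overline{\Delta}\widetilde{\a}$- resp.\ $\overline{\Delta}\a$-generator. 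If you replace your $\delta^1$-step by this detection argument, the remainder of your outline (reading off the target from the internal degree and Theorem \ref{H*G21}, and leaving the sign undetermined) does go through.
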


\begin{proof} First we observe that $\widetilde{\a}\in H^*(G_{24},(E_2)_*/(3))$ 
is non-divisible by $v_1$ and this implies 
that the mod-$u_1$ reduction homomorphism 
$H^*(G_{24},(E_2)_*/(3))\to H^*(G_{24},(E_2)_*/(3,u_1))$ must 
send $\widetilde{\a}$ to $\pm \omega^2 u^{-4}\a$ (cf. Theorem \ref{E_1V(1)}). 
Likewise, this map must send $\Delta_{2k+1}$ to $\pm \omega^2v_2^{3k+1}u^{-4}$, 
and it clearly sends $\a$ to $\a$.  
Thus the proposition follows from Theorem \ref{E_1V(1)}.c and naturality.   
\end{proof} 

\bigbreak 

\section{Passing from $\GG_2^1$ to $\GG_2$}\label{passing} 

Theorem \ref{H*G2} is a simple instance of a K\"unneth isomorphism: 
in fact, if we have an isomorphism of profinite $3$-groups 
$G=F\times \Z_3$, and if $\Z_3$ acts trivially on a $3$-profinite 
module $M$, then the exterior product in cohomology induces an isomorphism 
\begin{equation}\label{Kuenneth}
H^*(F,M)\otimes_{\Z_3} \Lambda_{\Z_3}(\zeta)\cong 
H^*(F,M)\otimes_{\Z_3} H^*(\Z_3;\Z_3)\to H^*(\GG_2,M)\ .
\end{equation} 
In particular this holds if $G=\GG_2$, $F=\GG_2^1$ and 
$M=(E_2)_*/(3)$ or $M=(E_2)_*/(3,u_1)$. 
We will need to know how the Bockstein homomorphisms 
$\delta^1$ and $\delta^0$ associated to the exact sequences 
(\ref{delta^0}) and (\ref{delta^1}) 
behave with respect to these isomorphisms. 

The proof of the following lemma is a 
straightforward exercise with the double complex obtained 
from tensoring a projective resolution of the trivial 
$\Z_3[[F]]$-module $\Z_3$ with the projective resolution of the trivial 
$\Z_3[[\Z_3]]$-module $\Z_3$ given by 
$$
0\to \Z_3[[T]]\buildrel{T}\over\longra\Z_3[[T]]\to \Z_3\to 0 
$$  
and is left to the reader. (Here we have identified $\Z_3[[T]]$ with 
$\Z_3[[\Z_3]]$ via the continuous isomorphism which sends $T$ to $t-e$ if 
$t$ is a topological generator of $\Z_3$.) 

\begin{lem} Let $G= F\times \Z_3$ and $H$ be a closed 
subgroup of $G$, $0\to M_1\to M_2\to M_3\to 0$ be a 
short exact sequence of continuous $G$-modules and 
$\delta_H$ be the associated Bockstein in $H^*(H,-)$. 
If $\Z_3$ acts trivially on $M_1$ and $M_3$, and if we identify 
$H^*(G,M_i)$ with $H^*(F,M_i)\oplus H^{*-1}(F,M_i)\zeta$ for $i=1,3$  
via (\ref{Kuenneth}), $M_3$ with $H^0(\Z_3,M_3)$ and 
$M_1$ with $H^1(\Z_3,M_1)$, then 
$$
\begin{array}{llllrll} 
\delta_G(x)&=\ &\delta_{F}(x)&+&(-1)^nH^n(F,\delta_{\Z_3})(x)\zeta&
&\ \ x\in H^n(F,M_3)\\ 
\delta_G(y\zeta )&=\ &&&\delta_{F}(y)\zeta&
&\ \ y\in H^{n-1}(F,M_3)\ . \qed 
\end{array}
$$ 
\end{lem}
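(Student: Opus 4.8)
The plan is to prove the lemma by explicit manipulation of a double complex, exactly as the statement suggests. First I would fix a projective resolution $P_{\bullet}$ of the trivial $\Z_3[[F]]$-module $\Z_3$, and tensor it over $\Z_3$ with the length-one resolution
$$
0\to \Z_3[[T]]\buildrel{T}\over\longra\Z_3[[T]]\to \Z_3\to 0
$$
of the trivial $\Z_3[[\Z_3]]$-module (identifying $\Z_3[[T]]$ with $\Z_3[[\Z_3]]$ by $T\mapsto t-e$ for a topological generator $t$). Since $\Z_3$ is central in $G=F\times\Z_3$, the total complex $Q_{\bullet}$ of this double complex is a projective resolution of the trivial $\Z_3[[G]]$-module $\Z_3$, and the same construction applied to any closed subgroup $H\subset G$ (with $H\cap\Z_3$ either trivial or open) computes $H^*(H,-)$; I would either restrict to the two cases actually needed ($H=G$ and $H=F$ via the projection) or simply note that the naturality in $H$ is formal once the case $H=G$ is settled.

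Next I would write down the two relevant cocycle-level descriptions. For a $G$-module $N$ on which $\Z_3$ acts trivially, $\Hom_{\Z_3[[G]]}(Q_n,N)$ splits as $\Hom_{\Z_3[[F]]}(P_n,N)\oplus \Hom_{\Z_3[[F]]}(P_{n-1},N)$, where the second summand is the ``$\zeta$-part'': the differential on the total complex is $\delta_P$ on the first factor, $\pm\delta_P$ on the second factor, and the only mixed term comes from the map $T\colon \Z_3[[T]]\to\Z_3[[T]]$, which on cochains is multiplication by $t-1$, i.e.\ acts as $0$ since $\Z_3$ acts trivially on $N$. This is precisely what produces the clean formula $H^*(G,N)\cong H^*(F,N)\oplus H^{*-1}(F,N)\zeta$, with $\zeta$ the class of the generator of $H^1(\Z_3,\Z_3)$; I would record this identification carefully so that the sign $(-1)^n$ in the statement is pinned down.

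The heart of the computation is then to chase a representing cocycle of $x\in H^n(F,M_3)$ through the connecting map for $0\to M_1\to M_2\to M_3\to 0$ at the level of the big double complex over $G$. Pick $\tilde x\in\Hom_{\Z_3[[F]]}(P_n,M_3)$ a cocycle representing $x$; lift it pointwise to $M_2$, apply $\delta_Q$, and observe that the result, which lands in $M_1$, has an $F$-part equal to a representative of $\delta_F(x)$ and a $\zeta$-part governed precisely by the map $T$, hence by the action of $t$ on the chosen lift. Because $\Z_3$ acts trivially on $M_3$ but the lift lives in $M_2$, the obstruction to $t$-invariance of the lift is exactly a cocycle representing $H^n(F,\delta_{\Z_3})(x)$ — here one uses that $\delta_{\Z_3}\colon H^0(\Z_3,M_3)\to H^1(\Z_3,M_1)$ is, on cochains, ``$(t-1)$ applied to a lift''. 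Tracking the Koszul sign from the total-complex differential gives the $(-1)^n$. The computation for $y\zeta$ is strictly easier: a class already in the $\zeta$-part is represented by a cocycle supported on $\Hom_{\Z_3[[F]]}(P_{n-1},M_3)$, the $T$-map contributes nothing (trivial action), so $\delta_G$ is just $\delta_F$ on that factor, giving $\delta_F(y)\zeta$.

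I expect the only real friction to be bookkeeping: getting the signs in the total-complex differential consistent with the chosen isomorphism $(\ref{Kuenneth})$ so that the factor $(-1)^n$ comes out correctly rather than as $(-1)^{n-1}$ or absorbed into $\zeta$, and making sure the identification of the $\zeta$-part of $\delta_Q(\text{lift of }\tilde x)$ with $H^n(F,\delta_{\Z_3})(x)$ is literally the connecting map and not its negative. Since the statement explicitly says the proof ``is left to the reader'', I would present it at exactly this level of detail — setting up the double complex, identifying the two summands, and indicating the cocycle chase — without grinding through every sign, and then invoke naturality in $H$ to cover the subgroup case.
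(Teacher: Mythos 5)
Your proposal is correct and follows exactly the route the paper indicates: it forms the double complex obtained by tensoring a projective resolution of $\Z_3$ over $\Z_3[[F]]$ with the two-term resolution $0\to\Z_3[[T]]\to\Z_3[[T]]\to\Z_3\to 0$ over $\Z_3[[\Z_3]]$, identifies the $\zeta$-summand via the vanishing of the $T$-component on modules with trivial central action, and reads off the Bockstein from the $(t-1)$-obstruction on the lift into $M_2$. The paper leaves precisely this cocycle chase to the reader, so your argument is the intended proof.
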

 
\begin{cor}\label{Bockstein} \mbox{ } 
Let $M=(E_2)_{4k}/(3)$ resp. $M=(E_2)_{4k}/(3,u_1)$ 
and identify $H^*(\GG_2,M)$ with 
$H^*(\GG_2^1,M)\oplus H^{*-1}(\GG_2^1,M)\zeta$ 
via (\ref{Kuenneth}).  
\begin{itemize}
\item[a)] For $x\in H^n(\GG_2^1,(E_2)_{4k}/(3,u_1))$ we have  
$\delta^1_{\GG_2}(x)=\delta^1_{\GG_2^1}(x)$.  
\item[b)] For $x\in H^n(\GG_2^1,(E_2)_{4k}/(3))$ we have 
$\delta^0_{\GG_2}(x)=\delta^0_{\GG_2^1}(x)+(-1)^nk x\zeta$.  
\end{itemize}
In particular, if we define 
$$
\begin{array}{lll}
\alpha(\GG_2):=\delta^0_{\GG_2}(v_1) &  
\widetilde{\alpha}(\GG_2):=\delta^0_{\GG_2}(v_2) &
\b(\GG_2):=\delta^0_{\GG_2}\delta^1_{\GG_2}(v_2) \\
\alpha(\GG_2^1):=\delta^0_{\GG_2^1}(v_1) & 
\widetilde{\alpha}(\GG_2^1):=\delta^0_{\GG_2^1}(v_2) &  
\b(\GG_2^1):=\delta^0_{\GG_2^1}\delta^1_{\GG_2^1}(v_2) \\
\end{array}
$$ 
then  
$$
\alpha(\GG_2)=\alpha(\GG_2^1)-v_1\zeta, \ \ 
\widetilde{\alpha}(\GG_2)=\widetilde{\alpha}(\GG_2^1), \ \ 
\b(\GG_2)=\b(\GG_2^1) \ . 
$$
\end{cor}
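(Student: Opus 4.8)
\textbf{Proof proposal for Corollary \ref{Bockstein}.}

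The plan is to deduce everything from the preceding Lemma, which describes how the Bockstein $\delta_G$ of a short exact sequence of $G = F\times\Z_3$-modules decomposes under the K\"unneth isomorphism (\ref{Kuenneth}) when $\Z_3$ acts trivially on the outer terms. I would apply this with $G=\GG_2$, $F=\GG_2^1$, and the factor $\Z_3$ being the central factor from the decomposition $\GG_2\cong\Z_3\times\GG_2^1$. The two short exact sequences in play are (\ref{delta^0}), namely $0\to M/(3)\xrightarrow{3} M/(9)\to M/(3)\to 0$ with $M=(E_2)_*$, and (\ref{delta^1}), namely $0\to\Si^4(E_2)_*/(3)\xrightarrow{v_1}(E_2)_*/(3)\to(E_2)_*/(3,u_1)\to 0$. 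In both cases, since the central $\Z_3$ acts trivially on all of $(E_2)_*/(3)$, $(E_2)_*/(9)$ and $(E_2)_*/(3,u_1)$, the hypotheses of the Lemma are satisfied.

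First I would handle part (a). For the sequence (\ref{delta^1}), the Lemma's formula for $\delta_G$ on a class $x\in H^n(F,M_3)$ has the shape $\delta_G(x)=\delta_F(x)+(-1)^nH^n(F,\delta_{\Z_3})(x)\zeta$, where $\delta_{\Z_3}$ is the Bockstein of the sequence of $\Z_3$-modules. But the central factor acts trivially on $\Si^4(E_2)_*/(3)$ and on $(E_2)_*/(3,u_1)$, and multiplication by $v_1$ is injective on the relevant trivial modules, so the connecting homomorphism $\delta_{\Z_3}$ for the sequence $0\to\Si^4(E_2)_{4k}/(3)\xrightarrow{v_1}(E_2)_{4k}/(3)\to(E_2)_{4k}/(3,u_1)\to 0$ over $\Z_3$ vanishes: the sequence of trivial $\Z_3$-modules is already short exact on $H^0$ up to the cokernel, and there is no contribution in $H^1(\Z_3,-)$ since the map $v_1$ is split as a map of trivial modules in each internal degree (the cokernel $(E_2)_{4k}/(3,u_1)$ is a direct summand of $(E_2)_{4k}/(3)$ as $\F_3$-vector spaces, compatibly with the trivial action). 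Hence the $\zeta$-term is zero and $\delta^1_{\GG_2}(x)=\delta^1_{\GG_2^1}(x)$, giving (a).

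Next, for part (b), I apply the Lemma to (\ref{delta^0}). Here the point is to compute $\delta_{\Z_3}$ for the sequence $0\to(E_2)_{4k}/(3)\xrightarrow{3}(E_2)_{4k}/(9)\to(E_2)_{4k}/(3)\to 0$ of trivial $\Z_3$-modules; after identifying $H^1(\Z_3,(E_2)_{4k}/(3))$ with $(E_2)_{4k}/(3)$ via a choice of topological generator, this Bockstein is multiplication by an explicit element of $\Z/3$. The key computation is that on the internal-degree-$4k$ piece this connecting map is multiplication by $k$; this is exactly the standard fact that the reduced norm / determinant homomorphism $\GG_2\to\Z_3$ detects the grading in the sense that the central $\Z_3$ acts on $(E_2)_{4k}$ by the $k$-th power of its action on $(E_2)_4$, so that the Bockstein of the mod-$9$ reduction sequence on degree $4k$ sees the integer $k$ modulo $3$ --- I would extract this from the description of the center of $\GG_2$ and the formula $z\mapsto z^n$ ($n=2$) in \S\ref{background}, together with $u$ having degree $-2$. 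Once this is in hand, the Lemma gives $\delta^0_{\GG_2}(x)=\delta^0_{\GG_2^1}(x)+(-1)^n k\,x\zeta$ for $x\in H^n(\GG_2^1,(E_2)_{4k}/(3))$, which is (b). Finally, specializing: $\alpha(\GG_2)=\delta^0_{\GG_2}(v_1)$ with $v_1\in H^0(\GG_2^1,(E_2)_4/(3))$, so $k=1$, $n=0$, giving $\alpha(\GG_2)=\alpha(\GG_2^1)-v_1\zeta$ (the sign from the formula being $+(-1)^0\cdot 1$, and $v_1\zeta = -(-v_1\zeta)$; I will double-check the sign convention against the statement and adjust the orientation of $\zeta$ if needed); $\widetilde\alpha(\GG_2)=\delta^0_{\GG_2}(v_2)$ but $v_2=u^{-8}\in H^0(\GG_2^1,(E_2)_{16}/(3,u_1))$ lives in the mod-$(3,u_1)$ setting where $\widetilde\alpha=\delta^1(v_2)$, and since $\delta^1$ and $\delta^0$ on the relevant class are governed by (a) and the $k=0$-type vanishing (the class $v_2$ being already defined via a $v_1$-torsion mechanism so that the correction term is absorbed), one gets $\widetilde\alpha(\GG_2)=\widetilde\alpha(\GG_2^1)$ and likewise $\b(\GG_2)=\b(\GG_2^1)$ since $\b$ is built from $\delta^0\delta^1$ and the $\delta^1$ carries no $\zeta$-correction while the outer $\delta^0$ is applied to a class to which the correction term contributes a multiple of $v_1$ which is already zero after the preceding $v_1$-divisible construction.

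The main obstacle I expect is pinning down the precise value and sign of the $\Z_3$-Bockstein $\delta_{\Z_3}$ in part (b) --- i.e. verifying that it is honestly multiplication by $k\bmod 3$ in internal degree $4k$, with the correct sign relative to the chosen generator of $H^1(\Z_3;\Z_3)$ and the normalization of $\zeta$. This is where the content lies; the rest is formal bookkeeping with the Lemma. I would settle it by restricting to the central $\Z_3\subset\GG_2$, using that $(E_2)_*/(3)$ as a module over this $\Z_3$ is a sum of trivial modules indexed by internal degree with the central element acting via its image under $\Z_3^\times\to\Z_3^\times$, $z\mapsto z^2$, on $(E_2)_2$, and then computing the mod-$9$ Bockstein of $\Z_3$ acting on $\Z/9$ in the appropriate twist, which reduces to an elementary computation in $H^*(\Z_3;\Z/9)$.
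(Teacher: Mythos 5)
Your overall strategy is exactly the paper's: apply the preceding Lemma to the central splitting $\GG_2\cong\Z_3\times\GG_2^1$ and compute the connecting homomorphism $\delta_{\Z_3}$ for the two coefficient sequences, and your treatment of part (a) (trivial action on the middle term forces $\delta_{\Z_3}=0$) is correct. But there are three concrete problems. First, your opening claim that the central $\Z_3$ acts trivially on $(E_2)_*/(9)$ is false, and if it were true part (b) would have no correction term at all; the Lemma only requires trivial action on the outer terms $M_1$ and $M_3$, and the entire content of (b) is precisely that the action on the middle term $(E_2)_{4k}/(9)$ is nontrivial. Second, the key computation --- that $\delta_{\Z_3}$ is multiplication by $k$ in internal degree $4k$ --- is exactly the step you defer, and your proposed route via the reduced-norm formula $z\mapsto z^2$ is not the relevant input. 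What the paper uses is the action of the central generator $t=1+3$ on $u$, namely $t_*(u)=4u$, whence $t$ acts on $(E_2)_{4k}/(9)$ by $(1+3)^{-2k}=1-6k$ and the Bockstein is multiplication by $-2k\equiv k\bmod 3$; this is a one-line computation but it is the whole substance of (b). Third, your justification of $\b(\GG_2)=\b(\GG_2^1)$ is wrong even though the conclusion is right: the correction term for the outer $\delta^0$ applied to $\widetilde{\a}\in H^1(\GG_2^1,(E_2)_{12}/(3))$ is $(-1)^1\cdot 3\cdot\widetilde{\a}\,\zeta$, which vanishes because the internal degree $12=4\cdot 3$ gives $k=3\equiv 0\bmod 3$, not because the correction is ``a multiple of $v_1$'' --- it is not a multiple of $v_1$.

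Two smaller points. The sign tension you noticed between the formula in (b) and the displayed relation $\a(\GG_2)=\a(\GG_2^1)-v_1\zeta$ is indeed only a matter of the chosen generator of $H^1(\Z_3;\Z_3)$ defining $\zeta$, so your plan to adjust the orientation of $\zeta$ is legitimate. And you are right that $\widetilde{\a}$ must be read as $\delta^1(v_2)$ (as in the definition of section 5.1), so that $\widetilde{\a}(\GG_2)=\widetilde{\a}(\GG_2^1)$ follows directly from part (a); your appeal to a ``$v_1$-torsion mechanism'' there is unnecessary and should be dropped.
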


\begin{proof} The central factor $\Z_3$ of $\GG_2$ is generated by the element 
$t:=1+3\in \Z_3^{\times}$. In this case $t$ acts trivially on $(E_2)_{0}$ 
and on $u$ via $t_*(u)=4u$. Therefore $t$ acts trivially on $M$ and 
via multiplication by $(1+3)^{-2k}=1-6k$ on $(E_2)_{4k}/(9)$. Hence 
$\delta_{\Z_3}$ is given by multiplication by $-2k\equiv k\ \text{mod}\ (3)$ 
and the result follows.   
\end{proof} 

\bigbreak 

\section{The Adams-Novikov spectral sequence for $L_{K(2)}V(0)$}
\label{ANSS}

As before we use the $E_1$-term of the algebraic spectral sequence 
(\ref{algss}) for $M=(E_2)_*V(0)$ to represent elements in the 
$E_2$-term of the Adams-Novikov spectral sequence for $\pi_*(L_{K(2)}V(0))$. 
However, unlike in the introduction, we do not always insist 
on writing elements in terms of the $\FF_3[\b,v_1]\otimes\Lambda(\zeta)$-module  
generators of Theorem \ref{E-infinity}. This allows for 
simplified statements of Lemma \ref{d5}, Corollary \ref{D5}, Lemma \ref{d9}  
and Corollary \ref{D9} below. 

The $E_2$-term satisfies $E_2^{s,t}=0$ unless $t\equiv 0\ \mod (4)$, 
hence its differentials $d_r$ are trivial if 
$r\not\equiv 1\ \text{mod}\ (4)$. Furthermore, 
the differentials are linear with respect to 
$\F_3[\b,v_1]\otimes\Lambda(\zeta)$, and the existence of the resolution 
(\ref{hom}) of \cite{GHMR} gives further restrictions 
on the behaviour of the Adams-Novikov differentials.  
In fact, they have to preserve the filtration on its $E_2$-term 
given by the algebraic resolution for $\GG_2$, 
and modulo this filtration the differentials are easily determined 
by the differentials in the Adams-Novikov spectral sequence for 
$E_2^{hG_{24}}\wedge V(0)$  (cf. Theorem \ref{EO2V(0)}). However, 
to settle the ambiguities coming from potential contributions of 
smaller filtration terms we need to fall back on knowledge of the 
differentials in $\pi_*(L_{K(2)}V(1))$ (cf. Theorem \ref{LK(2)V(1)}). 

The following Lemma records some immediate consequences 
of the knowledge of the $d_5$-differential in the Adams-Novikov 
spectral sequence for $\pi_*(L_{K(2)}V(1))$. 

\begin{lem}\label{d5} \mbox{ } The following identities hold in  
$H^*(\GG_2,(E_2)_*/(3))\cong E_2^{*,*}\cong E_5^{*,*}$ 
of the Adams-Novikov spectral sequence 
for $\pi_*(L_{K(2)}V(0))$. 
\begin{itemize} 
\item[a)] Let $k\not\equiv 0\mod(3)$. Then there are constants 
$\e_k\in \{\pm 1\}$ such that 
$$
\begin{array}{lll}
d_5(\Delta_{k}\widetilde{\a}\b)&=
&\ \e_k \Delta_{k-1}\b^{4}v_1\\
d_5(\Delta_{k}\b^2)&=
&\ \e_k \Delta_{k-1}\a(\GG_2)\b^{4}  \\
d_5(\Sigma^{48}\overline{\Delta}_{k}\widetilde{\a})&=
&\ \e_k\Sigma^{48}\overline{\Delta}_{k-1}\b^3v_1 \\
d_5(\Sigma^{48}\overline{\Delta}_{k}\b)&=
&\ \e_k\Sigma^{48}\overline{\Delta}_{k-1}\a(\GG_2)\b^3 \ . \\
\end{array} 
$$ 
\item[b)] Let $k\equiv 0\mod(3)$. Then 
$$
\begin{array}{lll}
d_5(\Delta_{k}\widetilde{\a}\b)&=&0\\
d_5(\Delta_{k}\b^2)&=&0\\
d_5(\Sigma^{48}\overline{\Delta}_{k}\widetilde{\a})&=&0\\
d_5(\Sigma^{48}\overline{\Delta}_{k}\b^2)&=&0\  .\\
\end{array} 
$$ 
\end{itemize} 
\end{lem}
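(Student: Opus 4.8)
The plan is to derive Lemma \ref{d5} by naturality from the known $d_5$-differential in the Adams-Novikov spectral sequence for $\pi_*(L_{K(2)}V(1))$, which is recorded in the appendix (Theorem \ref{LK(2)V(1)}). The point is that $V(0)$ maps to $V(1)$ via the cofibre sequence $\Sigma^4 V(0) \buildrel{v_1}\over\longra V(0) \longra V(1)$, so the induced map on Adams-Novikov $E_2$-terms is exactly the mod-$u_1$ reduction map $H^*(\GG_2,(E_2)_*/(3)) \to H^*(\GG_2,(E_2)_*/(3,u_1))$. The first step is to pin down how the classes appearing in the statement — $\Delta_k$, $\widetilde{\a}$, $\b$, $\a(\GG_2)$, and $\Sigma^{48}\overline{\Delta}_k$ — behave under this reduction. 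For $\widetilde{\a}$ and $\Delta_{2k+1}$ this has in effect already been done in the proof of Proposition \ref{beta-extensions}, where one sees $\widetilde{\a}\mapsto \pm\omega^2 u^{-4}\a$ and $\Delta_{2k+1}\mapsto \pm\omega^2 v_2^{3k+1}u^{-4}$; the class $\a$ maps to $\a$, $\b$ maps to $\b$, and $\b$ is $v_1$-torsion so it survives in the quotient. The even-index $\Delta_{2k}$ reduce to powers of $v_2$ (up to units), and the $\overline{\Delta}_k$ in $E_{\infty}^{3,*,*}$ reduce similarly; these identifications are essentially the $E_1$-term computation of Theorem \ref{E_1V(1)} in the appendix.

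Next I would invoke the $d_5$-differential for $L_{K(2)}V(1)$. The relevant formula there is, up to sign, $d_5(v_2^{k} x) \doteq k\, v_2^{k-1}\b^{2}(\ldots)$-type behaviour: the differential is detected by the Morava-stabilizer computation and is nonzero precisely when the relevant power of $v_2$ is prime to $3$. Concretely, $V(1)$-classes of the form $v_2^k\b^2$ and $v_2^k\a\b$ support $d_5$'s landing on $v_2^{k-1}\b^4$-type classes, and the coefficient is (up to a unit) the residue of $k$ mod $3$. Pulling this back through the reduction map, a class $\Delta_k\widetilde{\a}\b$ or $\Delta_k\b^2$ in the $V(0)$-spectral sequence reduces to $v_2^{j}(\text{unit})\cdot\b^2$ (resp. $v_2^{j}\a\b$) with $j$ congruent mod $3$ to something determined by $k$; one checks that the exponent $j$ is prime to $3$ exactly when $k\not\equiv 0\bmod 3$. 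This immediately yields part (b): when $k\equiv 0\bmod 3$ the target reduces to zero in $V(1)$, and since the potential target group $E_5^{s+5,t-4}$ for these classes injects into the corresponding $V(1)$-group (by sparseness and the explicit description in Theorem \ref{H*G21}/\ref{H*G2}, the $u_1$-divisible part is not in the way), the $d_5$ in $V(0)$ must already vanish. For part (a), the same injectivity of the target lets one transport the nonzero $V(1)$-differential back to conclude $d_5$ is nonzero in $V(0)$, with the sign $\e_k$ being whatever sign the $V(1)$-computation produces. The statements for $\Sigma^{48}\overline{\Delta}_k\widetilde{\a}$ and $\Sigma^{48}\overline{\Delta}_k\b$ follow in exactly the same way, using that the $\Sigma^{48}$-suspended summands $X_3,X_4$ of the resolution (\ref{hom}) are again built from $E_2^{hG_{24}}$, so the relevant part of the $V(1)$-spectral sequence looks the same there.

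The main obstacle I expect is the injectivity-of-target bookkeeping: one needs to verify that for each class in the statement the group in which its $d_5$-image would live maps injectively under $u_1$-reduction to the corresponding $V(1)$-group, so that ``nonzero in $V(1)$'' forces ``nonzero in $V(0)$'' and ``zero in $V(1)$'' forces ``zero in $V(0)$.'' This is not automatic because $H^*(\GG_2,(E_2)_*/(3))$ has plenty of $u_1$-divisible (equivalently $v_1$-torsion-of-higher-order) classes; one has to read off from Theorem \ref{H*G21} and Theorem \ref{H*G2} (and the $\zeta$-Künneth decomposition) that in the specific tridegrees occurring as targets of these $d_5$'s the $v_1$-torsion is exactly of order one, so reduction mod $u_1$ is injective there. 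A secondary, purely clerical point is keeping track of the units $\omega^2$ and of the factors of $v_2$ absorbed when passing between $\Delta_k$ and $v_2^{\bullet}$; these only affect the undetermined sign $\e_k$, which the Lemma already leaves unspecified, so they do not obstruct the argument. Once the target-injectivity is checked, the proof is a short naturality argument plus citation of Theorem \ref{LK(2)V(1)} and Theorem \ref{E_1V(1)}.
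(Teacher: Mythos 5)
Your overall instinct---deduce the lemma from the known differentials for $L_{K(2)}V(1)$ via the cofibre sequence $\Sigma^4V(0)\stackrel{v_1}{\to}V(0)\to V(1)$---is the right one, but you have chosen the wrong arrow of that cofibre sequence, and this creates a genuine gap. You push classes forward along the reduction map $r\colon H^*(\GG_2,(E_2)_*/(3))\to H^*(\GG_2,(E_2)_*/(3,u_1))$ and propose to recover the $V(0)$-differentials from injectivity of $r$ on the targets. But $\ker r$ is exactly the image of multiplication by $v_1$, and the asserted values $\Delta_{k-1}\b^4v_1=\b^3v_1\cdot[\Delta_{k-1}\b]$ and $\Sigma^{48}\overline{\Delta}_{k-1}\b^3v_1$ of the first and third differentials are nonzero $v_1$-multiples. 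So $r$ is not injective on those target groups --- it kills the answer outright --- and correspondingly the reductions of the sources (note $\widetilde{\a}\mapsto\pm\omega^2u^{-4}\a$, so $\Delta_k\widetilde{\a}\b$ reduces to a unit multiple of a $\b$-multiple of $v_2^{j}\a e_0$ or $v_2^j\omega^2u^{-4}\a e_0$, not of $\b^2$ as you write) are $d_5$-cycles in the $V(1)$-spectral sequence by Theorem \ref{LK(2)V(1)}.b. Naturality along $r$ therefore yields only $d_5(\Delta_k\widetilde{\a}\b)\in v_1\cdot H^*$, which is consistent both with the stated answer and with zero; the same failure affects the corresponding vanishing statements in part (b), whose potential targets are again nonzero $v_1$-multiples. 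Even for the second and fourth formulas, where the target does reduce nontrivially, $r$ cannot distinguish $\a(\GG_2)$ from $\a(\GG_2^1)=\a(\GG_2)+v_1\zeta$, so at best you determine those differentials modulo exactly the lower-filtration correction terms the lemma is careful to pin down.

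The paper's proof uses the other map of the cofibre sequence. Since $\Delta_k\widetilde{\a}\b$ and $\Delta_k\b v_1$ are killed by $v_1$, they lie in the image of the Bockstein $\delta^1_{\GG_2}$ associated to $0\to\Sigma^4(E_2)_*/(3)\to(E_2)_*/(3)\to(E_2)_*/(3,u_1)\to 0$; using Theorem \ref{E_1V(1)} and Corollary \ref{Bockstein} one identifies them as $\delta^1_{\GG_2}$ of $(\omega^2u^{-4})^{3k+2}\b$ resp.\ $(\omega^2u^{-4})^{3k+2}\a(\GG_2^1)$, and the geometric boundary theorem transports $d_5\bigl(v_2^{j}\omega^2u^{-4}\b e_0\bigr)=\pm v_2^{j-1}\a\b^3e_0$ from Theorem \ref{LK(2)V(1)}.b into the $V(0)$-spectral sequence, landing on $\delta^1_{\GG_2}$ of the target, i.e.\ on $\pm\Delta_{k-1}\b^4v_1$. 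The remaining two families are then obtained by applying $\delta^0_{\GG_2}$ and the geometric boundary theorem once more, which is also what produces the correct class $\a(\GG_2)$ (rather than $\a(\GG_2^1)$) in the answer. To repair your argument, replace the forward naturality along $r$ by this boundary-map argument; your reduction computations are not wasted, since they reappear in identifying the $\delta^1$-preimages.
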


\remarkstyle{Remark}  By identifying vector space generators 
in the appropriate bidegrees it is easy to see that  
there are unique elements $\l_i,\mu_i,\nu_i\in \FF_3$ such that  
$$
\begin{array}{lll}
d_5(\Delta_{k}\widetilde{\a}\b)&=&\ \l_1\Delta_{k-1}\b^{4}v_1+
\mu_1\Sigma^{48}\overline{\Delta}_{k-2}\a(\GG_2^1)\b^{2} 
+\nu_1 \Sigma^{48}\overline{\Delta}_{k-2}\b^{2}v_1\zeta\\

d_5(\Delta_{k}\b^2)&=&\ \l_2\Delta_{k-1}\a(\GG_2^1)\b^{4} +
\mu_2\Delta_{k-1}\b^{4}v_1\zeta 
+\nu_2\Sigma^{48}\overline{\Delta}_{k-2}\a(\GG_2^1)\b^{2}\zeta\\

d_5(\Sigma^{48}\overline{\Delta}_{k}\widetilde{\a})&=
&\ \l_3\Sigma^{48}\overline{\Delta}_{k-1}\b^{3}v_1 \\

d_5(\Sigma^{48}\overline{\Delta}_{k}\b)&=
&\ \l_4\Sigma^{48}\overline{\Delta}_{k-1}\a(\GG_2^1)\b^{3} +
\mu_4\Sigma^{48}\overline{\Delta}_{k-1}\b^{3}v_1\zeta \ .\\
\end{array} 
$$ 
Naturality and the geometric boundary theorem (cf. Theorem 2.3.4 
of \cite{Rav}) applied to the reso{\-}lution (\ref{hom}) 
allow to determine the values of the $\l_i$, i.e. to 
show the lemma modulo elements of lower filtration. The Lemma 
confirms these values and also determines $\mu_i$ and $\nu_i$;  
formally $\mu_i$ and $\nu_i$ can be deduced by simply replacing 
in the differentials for $E_2^{hG_{24}}\wedge V(0)$ the 
elements $\a$ by $\a(\GG_2)$. 


\begin{proof} We start with $\Delta_k\widetilde{\a}\b$. 
This is in the kernel of $v_1$-multiplication and must therefore 
(after $4$-fold suspension)  be in the image of the 
Bockstein homomorphism $\delta^1_{\GG_2}$ in $H^*(\GG_2,-)$ 
and $\delta^1_{\GG_2^1}$ in $H^*(\GG_2^1,-)$ associated to the short exact sequence 
(\ref{delta^1}).  
Similarly with $\Delta_k\b v_1$. 
By Theorem \ref{E_1V(1)} and by degree reasons we must therefore have 
\begin{equation}\nonumber
\delta^1_{\GG_2^1}((\omega^2u^{-4})^{3k+2}\b)= 
\pm  \Sigma^4\Delta_{k}\widetilde{\a}\b, \ \ \ 
\delta^1_{\GG_2^1}((\omega^2u^{-4})^{3k+2}\a(\GG_2^1)) = 
\pm  \Sigma^4\Delta_{k}\b v_1  
\end{equation} 
and, by Corollary \ref{Bockstein}, we even get 
\begin{equation}\label{various-delta1}
\delta^1_{\GG_2}((\omega^2u^{-4})^{3k+2}\b)= 
\pm  \Sigma^4\Delta_{k}\widetilde{\a}\b, \ \ \ 
\delta^1_{\GG_2}((\omega^2u^{-4})^{3k+2}\a(\GG_2^1)) = 
\pm  \Sigma^4\Delta_{k}\b v_1  \ . 
\end{equation} 
and by $\b$-linearity  
$\delta^1((\omega^2u^{-4})^{3k+2}\a\b^3) = 
\pm  \Sigma^4\Delta_{k}\b^4 v_1$. 
Then the geometric boundary theorem 
and Theorem \ref{LK(2)V(1)}.b show that $\Delta_{6k}\widetilde{\a}\b$ 
and $\Delta_{6k+3}\widetilde{\a}\b$ are permanent cycles and the value 
of the differential in the other cases is as stated (with a suitable 
constant $\e_k$).  

The case of $\Sigma^{48}\overline{\Delta}_{k}\widetilde{\a}\b$ 
can be treated similarly but in this case it would also suffice 
to use the strategy described in the remark above. The sign is 
clearly the same as in the previous case. 

The remaining two cases are deduced from what has already been 
established by using the Bockstein $\delta^0_{\GG_2}$
in $H^*(\GG_2,-)$ associated to the short exact sequence (\ref{delta^0}) 
and the geometric boundary theorem.   
By Corollary \ref{Bockstein} we have 
$$
\begin{array}{ll} 
\delta^0_{\GG_2}(\Delta_{k}\widetilde{\a}\b)=\Delta_{k}\b^{2}, &  
\delta^0_{\GG_2}(\Sigma^{48}\overline{\Delta}_{k}\widetilde{\a})= 
\Sigma^{48}\overline{\Delta}_{k}\b\\
\delta^0_{\GG_2}(\Delta_{k}\b^4 v_1) =\Delta_{k}\a(\GG_2)\b^4, &
\delta^0_{\GG_2}(\Sigma^{48}\overline{\Delta}_{k}\b^3 v_1) = 
\Sigma^{48}\overline{\Delta}_{k}\a(\GG_2)\b^3\ . 
\end{array}
$$  
In fact, by the Corollary we only need to determine 
$\delta^0_{\GG_2^1}$ and this is straightforward in the case of 
$\Sigma^{48}\overline{\Delta}_{k}\b^3 v_1$ and 
$\Sigma^{48}\overline{\Delta}_{k}\widetilde{\a}\b^3$. In the other two cases 
it is straightforward modulo terms of lower filtration and by degree reasons 
there are no error terms of lower filtration.  
\end{proof}

\begin{cor}\label{D5} The $d_5$-differential in the 
Adams-Novikov spectral sequence for $L_{K(2)}V(0)$ 
is linear with respect to $\FF_3[\b,v_1]\otimes\Lambda(\zeta)$   
and is trivial on all $\FF_3[\b,v_1]\otimes\Lambda(\zeta)$-module 
generators of $H^*(\GG_2^1,(E_2)_*/(3))$ of Theorem \ref{H*G21} 
except the following: 

$$
\begin{array}{llll}
d_5(\Delta_m\beta)       &=&\pm \Delta_{m-1}\a(\GG_2)\b^3 & m\not\equiv 0\mod (3) \\
d_5(\Delta_{2m}\widetilde{\alpha}) &= & 
\pm \Delta_{2m-1}\b^3v_1& m\not\equiv 0\mod (3)\\
d_5(\Delta_{6m+1}\widetilde{\alpha})&=&\pm \Delta_{6m}\b^3v_1&  \\
d_5(\Delta_{6m+5}\widetilde{\alpha}\b)&= 
&\pm\Delta_{6m+4}\b^4{v_1} &         \\
&\\
d_5(\overline{b}_{3^{n+1}(6m+1)})&= 
&\pm\Sigma^{48}\overline{\Delta}_{3^n(6m+1)-3}\b^2v_1 
& n\geq 0 \\ 
d_5(\overline{b}_{3^{n}(6m+5)})&=
&\pm\Sigma^{48}\overline{\Delta}_{3^{n-1}(6m+5)-3}\b^2v_1 
& m\equiv 1\mod(3),\ n\geq 2\\ 
&\\
d_5(\Sigma^{48}\overline{\Delta}_{2m})&=
&\pm\Sigma^{48}\overline{\Delta}_{2m-1}\a(\GG_2)\b^2    &m\not\equiv 0\mod (3)\\
d_5(\Sigma^{48}\overline{\Delta}_{3^n(6m+1)-2})&=
&\pm\Sigma^{48}\overline{\Delta}_{3^n(6m+1)-3}\a(\GG_2)\b^2&n\geq 0\\
d_5(\Sigma^{48}\overline{\Delta}_{3^n(6m+5)-2})&= 
&\pm\Sigma^{48}\overline{\Delta}_{3^n(6m+5)-3}\a(\GG_2)\b^2&n\geq 1   \\
d_5(\Sigma^{48}\overline{\Delta}_{2m}\widetilde{\alpha})            &=&
\pm \Sigma^{48}\overline{\Delta}_{2m-1}\b^3v_1           &m\not\equiv 0\mod (3)\\
d_5(\Sigma^{48}\overline{\Delta}_{3^n(6m+5)-2}\widetilde{\alpha}) &=&
\pm\Sigma^{48}\overline{\Delta}_{3^{n}(6m+5)-3}\b^3v_1,&  m\not\equiv 1\mod(3), 
\ n\geq 1\ . \\
\end{array}
$$
\end{cor}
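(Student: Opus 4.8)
The plan is to derive Corollary \ref{D5} from Lemma \ref{d5} by a combination of $\FF_3[\b,v_1]\otimes\Lambda(\zeta)$-linearity and the module structure of $H^*(\GG_2^1,(E_2)_*/(3))$ exhibited in Theorem \ref{H*G21}. First I would go through the list of module generators in Theorem \ref{H*G21} one by one and observe that a differential $d_5$ on such a generator $x$ must land in $E_5^{s+5,t-4,*}=E_2^{s+5,t-4,*}$; since the algebraic spectral sequence has only four columns ($0\le s\le 3$), the only generators on which $d_5$ can possibly be nonzero are those in filtration $s=0$ or $s=1$ (in the notation of Theorem \ref{H*G21}, the classes $\Delta_m\b$, $\a$, $\Delta_k\a$, $\Delta_k\widetilde{\a}$, their $\b$-multiples, and the $\overline{b}_{*}$'s; the $\overline{\Delta}_*$-generators sit in filtration $3$ after the $48$-fold suspension is accounted for and so are automatically $d_5$-cycles, except that their suspended versions $\Sigma^{48}\overline{\Delta}_*$ occupy filtration $0,1$ in the resolution (\ref{hom}), which is exactly why they reappear in the list). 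For each such generator I would write it, using Proposition \ref{extensions} and the explicit generators of Theorem \ref{H*G21}, as an $\FF_3[\b,v_1]\otimes\Lambda(\zeta)$-multiple of one of the four classes $\Delta_k\widetilde{\a}\b$, $\Delta_k\b^2$, $\Sigma^{48}\overline{\Delta}_k\widetilde{\a}$, $\Sigma^{48}\overline{\Delta}_k\b$ whose differential is computed in Lemma \ref{d5}, or else show that it is a $d_5$-cycle because the target bidegree of $d_5$ is zero.

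Concretely, the first block of the corollary is immediate: $d_5(\Delta_m\b)$ and $d_5(\Delta_{2m}\widetilde{\a})$, $d_5(\Delta_{6m+1}\widetilde{\a})$, $d_5(\Delta_{6m+5}\widetilde{\a}\b)$ all come from Lemma \ref{d5}(a) and (b) after dividing or multiplying by the appropriate power of $\b$ (recall $\Delta_m\b^2=\b\cdot(\Delta_m\b)$ and, by Proposition \ref{extensions} together with the $\b$-relations, $\Delta_{6m+3}\widetilde{\a}\b$ is carried into a class already known to be a permanent cycle). The vanishing statements — that $d_5$ is trivial on $\a$, on $\Delta_{2m+1}\a$, on the various $\alpha\b$-generators, and on the $\Delta_{3m}\b$'s — follow from Lemma \ref{d5}(b), from the sparseness of the $E_2$-term in the relevant bidegrees, and from the fact that these classes are restrictions of permanent cycles in $H^*(G_{24},-)$ (for $\a$, $\b$) coming from the Adams–Novikov spectral sequence for $E_2^{hG_{24}}\wedge V(0)$ via Theorem \ref{EO2V(0)}. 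For the $\overline{b}_*$-block and the $\Sigma^{48}\overline{\Delta}_*$-blocks I would use the $\b$-extensions of Proposition \ref{extensions} (which express $\b\overline{b}_{3^{n+1}(6m+1)}=\pm\overline{\Delta}_{3^n(6m+1)}\widetilde{\a}$ etc.) to transport the already-computed differentials on $\Sigma^{48}\overline{\Delta}_k\widetilde{\a}$ and $\Sigma^{48}\overline{\Delta}_k\b$ back onto $\overline{b}_*$ and onto $\Sigma^{48}\overline{\Delta}_k$ and $\Sigma^{48}\overline{\Delta}_k\widetilde{\a}$; the distinction between the cases $n=1$ and $n\ge 2$ for $\overline{b}_{3^n(6m+5)}$ is forced by which power of $v_1$ annihilates the source, read off from Theorem \ref{H*G21}.

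The one genuinely delicate point, and the step I expect to be the main obstacle, is bookkeeping the signs and the precise index shifts. In Lemma \ref{d5} the constants $\e_k\in\{\pm1\}$ are left unspecified, so in the corollary they can only be recorded as ``$\pm$''; but I must make sure that the index shift on $\overline{\Delta}$ is consistent throughout — in particular that the renaming $\overline{\Delta}_k\rightsquigarrow\Sigma^{48}\overline{\Delta}_{k-2}$ of Remark 2 on notation is applied uniformly, so that e.g. $\Sigma^{48}\overline{\Delta}_{3^n(6m+1)-2}$ in the source corresponds to the class called $\overline{\Delta}_{3^n(6m+1)}$ in Theorem \ref{H*G21}, one level up in the $\overline{\Delta}$-indexing than $\Sigma^{48}\overline{\Delta}_{3^n(6m+1)-3}$ which is the target. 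I would also need to double-check, using the annihilator ideals of Theorem \ref{H*G21}, that each differential I write down actually has a nonzero target in $E_5=E_2$ — for instance that $\Sigma^{48}\overline{\Delta}_{3^n(6m+1)-3}\b^2v_1\ne 0$, which holds precisely because the annihilator of $\overline{\Delta}_{3^n(6m\pm1)}$ is $(v_1^{4\cdot3^n},\b v_1^2)$ and $n\ge 0$. Once the indexing and the nonvanishing of targets are pinned down, the corollary is a mechanical consequence of Lemma \ref{d5}, Proposition \ref{extensions}, Theorem \ref{H*G21}, and linearity, with no further input needed.
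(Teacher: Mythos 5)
Your overall strategy --- deduce the corollary from Lemma \ref{d5} by $\FF_3[\b,v_1]\otimes\Lambda(\zeta)$-linearity, the extensions of Proposition \ref{extensions}, the module structure of Theorem \ref{H*G21}, and division by $\b$ --- is exactly the paper's, whose proof is two sentences citing precisely these inputs together with sparseness and the injectivity of $\b$-multiplication above cohomological degree $3$ (the latter is what licenses your ``dividing by $\b$'' and should be stated). However, two specific pieces of your reasoning are wrong. The opening filtration argument is not valid: the bound $0\le s\le 3$ constrains the algebraic filtration degree, whereas the Adams--Novikov $d_5$ raises the total cohomological degree $s+t$ by $5$ and does not push the algebraic filtration out of range. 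Indeed the corollary itself records nonzero $d_5$'s on the filtration-$3$ generators $\Sigma^{48}\overline{\Delta}_{2m}$, $\Sigma^{48}\overline{\Delta}_{3^n(6m\pm 1)-2}$, etc., and your parenthetical repair --- that the $\Sigma^{48}\overline{\Delta}_*$ ``occupy filtration $0,1$ in the resolution (\ref{hom})'' --- is false, since the $\Sigma^{48}E_2^{hG_{24}}$ summands sit in $X_3$ and $X_4$. The vanishing statements come from sparseness of the target bidegrees (read off from the annihilator ideals of Theorem \ref{H*G21}) together with the vanishing cases of Lemma \ref{d5}(b), not from a column count.

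Second, your explanation of the $n=1$ versus $n\geq 2$ dichotomy for $\overline{b}_{3^{n}(6m+5)}$ is the wrong mechanism and, taken literally, predicts the wrong answer at $n=1$. Writing $m=3m'+1$, Proposition \ref{extensions} gives $\b\,\overline{b}_{3^{n}(18m'+11)}=\pm\overline{\Delta}_{3^{n-1}(18m'+11)}\widetilde{\a}=\pm\Sigma^{48}\overline{\Delta}_{k}\widetilde{\a}$ with $k=3^{n-1}(18m'+11)-2$ for $n\geq 1$; one has $k\equiv 0\bmod 3$ exactly when $n=1$, so the vanishing at $n=1$ is an instance of Lemma \ref{d5}(b). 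It is not forced by any annihilator: the would-be target $\Sigma^{48}\overline{\Delta}_{6m+2}\b^2v_1=\overline{\Delta}_{6m+4}\b^2v_1$ is nonzero in $E_2$, since the annihilator of $\overline{\Delta}_{2m}$ in Theorem \ref{H*G21} is only $(v_1^2)$, which does not contain $\b^2v_1$. With these two points corrected, the remaining bookkeeping (index shifts under the renaming $\overline{\Delta}_k\rightsquigarrow\Sigma^{48}\overline{\Delta}_{k-2}$, and checking nonvanishing of targets) goes through as you describe.
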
 

\begin{proof}  
Linearity with respect to $\FF_3[\b,v_1]\otimes\Lambda(\zeta)$ 
is clear. The rest is an immediate consequence of the previous lemma  
together with sparseness, Proposition \ref{extensions}, Theorem \ref{H*G21}, 
and the fact that $\b$-multiplication is injective above 
cohomological degree $3$.  
\end{proof} 

\begin{cor}\label{E6}  
$E_6$ is the quotient of the direct sum of cyclic 
$\FF_3[\b,v_1]\otimes\Lambda(\zeta)$-modules with 
the following generators and annihilating ideals  
$$
\begin{array}{lll}
1=\Delta_0          &      & (\b v_1^2,\b^3v_1) \\
&\\
\Delta_{m}\beta& 0\neq m\equiv 0\mod(3) &(v_1^2,\b^2v_1) \\
\Delta_{6m+1}\b v_1  &                      &(v_1,\b^2) \\
\Delta_{6m+4}\b v_1 &                      &(v_1,\b^3) \\
\Delta_{m}\b v_1&    m\equiv 2\mod(3)     &(v_1) \\ 
&\\
\alpha(\GG_2^1)                &          & (\b v_1,\b^3) \\
\Delta_{2m+1}\alpha(\GG_2^1)& m\not\equiv 2\mod(3)       &(v_1,\b^3) \\
\Delta_{2m+1}\alpha(\GG_2^1)& m\equiv 2\mod(3)            &(v_1) \\
\Delta_{2.3^n(3m-1)}\alpha(\GG_2^1)& m\not\equiv 0\mod (3),\ n\geq 1 
& (v_1^{4.3^{n+1}-1},\b v_1,\b^3)  \\
\Delta_{2(3m-1)}\alpha(\GG_2^1)& m\not\equiv 0\mod (3) 
& (v_1^{11},\b v_1,\b^4) \\
\Delta_{6m}\widetilde{\alpha}    & & (v_1)\\
b_{2(9m+2)+1}   & &  (v_1^2,\b)\\
\Delta_{6m+3}\widetilde{\alpha}  & & (v_1^{3},\b v_1) \\
&\\
\Delta_{2.3^n(3m+1)}\alpha(\GG_2^1)\b & n\geq 1                   & (v_1,\b^2) \\
\Delta_{2(3m+1)}\alpha(\GG_2^1)\b & m\equiv 0\mod (3)           & (v_1) \\
\Delta_{2.3^n(3m-1)}\alpha(\GG_2^1)\b& n\geq 1         & (v_1,\b^2) \\
\Delta_{2(3m-1)}\alpha(\GG_2^1)\b    & m\equiv 0\mod (3) & (v_1,\b^3) \\
&\\
\overline{b}_{3^{n+1}(6m+1)}v_1  & n\geq 0
& (v_1^{6.3^n},\beta) \\ 
\overline{b}_{3^{n}(6m+5)}v_1    & m\equiv 1\mod (3),\ n\geq 2          
& (v_1^{10.3^n},\beta) \\
\overline{b}_{3^{n}(6m+5)}       & m\equiv 1\mod (3),\ n=0,1                
& (v_1^{10.3^n+1},\b v_1) \\ 
&\\
\Sigma^{48}\overline{\Delta}_{3^n(6m+1)-3}&  n\geq 1  
&(v_1^2,\b^2v_1 )\\
\Sigma^{48}\overline{\Delta}_{3^n(6m+5)-3}&  m\not\equiv 1\mod (3),\ n\geq 1 
&(v_1^2,\b^3v_1)\\
\Sigma^{48}\overline{\Delta}_{3^n(6m+5)-3}&  m\equiv 1\mod (3),\ n\geq 1
&(v_1^2,\b^2v_1)\\
\Sigma^{48}\overline{\Delta}_{(6m+1)-3}v_1     &&(v_1,\b^2)\\
\Sigma^{48}\overline{\Delta}_{(6m+5)-3}v_1     &&(v_1)\\
&\\
\Sigma^{48}\overline{\Delta}_{3^n(6m\pm 1)-2}v_1& n\geq 1    
&(v_1^{4.3^n-1},\b v_1,\b^3)\\
\Sigma^{48}\overline{\Delta}_{(6m+1)-2}v_1    & 
&(v_1^{3},\b v_1)\\
\Sigma^{48}\overline{\Delta}_{(6m+5)-2}      &
&(v_1^{4},\b v_1^2,\b^3v_1)\\
\end{array}
$$
$$
\begin{array}{lll}
&\\
\Sigma^{48}\overline{\Delta}_{2m-1}\alpha(\GG_2^1)& m\not\equiv 0\mod(3)     
&(v_1,\b^3) \\
\Sigma^{48}\overline{\Delta}_{6m+5}\alpha(\GG_2^1)   &  &(v_1) \\
\Sigma^{48}\overline{\Delta}_{3^n(6m+1)-3}\alpha(\GG_2^1)& n\geq0&(v_1,\b^2) \\ 
\Sigma^{48}\overline{\Delta}_{3^n(6m+5)-3}\alpha(\GG_2^1)&
m\not\equiv 1\mod(3), \ n\geq 1  &(v_1,\b^3) \\ 
\Sigma^{48}\overline{\Delta}_{3^n(6m+5)-3}\alpha(\GG_2^1)&
m\equiv 1\mod(3),\  n\geq 1 &(v_1,\b^2) \\ 
\Sigma^{48}\overline{\Delta}_{6m}\widetilde{\alpha}   &   &(v_1)\\
\Sigma^{48}\overline{\Delta}_{(6m+5)-2}\widetilde{\alpha}& 
m\not\equiv 1\mod(3)  &(v_1)\\
\end{array}
$$
\noindent
modulo the following relations in which module generators 
are put into paranthesis (in order to distinguish between module 
multiplications and generators)
$$
\begin{array}{lllll}
\beta^3(\Delta_{k}\a(\GG_2^1))&=
& \b^2\zeta(\Delta_{k}\b v_1),\  & k=2(3m-1)& m\not\equiv 0\mod(3)   \\
\beta^2(\Delta_{k}\a(\GG_2^1)\b)&=
& \b^2\zeta(\Delta_{k}\b v_1),\    & k=2(3m-1)& m\equiv 0\mod(3)  \\
\b^2(\Sigma^{48}\overline{\Delta}_{k}\a(\GG_2^1))&=
& \b^2v_1\zeta(\Sigma^{48}\overline{\Delta}_{k}), &k=2m-1& m\not\equiv 0\mod (3) \\
\b^2(\Sigma^{48}\overline{\Delta}_{k}\a(\GG_2^1)&=
& \b^2v_1\zeta(\Sigma^{48}\overline{\Delta}_{k}) 
& k=3^n(6m+5)-3& m\not\equiv 1\mod(3),\ n\geq 1 \ .\\
\end{array}
$$
\end{cor}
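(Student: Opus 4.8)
The plan is to derive $E_6$ directly from $E_5\cong E_2\cong H^*(\GG_2,(E_2)_*/(3))$, whose structure as an $\FF_3[\b,v_1]\otimes\Lambda(\zeta)$-module is given by Theorem \ref{H*G21} together with the K\"unneth isomorphism of Theorem \ref{H*G2} (Corollary \ref{Bockstein} records how $\a(\GG_2)$ and $\a(\GG_2^1)$ are related). Since $d_5$ is $\FF_3[\b,v_1]\otimes\Lambda(\zeta)$-linear, it is completely determined by its values on the module generators, and these values are listed in Corollary \ref{D5}. Thus the computation of $E_6=\ker d_5/\operatorname{im} d_5$ is a purely bookkeeping task: for each family of cyclic summands in the description of $E_5$ one records which generators survive, which are hit, and which acquire shorter annihilator ideals because a multiple of the generator (rather than the generator itself) is hit.

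First I would organize the generators of $E_5$ into the blocks appearing in Theorem \ref{H*G21} (the $\Delta_k$-block, the $\a(\GG_2^1)$- and $\widetilde\a$-block, the $\b$-multiples thereof, the $\overline b$-block, and the four $\overline\Delta$-blocks), and augment each by its $\zeta$-multiple coming from Theorem \ref{H*G2}. For each family I would then apply the relevant line of Corollary \ref{D5}. There are three phenomena to track carefully. (i) A generator $x$ with $d_5(x)=\pm y$ where $y$ itself generates a cyclic summand: then $x$ disappears and $y$ disappears, so neither contributes to $E_6$. (ii) A generator $x$ with $d_5(x)=\pm\b^j z$ or $\pm v_1^j z$ for $j>0$, where $z$ generates a summand: then $x$ survives but its new annihilator ideal is enlarged by $\b^j$ (resp.\ $v_1^j$), while the summand on $z$ has its annihilator ideal correspondingly shrunk — e.g.\ the passage from $(v_1^{10.3^n+1},\b v_1)$ on $\overline b_{3^n(6m+5)}$ (for $n\ge2$) to the annihilator $(v_1^{10.3^n},\beta)$ on $\overline b_{3^n(6m+5)}v_1$, since $d_5(\overline b_{3^n(6m+5)})=\pm\Sigma^{48}\overline\Delta_{3^{n-1}(6m+5)-3}\b^2v_1$ and this target survives, being not itself a multiple. (iii) Relations in $E_5$ (the last group in Theorem \ref{H*G21}, via Proposition \ref{extensions}) must be carried along and, where both sides survive $d_5$, re-expressed in terms of the chosen $E_6$-generators; this accounts for the four relations displayed at the end of the Corollary, each of which is the image under the quotient map of a relation already present in $E_5$ (or produced by $d_5$-linearity from such a relation, as with $\b^3[\Delta_k\a(\GG_2^1)]=\b^2\zeta[\Delta_k\b v_1]$ for $k=2(3m-1)$, $m\not\equiv0$, which comes from $d_5(\Delta_k\b^2)=\pm\Delta_{k-1}\a(\GG_2)\b^4$ combined with $\a(\GG_2)=\a(\GG_2^1)-v_1\zeta$).

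The one point requiring genuine care, rather than pure bookkeeping, is the case-splitting on congruences of $k$ modulo powers of $3$, and in particular the boundary cases $n=0,1$ where the differentials in Corollary \ref{D5} behave irregularly (e.g.\ $d_5(\overline b_{3^n(6m+5)})$ is nonzero only for $n\ge2$, while $\overline b_{3^{n+1}(6m+1)}$ supports a $d_5$ already for $n\ge0$). One must check that every generator of $E_5$ is accounted for exactly once — no generator both hits something and is hit — and that the bidegrees match so that the claimed targets $\Sigma^{48}\overline\Delta_{k-1}\a(\GG_2)\b^2$ etc.\ really are the stated $E_6$-generators and not combinations of several. I expect the main obstacle to be precisely this exhaustiveness check across all the subfamilies: verifying that the block of $\overline\Delta_k$-generators, after removing those killed by $d_5$ from the $\overline b$-block and those supporting a $d_5$, reassembles into exactly the list of $\Sigma^{48}\overline\Delta$-generators displayed, with the right annihilator ideals. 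This is routine but lengthy, and is the analogue of the reindexing already flagged in the text (the change from $\overline\Delta_k$ to $\Sigma^{48}\overline\Delta_{k-2}$). Once the bookkeeping is complete, the displayed presentation of $E_6$ follows, and the linearity of $d_5$ over $\FF_3[\b,v_1]\otimes\Lambda(\zeta)$ — which I would state first — guarantees there is nothing further to check.
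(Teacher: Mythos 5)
Your proposal follows essentially the same route as the paper: the paper's proof of this corollary is exactly the bookkeeping you describe, namely computing $\ker d_5/\operatorname{im}d_5$ from Theorem \ref{H*G21} (tensored with $\Lambda(\zeta)$ via Theorem \ref{H*G2}) using the $\FF_3[\b,v_1]\otimes\Lambda(\zeta)$-linearity of $d_5$ and the list of values in Corollary \ref{D5}, with the paper likewise flagging the residue-class reindexing (especially for $\Sigma^{48}\overline{\Delta}_m$ with $m$ even, rewritten as $k-3$ with $k$ odd equal to $3^n(6m\pm1)$) as the only real complication. One small caveat: the displayed relations are not ``already present in $E_5$'' (which is a direct sum of cyclics) but are created by the quotient, since $d_5$ of a generator is a two-term combination once $\a(\GG_2)=\a(\GG_2^1)-v_1\zeta$ is expanded --- a mechanism your parenthetical shows you understand.
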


\begin{proof} In principle this is a straightforward consequence 
of the previous corollary and Theorem \ref{H*G21}. Complications arise 
because some of the time integers are distinguished by their residue 
class modulo $(2)$, some of the time by their residue class modulo $(3)$ 
and some of the time the distinction is more involved. 

The most complicated case is perhaps that of the classes 
$\Sigma^{48}\overline{\Delta}_{m}$ for $m$ even.    
There one uses that an even integer can be uniquely written in the 
form $k-3$ with $k$ odd and an odd integer $k$ 
can be uniquely written either as $3^n(6m\pm 1)$
with $n\geq 1$. This together 
with the previous corollary and Theorem \ref{H*G21} leads to the result 
for the first block of generators involving 
$\Sigma^{48}\overline{\Delta}_{m}$. 
The other blocks can be treated similarly.
\end{proof} 

The following Lemma records immediate consequences 
of the knowledge of the $d_9$-differential in the Adams-Novikov 
spectral sequence for $\pi_*(L_{K(2)}V(1))$. 


\begin{lem}\label{d9} \mbox{ }  The following identities hold in  
the $E_9$-term of the Adams-Novikov spectral sequence 
for $\pi_*(L_{K(2)}V(0))$. 
\begin{itemize} 
\item[a)] Let $k\equiv 2\mod(3)$. Then there are constants 
$\e_k'\in \{\pm 1\}$ such that 
$$
\begin{array}{lll}
d_9(\Delta_{k}\a(\GG_2)\b^2)&=&\ \e_k'\Delta_{k-2}\b^7 \\
d_9(\Delta_{k}\b^2 v_1)&=&\ \e_k'\Delta_{k-2}\widetilde{\a}\b^6 \\
d_9(\Sigma^{48}\overline{\Delta}_k\a(\GG_2)\b)&=
&\ \e_k'\Sigma^{48}\overline{\Delta}_{k-2}\b^6 \\
d_9(\Sigma^{48}\overline{\Delta}_{k}\b v_1)&=
&\ \e_k'\Sigma^{48}\overline{\Delta}_{k-2}\widetilde{\a}\b^5\ . \\
\end{array} 
$$
\item[b)] Let $k\equiv 0,1\mod(3)$. Then 
$$
\begin{array}{lll}
\begin{array}{lll}
d_9(\Delta_{k}\a(\GG_2)\b^2)&=& 0 \\
d_9(\Delta_{k}\b^2 v_1)&=& 0 \\
d_9(\Sigma^{48}\overline{\Delta}_k\a(\GG_2)\b)&=& 0  \\
d_9(\Sigma^{48}\overline{\Delta}_{k}\b v_1)&=& 0\ . \\
\end{array} \end{array} 
$$ 
\end{itemize} 
\end{lem}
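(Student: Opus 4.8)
The plan is to deduce Lemma \ref{d9} from the known $d_9$-differential in the Adams--Novikov spectral sequence for $\pi_*(L_{K(2)}V(1))$ (Theorem \ref{LK(2)V(1)} of the appendix) by exactly the same Bockstein-plus-geometric-boundary mechanism that was used to establish Lemma \ref{d5}, now applied one step further up the $V(1)$ differentials. First I would recall that the cofibre sequence $\Sigma^4 V(0)\buildrel{v_1}\over\to V(0)\to V(1)$ realizes the short exact sequence (\ref{delta^1}), so that the associated Bockstein $\delta^1_{\GG_2}$ in $H^*(\GG_2,-)$ interchanges (up to sign) $v_1$-divisible classes in the $V(0)$-$E_2$-term with classes in the $V(1)$-$E_2$-term, and by the geometric boundary theorem (Theorem 2.3.4 of \cite{Rav}) a $d_9$-differential in the $V(1)$-spectral sequence on $\delta^1$ of a class forces the corresponding $d_9$ on the class itself (or vice versa, a permanent cycle downstairs forces a permanent cycle upstairs). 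The four classes in part (a) come in two $\delta^1$-pairs: $\Delta_k\a(\GG_2)\b^2$ is $\pm\delta^1$ of a $V(1)$-class in the appropriate bidegree and $\Delta_k\b^2 v_1$ is the $\delta^1$-image of another, and likewise for the $\Sigma^{48}\overline{\Delta}_k$ family; moreover the two members of each pair are related by the Bockstein $\delta^0_{\GG_2}$ of (\ref{delta^0}), so a single computation in the $V(1)$-spectral sequence determines both.

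The key steps, in order, are as follows. First, identify in $H^*(\GG_2,(E_2)_*/(3,u_1))$ the classes whose $\delta^1_{\GG_2}$ hit $\Sigma^4\Delta_k\a(\GG_2)\b^2$ and $\Sigma^4\Delta_k\b^2 v_1$; by Theorem \ref{E_1V(1)} and degree reasons these are $(\omega^2u^{-4})^{3k+?}$ times appropriate powers of $\a$ and $\b$ (exactly as in the proof of Lemma \ref{d5}, but multiplied by an extra $\b^2$ and shifted in the $\Delta$-index), and Corollary \ref{Bockstein} guarantees the same identification works for $\delta^1_{\GG_2^1}$ and hence that there is no $\zeta$-ambiguity. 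Second, consult Theorem \ref{LK(2)V(1)} for the $d_9$-differential in $\pi_*(L_{K(2)}V(1))$: it is there that one reads off that the relevant $V(1)$-classes support a nonzero $d_9$ precisely when $k\equiv 2\bmod 3$ and are permanent cycles (through $E_9$) when $k\equiv 0,1\bmod 3$. Third, apply the geometric boundary theorem to transport this across $\delta^1$, obtaining the $d_9$ on $\Delta_k\b^2 v_1$ and (via $\delta^0_{\GG_2}$, using $\delta^0_{\GG_2}(\Delta_k\widetilde\a\b^6)=\Delta_k\b^7$ and $\delta^0_{\GG_2}(\Delta_k\b^2 v_1)=\Delta_k\a(\GG_2)\b^2$ from Corollary \ref{Bockstein}, together with naturality of $d_9$ with respect to $\delta^0$) the $d_9$ on $\Delta_k\a(\GG_2)\b^2$. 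Fourth, repeat verbatim for the $\Sigma^{48}\overline{\Delta}_k$ versions, where the filtration shift built into the resolution (\ref{hom}) accounts for the drop from $\b^7,\b^6$ to $\b^6,\b^5$; here the argument is in fact slightly easier because, as in the analogous step of Lemma \ref{d5}, the relevant classes are not $v_1$-divisible and the $\delta^0$-computation is exact with no lower-filtration correction terms. Finally, fix the single sign $\e_k'\in\{\pm1\}$ by noting that $\delta^0$ and $\delta^1$ are compatible with signs so that the same $\e_k'$ governs all four differentials in a given $k$.

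The main obstacle I expect is bookkeeping rather than conceptual: one must verify that in each of the four bidegrees the target of the putative $d_9$ is genuinely one-dimensional (or at least that the ambiguity lies in lower algebraic-resolution filtration and is killed by the $V(1)$-input), so that the differential is determined up to the single sign $\e_k'$ and not merely up to an unknown $\F_3$-linear combination. This is the place where one cannot avoid identifying vector-space generators in the relevant internal degrees, exactly as in the remark following Lemma \ref{d5}; the payoff of having already pinned down $E_6$ in Corollary \ref{E6} is that the list of surviving classes in each bidegree is short, so the ambiguity is controlled. A secondary technical point is to make sure the geometric boundary theorem is being applied to a $d_9$ and not inadvertently to a composite of lower differentials — but since all $d_r$ with $r\not\equiv1\bmod 4$ vanish and $d_5$ has already been computed in Corollary \ref{D5}, the only differential that can act on these classes at the $E_9$-stage is $d_9$, so this is automatic.
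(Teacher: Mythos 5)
Your proposal is correct and takes essentially the same route as the paper's proof: the paper likewise starts from $\Delta_k\b^2v_1$ and $\Sigma^{48}\overline{\Delta}_k\b v_1$, identifies them (after a $4$-fold suspension) as $\delta^1_{\GG_2}$-images of explicit classes in $H^*(\GG_2,(E_2)_*/(3,u_1))$, invokes the geometric boundary theorem together with Theorem \ref{LK(2)V(1)}.c, and then transports the result to $\Delta_k\a(\GG_2)\b^2$ and $\Sigma^{48}\overline{\Delta}_k\a(\GG_2)\b$ via exactly the $\delta^0_{\GG_2}$-identities you quote. The only cosmetic discrepancy is your opening claim that $\Delta_k\a(\GG_2)\b^2$ is itself a $\delta^1$-image, which you then correctly abandon in favour of the $\delta^0$-argument.
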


\remarkstyle{Remark} By identifying vector space generators 
in the appropriate bidegrees it is easy to see that  
there are unique elements $\l_i,\mu_i,\nu_i\in \FF_3$ such that  
$$
\begin{array}{lll}
d_9(\Delta_{k}\a(\GG_2^1)\b^2)&=&\ \l_5\Delta_{k-2}\b^{7}+
\mu_5\Delta_{k-2}\widetilde{\a}\b^{6}\zeta+
\nu_5 \Sigma^{48}\overline{\Delta}_{k-3}\b^{5}\zeta\\
d_9(\Delta_{k}\b^2v_1)&=&\ \l_6\Delta_{k-2}\widetilde{\a}\b^{6} +
\mu_6\Sigma^{48}\overline{\Delta}_{k-3}\b^{5}+
\nu_6 \Sigma^{48}\overline{\Delta}_{k-3}\widetilde{\a}\b^{4}\zeta\\
d_9(\Sigma^{48}\overline{\Delta}_k\a(\GG_2^1)\b)&=
&\ \l_7\Sigma^{48}\overline{\Delta}_{k-2}\b^{6}+
\mu_7\Sigma^{48}\overline{\Delta}_{k-2}\widetilde{\a}\b^{5}\zeta\\
d_9(\Sigma^{48}\overline{\Delta}_{k}\b v_1)&=
&\ \l_8\Sigma^{48}\overline{\Delta}_{k-2}\widetilde{\a}\b^{5}\ . \\
\end{array} 
$$  
As before naturality and the geometric boundary theorem 
applied to the resolution (\ref{hom}) 
allow to show the lemma modulo elements of lower filtration, 
i.e. to determine the values of the $\l_i$. Again the Lemma 
confirms these values and also determines $\mu_i$ and $\nu_i$, and  
formally $\mu_i$ and $\nu_i$ can be deduced by simply replacing 
in the differentials for $E_2^{hG_{24}}\wedge V(0)$ the 
elements $\a$ by $\a(\GG_2)$. 


\begin{proof} The proof resembles that of Lemma \ref{d5}. 
We start with the class $\Delta_k\b^2 v_1$. From (\ref{various-delta1}) 
and $\b$-linearity of $\delta^1_{\GG_2}$ we get    
$$
\delta^1_{\GG_2}((\omega^2u^{-4})^{3k+2}\a(\GG_2)\b) = 
\pm \Sigma^{4}\Delta_{k}\b^2 v_1, \ \ \ 
\delta^1_{\GG_2}((\omega^2u^{-4})^{3k-4}\b^6) = 
\pm \Sigma^{4}\Delta_{k-2}\widetilde{\a}\b^6 
$$ 
and as before the geometric boundary theorem and Theorem \ref{LK(2)V(1)}.c 
yield the value of the differential (with a suitable constant $\e_k'$).  

The case of $\Sigma^{48}\overline{\Delta}_{k}\b v_1$ 
can be treated similarly but we can also use the strategy 
described in the remark above. Again the sign is 
clearly the same as in the previous case.   

The remaining two cases can once again be deduced by using the Bockstein 
$\delta^0_{\GG_2}$ in $H^*(\GG_2,-)$ associated to the short exact sequence 
(\ref{delta^0}). As in the proof of Lemma \ref{d5} we obtain  
$$ 
\begin{array}{ll} 
\delta^0_{\GG_2}(\Delta_{m}\b^2 v_1)=
\Delta_{m}\a(\GG_2)\b^2, &  
\delta^0_{\GG_2}(\Sigma^{48}\overline{\Delta}_{m}\b v_1) = 
\Sigma^{48}\overline{\Delta}_{m}\a(\GG_2)\b \\
\delta^0_{\GG_2}(\Delta_{m}\widetilde{\a}\b^6) = \Delta_{m}\b^7, &
\delta^0_{\GG_2}(\Sigma^{48}\overline{\Delta}_{m}\widetilde{\a}\b^5) = 
\Sigma^{48}\overline{\Delta}_{m}\b^6  \\
\end{array}
$$  
and the geometric boundary theorem gives the result. 
\end{proof}

\begin{cor}\label{D9} The $d_9$-differential in the Adams-Novikov 
spectral sequence for $L_{K(2)}V(0)$ is linear with respect to 
$\FF_3[\b,v_1]\otimes\Lambda(\zeta)$   
and is trivial on all $\FF_3[\b,v_1]\otimes\Lambda(\zeta)$-module 
generators of $E_6^{*,*}$ given in Corollary \ref{E6} except the following: 
$$
\begin{array}{lllll}
d_9(\Delta_m\beta v_1)        &=&\pm\Delta_{m-2}\widetilde{\a}\b^5  
&\ m\equiv 2                  &\mod (3)\\
d_9(\Delta_{6m+5}\a(\GG_2^1)) &=&\pm(\Delta_{6m+3}\b^5+\Delta_{6m+3}\widetilde{\a}\b^4\zeta) &\\
d_9(\Delta_{2(3m+1)}\a(\GG_2^1)\b)&=
&\pm(\Delta_{6m}\b^6+\Delta_{6m}\widetilde{\a}\b^5\zeta)& \\
d_9(\overline{b}_{18m+11})    &=&
\pm(\Sigma^{48}\overline{\Delta}_{6m}
\b^4+\Sigma^{48}\overline{\Delta}_{6m}\widetilde{\a}\b^3\zeta) &  \\ 
d_9(\Sigma^{48}\overline{\Delta}_{6m+2}v_1)  &=&
\pm\Sigma^{48}\overline{\Delta}_{6m}\widetilde{\a}\b^4 &\\
d_9(\Sigma^{48}\overline{\Delta}_{6m+5}v_1)  &=&
\pm\Sigma^{48}\overline{\Delta}_{6m+3}\widetilde{\a}\b^4 
&\ m\not\equiv 1&\mod (3)\\
d_9(\Sigma^{48}\overline{\Delta}_{6m+5}v_1)
&=&\pm \overline{b}_{3(6m+5)}\b^5           &\ m \equiv 1&\mod (3)  \\ 
d_9(\Sigma^{48}\overline{\Delta}_{6m+5}\alpha(\GG_2^1))
&=&\pm(\Sigma^{48}\overline{\Delta}_{6m+3}\b^5+
\Sigma^{48}\overline{\Delta}_{6m+3}\widetilde{\a}\b^4\zeta) &\ . \\ 
\end{array}
$$
\end{cor}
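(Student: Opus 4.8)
The plan is to derive Corollary \ref{D9} from Lemma \ref{d9} in exactly the same formal manner in which Corollary \ref{D5} was obtained from Lemma \ref{d5}. The starting data are: the description of $E_6$ as an $\FF_3[\b,v_1]\otimes\Lambda(\zeta)$-module in Corollary \ref{E6}; the four families of differentials in Lemma \ref{d9} together with the refinement in the Remark following it (which records the values of $\mu_i,\nu_i$, obtained by replacing $\a$ by $\a(\GG_2)$ in the differentials for $E_2^{hG_{24}}\wedge V(0)$ of Theorem \ref{EO2V(0)}); the extension relations of Proposition \ref{extensions}; and the fact that $\b$-multiplication is injective in cohomological degrees $>3$.

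First I would fix the indexing conventions already in force in Corollary \ref{E6}: even integers are written $k-3$ with $k$ odd (and $k$ odd is $3^n(6m\pm1)$ with $n\geq 1$), while the classes $\Sigma^{48}\overline{\Delta}_{j}v_1$, $\Sigma^{48}\overline{\Delta}_{j}v_1^2$, $b_{2(9m+2)+1}$, $\overline{b}_{3(6m+5)}$ etc. carry the residue-class restrictions coming from the surviving part of Theorem \ref{H*G21}. For each generator $g$ listed in Corollary \ref{E6} I would compute $d_9(g)$ by $\FF_3[\b,v_1]\otimes\Lambda(\zeta)$-linearity: one applies a monomial $\b^jv_1^i\zeta^{\e}$ to one of the four ``source'' classes $\Delta_k\a(\GG_2)\b^2$, $\Delta_k\b^2v_1$, $\Sigma^{48}\overline{\Delta}_k\a(\GG_2)\b$, $\Sigma^{48}\overline{\Delta}_k\b v_1$ appearing in Lemma \ref{d9} and reads off the target, then rewrites the target in terms of the $E_6$-generators using Corollary \ref{E6} and Proposition \ref{extensions}. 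The congruence $k\equiv 2\ (3)$ versus $k\equiv0,1\ (3)$ in Lemma \ref{d9} produces precisely the residue-class conditions displayed in Corollary \ref{D9}, and the $\zeta$-terms appear exactly when the source class in $E_6$ involves the combination $\a(\GG_2^1)$ (since $\a(\GG_2)=\a(\GG_2^1)-v_1\zeta$ by Corollary \ref{Bockstein}, a monomial in $\a(\GG_2)$ acquires a $v_1\zeta$-correction when expressed via $\a(\GG_2^1)$); this is the origin of the formula $d_9(\Delta_{6m+5}\a(\GG_2^1))=\pm(\Delta_{6m+3}\b^5+\Delta_{6m+3}\widetilde{\a}\b^4\zeta)$ and its siblings. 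For generators not of this form, or whose $d_9$-image would land in a bidegree that is already zero in $E_6$ or that violates sparseness, one records $d_9(g)=0$; here sparseness ($E_2^{s,t}=0$ unless $t\equiv0\ (4)$, so $d_r=0$ for $r\not\equiv1\ (4)$, and the $v_1$-torsion heights from Corollary \ref{E6}) together with $\b$-injectivity above degree $3$ pins down the images without ambiguity.

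The case requiring the most care is that of the classes $\Sigma^{48}\overline{\Delta}_{6m+5}v_1$: when $m\not\equiv1\ (3)$ its $d_9$ hits $\Sigma^{48}\overline{\Delta}_{6m+3}\widetilde{\a}\b^4$ in the ``$\overline{\Delta}$-block'', but when $m\equiv1\ (3)$ the target class $\Sigma^{48}\overline{\Delta}_{6m+3}\widetilde{\a}$ does not survive into $E_6$ in the expected form and the image must instead be identified, via the $\b$-extension $\b\overline{b}_{3(6m+5)}=\pm\Sigma^{48}\overline{\Delta}_{3(6m+5)-2}\widetilde{\a}$-type relation of Proposition \ref{extensions} and the surviving generator $\overline{b}_{3(6m+5)}$ of Corollary \ref{E6}, with $\pm\,\overline{b}_{3(6m+5)}\b^5$. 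I expect this bookkeeping — matching the $6m+5$ families against the $3^n(6m+5)-3$ and $3^n(6m\pm1)-2$ families and the surviving $\overline{b}$'s, and keeping track of which targets have been killed by $d_5$ already (so one must work in $E_6$, not $E_2$) — to be the main obstacle; the signs $\e_k'$ are inherited unchanged from Lemma \ref{d9} by $\FF_3[\b,v_1]\otimes\Lambda(\zeta)$-linearity and need no separate determination. Once every generator of Corollary \ref{E6} has been treated, the list of nonzero $d_9$'s is exactly the displayed one, completing the proof. \qed
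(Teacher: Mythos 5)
Your proposal is correct and follows essentially the same route as the paper: the published proof likewise deduces the corollary from Lemma \ref{d9} by $\FF_3[\b,v_1]\otimes\Lambda(\zeta)$-linearity, Corollary \ref{E6}, Proposition \ref{extensions}, injectivity of $\b$-multiplication in the relevant bidegrees, and the observation (which you phrase via sparseness and bidegree constraints) that the $\b$-torsion classes of $E_6$ sit in too low a cohomological degree to interact via $d_9$. Your identification of the $m\equiv 1\bmod(3)$ case of $\Sigma^{48}\overline{\Delta}_{6m+5}v_1$ hitting $\overline{b}_{3(6m+5)}\b^5$ via the $\b$-extension, and of the $\zeta$-terms arising from $\a(\GG_2)=\a(\GG_2^1)-v_1\zeta$, matches the intended bookkeeping.
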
  

\begin{proof}  
Linearity with respect to $\FF_3[\b,v_1]\otimes\Lambda(\zeta)$ is clear. 
Then we note that the $\b$-torsion classes in the $E_6$-term 
are in too low a cohomological degree to interact via $d_9$ and hence 
$d_9$ is trivial on them.  The rest is an 
immediate consequence of Proposition \ref{extensions}, 
Corollary \ref{E6} and the previous Lemma, and the fact 
that $\b$-multipication is injective in the 
relevant bidegrees.  
\end{proof} 

This finally allows us to calculate the homotopy of $\pi_*(L_{K(2)}V(0))$.  

\noindent {\it Proof of Theorem 1.8.}   
By using the last corollary 
it is straightforward to verify that the $E_{10}$-term has the structure 
described in Theorem \ref{E-infty}. Then we see that $E_{10}^{s,*}=0$ 
for $s>11$ and hence there is no room for higher differentials 
and we get $E_{10}=E_{\infty}$. 

\bigbreak

\appendix\section{The Adams-Novikov spectral sequences converging towards 
$\pi_*(E_2^{hG_{24}}\wedge V(0))$, 
$\pi_*(E_2^{hG_{24}}\wedge V(1))$ and $\pi_*(L_{K(2)}V(1))$} 

The behaviour of the spectral sequence for $\pi_*(E_2^{hG_{24}}\wedge V(0))$ 
can be deduced from that for $\pi_*(E_2^{hG_{24}})$. 
We record this in the following result.


\begin{thm}\label{EO2V(0)} \mbox{ } 
\begin{itemize} 
\item[a)] 
The differentials in the Adams-Novikov spectral sequence
$$
E_2^{s,t}\cong 
\F_3[[v_1^6\Delta^{-1}]][v_1,\Delta^{\pm 1},\b,\a,\widetilde{\a}]/
(\a^2,\widetilde{\a}^2,v_1\a,v_1\widetilde{\a},\a\widetilde{\a}+v_1\b)\\ 
\Longrightarrow \pi_{t-s}(E_2^{hG_{24}}\wedge V(0))
$$  
are linear with respect to 
$\FF_3[\Delta^{\pm 3},v_1,\b,\a,\widetilde{\a}]$. 
The only non{\-}trivial differentials are 
$d_5$ and $d_9$.  They are (redundantly) determined  by 
$$
d_5(\Delta)=\pm\a\b^2, \ \ 
d_5(\Delta\widetilde{\a})=\pm \widetilde{\a}\a\b^2=\mp v_1\b^3\ 
$$   
and  
$$
d_9(\Delta^2\a)=\pm \b^5, \ \ 
d_9(\Delta^2v_1)=\mp\widetilde{\a}\b^4 \ . 
$$ 
\item[b)] 
We have an inclusion of subalgebras 
$$
E_{\infty}^{0,*}\cong 
\FF_3[[v_1^6\Delta^{-1}]][v_1,v_1\Delta,\Delta^{\pm 3}]\ .
$$
In positive filtration $E_{\infty}^{s,t}$ has an $\FF_3$-vector space basis 
given by the $16$ elements which are represented on $E_2$ by 
$\a$, $\b\a$, $\Delta\a$, $\b\Delta\a$, 
$\widetilde{\a}\a=\b v_1$, $\b\widetilde{\a}\a=\b^2v_1$, 
$\Delta\widetilde{\a}\a=\b\Delta v_1$, 
$\b\Delta\widetilde{\a}\a=\b^2\Delta v_1$, 
$\b^j$, $j=1,2,3,4$, 
$\b^k\widetilde{\a}$, $k=0,1,2,3$  and 
their multiples by powers of $\Delta^{\pm 3}$. 
\end{itemize}
\end{thm}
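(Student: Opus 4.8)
\noindent\emph{Plan of proof.}\;
The plan is to obtain everything from the Adams--Novikov spectral sequence for $E_2^{hG_{24}}$ (the $K(2)$-local spectrum of topological modular forms at the prime $3$), transported through the cofibre sequence
$$E_2^{hG_{24}}\stackrel{3}{\longrightarrow}E_2^{hG_{24}}\stackrel{i}{\longrightarrow}E_2^{hG_{24}}\wedge V(0)\stackrel{\partial}{\longrightarrow}\Sigma E_2^{hG_{24}}\ .$$
By \cite{GHMR} (see also \cite{behk}) the source spectral sequence has $E_2$-term $H^*(G_{24},(E_2)_*)$ and its only nontrivial differentials are a $d_5$ with $d_5(\Delta)=\pm\alpha\beta^2$ and a $d_9$ with $d_9(\Delta^2\alpha)=\pm\beta^5$, all linear over the subalgebra of permanent cycles generated by $\Delta^{\pm3}$, $\alpha$ and $\beta$. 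The map $i$ induces on $E_2$-terms the reduction $H^*(G_{24},(E_2)_*)\to H^*(G_{24},(E_2)_*/(3))$, whose target is the algebra of Theorem~\ref{shapiro}(a); so the first step is to push $d_5(\Delta)$ and $d_9(\Delta^2\alpha)$ forward along $i_*$, which yields the two ``$\alpha$-shaped'' differentials of part (a) together with the fact that $\Delta^{\pm3}$, $\alpha$, $\beta$ remain permanent cycles.

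Next I would deal with the classes $v_1$ and $\widetilde\alpha$, which are not in the image of $i_*$. The class $v_1$ is the mod-$3$ Adams self-map, hence a permanent cycle. For $\widetilde\alpha$: since $3\beta=0$ in $\pi_{10}E_2^{hG_{24}}$, the class $\beta$ lifts along $\partial$ to a nonzero class in $\pi_{11}(E_2^{hG_{24}}\wedge V(0))$; by the geometric boundary theorem (Theorem~2.3.4 of \cite{Rav}) this lift has Adams--Novikov filtration exactly $1$, and a direct inspection of Theorem~\ref{shapiro}(a) shows $E_2^{1,12}$ is one-dimensional, spanned by $\widetilde\alpha$, so $\widetilde\alpha$ is a permanent cycle. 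Since the differentials are derivations linear over $\F_3[\Delta^{\pm3},v_1,\beta,\alpha,\widetilde\alpha]$, the remaining formulae in (a) now follow from $d_5(\Delta)=\pm\alpha\beta^2$, $d_9(\Delta^2\alpha)=\pm\beta^5$ and the relations of Theorem~\ref{shapiro}(a), in particular $\alpha\widetilde\alpha=-v_1\beta$ and $v_1\alpha=v_1\widetilde\alpha=0$: for instance $d_5(\Delta\widetilde\alpha)=d_5(\Delta)\widetilde\alpha=\pm\alpha\widetilde\alpha\beta^2=\mp v_1\beta^3$, and $\beta\cdot d_9(\Delta^2v_1)=-d_9(\Delta^2\alpha\widetilde\alpha)=\mp\widetilde\alpha\beta^5$, whence $d_9(\Delta^2v_1)=\mp\widetilde\alpha\beta^4$ because $\beta$-multiplication is injective in cohomological degree $9$. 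That $d_r$ vanishes for $r\neq 5,9$ is forced in two stages: $E_2$ lives in internal degrees $\equiv 0\pmod 4$, so $d_r=0$ unless $r\equiv 1\pmod 4$; and once $d_5$ and $d_9$ have been run one checks (using part (b)) that $E_{10}$ is concentrated in cohomological filtration $\le 8$, leaving no room for $d_{13}$ and beyond, so $E_{10}=E_\infty$.

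For part (b) the task is to run $d_5$ and then $d_9$ explicitly on the $E_2$-term of Theorem~\ref{shapiro}(a), which as an $\F_3[\Delta^{\pm3}]$-module is generated (up to the $(v_1^6\Delta^{-1})$-completion) by the monomials $\Delta^a\beta^c$, $\Delta^av_1^b\beta^c$ with $b\ge 1$, $\Delta^a\alpha\beta^c$ and $\Delta^a\widetilde\alpha\beta^c$. In cohomological filtration $0$, $d_5$ kills exactly the $\Delta^n$ with $n\not\equiv 0\pmod 3$ (the relation $v_1\alpha=0$ leaving all $v_1$-divisible classes alive), then $d_9$ kills exactly the $v_1\Delta^n$ with $n\equiv 2\pmod 3$ (using $d_9(\Delta^2v_1)=\mp\widetilde\alpha\beta^4$ and $v_1\widetilde\alpha=0$ for the rest), and one verifies that the survivors are precisely the subalgebra $\F_3[[v_1^6\Delta^{-1}]][v_1,v_1\Delta,\Delta^{\pm 3}]$. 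In positive filtration one proceeds family by family, the combinatorics being governed by $d_5(\Delta^a)=\pm a\Delta^{a-1}\alpha\beta^2$ and the $\Delta^{\pm3}$-linear extensions of $d_9(\Delta^2\alpha\beta^c)=\pm\beta^{5+c}$ and $d_9(\Delta^2v_1\beta^c)=\mp\widetilde\alpha\beta^{4+c}$; the residue of the $\Delta$-exponent modulo $3$ decides survival, the relations $v_1\alpha=v_1\widetilde\alpha=0$ make every $v_1$- or $\alpha$-divisible class a $d_5$-cycle, and $\alpha\widetilde\alpha=-v_1\beta$ links the $\widetilde\alpha$-family to the $v_1$-family. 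One finds that the only positive-filtration survivors are the sixteen classes listed, each together with its $\F_3[\Delta^{\pm3}]$-multiples.

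I expect the main obstacle to be precisely this last bookkeeping in part (b): in each of the four families one must pair $d_5$- and $d_9$-sources with their targets while keeping track of $\Delta$-exponents modulo $3$ and of the truncations forced by $v_1\alpha=v_1\widetilde\alpha=0$, so as to see that nothing beyond the sixteen named classes survives and that each of them genuinely does. The remaining delicate point --- checking that $v_1,\alpha,\widetilde\alpha,\beta$ are permanent cycles, so that the Leibniz rule applies without restriction --- is handled as above by the self-map, by $i_*$ and by the geometric boundary theorem; one should also confirm that the sign ambiguities in (a) are compatible with those inherited from $E_2^{hG_{24}}$ via $i_*$ and $\partial$.
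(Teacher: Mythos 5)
Your proposal is correct and follows essentially the same route as the paper's appendix proof: both deduce $d_5$ and $d_9$ from the known Adams--Novikov spectral sequence for $E_2^{hG_{24}}$ via the module structure (your $i_*$ and the cofibre sequence are just that), use $v_1\alpha=v_1\widetilde{\alpha}=0$ and $\alpha\widetilde{\alpha}=-v_1\beta$ together with injectivity of $\beta$-multiplication in the relevant degrees to propagate the differentials to $\Delta\widetilde{\alpha}$ and $\Delta^2v_1$, and finish with the same sparseness and bookkeeping argument giving $E_{10}=E_\infty$. The only substantive difference is that you supply an explicit justification (sparseness plus the geometric boundary theorem applied to the $3$-torsion class $\beta$) for $\widetilde{\alpha}$ being a permanent cycle, a fact the paper merely recalls.
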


\begin{proof} This is a consequence of the behaviour of the 
spectral sequence converging to $\pi_*(E_2^{hG_{24}})$ 
(cf. \cite{GHM}, \cite{GHMR}).  
First one observes that every element in the 
transfer from $\pi_*(E_2)$ is a permanent cycle. Together with the fact 
that $v_1$ is a permant cycle as well, 
this shows that the only classes on the $0$-line which can carry non-trivial 
differentials are the classes $\Delta^kv_1^{\epsilon}$ with $k\in \Z$ and 
$\epsilon =0$ or $k\neq 0$ and $\epsilon=1$.  
Furthermore we recall that the elements $\a$ and $\b$ are permanent cycles in the 
Adams-Novikov spectral sequence converging to  
$\pi_*(E_2^{hG_{24}})$ and $\widetilde{\a}$ in that for 
$V(0)$ detecting homotopy classes fow which we use the same names. 

Now the spectral sequence for $\pi_*(E_2^{hG_{24}})$   
shows $d_5(\Delta)=\pm \a\b^2$ 
and from the fact that the spectral sequence for $\pi_*(E_2^{hG_{24}}\wedge V(0))$ 
is a module over that for $\pi_*(E_2^{hG_{24}})$ we deduce 
$$
d_5(\Delta\widetilde{\a})=\pm\a\widetilde{\a}\b^2=\mp v_1\b^3, \ \ \
d_5(\Delta^kv_1)=0,\ k\equiv 1,2\mod (3)\ ,
$$
and $d_5$ happens to be a derivation.    
In particular we obtain an isomorphism of algebras 
$$
E_6^{0,*}\cong 
\FF_3[[v_1^6\Delta^{-1}]][\Delta^{\pm 3},v_1,\Delta v_1,\Delta^2 v_1]\ ,
$$
and in positive filtration $E_6^{s,t}$ has an $\FF_3$-vector space basis 
given by the elements  
$\b^k$, $k>0$, $\b^kv_1$, $k=1,2$, $\b^k\a$, $k=0,1$, 
$\b^k\widetilde{\a}$, $k\geq 0$, 
$\b^k\Delta v_1$, $k=1,2$, $\b^k\Delta \a$, $k=0,1$, 
$\b^k\Delta^2v_1$, $k>0$ and $\b^k\Delta^2\a$, $k\geq 0$ and their 
multiples by $\Delta^{\pm 3}$.  

Next the Adams-Novikov spectral sequence for $\pi_*(E_2^{hG_{24}})$ implies  
$d_9(\b^l\Delta^{3k+2}\a)=\pm \b^{l+5}\Delta^{3k}$, hence 
the module structure of the spectral sequence for $\pi_*(E_2^{hG_{24}}\wedge V(0))$ 
with respect to that of $\pi_*(E_2^{hG_{24}})$ gives 
$d_9(\b^{l+1}\Delta^{3k+2}v_1)=d_9(\b^l\Delta^{3k+2}\widetilde{\a}\a)=
\pm \b^{l+5}\Delta^{3k}\widetilde{\a}$ 
and thus $d_9(\b^l\Delta^{3k+2}v_1)=\pm\b^{l+4}\Delta^{3k}\widetilde{\a}$. 
Furthermore sparseness shows that $d_9$ is trivial on all other  
classes of positive cohomological degree. In the resulting $E_{10}$-term 
we have $E_2^{s,*}=0$ for $s>8$ and 
thus there is no more room for higher differentials. 
Hence we get $E_{10}=E_{\infty}$ and the structure of $E_{\infty}$ 
agrees with the stated result.
\end{proof}

The following result has already been proved in \cite{GHM} for the subgroup 
$G_{12}$ instead of $G_{24}$. In fact, part (a) is an immediate consequence 
of the formulae  (\ref{SD16-action}) and (\ref{Frobenius-action}) and 
the fact that the action of $G_{24}$ on $(E_2)_*/(3,u_1)$ factors through 
an action of the quotient $G_{24}/\Z/3=Q_8\subset SD_{16}$. The proof of 
part (b) and (c) 
is analogous to the case of $G_{12}$.
We leave the details to the reader. 


\begin{thm}\label{homv(1)}(cf.  Theorem 9 of {\cite{GHM}}) \mbox{ } 
\begin{itemize} 
\item[a)] The $E_2$-term of the Adams-Novikov spectral sequence converging to 
$\pi_*(E_2^{hG_{24}}\wedge V(1))$ is given by 
$$
E_2^{*,*}\cong H^*(G_{24},(E_2)_*/(3,u_1))\cong 
\FF_3[\omega^2u^{\pm 4},\b,\a]/(\a^2) \ . 
$$  
\item[b)] The only non-trivial differentials in this 
Adams-Novikov spectral sequence are $d_5$ and $d_9$. They are determined 
by linearity with respect to 
$\FF_3[\omega^2u^{\pm 4},\b,\a]$ and the formulae 
$$
d_5((\omega^2u^{\pm 4})^k)= 
\left\{\begin{array}{lll}
0                     
& \ k\equiv 0,1,2     &  \mod \ (9) \\ 
\pm (\omega^2u^{\pm 4})^{k-3}\a\b^2 
& \ k\equiv 3,4,5,6,7,8 &   \mod \ (9) 
\end{array}\right.
$$
and 
$$
d_9((\omega^2u^{\pm 4})^k\a )=
\left\{\begin{array}{lll}
0                     & \ k\equiv 0,1,2,3,4,5  & \mod \ (9) \\ 
\pm (\omega^2u^{\pm 4})^{k-3}\b^5 & \ k\equiv 6,7,8 & \mod \ (9)  \ .
\end{array}\right.  
$$
\item[c)] 
There are elements in 
$\pi_{8k}(E_2^{hG_{24}}\wedge V(1))$ represented by  
$(\omega^2u^{-4})^k$, $k=0,1,2$,  
and in $\pi_{8k+3}(E_2^{hG_{24}}\wedge V(1))$ 
represented by $(\omega^2u^{-4})^k\a$, $k=0,1,2,3,4,5$,   
such that there is an isomorphism of modules over 
$\FF_9[\Delta^{\pm 3},\b]$ 
$$
\begin{array}{ll} 
\pi_*(E_2^{hG_{24}}\wedge V(1))\cong & 
\FF_9[\Delta^{\pm 3}]\otimes 
\Big(\FF_9[\b]/(\b^5)\{1,\omega^2u^{-4},(\omega^2u^{-4})^2\}\oplus \\
& \oplus \FF_9[\beta]/(\b^2)\{\a,\cdots,(\omega^2u^{-4})^5\a\}\Big)\ .
\end{array} 
$$
\end{itemize} 
\end{thm}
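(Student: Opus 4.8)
The statement to prove is Theorem \ref{homv(1)}, describing the Adams-Novikov spectral sequence converging to $\pi_*(E_2^{hG_{24}}\wedge V(1))$.

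\medskip

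The plan is to follow the strategy already carried out in \cite{GHM} for the subgroup $G_{12}$, making the necessary adjustments for $G_{24}$. First I would establish part (a). The $E_2$-term is $H^*(G_{24},(E_2)_*/(3,u_1))$ by the descent spectral sequence. The key observation is that the action of $G_{24}\cong \Z/3\rtimes Q_8$ on $(E_2)_*/(3,u_1)$ factors through the quotient $G_{24}/(\Z/3)\cong Q_8$: indeed, by (\ref{SD16-action}) and (\ref{Frobenius-action}) the element $\omega$ acts on $(E_2)_*/(3,u_1)\cong\F_9[u^{\pm 1}]$ by $\omega_*u=\omega u$, and $a\equiv 1\ \mathrm{mod}\ (S)$ acts trivially modulo $(3,u_1)$ since $t_0(a)\equiv 1\ \mathrm{mod}\ (3,u_1)$ by (\ref{t_imodm}); hence the $\Z/3$ generated by $a$ acts trivially. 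A Lyndon-Hochschild-Serre argument (or direct comparison with the known computation $H^*(Q_8,\F_9[u^{\pm 1}])$ as in \cite{GHM}) then gives the isomorphism $H^*(G_{24},(E_2)_*/(3,u_1))\cong \F_3[\omega^2u^{\pm 4},\b,\a]/(\a^2)$, where $\a\in H^1$, $\b\in H^2$, and $\omega^2u^{\pm 4}$ is the relevant invariant combination of degree $\pm 8$; these are compatible with the restrictions of the classes of the same name from $H^*(G_{24},(E_2)_*/(3))$.

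\medskip

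Next, for part (b), I would determine the differentials. The classes $\a$ and $\b$ are permanent cycles (they are restrictions of permanent cycles already in the sphere / $E_2^{hG_{24}}$ setting, cf. the discussion in \S\ref{e1-term} and \cite{GHM}, \cite{GHMR}), so by $\F_3[\omega^2u^{\pm 4},\b,\a]$-linearity everything is governed by the differentials on the powers $(\omega^2u^{\pm 4})^k$. These are obtained from the differentials for $E_2^{hG_{24}}\wedge V(0)$ recorded in Theorem \ref{EO2V(0)} by reducing mod $u_1$: under the mod-$u_1$ reduction map $\Delta$ maps (up to a unit and a power of $\omega^2u^{-4}$) to a power of $\omega^2u^{-4}$ times $v_2^{\ast}$, so the $d_5$-differential $d_5(\Delta)=\pm\a\b^2$ of Theorem \ref{EO2V(0)} translates into the stated $d_5$ on the relevant powers of $\omega^2u^{\pm 4}$, with the vanishing for $k\equiv 0,1,2\ \mathrm{mod}\ (9)$ coming from sparseness (there is no target). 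Similarly $d_9(\Delta^2\a)=\pm\b^5$ translates to the stated $d_9$. That $d_5$ and $d_9$ are the only nontrivial differentials, and that they are derivations, follows from sparseness together with the fact that $E_2^{s,*}$ is concentrated in bounded cohomological degree once one quotients by the images (as in the proof of Theorem \ref{EO2V(0)}).

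\medskip

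Finally, for part (c), I would read off $E_\infty$ from the $E_2$-term modulo the $d_5$- and $d_9$-images and kernels computed in (b), organizing the surviving classes into an $\F_9[\Delta^{\pm 3},\b]$-module: the $0$-line contributes $\F_9[\Delta^{\pm 3}]\otimes\F_9[\b]/(\b^5)\{1,\omega^2u^{-4},(\omega^2u^{-4})^2\}$ (the three residues mod $3$ of the exponent that survive $d_5$, with $\b^5=0$ imposed by $d_9$), and the $\a$-line contributes $\F_9[\Delta^{\pm 3}]\otimes\F_9[\b]/(\b^2)\{\a,\ldots,(\omega^2u^{-4})^5\a\}$ (six residues mod $9$ surviving $d_9$, with $\b^2=0$ from $d_5$ acting on the $\a$-classes via $\a\widetilde\a$-type relations). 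Since $E_\infty$ is then concentrated in filtration $\leq 3$ or so in each relevant degree, there are no extension problems beyond those forced by the $\b$-module structure, and the stated description of $\pi_*$ follows. The main obstacle is bookkeeping: correctly matching the residue classes mod $3$ and mod $9$ that appear in the differentials, and tracking the units $\omega^2$ and signs through the mod-$u_1$ reduction; the conceptual content is entirely a reduction to Theorem \ref{EO2V(0)} and the $G_{12}$-computation of \cite{GHM}.
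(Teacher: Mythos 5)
Your part (a) is exactly the paper's argument: the action of $G_{24}$ on $(E_2)_*/(3,u_1)$ factors through $Q_8$, and the cohomology is then read off from (\ref{SD16-action}) and (\ref{Frobenius-action}); the paper says no more than this and defers (b) and (c) to the $G_{12}$ case of \cite{GHM}. The gap is in your justification of part (b). Naturality along $V(0)\to V(1)$ only controls the differentials on classes in the image of the mod-$u_1$ reduction $H^*(G_{24},(E_2)_*/(3))\to H^*(G_{24},(E_2)_*/(3,u_1))$; on the $0$-line this image is spanned by the powers $(\omega^2u^{\pm 4})^{3j}$ (since $v_1\mapsto 0$ and $\Delta\mapsto\pm(\omega^2u^{\pm 4})^{3}$), so Theorem \ref{EO2V(0)} determines $d_5((\omega^2u^{\pm 4})^{k})$ only for $k\equiv 0\bmod 3$. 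For the remaining residues your appeal to sparseness fails: since $d_2=d_3=d_4=0$ we have $E_5=E_2$, and the potential target $(\omega^2u^{\pm 4})^{k-3}\a\b^2$ is a nonzero element of $E_2^{5,*}$ for \emph{every} $k$ (the $E_2$-term is free over $\F_3[\omega^2u^{\pm4},\b]$ on $1,\a$). So sparseness neither forces the vanishing for $k\equiv 1,2\bmod 9$ nor produces the nonzero values for $k\equiv 4,5,7,8\bmod 9$. The same problem recurs for $d_9$ on the classes $(\omega^2u^{\pm4})^{k}\a$ with $k\equiv 2\bmod 3$, which are not in the image of reduction either.

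The missing ingredient is the $v_1$-Bockstein together with the geometric boundary theorem: the connecting homomorphism $\delta^1$ of (\ref{delta^1}) identifies $(\omega^2u^{\pm4})^{3j+1}$ and $(\omega^2u^{\pm4})^{3j+2}$ (up to sign) as classes mapping to $\Delta^{j}\a$ and $\Delta^{j}\widetilde{\a}$, and one then transports the differentials $d_5(\Delta\widetilde{\a})=\mp v_1\b^3$ and $d_9(\Delta^2 v_1)=\mp\widetilde{\a}\b^4$ of Theorem \ref{EO2V(0)} across $\delta^1$; in addition one needs the module structure over the Adams--Novikov spectral sequence for $E_2^{hG_{24}}$ and the fact that $\omega^2u^{-4}$ and $v_2=-(\omega^2u^{-4})^2$ are permanent cycles, which is not formal. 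This is precisely the machinery of \cite{GHM} and of Lemmas \ref{d5} and \ref{d9} of the present paper, and without it the dichotomy between $k\equiv 1,2$ and $k\equiv 4,5,7,8\bmod 9$ cannot be decided. Granting (b), your reading-off of (c) is essentially correct (additive extensions are trivial since everything is an $\F_9$-vector space), though note that the surviving classes reach filtration $8$ (the $\b^4$-multiples of the $0$-line generators), not filtration $3$.
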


Next we turn towards the algebraic spectral sequence (\ref{algss})
in the case of $M=(E_2)_*/(3,u_1)$ and the Adams-Novikov 
spectral sequence converging towards $\pi_*(L_{K(2)}V(1))$. 
This has already been discussed in \cite{GHM} and \cite{shi}. 
Here we merely translate those results into a form suitable 
for our discussion. As before $v_2$ is defined to be $u^{-8}$.  

 
\begin{thm}\label{E_1V(1)} (cf. section 4 of \cite{GHM}) 
\mbox{ } 
\begin{itemize} 
\item[a)] As a module over $\F_3[v_2^{\pm 1},\b,\a]/(\a)^2$    
the $E_1$-term of the algebraic spectral sequence 
(\ref{algss}) for $(E_2)_*/(3,u_1)$ is given as follows   
(with $\beta$ and $\a$ acting trivially on $E_1^{s,*,*}$ if $s=1,2$, 
and the elements $e_s$, $s=0,1,2,3$,   
serving as module generators in tridegree $(s,0,0)$. )
$$
E_1^{s,*,*}\cong 
\begin{cases}
\F_3[\omega^2u^{\pm 4}][\beta,\alpha]/(\alpha^2)e_s
& s=0,3 \\
\omega^2u^4\F_3[v_2^{\pm 1}]e_s  & s=1,2 \\ 
0 & s>3\ .
\end{cases}  
$$
\item[b)] The differentials in this spectral sequence are 
$\FF_3[v_2^{\pm 1},\b,\a]$-linear. The only non-trivial differential 
in this spectral sequence is $d_1$ and is determined by 
$$
d_1^{0,0}(\omega^2u^{-12}e_0)=\omega^2u^{-12}e_1, 
\ \ d_1^{1,0}=0, \ \ d_1^{2,0}=0 \ . 
$$  
\item[c)] The following $\b$-extensions 
hold in $H^*(\mathbb{G}_2^1,(E_2)_{*}/(3,u_1))$   
$$ 
\beta \omega^2u^4v_2^ke_2=\pm v_2^k\a e_3\ . 
$$
\item[d)] $H^*(\G_2^1;(E_2)_*/(3,u_1))$ is a free module over 
$\FF_3[v_2^{\pm 1},\b]$ on generators 
$$
e_0,\ \ \a e_0, \ \ \omega^2u^{-4}\a e_0,\ \  
\omega^2u^{-4}\b e_0, \ \ \omega^2u^{-4}e_2, \ \ e_3, 
\ \ \omega^2u^{-4}e_3, \ \  \omega^2u^{-4}\a e_3\ . 
$$  
\item[e)] There is an isomorphism of 
$\FF_3[v_2^{\pm 1},\b]\otimes\Lambda(\zeta)$-modules (even of algebras) 
$$
H^*(\G_2,(E_2)_*/(3,u_1))\cong 
H^*(\G_2^1,(E_2)_*/(3,u_1))\otimes \Lambda(\zeta)\ . 
$$  
\end{itemize} 
\end{thm}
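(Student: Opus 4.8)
## Proof Proposal for Theorem A.6

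The statement to prove is the final part of Theorem \ref{E_1V(1)}, namely parts (a)--(e) describing the algebraic spectral sequence (\ref{algss}) for $M = (E_2)_*/(3,u_1)$ and the cohomology $H^*(\G_2^1, (E_2)_*/(3,u_1))$ together with its passage to $\G_2$. Since the authors say ``we merely translate those results'' from \cite{GHM}, my plan is to reduce everything to the already-established computation of $H^*(G_{24}, (E_2)_*/(3,u_1))$ in Theorem \ref{homv(1)}(a) and the Shapiro-type identification of the $E_1$-term, and then run the (very short) spectral sequence.

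\medskip\noindent\emph{Plan for (a).}
First I would invoke the Shapiro isomorphisms exactly as in section \ref{e1-term}: for $s=1,2$ the module $C_s$ is $\Z_3[[\G_2^1]]$-projective, so $E_1^{s,t}$ vanishes for $t>0$ and equals $\Hom_{\Z_3[SD_{16}]}(\chi, (E_2)_*/(3,u_1))$ for $t=0$; using the $SD_{16}$-action (\ref{SD16-action}) and Frobenius action (\ref{Frobenius-action}) on $(E_2)_*/(3,u_1)\cong \F_9[[u^{\pm1}]]$, one reads off that this Hom-group is free of rank one over $(E_2)_*/(3,u_1)^{SD_{16}} = \F_3[v_2^{\pm1}]$ on the generator $\omega^2 u^4$, exactly as in (\ref{TMF(2)}) after inverting $v_1$ (equivalently $u_1$). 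For $s=0,3$ the Shapiro isomorphism gives $E_1^{s,t}\cong H^t(G_{24},(E_2)_*/(3,u_1))$, and Theorem \ref{homv(1)}(a) identifies this with $\F_3[\omega^2 u^{\pm4}, \b,\a]/(\a^2)$. Setting $e_s$ to be the canonical generator in tridegree $(s,0,0)$ gives the stated module description, with $\b,\a$ acting trivially on the projective columns $s=1,2$ for degree/structural reasons.

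\medskip\noindent\emph{Plan for (b).}
The differentials are $\F_3[v_2^{\pm1},\b,\a]$-linear by naturality of the $\b$- and $\a$-operations (these come from a $H^*(\G_2^1,\F_3)$-module structure and from Greek-letter Bocksteins that commute with the spectral sequence). The columns $s=1,2$ are $v_2$-periodic and concentrated in $t=0$, while $s=0,3$ carry $\b$ and $\a$; a bidegree count shows $d_1: E_1^{0,0}\to E_1^{1,0}$ and $d_1: E_1^{2,0}\to E_1^{3,0}$ are the only maps that can be nonzero, and $d_r=0$ for $r\ge 2$ because after $d_1$ there is nothing left in the right bidegrees (the $s=1,2$ columns have no $\b$- or $\a$-towers to hit). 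To pin down $d_1^{0,0}$ I would use that it is induced by $C_1\to C_0$, $e_1\mapsto (e-\omega)e_0$ (Corollary \ref{delta1}): since $\omega_* u = \omega u$ we get $(e-\omega)_*(u^{-12}) = (1-\omega^{-12})u^{-12} = (1-1)u^{-12}$... wait — more carefully, $\omega$ acts on $u^k$ by $\omega^k$, and on $\omega^2$ by itself, so on $\omega^2 u^{-12}$ it acts trivially and $d_1$ would vanish; the correct normalization, matching Theorem \ref{homv(1)}, is that $d_1^{0,0}(\omega^2 u^{-12}e_0)$ hits the corresponding generator $\omega^2 u^{-12}e_1$ precisely because in the $V(1)$-setting the class $\Delta \equiv \omega^2 u^{-12}$ is \emph{not} $\G_2^1$-invariant (only $G_{24}$-invariant), so its image under $\partial_1^*$ in the $s=1$ column is a unit multiple of $\omega^2 u^{-12}e_1$. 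I would verify $d_1^{1,0}=d_1^{2,0}=0$ by observing the source columns $s=1,2$ are $v_2$-periodic and the target $s=2,3$ columns in the relevant bidegrees are generated over $\F_3[v_2^{\pm1}]$ by elements already known (from Theorem \ref{homv(1)}) to be $\b$-torsion-free permanent cycles — the same argument used for Proposition \ref{b_0}. \textbf{This identification of $d_1$ — extracting the precise coefficient and showing the two higher-column differentials vanish — is the main obstacle}, and it is exactly the point where one falls back on the explicit $\partial_1^*$ of Proposition \ref{dualcomplex}(c) and the dual-complex bookkeeping of section \ref{resolution}.

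\medskip\noindent\emph{Plan for (c), (d), (e).}
For (c), the $\b$-extension $\b\,\omega^2u^4 v_2^k e_2 = \pm v_2^k \a e_3$ is forced: in $E_2 = E_\infty$ both sides live in the same bidegree (sparseness and $\b$-linearity), the left side is nonzero by the module structure of (a)--(b), and there is a unique nonzero target; the sign is the content of the ``last group of $\b$-extensions'' alluded to after Proposition \ref{extensions} and follows from naturality of $\b$ together with the $G_{24}$-computation. Alternatively this is Theorem \ref{E_1V(1)}(c) quoted verbatim from \cite{GHM}. For (d), I assemble the $E_\infty = E_2$-term of the spectral sequence from (a) and (b): killing $\omega^2u^{-12}e_0$ against $\omega^2u^{-12}e_1$ (hence, by $v_2$-periodicity, all of $E_1^{0,0}$ against all of $E_1^{1,0}$ — note both are free rank one over $\F_3[v_2^{\pm1}]$ once we mod out the $\b,\a$-towers, and the differential is $v_2$-linear iso on the ``$1$''-part) leaves the eight listed $\F_3[v_2^{\pm1},\b]$-free generators $e_0, \a e_0, \omega^2u^{-4}\a e_0, \omega^2u^{-4}\b e_0, \omega^2u^{-4}e_2, e_3, \omega^2u^{-4}e_3, \omega^2u^{-4}\a e_3$; freeness over $\F_3[v_2^{\pm1},\b]$ holds because after inverting $v_2$ everything is periodic and $\b$-towers are infinite (no $v_1$-torsion survives since $v_1$ is inverted, i.e. $u_1$ is invertible... rather $u_1=0$ here, so there is no $v_1$ at all and $\b$ acts freely on the $s=0,3$ parts). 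Finally (e) is the Künneth isomorphism of section \ref{passing}: since $\G_2 \cong \Z_3 \times \G_2^1$ with the central $\Z_3$ acting trivially on $(E_2)_*/(3,u_1)$ (the generator $t = 1+3$ acts on $u$ by $4$, hence trivially mod $(3,u_1)$ on a module that is $u_1$-torsion... actually $t$ acts on $u^k$ by $4^k$, which mod $3$ is $(\pm1)^k$ — but $H^*(\Z_3, M)$ with $M$ in even internal degree where $t$ acts by $+1$ gives $\Lambda(\zeta)\otimes M$, and the Bockstein-type subtlety of Corollary \ref{Bockstein} does not arise here because we are not comparing Greek-letter classes of $\G_2$ and $\G_2^1$ in this module), the Künneth formula (\ref{Kuenneth}) applies and gives $H^*(\G_2,(E_2)_*/(3,u_1)) \cong H^*(\G_2^1,(E_2)_*/(3,u_1))\otimes \Lambda(\zeta)$ as algebras. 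I would cite Lemma 22 of \cite{hennkn} or the explicit computation in \cite{GHM} to confirm the $t$-action is trivial mod $(3,u_1)$ in the relevant range, closing the argument.
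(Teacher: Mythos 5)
Your outline for (a), (d) and (e) matches the paper's (Shapiro plus the known $G_{24}$- and $SD_{16}$-computations for the $E_1$-term, then the K\"unneth isomorphism of section \ref{passing} for (e), where your worry about the central action is unfounded since $t=1+3$ acts on $u^k$ by $4^k\equiv 1 \bmod 3$). The genuine gaps are in (b) and (c). For $d_1^{0,0}$ your direct computation goes off the rails because of an arithmetic slip: $\omega_*(\omega^2u^{-12})=\omega^{-12}\cdot\omega^2u^{-12}=\omega^4\cdot\omega^2u^{-12}=-\omega^2u^{-12}$ (recall $\omega^8=1$, so $\omega^{-12}=\omega^4=-1$, not $1$), hence $(e-\omega)_*(\omega^2u^{-12})=2\omega^2u^{-12}=-\omega^2u^{-12}\neq 0$ and the naive calculation already gives the stated answer; your subsequent appeal to "the correct normalization" is not a proof. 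More seriously, you give no valid argument that $d_1^{1,0}=d_1^{2,0}=0$: the source $E_1^{1,0}$ and target $E_1^{2,0}$ are isomorphic as graded $\F_3[v_2^{\pm 1}]$-modules, so no bidegree count applies, and the fact that the targets are "permanent cycles" constrains differentials \emph{out of} them, not \emph{into} them. The paper settles all of (b) at once by naturality along the reduction $(E_2)_*/(3)\to (E_2)_*/(3,u_1)$: Theorem \ref{d1} shows $d_1(b_{2k+1})$ and $d_1(\overline{b}_{2k+1})$ are always divisible by positive powers of $v_1$, hence vanish mod $(3,u_1)$, while $d_1(\Delta_{2m+1})$ reduces to $\pm\omega^2u^{-12(2m+1)}e_1$; the collapse at $E_2$ is then read off by comparing with the abutment known from \cite{GHM}. (A direct argument for the vanishing does exist -- $S_2^1$ acts trivially on $(E_2)_*/(3,u_1)$ and the maps $\partial_2,\partial_3$ are given by left multiplication by elements of $3\Z_3[[\G_2^1]]+IS_2^1\cdot\Z_3[[\G_2^1]]$, so they induce zero -- but you did not give it.)

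For (c) your argument is circular: in the associated graded, $\b$ acts trivially on the column $s=2$, so the assertion that "the left side is nonzero by the module structure of (a)--(b)" is exactly the claim to be proved, not a consequence of it. The nonvanishing is a hidden $\b$-extension across filtration degrees, and the paper's input is external: $H^*(\s_2,(E_2)_*/(3,u_1))$ is free over $\F_9[\b]$ by Corollary 19 of \cite{GHM}, which forces a nontrivial $\b$-multiplication from $E_\infty^{2,0}$ into $E_\infty^{3,1}$, and only then do degree reasons pin it down to $\pm v_2^k\a e_3$. Without that freeness statement (or an equivalent computation of the abutment) the extension cannot be "forced by sparseness."
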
 

\begin{proof} a) The action of $\G_2^1$ on $(E_2)_*/(3,u_1)$ 
is trivial on its Sylow-subgroup $S_2^1$, and on the quotient group 
$\G_2^1/S_2\cong SD_{16}$ the action is given by 
the formulae in (\ref{SD16-action}) and (\ref{Frobenius-action}). 
With this information the calculation of the $E_1$-term is straightforward 
and is left to the reader.  

b) As module over $\FF_3[v_2^{\pm 1},\b,\a]$ the $E_1$-term is generated 
by $e_s$ and $\omega^2u^{-12}e_s$ for $s=0,3$ and $\omega^2u^{-4}e_s$ for $s=1,2$. 
The map of algebraic spectral sequences 
(\ref{algss}) induced by the canonical homomorphism 
$(E_2)_*/(3)\to (E_2)_*/(3,u_1)$ of $\Z_3[[\GG_2^1]]$-modules sends 
$\Delta_{2k}$ to $v_2^{3k}e_0$, $b_{2k+1}$ to $\omega^2 u^{-4}v_2^ke_1$, 
$\overline{b}_{2k+1}$ to $\omega^2 u^{-4}v_2^ke_2$ and 
$\overline{\Delta}_{2k}$ to $v_2^{3k}e_3$. 
This and Theorem \ref{d1} determine the $d_1$-differential and the $E_2$-page. 
The abutment of the spectral sequence is known by Corollary 19 of 
\cite{GHM} and comparing the $E_2$-term with the abutment shows that 
the spectral sequence collapses at its $E_2$-term. 

c) From the same corollary we know that 
$H^*(\s_2,(E_2)_*/(3,u_1))$ is free as module over $\F_9[\b]$,  
hence $H^*(\GG_2,(E_2)_*/(3,u_1))$ and then also 
$H^*(\GG_2^1,(E_2)_*/(3,u_1))$ are free as modules over $\F_3[\b]$. 
This requires nontrivial $\beta$-multiplications on $E_2^{2,0}$ 
and by degree reasons these multiplications must be as claimed.  

d) This is an immediate consequence of (a), (b) and (c). 

e) This is an easy consequence of the isomorphism 
$\G_2\cong \G_2^1\times \Z_3$ and the fact that the central factor $\Z_3$ 
acts trivially on $(E_2)_*/(3,u_1)$. 
\end{proof} 

Next we note that as before the existence of the resolution 
(\ref{hom}) of \cite{GHMR} puts additional 
restrictions on the Adams-Novikov differentials for 
$L_{K(2)}V(1)$. In fact, again by naturality and the geometric boundary 
theorem, these differentials can be easily read off from those 
for $E_2^{hG_{24}}\wedge V(1)$, at least modulo 
the filtration on $H^*(\G_2,(E_2)_*/(3,u_1))$ determined by the 
algebraic resolution (\ref{algss}).      
It turns out that the potential terms of lower filtration 
are always trivial although showing this requires a non-trivial 
effort which has essentially been carried out in \cite{GHM}.  


\begin{thm}\label{LK(2)V(1)}(cf. section 3 of {\cite{GHM}}) 
\begin{itemize} 
\item[a)] The only non-trivial differentials in the 
Adams-Novikov spectral sequence converging to $\pi_*(L_{K(2)}V(1))$ 
are $d_5$ and $d_9$. They are both 
determined by the fact that they are linear with respect to 
$\FF_3[v_2^{\pm 9},\b]\otimes\Lambda(\zeta)$ 
and by the following formulae in which we identify
$v_2^{k+3}e_3$ with $\Sigma^{48}v_2^ke_3$ etc. . 

\item[b)]  The differential $d_5$ is given by 
$$
\begin{array}{lll}  
d_5(v_2^k\a e_0)&=&0\\  
d_5(v_2^k\omega^2u^{-4}\a e_0)&=&0 \\ 
d_5(v_2^k\omega^2u^{-4} e_2)&=&0\\
d_5(\Sigma^{48}v_2^k\omega^2u^{-4}\a e_3)&=&0 \\ 
\end{array} 
$$
for all $k$, and  
$$
\begin{array}{lll}
d_5(v_2^ke_0)&= &
\left\{\begin{array}{lll}
0                                      &\ \ \ \ k\equiv 0,1,5     &\ \mod (9) \\ 
\pm v_2^{k-2}\omega^2u^{-4}\a\b^2e_0   &\ \ \ \ k\equiv 2,3,4,6,7,8 &\ \mod (9) \\
\end{array}\right. \\ 
&\\
d_5(v_2^k\omega^2u^{-4}\b e_0)&=&
\left\{\begin{array}{lll}
0                   & \ \ \ \ \ \ \ \ \ \ \ \ k\equiv 0,4,5     &\ \mod (9) \\ 
\pm v_2^{k-1}\a\b^3e_0   &\ \ \ \ \ \ \ \ \ \ \ \ k\equiv 1,2,3,6,7,8 &\ \mod (9) \\ 
\end{array}\right. \\
&\\
d_5(\Sigma^{48}v_2^k e_3)&=&
\left\{\begin{array}{lll}
0                                              &  k\equiv 0,1,5       &\ \mod (9) \\ 
\pm \Sigma^{48}v_2^{k-2}\omega^2u^{-4}\a\b^2e_3&  k\equiv 2,3,4,6,7,8 &\ \mod (9)\  \\ 
\end{array}\right. \\
&\\
d_5(\Sigma^{48}v_2^k\omega^2u^{-4}e_3)&=&
\left\{\begin{array}{lll}
0                                     &\ \ \ \ \ \ \ \ k\equiv 0,4,5       &\ \mod (9) \\ 
\pm \Sigma^{48}v_2^{k-1}\a\b^2e_3     &\ \ \ \ \ \ \ \ k\equiv 1,2,3,6,7,8 &\ \mod (9)\ . \\
\end{array}\right. \\
&\\
\end{array}
$$ 
\item[c)] The differential $d_9$ is given by 
$$
\begin{array}{llllll}
d_9(v_2^k e_0)&=&0                     &&\  k\equiv 0,1,5   &    \ \mod  (9)\\
d_9(v_2^k\omega^2u^{-4}\b e_0)&=&0  &&\  k\equiv 0,4,5   &    \ \mod  (9)\\
d_9(\Sigma^{48}v_2^k e_3)     &=&0          &&\  k\equiv 0,1,5   &    \ \mod (9)\\
d_9(\Sigma^{48}v_2^k\omega^2u^{-4}\b e_3)&=&0 && \ k\equiv 0,4,5&\ \mod (9)    \\ 
&\\
\end{array}
$$
$$
\begin{array}{lll}
d_9(v_2^{k}\a e_0)&=&
\begin{cases} 
0                           &\   \ \ \ \ \ \ \ \ \ \ \ k\equiv 0,1,2,5,6,7 \mod (9) \\
\pm v_2^{k-3}\b^5 e_0       &\   \ \ \ \ \ \ \ \ \ \ \ k\equiv 3,4,8\ \ \ \ \ \ \ \ \mod (9) 
\end{cases} \\
&\\
d_9(v_2^{k}\omega^2u^{-4}\a e_0)&=&
\begin{cases} 
0                                   &\ \ \ \ \ k\equiv 0,1,2,4,5,6 \mod (9) \\
\pm v_2^{k-3}\omega^2u^{-4}\b^5e_0  &\ \ \ \ \ k\equiv 3,7,8\ \ \ \ \ \ \ \ \mod (9) 
\end{cases} \\
&\\
d_9(v_2^{k+2}\omega^2u^{-4}e_2)&=& 
\begin{cases} 
0                               &\ \ \ \ \ \ \ \ k\equiv 0,1,2,5,6,7 \mod (9) \\
\pm \Sigma^{48}v_2^{k-5}\b^4e_3 &\ \ \ \ \ \ \ \ k\equiv 3,4,8\ \ \ \ \ \ \ \ \mod (9) 
\end{cases} \\ 
&\\ 
d_9(\Sigma^{48}v_2^{k}\omega^2u^{-4}\a e_3)&=& 
\begin{cases} 
0                                            &  k\equiv 0,1,2,4,5,6 \mod (9)\\
\pm \Sigma^{48}v_2^{k-3}\b^5\omega^2u^{-4}e_3&  k\equiv 3,7,8\ \ \ \ \ \ \ \ \mod (9) \ .
\end{cases} \\
&\\
\end{array} 
$$
\item[d)] As a module over $P=\FF_3[v_2^{\pm 9},\b]\otimes \Lambda(\zeta)$ 
there is an isomorphism
$$
\begin{array}{lll} 
\pi_*(L_{K(2)}V(1)) \cong 
& P/(\b^5)\{v_2^{k}e_0\}_{k=0,1,5}
& \oplus \ P/(\b^3)\{v_2^{k}\a e_0\}_{k=0,1,2,5,6,7}\\
& \oplus \ P/(\b^4)\{v_2^{k}\omega^2u^{-4}\b e_0\}_{k=0,4,5} 
& \oplus \ P/(\b^2)\{v_2^{k}\omega^2u^{-4}\a e_0\}_{k=0,1,2,4,5,6}  \\
& \oplus \ P/(\b^4)\{\Sigma^{48}v_2^{k}e_3\}_{k=0,1,5}
& \oplus \ P/(\b^3)\{v_2^{k+2}\omega^2u^{-4}e_2\}_{k=0,1,2,5,6,7}  \\
& \oplus \ P/(\b^5)\{\Sigma^{48}v_2^{k}\omega^2u^{-4}e_3\}_{k=0,4,5}
& \oplus \ P/(\b^2)\{\Sigma^{48}v_2^{k}\omega^2u^{-4}\a e_3\}_{k=0,1,2,4,5,6}  \ .
\end{array}
$$
\end{itemize} 
\end{thm}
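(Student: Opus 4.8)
The plan is to read everything off the already-established $E_2$-term together with the topological resolution (\ref{hom}) and the Adams--Novikov spectral sequence for $E_2^{hG_{24}}\wedge V(1)$ of Theorem \ref{homv(1)}. By Theorem \ref{E_1V(1)} the $E_2$-term is $H^*(\GG_2,(E_2)_*/(3,u_1))$, a free module over $\FF_3[v_2^{\pm 1},\b]\otimes\Lambda(\zeta)$, and it carries the filtration coming from the algebraic spectral sequence (\ref{algss}). Since by Theorem \ref{E_1V(1)}.a this cohomology is concentrated in internal degrees divisible by $4$, every differential $d_r$ vanishes unless $r\equiv 1\bmod 4$. The classes $\b$ and $\zeta$ are permanent cycles, so all differentials are $\FF_3[\b]\otimes\Lambda(\zeta)$-linear; once $d_5$ and $d_9$ are known one checks directly that $v_2^{\pm 9}$ survives to $E_{10}$, which upgrades the only remaining possible differentials $d_{13},d_{17},\dots$ to $P$-linear ones, $P:=\FF_3[v_2^{\pm 9},\b]\otimes\Lambda(\zeta)$.

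To compute $d_5$ and $d_9$ I would smash the resolution (\ref{hom}) with $V(1)$, obtaining
$$
\ast\to L_{K(2)}V(1)\to X_0\wedge V(1)\to X_1\wedge V(1)\to X_2\wedge V(1)\to X_3\wedge V(1)\to X_4\wedge V(1)\to\ast\ ,
$$
where each $X_i\wedge V(1)$ is a finite wedge of suspensions of $E_2^{hG_{24}}\wedge V(1)$ and $E_2^{hSD_{16}}\wedge V(1)$. The spectral sequence for $E_2^{hSD_{16}}\wedge V(1)$ collapses at $E_2$ because $H^{>0}(SD_{16},(E_2)_*/(3,u_1))=0$ (equivalently $E_1^{1,t}=E_1^{2,t}=0$ for $t>0$ in (\ref{algss})), so those wedge summands carry no differentials, while the one for $E_2^{hG_{24}}\wedge V(1)$ is Theorem \ref{homv(1)}. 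The map $L_{K(2)}V(1)\to X_0\wedge V(1)$ induces on $E_2$-pages the inclusion of the algebraic-filtration-zero part, so naturality transports the $d_5$ and $d_9$ of Theorem \ref{homv(1)} onto the $e_0$-classes; the differentials on the $e_2$- and $e_3$-classes follow the same way after chasing the connecting maps of the resolution via the geometric boundary theorem (Theorem 2.3.4 of \cite{Rav}), the $e_3$-classes arising from $X_3=\Sigma^{48}E_2^{hG_{24}}\vee\Sigma^{40}E_2^{hSD_{16}}$.

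The hard part is that this argument only pins down $d_5$ and $d_9$ modulo terms of strictly higher algebraic filtration, and one must show those correction terms vanish so that the formulae of (b) and (c) hold exactly. For the $e_3$-classes, which already sit in algebraic filtration $3$, there is nothing to correct. For the $e_0$- and $e_2$-classes a correction term would lie in the image of the algebraic $d_1$, and I would exclude it by combining the Bockstein identities of section \ref{passing} (Corollary \ref{Bockstein}) with the freeness of $H^*(\GG_2,(E_2)_*/(3,u_1))$ over $\FF_3[\b]$ from Theorem \ref{E_1V(1)}.d: each candidate correction either has the wrong internal degree or would contradict that freeness. This is exactly the bookkeeping already carried out in section 3 of \cite{GHM}, which I would invoke rather than repeat.

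Finally, with $d_5$ and $d_9$ in hand I would compute $E_{10}$ by a routine $P$-linear bookkeeping starting from the module of Theorem \ref{E_1V(1)}. Inspection shows $E_{10}$ is bounded in cohomological degree, so no nonzero $d_r$ with $r\geq 13$ is possible and $E_{10}=E_\infty$. Since every summand listed in (d) is a cyclic $P$-module whose annihilator $(\b^k)$ is already visible on $E_\infty$, there is no room for multiplicative extensions beyond the displayed $\FF_3[\b]$-structure, and the asserted description of $\pi_*(L_{K(2)}V(1))$ follows.
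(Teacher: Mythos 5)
Your route is genuinely different from the paper's. The paper does not re-run the spectral sequence at all: its proof of Theorem \ref{LK(2)V(1)} consists of the single observation that the statement is an immediate reformulation of the main theorem of \cite{GHM} (Corollary 19 there), so that the only work is to set up a dictionary between the $\FF_9[v_2^{\pm1},\b]$-module generators used in \cite{GHM} and the generators $e_0,\a e_0,\dots,\Sigma^{48}\omega^2u^{-4}\a e_3$ of Theorem \ref{E_1V(1)}. Most of that dictionary is forced by bidegrees; the one subtlety is that $\Sigma^{48}e_3$ and $\Sigma^{48}\omega^2u^{-4}e_3$ are \emph{not} determined by bidegree alone and are pinned down by the fact that they lie in the kernel of restriction to $H^*(G_{12},(E_2)_*/(3,u_1))$. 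Your approach --- smash the resolution (\ref{hom}) with $V(1)$ and transport the differentials of Theorem \ref{homv(1)} by naturality and the geometric boundary theorem --- is the strategy the paper uses for $V(0)$ and advertises in the remark immediately preceding this theorem; it has the virtue of explaining where the pattern of differentials comes from, but the paper explicitly warns there that it only determines $d_5$ and $d_9$ modulo the filtration coming from (\ref{algss}), and that killing the correction terms ``requires a non-trivial effort which has essentially been carried out in \cite{GHM}''.

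That is exactly where your write-up has a gap. The mechanism you propose for excluding corrections --- ``wrong internal degree or contradicts $\b$-freeness'' --- does not work. For instance, the target of $d_5(v_2^ke_0)$ sits in cohomological degree $5$ and internal degree $16k+4$, and besides the filtration-$0$ class $v_2^{k-2}\omega^2u^{-4}\a\b^2e_0$ this tridegree also contains the nonzero filtration-$2$ class $\omega^2u^{-4}v_2^{k-1}\b\zeta e_2$ and the filtration-$3$ class $\omega^2u^{-4}v_2^{k-1}\b e_3$; moreover, freeness of the $E_2$-term over $\FF_3[\b]$ (Theorem \ref{E_1V(1)}.d) is a statement about $E_2$ and by itself places no constraint on which classes can appear as differentials. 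So excluding the corrections is genuinely the hard content, and your proof ultimately discharges it by citing section 3 of \cite{GHM} --- at which point you are relying on \cite{GHM} for precisely the computation whose notational translation constitutes the paper's entire proof. The remaining steps of your argument (sparseness forcing $r\equiv 1\bmod 4$, collapse over the $SD_{16}$ summands since $|SD_{16}|$ is prime to $3$, the passage to $E_{10}=E_\infty$, and reading off part (d)) are fine.
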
 

\begin{proof}  This is an immediate reformulation of the 
main theorem of \cite{GHM}. We just have to use the following 
dictionary which translates between the 
$\FF_9[v_2^{\pm 1},\b]$-module generators used in Corollary 19 of 
\cite{GHM} and those of $H^*(\s_2^1,(E_2)_*/(3,u_1))$ 
of Theorem \ref{E_1V(1)}\footnote{For the translation we pass from 
$H^*(\GG_2^1;(E_2)_*/(3,u_1))$ to 
$H^*(\s_2^1;(E_2)_*/(3,u_1))$ in Theorem \ref{E_1V(1)} and from 
$H^*(\s_2;(E_2)_*/(3,u_1))$ to $H^*(\s_2^1;(E_2)_*/(3,u_1))$  
in \cite{GHM}. This passage  
is straightforward and left to the reader.}. 

By degree reasons the generators $1$, $\a$, $v_2^\frac{1}{2}\a$, 
$v_2^{\frac{1}{2}}\b$, $\a a_{35}$ and $v_2^{\frac{1}{2}}\b\a a_{35}$ 
of \cite{GHMR} must correspond, up to a unit in $\FF_9$, to $e_0$, 
$\a e_0$, $\omega^2u^{-4}\a e_0$, $\omega^2u^{-4}\b e_0$, 
$\omega^2u^{-4}v_2^2e_2$ and $\Sigma^{48}\omega^2u^{-4}\a e_3$ of 
Theorem \ref{E_1V(1)}. 
The generators $\b a_{35}$ and $v_2^{\frac{1}{2}}\b a_{35}$ 
are not determined (not even up to a unit) by their bidegree, 
but if one takes into account that they are in the kernel 
of the restriction map to $H^*(G_{12};(E_2)_*/(3,u_1))$ 
then they are also determined up to a unit and they must 
therefore agree, up to a unit, with the elements 
$\Sigma^{48}e_3$ and $\Sigma^{48}\omega^2u^{-4}e_3$. 
\end{proof}

\bibliographystyle{amsplain}
\bibliography{bibhkm}
\bigbreak
\bigbreak  

\end{document}